\theoremstyle{plain}
\newtheorem{theorem}{Theorem}[section]
\newtheorem*{theorem*}{Theorem}
\newtheorem{lemma}[theorem]{Lemma}
\newtheorem{corollary}[theorem]{Corollary}
\newtheorem{proposition}[theorem]{Proposition}
\theoremstyle{definition}
\newtheorem{definition}[theorem]{Definition}
\theoremstyle{remark}
\newtheorem{remark}[theorem]{Remark}
\numberwithin{equation}{section}
\newcommand{\bA}{\mathbb{A}}
\newcommand{\bC}{\mathbb{C}}
\newcommand{\bE}{\mathbb{E}}
\newcommand{\bP}{\mathbb{P}}
\newcommand{\bR}{\mathbb{R}}
\newcommand{\bW}{\mathbb{W}}
\newcommand{\cA}{\mathcal{A}}
\newcommand{\cB}{\mathcal{B}}
\newcommand{\cC}{\mathcal{C}}
\newcommand{\cD}{\mathcal{D}}
\newcommand{\cF}{\mathcal{F}}
\newcommand{\cL}{\mathcal{L}}
\newcommand{\cU}{\mathcal{U}}
\newcommand{\NCPd}{{\rm NCP}_d }
\newcommand{\tr}{{\rm tr}\,}
\newcommand{\trn}{{\rm tr}_n\,}
\DeclareMathOperator{\sa}{sa}
\DeclareMathOperator*{\median}{\ast} 
\DeclarePairedDelimiter{\ip}{\langle}{\rangle}
\DeclarePairedDelimiter{\norm}{\lVert}{\rVert}
\title{Viscosity Solutions in Non-commutative Variables}
\author{Wilfrid Gangbo$^1$}
\address{$^1$Department of Mathmatics, University of California, Los Angeles}
\email{\href{mailto:wgangbo@math.ucla.edu}{wgangbo@math.ucla.edu}}
\author{David Jekel$^2$}
\address{$^2$Department of Mathematical Sciences, University of Copenhagen}
\email{\href{mailto:daj@math.ku.dk}{daj@math.ku.dk}}
\author{Kyeongsik Nam$^3$}
\address{$^3$Department of Mathematical Sciences, Korea Advanced Institute of Science and Technology}
\email{\href{mailto:ksnam@kaist.ac.kr}{ksnam@kaist.ac.kr}}
\author{Aaron Z. Palmer$^4$}
\address{$^4$Department of Mathmatics, University of California, Los Angeles}
\email{\href{mailto:azp6@cornell.edu}{azp6@cornell.edu}}
\date{\today}
\begin{document}
	
	\begin{abstract}
		Motivated by parallels between mean field games and random matrix theory, we develop stochastic optimal control problems and viscosity solutions to Hamilton-Jacobi equations in the setting of non-commutative variables.  Rather than real vectors, the inputs to the equation are tuples of self-adjoint operators from a tracial von Neumann algebra.  The individual noise from mean field games is replaced by a free semi-circular Brownian motion, which describes the large-$n$ limit of Brownian motion on the space of self-adjoint matrices.  We introduce a classical common noise from mean field games into the non-commutative setting as well, allowing the problems to combine both classical and non-commutative randomness.
	\end{abstract}


	\maketitle
	
	\section{Introduction}
	
	\subsection{Context and motivation}
	
	We aim to develop free non-commutative analogs of mean fields games, stochastic control, and Hamilton--Jacobi--Bellman equations.  Random matrix theory has a long history since its appearance in 1928, where empirical covariance matrices of measured data naturally form a random matrix ensemble and the eigenvalues play a crucial role in principal component analysis \cite{wishart1928generalised}.  Wigner studied the spectral distributions of random matrices with independent entries (and especially the \emph{Gaussian unitary ensemble} or GUE) motivated by the observed similarity of random spectral distributions to energy levels of atomic nuclei \cite{wigner1967random}. {Mean field games, introduced by Caines--Malham{\'e}--Huang \cite{huang2006large} and Lasry--Lions \cite{lasry2007mean}, describe competitive interactions between a large number of agents through the continuum approximation using their bulk density;  see also Gomes--Sa\'{u}de \cite{gomes2014meanfield} for a survey of mean field game models and Carmona-Delarue \cite{carmona2018probabilistic} for a more exhaustive coverage. The study of mean field games deploys and advances methods from statistical physics, and is motivated by applications to modeling social behavior, economics, and finance.  An influential mathematical development of Cardaliaguet--Delarue--Lasry--Lions \cite{cardaliaguet2019master} was an infinite-dimensional PDE approach to the convergence of the $n$-player games to the mean field game limit (see also \cite{gangboMesz,GangboSwiech2015}). The infinite-dimensional mean field game PDE has also been developed using analytic solutions by Gangbo--M{\'e}sz{\'a}ros--Mou--Zhang \cite{gangbo2022mean} and  Bansil--M{\'e}sz{\'a}ros--Mou \cite{BansilMeszM} (see also \cite{JakobsenRutkowski}). 
		
		Connections between mean field games and random matrix theory have been apparent for some time.  For instance, recent studies in game theory were motivated by the empirical observations that the distinctive behavioral patterns of bus drivers in some cities relate to previously observed experiments in quantum physics. The dynamics of these bus patterns correspond to paths of eigenvalues for $n\times n$ self-adjoint random matrices.  In the same vein, the Dyson game \cite{carmona2020dyson} described a mean field games approach matrix Brownian motion where the positions of $n$ players correspond to the eigenvalues of the matrix.  Hence, the empirical distribution of the player positions represents the empirical spectral distribution of the matrix, i.e.\ the measure with mass $1/n$ at each of the eigenvalues.  The Nash--optimal trajectories of the game are given by Dyson Brownian motions, first introduced by Freeman Dyson \cite{dyson1962brownian}. The studies in \cite{carmona2020dyson} and \cite{dyson1962brownian} connect to a rich area of dynamical systems such as the theory of  Calogero--Moser--Sutherland models which involve completely integrable Hamiltonians; see Menon \cite{menon2017complex} for further discussion.  The Dyson Brownian motion also has a natural Riemannian interpretation using motion by mean curvature which was discovered by Huang, Inauen, and Menon \cite{Menon.et.al}.  The connections with mean field games and more generally PDE theory led to a new approach for analyzing the spectrum of single-matrix models developed by Bertucci--Debah--Lasry--Lions \cite{bertucci2022spectral}.
		
		We want to explore the connection between mean field games and random matrix theory in the setting of multimatrix models with several self-adjoint matrices that do not necessarily commute with each other.  Unlike a single self-adjoint matrix, the empirical distribution of $d$-tuple of self-adjoint matrices cannot simply be described by the positions of $n$ eigenvalues or any classical measure on $\mathbb R^d$; it is fundamentally non-commutative, so that one is forced to deal directly with the non-commutative moments $\tr(X_{i_1} \dots X_{i_k})$ or to formulate some analog of smooth test functions for non-commuting variables.  The large-$n$ limiting behavior of such multimatrix models is the domain of free probability theory, developed in large part by Voiculescu, where the role of probabilistic independence is played by free independence modeled on the behavior of free products of groups \cite{voiculescu1991limit,voiculescu1998strengthened}.  The large-$n$ limits are modeled by tuples $(X_1,\dots,X_d)$ of non-commuting random variables, which are operators in a von Neumann algebra, i.e.\ a non-commutative measure space.
		
		The difference between the classical and non-commutative settings can be described in terms of the choice of observables and the symmetry group acting on the system as follows. In the classical case, the observables correspond to commuting (diagonal) matrices with the symmetry of the permutation group representing interchangeability. In the non-commutative case, the observables are self-adjoint matrices or operators (as in quantum mechanics), and the symmetry group is the unitary group representing invariance under a change of coordinate basis.  In the von Neumann algebraic framework, the symmetries can be understood as trace-preserving automorphisms and, more generally, inclusions of von Neumann algebras. In our framework, this arises from the assumption that problem data is given by `tracial $\mathrm{W}^*$-functions,' along with structural assumptions about the model noise, which we discuss later.
		
		Despite the inherent non-commutative nature of multimatrix models that makes them less amenable to classical techniques, there is ample motivation to relate multimatrix models with mean field games, especially the role of non-commutative stochastic analysis and stochastic optimization problems in free probability.   Biane--Capitaine--Guionnet \cite{biane1998stochastic}  used a stochastic optimization problem to study large deviations theory for matrix Brownian motion, and hence also free entropy.  This analysis was carried further by Dabrowski in the study of entropy for free Gibbs laws \cite{Dabrowski2017Laplace}, and the related Hamilton--Jacobi--Bellman equation appeared in \cite{jekel2020elementary}.  These works occur in the larger context of free stochastic analysis \cite{biane1998stochastic,guionnet2009free,dabrowski2010path,jekel2023martingale}, free entropy and information theory \cite{voiculescu1993analogues1,voiculescu1994analogues2,Voiculescu1998analogues5,voiculescu1999analogues6,hiai2004free,hiai2006free}, and free optimal transport \cite{biane2001free,guionnet2014freemonotone,jekel2022tracial,gangbo2022duality,jekel2024optimal} (and the bibliography listed here is far from exhaustive).  We also briefly note the emerging research in matrix-valued optimal transport \cite{chen2017matrix, chen2020matrix} as well as entropic optimal transport for density operators in the quantum setting, e.g.\ \cite{carlen2017gradient,depalma2021quantumchannels,feliciangeli2023non}; although this is distinct from the free probability setting studied here, the similarity of approaches in \cite{duvenhage2022quadratic,gangbo2022duality} suggests potential connections.
		
		This work will formulate mean-field-like stochastic control problems in a free probability setting motivated by the large-$n$ behavior of random matrix theory, and study the resulting Hamilton--Jacobi equations in tracial von Neumann algebras; these can be understood as a non-local PDE in the single matrix case and the multimatrix case they are evolution equations in Hilbert space.  In a subsequent paper, we plan to show that under appropriate convexity assumptions our free model accurately describes the large-$n$ limiting behavior of the associated random matrix models.
		
		Our model will incorporate, for the first time in the free probability setting, both \emph{common noise} and \emph{free individual noise}. To motivate this feature, we consider the analogous models of mean field games that incorporate common noise and classical individual noise. The classical individual noise is modeled by independent Brownian motions, which lead to a second order Laplacian term in the evolution of the player densities. The individual noise captures variation and uncertainty in the individual player dynamics.  The common noise is modeled by a single Brownian motion that is shared in the dynamics of all the players, which results in a stochastic term in the partial differential equation for the player densities.  The common noise captures environmental uncertainty and exogenous variations in the system.  Both these sources of noise are important for financial applications as well as population modeling.  The individual noise is also motivated from physics by the need to capture unstructured interactions between particles in the nucleus of an atom.  Whether sources of non-commutative randomness that arise in games or other systems would also be captured by similar freely independent noise is an intriguing aspect of the problem for further study into applications. For background on the role of individual and common noise in mean field games, we refer the reader to Carmona-Delarue \cite{carmona2018probabilistic}; Chapter 1.4 for economic applications and Chapter 2.1 for a typical mathematical set-up. 
		
		The study of Hamilton--Jacobi equations on infinite dimensional spaces started several decades ago, for instance in Hilbert space settings  \cite{Crandall-Lions85,Crandall-Lions86,Crandall-Lions86b}. This was followed by studies on the Wasserstein space, whose differential structure has been proven to coincide with that of a quotient Hilbert space \cite{gangbo2019}.
		The study in the current  manuscript is the non commutative analog of the recent developments which appeared in \cite{AmbrosioF2014,BayraktarEkrenZhang,cosso2024master,GangboMayorgaS,GangboSwiech,gangboNguyenT}, dealing with either no noise, the common noise or the individual noise. In this study, we trade the Wasserstein space with the set of non commutative laws.

		\subsection{Control problems in the multimatrix and von Neumann algebraic settings}
		
		For concreteness, we first describe the stochastic control problems we are interested in the setting of $n \times n$ matrix tuples before moving on to the general von Neumann algebraic setting.  The state space will be $M_n(\bC)_{\sa}^d$, the set of $d$-tuples of self-adjoint $n \times n$ complex matrices.  In the mean field games analogy, due to the non-commutative nature of the problem, one cannot isolate individual players (just as a quantum graph does not have a distinct set of vertices), but conceptually we may think of the multimatrix as describing ``$M_n(\bC)$-many points'' in $\bR^d$.  We are interested in the large $n$ behavior of stochastic control problems on $M_n(\bC)_{\sa}^d$, i.e.\ we take $n \to \infty$ with $d$ fixed.
		
		The free individual noise in the multimatrix setting comes from a Brownian motion $\widehat{W}_t^n$ on the space of $d$-tuples of self-adjoint matrices.  For each $j = 1$, \dots, $d$, the $\widehat{W}_t^{n,j}$ is a random Gaussian self-adjoint matrix whose entries are independent up to Hermitian symmetry.  The distribution of $t^{-1/2} \widehat{W}_t^{n,j}$ is the \emph{Gaussian unitary ensemble}, so called because its distribution is invariant under unitary conjugation.  Hence, we refer to $W_t^{n,j}$ and $W_t^n$ as a \emph{GUE$(n)$ Brownian motion}.  The large-$n$ limiting behavior of this process is described by a free Brownian motion (see e.g.\ \cite{biane1997freebrownian,biane1998stochastic,biane2003large}).  The free Brownian motion is a process $S_t = (S_t^1,\dots,S_t^d)$ of self-adjoint operators in a von Neumann algebra $\mathcal{A}$ (a certain type of algebra of bounded operators on a Hilbert space) with a tracial state $\tau: \mathcal{A} \to \bC$ representing the expectation; these operators are not random in the classical sense but come from a non-commutative analogue of probability spaces.  The spectral distribution of each $t^{-1/2} S_t^j$ is given by Wigner's semicircular measure $(1/2\pi) \mathbf{1}_{[-2,2]}(x) \sqrt{4 - x^2}\,dx$, which  is the almost sure limit of the empirical spectral distribution of the GUE matrix (see e.g.\ \cite[\S 2]{anderson2010introduction}).  The different coordinates and the different increments of the free Brownian motion are \emph{freely independent}, a non-commutative form of independence named for its relationship with free products of groups and operator algebras (see e.g.\ \cite{voiculescu1992freerandom}, \cite[\S 5]{anderson2010introduction}, and \S \ref{subsec: free independence}).  Voiculescu's theory of asymptotic freeness shows that the joint moments $\tr_N(W_{t_1}^{n,j_1} \dots W_{t_k}^{n,j_k})$ converge almost surely to the deterministic limit $\tau(S_{t_1}^{j_1} \dots S_{t_k}^{j_k})$ \cite{voiculescu1991limit,voiculescu1998strengthened}.  The deterministic limiting behavior arises from high-dimensional concentration of measure (see \cite{voiculescu1998strengthened,guionnet2000concentration,meckes2013}). 
		
		Meanwhile, the common noise in the multimatrix setting is still a single \emph{classical} Brownian motion that is shared along the diagonal of each of the matrices.  While the individual noise is described by a deterministic limit using free Brownian motion, the classical randomness of common noise persists in the large-$n$ limit.  Hence, the equation that we formulate in the free probability setting incorporates both non-commutative randomness in the individual noise and classical randomness in the common noise; this combination itself is a new technical challenge that has not been studied much before.
		
		The multimatrix stochastic control problems are formulated as follows.  We fix two nonnegative parameters $\beta_C$ and $\beta_F$, which we refer to respectively as the \emph{common noise} and the \emph{free individual noise} parameters.  Fix a probability space $(\Omega, \mathbb P)$ and a standard one-dimensional Brownian motion $(W^0_t)_{t \in [0,T]}$ on $\Omega$, which we embed in the matrix algebra by multiplying by the identity matrix $\mathbf{1}_{M_{n}(\mathbb C)}$. We let $\big(\widehat{W}^{n,j}_t\big)_{t \in [0, T]}^{j=1, \cdots, d}$ denote $d$ many i.i.d. GUE($n$) Brownian motions on $\Omega$, independent of $(W^0_t)_{t \in [0,T]}$. In order to feature the simplest example of the problems of interest, let us start with a time-dependent random control $(\alpha^{n}_t)_{t\in [0,T]}$ taking values in $M_{n}(\mathbb C)^d_{\rm sa}$. Under appropriate conditions, there exists a unique random path $t \in (t_0, T) \mapsto X_t^{n,j}[\alpha^n]$,  a solution to the stochastic differential equation 
		\begin{equation}\label{eq:d1}
			dX_t^{n,j}[\alpha^n] =  \alpha_t^{n,j}\, dt + \beta_C\, \mathbf{1}_{M_{n}(\mathbb C)} dW_t^0 + \beta_F\, d\widehat{W}^{n, j}_t,\quad j \in \{1,\ldots, d\},
		\end{equation}
		where at the initial time $(X_{t_0}^{n,j}[\alpha])_{j=1,\cdots,d}$ is a prescribed deterministic  $d$--tuple $x_0^n\in M_{n}(\mathbb C)^d_{\rm sa}$. Given a Lagrangian  $L_{M_n(\mathbb C)}$ 
		representing a running cost and a function $g_{M_n(\mathbb C)}$ representing a terminal value function, we search for an optimal control, in an admissible class $\widehat{\mathbb{A}}_{M_{n}(\mathbb C)}^{t_0,T}$, for the problem   
		\begin{equation}\label{eq:d2}
			\widehat {V}_{M_n(\mathbb C)}(t_0,x_0^n) =  \inf_{\alpha \in \widehat{\mathbb{A}}_{M_{n}(\mathbb C)}^{t_0, T}}\Bigg\{\mathbb E\bigg[\int_{t_0}^T L_{M_n(\mathbb C)} \big(X_t[\alpha], \alpha_t\big)dt + g_{M_n(\mathbb C)}\big(X_T[\alpha]\big)\bigg]:\  X_{t_0}[\alpha]=x_0^n\Bigg\}.
		\end{equation}

		Our goal is to study the large-$n$ limit of the value function analogously to the mean-field limit where the number of players $n$ tends to infinity, but in the multimatrix setting, we have to first discuss what we even mean by convergence.  In the mean field setting, convergence is understood as convergence of the empirical measures averaged over the players to a limit measure.  Similarly, for a single matrix, one studies convergence of the empirical spectral measure.  But as mentioned before, the joint distributions of non-commuting random variables are not described by classical measures. Rather, the non-commuting random variables are elements $(X_1,\dots,X_d)$ in a von Neumann algebra $\mathcal{A}$ (a certain type of algebra of bounded operators on a Hilbert space) with a tracial state $\tau: \mathcal{A} \to \bC$ representing the expectation.  The probability distribution of these operators is its \emph{non-commutative law} described as the linear functional $\lambda_X$ on non-commutative polynomials $\bC\ip{x_1,\dots,x_d}$ sending each test polynomial $p$ to its expectation $\tau(p(X_1,\dots,X_d))$ when evaluated on our given tuple of random variables.  The space of non-commutative laws of $d$-tuples $(X_1,\dots,X_d)$ with $\sup_j \norm{X_j}_\infty \leq R$ has a weak-$*$ topology that makes it into a compact space.  Analogous to classical probability, one can think of the space of non-commutative laws as a quotient space consisting of equivalence classes of $d$-tuples of random variables, where for von Neumann algebras $\mathcal{A}$ and $\cB$ along with  tracial states $\tau_\cA$ and $\tau_\cB$, we say that    $X \in L^2(\mathcal{A})_{\sa}^d$ and $Y \in L^2(\mathcal{B})_{\sa}^d$ with $\norm{X}_\infty, \norm{Y}_\infty \leq R$ satisfy $X \sim Y$ if $\tau_{\mathcal{A}}(p(X)) = \tau_{\mathcal{B}}(p(Y))$ for all $p \in \bC\ip{x_1,\dots,x_d}$.  If $X, Y \in M_n(\bC)_{\sa}^d$, then $X \sim Y$ if and only if there is a unitary $U$ such that $UX_jU^* = Y_j$ for all $j$ (although this is not exactly true for tuples in general von Neumann algebras).

		Thus the question of convergence for the multimatrix control problems can be formulated as follows:  Suppose that $x_0^n$ is a $d$-tuple of self-adjoint $n \times n$ matrices and the operator norms of $x_0^n$ are bounded as $n \to \infty$.  Suppose that the non-commutative laws of $x_0^n$ converge in the weak-$*$ topology.  Then do the value functions $\widehat{V}_{M_n(\mathbb{C})}(t_0,x_0^n)$ converge as $n \to \infty$, and if so, how do we describe the limit?  This paper will formulate a candidate for the limit in the setting of free probability and tracial von Neumann algebras.  We will set up the stochastic control problems, prove a dynamic programming principle, and define viscositiy solutions.  Because these developments alone are sufficiently involved, we will address convergence of the matrix models in the large-$n$ limit in a follow-up paper.

		The infinite-dimensional analogue of the stochastic control problem \eqref{eq:d1} is as follows.  Let $\cA = (A,\tau)$ be a tracial von Neumann algebra, i.e.\ a von Neumann algebra $A$ with a chosen (faithful normal) tracial state $\tau$.  Let $L^2(\mathcal{A})$ be the completion of $A$ with respect to $\norm{x}_{L^2(\cA)} = \tau(x^*x)^{1/2}$. Assume that $\mathcal A$ is large enough to contain a family of freely independent semi-circular Brownian motions $S_t^j$, for $j = 1$, \dots, $d$, that are freely independent from our chosen initial condition $x_0 \in L^2(\mathcal{A})_{\sa}^d$.  Moreover, the control $(\alpha_t)_{t\in[0,T]}$ will be a process taking values in $L^2(\mathcal A)^d_{\rm sa},$ the subset of $d$-tuples of self-adjoint operators in $L^2(\mathcal A)$, satisfying appropriate independence conditions (namely, the increments of the free Brownian motion $S_t$ for $t \geq t_0$ are freely independent of the values of $\alpha_t$ for $t \leq t_0$, and the increments of the common Brownian motion $W_t^0$ for $t \geq t_0$ are probabilistically independent of $\alpha_t$ for $t \leq t_0$).  Then let $X_t = (X_t^j)_{j=1,\dots,d}$ be the solution to the stochastic differential equation
		\begin{equation}\label{eq:d1noncom}
			dX^j_t = \alpha_t^j\, dt + \beta_C\, \mathbf{1}_{\mathcal A}\, dW_t^0 + \beta_F\, dS_t^j,\quad j \in \{1,\ldots, d\},
		\end{equation} 
		with initial condition $X_{t_0}[\alpha] = x_0 \in L^2(\cA)^d_{sa}$.  We emphasize that in contrast to the expression multiplied by $\beta_F$ in \eqref{eq:d1} being stochastic, the analogous expression in \eqref{eq:d1noncom} with the free Brownian motion is not random in the classical sense.
		
		In the special case  $d=1$, we may understand solutions to (\ref{eq:d1noncom}) in terms of their law $\mu_t\in \mathcal{P}_2(\mathbb{R})$, where $\mathcal{P}_2(\mathbb{R})$ denotes the set of probability measures having finite second moments.  Equation (\ref{eq:d1noncom}) expresses the following stochastic non-linear non-local partial differential equation, supposing $\alpha_t = a_t(X_t)$,
		$$
		d\mu_t = -\partial_x(a_t\, \mu_t)dt -\partial_x(\beta_C\, \mu_t)dW_t^0 +\frac{\beta_C^2}{2}\partial_x^2 \mu_t\, dt + \frac{\beta_F^2}{2} \partial_x( h[\mu_t]\, \mu_t) dt,
		$$
		where $h[\mu](y) := \text{p.v.}\, \int_{\mathbb{R}} \frac{\mu(dx)}{y-x}$ denotes the Hilbert transform (see \S \ref{subsec: cylindrical}).
		
		In light of the existing theory of viscosity solutions on Hilbert spaces, given a family $(L_{\mathcal A})_{\mathcal A}$ of Lagrangians, a family $(g_{\mathcal A})_{\mathcal A}$ of value functions, an admissible set of control policies $\mathbb{A}_{\cA}^{t_0,T}$   and $x_0 \in L^2(\mathcal A)^d_{\rm sa},$ it is tempting to set 
		\begin{equation}\label{eq:d4-b}
			\widetilde V_{\mathcal A}(t_0, x_0): =   \inf_{\alpha \in \mathbb{A}_{\cA}^{t_0, T}}\Bigg\{\mathbb E\bigg[\int_{t_0}^T L_{\mathcal A} \big(X_t[\alpha], \alpha_t\big)dt + g_{\mathcal A}\big(X_T[\alpha]\big)\bigg], \; X_{t_0}[\alpha]=x_0\Bigg\}.
		\end{equation}
		The problem with this na{\"\i}ve formulation is that $\widetilde{V}_{\cA}$ lacks the consistency properties necessary to make it well-defined on the quotient spaces of interest; it is not invariant under embedding the non-commutative probability space $\cA$ into a larger one $\cB$ even if we assume such invariance for the functions $L$ and $g$. Specifically, assume that the families of Lagrangians $(L_{\mathcal A})_{\mathcal A}$ and value functions $(g_{\mathcal A})_{\mathcal A}$ satisfy the consistency property that   
		\[
		g_{\mathcal B}(\iota x_0)  =  g_{\mathcal A}(x_0), \quad \hbox{for all}\  x_0 \in L^2(\mathcal A)^d_{\rm sa}
		\] 
		for any $W^*$--algebra $\cB$ and a $W^*$--embedding $\iota: \mathcal A \to \mathcal B$. In general,
		\[
		\widetilde  V_{\mathcal B}(t_0,  \iota(x_0)) \not= \widetilde V_{\mathcal A}(t_0,  x_0).
		\]
		This behavior should be contrasted with the classical setting, in which any probability space that supports a Brownian motion independent of the initial condition would yield the same answer for stochastic optimization problems.  The issue of different possible behaviors in different larger tracial von Neumann algebras already arose in \cite{gangbo2022duality} in the study of Monge--Kantorovich duality for the non-commutative $L^2$-Wasserstein metric of Biane and Voiculescu \cite{biane2001free}.  To obtain invariance for our value function, we will take the infimum over all possible embeddings into larger tracial von Neumann algebras, and hence the relevant functions to study are
		\begin{equation}\label{eq:d4-c}
			\overline{V}_{\mathcal A}(t_0, x_0) :=   \inf_{(\mathcal B,\, \iota)} \Big\{ \widetilde V_{\mathcal B}(t_0, \iota x_0): \iota: \mathcal A \to \mathcal B \; \text{is a} \; \text{tracial } W^*  \text{-embedding} \Big\}.
		\end{equation} 
		In order to show that this function has the desired invariance property, we use a novel joint embedding lemma that allows us to embedding two given von Neumann algebras with non-commutative filtrations into a larger one while identifying the two copies of the free Brownian motion and initial condition (see \S \ref{subsec: value function}).

		
		We aim to show that the value function $\overline{V}$ is a viscosity solution for a certain Hamilton--Jacobi equation, which also requires formulating the definition of viscosity solution for the non-commutative setting.  Abstractly, the Hamilton-Jacobi equation appears on the space of non-commutative laws as
		\begin{align*}
			-\partial_t V(t,\lambda) + H\big(\lambda, -\partial V(t,\lambda)\big) - \frac{\beta_C^2}{2}\, \Delta V(t,\lambda)- \frac{\beta_F^2}{2}\, \Theta V(t,\lambda)=&\ 0,\\
			V(T,\lambda) =&\ g(\lambda).
		\end{align*}
		In this equation, $H=(H_\cA)_{\cA}$ is a tracial $W^*$--function of the state and generalized momentum, precisely defined in (\ref{eqn:Hamiltonian_equivalence}) later. The operator $\Delta$ is a common noise Laplacian, which corresponds to a second-order differential operator, for example, defined on the functions of the Hilbert spaces $C^2(L^2(\cA)_{\sa}^d)$.  Finally, the operator $\Theta$ is a free individual noise Laplacian, which is defined by means of introducing a freely independent free Brownian motion, and hence also requires enlarging the tracial von Neumann algebra and relies on invariance under embeddings.
		
		We close with some brief comments on convergence of the multimatrix value functions to the free limit, which will be studied in a follow-up paper.  First, it is essential to assume stronger continuity properties for the functions $L$ and $g$ than simply their being Lipschitz tracial $\mathrm{W}^*$-functions.  The reason is that, when $d > 1$, the non-commutative Wasserstein distance studied by Biane and Voiculescu \cite{biane2001free} gives a much stronger topology than the weak-$*$ topology on the space of non-commutative laws of variables bounded by $R$, and in fact it is impossible for the non-commutative laws of multimatrices to converge in Wasserstein distance unless the limiting von Neumann algebra is amenable as shown in \cite[\S 5.4-5.5]{gangbo2022duality}.  Hence, for convergence we would assume weak-$*$ continuity for $L$ and $g$ in addition to the hypotheses in this paper.  Secondly, we must also deal with the fact that enlarging the von Neumann algebra could alter the value of the infimum, hence the large-$n$ limit of the infima in the matrix models can in general be greater than the infimum given in $\overline{V}$.  We will remove this issue by making a stronger convexity assumption  on $L$ and $g$ called $E$-convexity \cite{gangbo2022duality}; this is analogous to the way that some convergence results in mean field games require displacement convexity for Hamilton--Jacobi equation \cite{gangboMesz} or Lasry--Lions monotonicity condition \cite{cardaliaguet2019master}.

		\subsection{Overview of results and organization}
		
		We approach the problem of understanding Hamilton-Jacobi equations on non-commutative spaces by first defining optimal control problems in von Neumann algebras.  We start in \S \ref{sec: prelim} with the necessary background on tracial $\mathrm{W}^*$ (von Neumann) algebras, amalgamated free products, and non-commutative laws (which play the role of probability distributions in this theory).  In \S \ref{sec:problem}, we describe the setup of these stochastic optimization problems.  After introducing motivating examples in \S \ref{subsec: example problems}, we spell out the general assumptions in \S \ref{sec: setup}, and then develop the properties of the Hamiltonian in \S \ref{subsec: Hamiltonian properties} and the value function in \S \ref{subsec: value function}.
		
		Our framework relies on the notion of a \emph{tracial $\mathrm{W}^*$-function}, that is, a real-valued function that is defined in a consistent way on all tracial $\mathrm{W}^*$-algebras, so that the output only depends on the non-commutative law of the input, similar to how probabilistic phenomena are independent of the choice or probability space.  We also use the related notion of tracial $\mathrm{W}^*$-vector fields.  Like tracial $\mathrm{W}^*$-function, these are defined in terms of consistency with respect to tracial $\mathrm{W}^*$-embeddings of a tracial $\mathrm{W}^*$-algebra into a larger one.  We shows that although the na{\"\i}ve value function $\widetilde{V}_{\mathcal{A}}$ is not necessarily a tracial $\mathrm{W}^*$-function, $\overline{V}_{\mathcal{A}}$ will be under our assumptions.  We assume Lipschitz conditions for the Lagrangian and terminal cost functions, and establish that the resulting control-theoretic Hamiltonian and value function are tracial $\mathrm{W}^*$-functions and satisfy a similar Lipschitz estimate.  While our assumptions are far from the greatest possible generality, we are guided by a handful of diverse examples, including an Eikonal equation, Linear-Quadratic-Gaussian framework, and a controlled von Neumann equation.
		
		In addition, we study \emph{$E$-convexity}, a convexity condition from \cite{gangbo2022duality} that also takes into account tracial $\mathrm{W}^*$-embeddings $\iota: \mathcal{A} \to \mathcal{B}$ by demanding that if $E: \mathcal{B} \to \mathcal{A}$ is the conditional expectation adjoint to $\iota$, then $f^{\mathcal{A}} \circ E \leq f^{\mathcal{B}}$.  We show that $E$-convexity for the terminal and running cost functions implies that $\overline{V}_{\mathcal{A}} = \tilde{V}_{\mathcal{A}}$, thus in this case enlarging the von Neumann algebra does not produce any smaller infimum (see Lemma \ref{lem:decreasing}, and we also obtain $E$-convexity for the value function (see Lemma \ref{lem: decreasing 2}).

		After the development of free stochastic control problems in \S \ref{sec:problem}, we move on to the concept of viscosity solutions in \S \ref{sec: viscosity solutions}. We consider two notions of viscosity solution.  The first notion in \S \ref{subsec: intrinsic viscosity} is defined on the space of non-commutative laws and uses tracial $W^*$--functions as test functions.  We show that the value function is a sub- and super-solution in our definition by showing it satisfies the sub- and super-dynamic programming principle in Propositions \ref{prop:subdynamic_programming} and \ref{prop:dynamic_programming}.  In our framework, the subsolution property can already be shown for the functions $\tilde{V}_{\mathcal{A}}$ that work with stochastic processes in a fixed tracial $\mathrm{W}^*$-algebra $\mathcal{A}$, but the supersolution property requires consider all possible embeddings into a larger tracial $\mathrm{W}^*$-algebra.  This slight asymmetry is a new subtlety that arises in the non-commutative setting due to different possible behaviors of embeddings into larger tracial $\mathrm{W}^*$-algebras.
		
		The second notion of viscosity solution that we consider in \S \ref{subsec: Hilbert viscosity} is based on the standard notion of viscosity solution in Hilbert spaces applied to the non-commutative $L^2$ space $L^2(\cA)_{sa}^d$ associated to a tracial $\mathrm{W}^*$-algebra $\cA$.  In this setting, we do not handle the individual noise, only the common noise.  However, in this setting, we can show a comparison principle for viscosity solutions that implies uniqueness (Theorem \ref{thm:comparison}) based on the comparison principle in the existing Hilbert-space theory.
		
		Finally, Section \ref{sec:examples} discusses our main examples in further depth, including an Eikonal equation, Linear-Quadratic-Gaussian framework, and a controlled von Neumann equation. 
		
		The appendices provide necessary technical background and detail.  In \S \ref{subsec: diff eq appendix}, we give background on vector-valued and stochastic differential equations.  In \S \ref{apx:free_laplacian}, we explain how to compute the free probabilistic Laplacian used in our differential equations on a certain class of test functions.  In particular, this section provides a general class of cost functions satisfying our assumptions.  In \S \ref{sec: AFP}, we give background on amalgamated free products and show that two different non-commutative filtrations and Brownian motions can be jointly embedded into another non-commutative filtration such that the given Brownian motions are identified.  This is the technical machinery needed to establish the consistency of the value function $\overline{V}$ that makes it a well-defined tracial $\mathrm{W}^*$-function.  These computations may also be of interest for free probability in general.


		\subsection{Acknowledgements}
		
		W.G. was supported by NSF grant DMS-2154578 and Air Force grant FA9550-18-1-0502. D.J. was partially supported by the National Science Foundation (US) grant DMS-2002826, the National Sciences and Engineering Research Council (Canada) grant RGPIN-2017-05650, and the Independent Research Fund of Denmark grant 1026-00371B.  K.N. was supported by the National Research Foundation of Korea (RS-2019-NR040050). A.Z.P. also acknowledges the support of Air Force grant FA9550-18-1-0502. 
		
		We thank Dimitri Shlyakhtenko for numerous conversations that motivated this work.

		\section{Preliminaries} \label{sec: prelim}
		
		\subsection{Von Neumann Algebras}
		
		We first formulate the framework of non-commutative probability spaces, following similar conventions as in \cite{gangbo2022duality} (note that further bibliography on von Neumann algebras is given there as well). Recall  $A$ is a \emph{$W^*$--algebra} (von Neumann algebra) if $A$ is a unital $C^*$--algebra together with an operator norm $\|\cdot\|_\infty$ such that $A$ as a Banach space is the dual of some Banach space $A_*$.  If $\tau \in A^*$ is a faithful normal trace in the dual Banach space, then we call $\mathcal{A} = (A,\tau)$ a \emph{tracial $\mathrm{W}^*$-algebra} or a \emph{non-commutative probability space}.  The intuition behind non-commutative probability spaces is that the elements of $A$ are non-commutative random variables, and the trace $\tau$ is analogous to the expectation.
		
		The GNS construction \cite{segal1947irreducible} produces a Hilbert space $L^2(\cA)$ as follows:  A pre-inner product can be defined on $\cA$ by
		$$
		\langle X,Y\rangle_{L^2(\cA)} := \tau(X^*\, Y) , \hbox{ for } X,Y\in L^2(\cA).
		$$
		Faithfulness of $\tau$ implies that this is non-degenerate.  Thus, $\cA$ be completed to a Hilbert space $L^2(\cA)$; we continue to use the same notation $\ip{\cdot,\cdot}_{L^2(\cA)}$ for the inner product on the completion.  Moreover, for $d$-tuples $X \in L^2(\cA)^d$, we will indicate the component of the $d$-tuple with superscripts, as $X = (X^1,\cdots,X^d)$ and  $Y = (Y^1,\cdots,Y^d)$, and denote the inner product on $L^2(\cA)^d$ with the subscript $L^2(\cA)$ as
		$$
		\langle X,Y\rangle_{L^2(\cA)} := \sum_{j=1}^d\tau({X^j}^*\, Y^j).
		$$
		In either case, we define the Hilbert space norm by
		$$
		\|X\|_{L^2(\cA)} : = \sqrt{ \langle X,X\rangle_{L^2(\cA)}}.
		$$
		
		We let $L^\infty(\cA)\subset L^2(\cA)$ be the collection of elements in $L^2(\cA)$ that are bounded in the operator norm, and we may consider $A\subset L^\infty(\cA)$. The operator norm also naturally extends to $d$-tuples by
		$$
		\|X\|_\infty := \max_{j\in \{1,\ldots,d\}} \|X^j\|_\infty, \hbox{ for } X = (X^1,\cdots,X^d) \in L^\infty(\cA)^d.
		$$
		Let $\mathbf{1}_{\cA}$ be the identity element  in $L^2(\cA)$. We denote by $L^2(\cA)^d_{sa}$ the collection of  self-adjoint elements in $L^2(\cA)^d$. We will also consider $\mathbbm{1}_{\cA}\in L^2(\cA)^d_{sa}$ to be the $d$-tuple with the identity in each component, and $\mathbf{e}_{\cA}^j\in L^2(\cA)^d_{sa}$ to have the identity in the $j$th component and 0 in other components, for $j\in \{1,\ldots,d\}$.  


		Next, we introduce an important notion of {tracial $W^*$--embedding}.
		We say that $\iota:\cA=(A,\tau)\rightarrow \mathcal{B}=(B,\rho)$  is  a \emph{tracial $W^*$--embedding} if it is unital $*$ -homomorphism (it respects addition, multiplication, and adjoints) that is also trace-preserving, meaning that
		\begin{align*}
			\rho(\iota\, X) = \tau(X)\quad  \text{for any } X\in A.
		\end{align*} 
		For such a {tracial $W^*$--embedding}  $\iota:\cA \rightarrow \cB$, there is an induced linear isometry  $\iota:L^2(\cA)^d\rightarrow L^2(\mathcal{B})^d$ where we use the same notation. For a concise notation,   throughout the paper, when we write $\iota:\cA\rightarrow \cB$, we always assume that $\iota$ is a tracial $W^*$--embedding. Its adjoint, or the conditional expectation, is denoted by $E:\cB \rightarrow \cA$. 
		For more details on this setup, see \cite{gangbo2022duality}.
		
		Here are some examples:
		\begin{itemize}
			\item The space of $n\times n$ complex matrices,  $M_n(\bC)$, where the normalized trace (resp. inner product) is given by 
			\[
			\trn(A):=\frac{1}{n} \tr(A), \quad \langle A, B \rangle_{\trn}:=\trn(A^*B).
			\]
			In this case, we may identify $M_n(\bC) \cong L^2(\cA)\cong L^\infty(\cA)$ for $\cA=(M_n(\bC),\trn)$. 
			
			We denote by $\cU_n$ the set of unitary matrices $U \in M_n(\bC)$. For any unitary matrix $U$, it  defines a isometry by a conjugation $X \mapsto U^*\, X\, U$.  We say that two $d$--tuples $X$ and $Y$ of self-adjoint elements in $M_n(\bC)^d_{sa}$ are equivalent if there exists $U \in \cU_n$ such that $Y^j=U^* X^j U$ for all $j =1, \cdots, d.$ 
			
			\item We recover a classical probabilistic setting by fixing a probability space $(\Omega,\mathcal{F},\bP)$, where $\Omega$ is a separable metric space and $\mathcal{F}$ is the associated Borel $\sigma$-algebra. We consider the $C^*$-algebra $A= C(\Omega; \bC),$  the collection of continuous functions from $\Omega$ to $\mathbb{C}$.  Algebra products are given by  pointwise products, and the trace corresponds to the expectation,
			$$
			\tau(X) := \bE[X].
			$$
			The GNS construction produces the Hilbert space of equivalence classes of $\bP$-square-integrable functions, for $\cA=(A, \tau)$:
			$$
			L^2(\cA) \cong L^2(\Omega,\mathcal{F},\bP;\bC).
			$$
			The self-adjoint elements are simply the real-valued random variables. 
			
			\item Given a (discrete) group $G$, let $\ell^2(G)$ be its $\ell^2$ space. For each $g \in G$, let $\lambda(g) \in B(\ell^2(G))$ be the left translation by $g$.  Let $L(G)$ be the von Neumann subalgebra of $B(\ell^2(G))$ generated by $\{\lambda(g): g \in G\}$.  Let $\tau_G(x) = \ip{\delta_e, x \delta_e}_{\ell^2(G)}$.  One can show that $(L(G),\tau_G)$ is a tracial $\mathrm{W}^*$-algebra.  Free probability is motivated by examining what happens when the group $G$ is decomposed as a free product (see e.g.\ \cite[Example 5.3.3]{anderson2010introduction}).
		\end{itemize}
		
		Given a tracial von Neumann algebra $\cA = (A,\tau)$ and $S \subseteq A$, we denote $\mathrm{W}^*(S)$ the von Neumann subalgebra generated by $S$, i.e., the intersection of all tracial von Neumann algebras containing $S$, equipped with the trace $\tau|_{\mathrm{W}^*(S)}$.  Note that the inclusion map $\mathrm{W}^*(S) \to \cA$ is a tracial $\mathrm{W}^*$-embedding.  For tracial von Neumann subalgebras $(\cB_j)_{j \in J}$ in $\cA$, we write
		\[
		\bigvee_{j \in J} \cB_j = \mathrm{W}^*\bigg( \bigcup_{j \in J} \cB_j \bigg)
		\]
		for their \emph{join}, that is, the von Neumann subalgebra that they generate, equipped with the restriction of the trace $\tau$.
		
		\subsection{Free independence and free products} \label{subsec: free independence}
		
		In non-commutative probability, there is an analog of independence known as \emph{free independence} that relates closely to free products of tracial von Neumann algebras (and of groups).
		
		If $\cA=(A, \tau)$ is a tracial $\mathrm{W}^*$--algebra and $\{\cA_j=(A_j,\tau): j\in J\}$ are tracial $\mathrm{W}^*$-subalgebras of $\cA$ with an index set $J$, we say that $\{\cA_j: j\in J\}$ are \emph{freely independent} if for all positive integers $n$ and $j:\{1,\ldots,n\}\rightarrow J$ such that $j(k)\not= j(k+1)$  for $k=1,\cdots,n-1,$
		$$
		\tau\bigg( \prod_{k=1}^{n}\big(a_k-\tau(a_k)\big)\bigg)=0, \qquad \hbox{for all }(a_1,\ldots,a_{n}) \in A_{j(1)}\times\ldots \times A_{j(n)},
		$$
		where the terms in the product are understood to be multiplied in order from left to right; see \cite{voiculescu1985symmetries,voiculescu1992freerandom}.  More generally, if $\{\cA_j: j \in J\}$ is a collection of tracial $\mathrm{W}^*$-algebras in $\cA$ containing a common subalgebra $\cB$ and if $E_{\cB}$ denote the trace-preserving conditional expectation $\cA \to \cB$, then we say that $\{\cA_j: j \in J\}$ are \emph{freely independent with amalgamation over $\cB$} or \emph{freely independent over $\cB$} if for all positive integers $n$ and $j:\{1,\ldots,n\}\rightarrow J$ such that $j(k)\not= j(k+1)$   for $k=1,\cdots,n-1,$ 
		$$
		E_{\cB} \bigg( \prod_{k=1}^n\big(a_k-E_{\cB}(a_k)\big)\bigg)=0, \qquad \hbox{for all }(a_1,\ldots,a_{n}) \in A_{j(1)}\times\ldots \times A_{j(n)}.
		$$
		In the case where $\cB = \bC$, this reduces to plain free independence as defined above.  We remark that, given the inclusions $\cB \to \cA_j$ and $\tau|_{\cA_j}$, free independence with amalgamation over $\cB$ uniquely determines the trace of any product $a_1 \dots a_k$ where $a_i$ is from $\cA_{i_j}$ \cite[Proposition 1.3]{voiculescu1995operations}.

		As in classical probability, we often need to construct independent joins of non-commutative probability spaces.  Given tracial von Neumann algebras $\cA_j$ for $j \in J$ containing a common subalgebra $\cB$, there exists a tracial von Neumann algebra $*_{\cB}( \cA_j)_{j\in J}$ containing $\cB$  with trace-preserving unital $*$-homomorphisms $\iota_j: \cA_j \to *_{\cB} (\cA_j)_{j\in J}$ such that $\iota_j|_{\cB} = \operatorname{id}$ and $(\iota_j(\cA_j))_{j \in J}$ are freely independent over $\cB$ and $*_{\cB} (\cA_j)_{j \in J}$ is generated by $(\iota_j(\cA_j))_{j \in J}$.  This algebra $*_{\cB} (\cA_j)_{j \in J}$ is unique up to a canonical isomorphism (one respecting the inclusions $\iota_j$), and is called the \emph{free product of $\cA_j$ (with amalgamation) over $\cB$}.  In the case $\cB = \bC$, it is called simply the free product of the $\cA_j$'s and denoted $\median (\cA_j)_{j \in J}$.  Note also that in the case of two algebras, we will sometimes use the notation $\cA_1 \median_{\cB} \cA_2$.  For more background on free independence and free products with amalgamation, \cite[\S 5]{voiculescu1985symmetries}, \cite[\S 3.8]{voiculescu1992freerandom}, \cite[p. 384-385]{popa1993markov}, \cite[\S 1]{voiculescu1995operations}, \cite[\S III]{speicher1998combinatorial}, \cite[\S 4.7]{brown2008c*algebras}.  Because we sometimes deal with multiple different inclusions of the same two algebras, let us explicitly state what we mean by the existence and uniqueness of the free product.
		
		\begin{lemma} \label{lem: uniqueness of free product}
			Let $J$ be any index set.  Let $(\cA_j)_{j \in J}$ and $\cB$ be tracial $\mathrm{W}^*$-algebras, and let $\varphi_j: \cB \to \cA_j$ be a tracial $W^*$--embedding.  Then there exists a tracial $\mathrm{W}^*$-algebra $\cC$ and tracial $W^*$--embeddings $\iota_j: \cA_j \to \cC$ such that
			\begin{enumerate}
				\item $\varphi = \iota_j \circ \varphi_j: \cB \to \cC$ is independent of $j$.
				\item The images $(\iota_j(\cA_j))_{j \in J}$ are freely independent with amalgamation over $\varphi(\cB)$.
				\item $\cC$ is generated by $(\iota_j(\cA_j))_{j \in J}$.
			\end{enumerate}
			Moreover, if $\widetilde{\cC}$ and $\widetilde{\iota}_j$ are another tracial $\mathrm{W}^*$-algebra and  tracial $W^*$--embeddings satisfying these properties, then there exists a unique isomorphism $\Phi: \cC \to \widetilde{\cC}$ such that $\Phi \circ \iota_j = \widetilde{\iota}_j$ for $j \in J$.
		\end{lemma}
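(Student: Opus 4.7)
The plan is to prove existence via the standard Hilbert bimodule construction (going back to Voiculescu) and prove uniqueness by a moment-matching argument, using the fact — already recalled in the excerpt — that free independence with amalgamation over $\cB$ together with the individual inclusions $\cB \hookrightarrow \cA_j$ uniquely determines all alternating moments $\tau(a_1 \cdots a_n)$ with $a_k \in \cA_{j(k)}$ and $j(k) \neq j(k+1)$.

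For existence I would proceed as follows. First, fix a common copy of $\cB$ and, for each $j$, form the pointed $\cB$-bimodule $\cH_j := L^2(\cA_j) \ominus \varphi_j(L^2(\cB))$, where the $\cB$-actions come from left and right multiplication in $L^2(\cA_j)$. Second, assemble the amalgamated free product bimodule
\[
\cF := L^2(\cB) \,\oplus\, \bigoplus_{n \geq 1}\ \bigoplus_{\substack{j_1, \dots, j_n \in J \\ j_k \neq j_{k+1}}} \cH_{j_1} \otimes_{\cB} \cH_{j_2} \otimes_{\cB} \cdots \otimes_{\cB} \cH_{j_n},
\]
with distinguished vacuum vector $\Omega = \mathbf{1}_{\cB}$. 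Third, represent each $\cA_j$ on $\cF$ by the standard free-product action (decomposing $a \in \cA_j$ as $E_{\cB}(a) + (a - E_{\cB}(a))$ and implementing the creation/annihilation rules on alternating tensors); this gives normal, trace-preserving, unital $*$-homomorphisms $\iota_j: \cA_j \to B(\cF)$ that agree on $\cB$. Fourth, let $\cC$ be the von Neumann algebra generated in $B(\cF)$ by $\bigcup_j \iota_j(\cA_j)$, and define $\tau_{\cC}(x) := \langle \Omega, x \Omega \rangle_{\cF}$. One checks, as in the classical Voiculescu construction, that $\tau_{\cC}$ is a faithful normal tracial state and that the $\iota_j(\cA_j)$ are freely independent with amalgamation over $\varphi(\cB)$; this gives (1)--(3).

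For uniqueness, suppose $(\cC, \iota_j)$ and $(\widetilde{\cC}, \widetilde{\iota}_j)$ both satisfy (1)--(3). Let $\cC_0 \subset \cC$ be the $*$-subalgebra spanned by alternating products $\iota_{j(1)}(a_1) \cdots \iota_{j(n)}(a_n)$ (allowing $n = 0$, giving $\varphi(\cB)$), and similarly $\widetilde{\cC}_0 \subset \widetilde{\cC}$; these are $*$-algebraically generated by the respective embeddings and $\sigma$-weakly dense in $\cC$ and $\widetilde{\cC}$ by (3). Define $\Phi_0: \cC_0 \to \widetilde{\cC}_0$ by
\[
\Phi_0\bigl(\iota_{j(1)}(a_1)\cdots \iota_{j(n)}(a_n)\bigr) = \widetilde{\iota}_{j(1)}(a_1) \cdots \widetilde{\iota}_{j(n)}(a_n).
\]
The moment-uniqueness statement for freeness with amalgamation (recalled just before the lemma) shows that $\tau_{\cC}$ and $\tau_{\widetilde{\cC}}$ agree on all such alternating words, hence $\Phi_0$ is well-defined and trace-preserving on $\cC_0$, hence extends to an isometry on $L^2$, which in turn implements a $W^*$-isomorphism $\Phi: \cC \to \widetilde{\cC}$ (by Kaplansky density plus the standard argument that a trace-preserving $*$-isomorphism between strongly dense $*$-subalgebras extends to the bicommutant). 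The identity $\Phi \circ \iota_j = \widetilde{\iota}_j$ holds by construction, and uniqueness of $\Phi$ follows because any other such map must agree with $\Phi_0$ on the generating set $\bigcup_j \iota_j(\cA_j)$ and hence on all of $\cC$ by normality.

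The main technical obstacle is verifying tracia\-lity and faithfulness of $\tau_{\cC}$ on the abstractly constructed von Neumann algebra; positivity and normality follow directly from $\tau_{\cC}(x) = \langle \Omega, x\Omega\rangle$, but the trace property on alternating words requires a direct combinatorial check, and faithfulness follows from the fact that $\Omega$ is separating for $\cC$ because the alternating tensors span a dense subspace of $\cF$ on which $\cC_0$ acts cyclically. These verifications are entirely standard; I would cite \cite[\S 1]{voiculescu1995operations}, \cite[\S III]{speicher1998combinatorial}, and \cite[\S 4.7]{brown2008c*algebras} as references rather than reproducing the argument, and only include the construction itself and the uniqueness map $\Phi$ in detail, since those are what will be used in the joint-embedding lemma of \S\ref{sec: AFP}.
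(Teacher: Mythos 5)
The paper does not actually prove this lemma: it is stated as a known background fact, with references to \cite[\S 1]{voiculescu1995operations}, \cite[\S III]{speicher1998combinatorial}, \cite[p. 384-385]{popa1993markov}, and \cite[\S 4.7]{brown2008c*algebras} listed just before the statement. Your proof reproduces the standard construction those references contain — the amalgamated Fock $\cB$-bimodule built from the reduced bimodules $\cH_j = L^2(\cA_j)\ominus\varphi_j(L^2(\cB))$, the vacuum state, and moment-matching for uniqueness — so there is no conflict with the paper, and your argument is essentially correct.

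One point in your existence sketch is stated imprecisely and would need to be tightened if you were writing the proof out in full rather than deferring to references: you assert that faithfulness ``follows from the fact that $\Omega$ is separating for $\cC$ because the alternating tensors span a dense subspace of $\cF$ on which $\cC_0$ acts cyclically.'' Cyclicity alone does not imply separating (e.g., every nonzero vector is cyclic for $B(H)$ yet none is separating when $\dim H>1$). The correct order of deductions is: first establish traciality of $\tau_{\cC}=\langle\Omega,\,\cdot\,\Omega\rangle$; then the right-multiplication action of $\cC_0$ on $\cF$ is well-defined, commutes with the left action, and also has $\Omega$ as a cyclic vector, whence $\Omega$ is separating for $\cC$ and $\tau_{\cC}$ is faithful. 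You do list traciality first as the ``main technical obstacle,'' so you have the logical dependency right — it is only the phrasing ``separating \emph{because} $\cC_0$ acts cyclically'' that conflates the two. In the uniqueness half, your use of the moment-uniqueness fact (recalled in the paper from \cite[Proposition 1.3]{voiculescu1995operations}) to show that $\Phi_0$ is well-defined and trace-preserving, followed by $L^2$-isometric extension, is exactly the standard argument and is correct.
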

		
		In particular, this implies the following observation, which we will use throughout to switch perspectives between free independence and free products.
		
		\begin{corollary} \label{cor: independence and embeddings}
			Let $J$ be any index set. Let $\cA$ be a tracial $\mathrm{W}^*$-algebra and $(\cA_j)_{j \in J}$ be a collection of subalgebras containing a common subalgebra $\cB$.  Let $\median_{\cB} (\cA_j)_{j \in J}$ be the free product with amalgamation over $\cB$ and let $\iota_j$ be the canonical inclusion of $\cA_j$ into this free product. Then the following are equivalent:
			\begin{enumerate}
				\item $(\cA_j)_{j \in J}$ are freely independent in $\cA$ with amalgamation over $\cB$.
				\item There exists a tracial $\mathrm{W}^*$-isomorphism $\phi: \median_{\cB} (\cA_j)_{j \in J} \to \bigvee_{j \in J} \cA_j \subseteq \cA$ such that the following diagram commutes for each $j\in J$:
				\[
				\begin{tikzcd}
					\mathcal{A}_j \arrow{d}{\iota_j} \arrow{dr} & \\
					\median_{\mathcal{B}}(\mathcal{A}_j)_{j \in J} \arrow{r}{\phi} & \bigvee_{j \in J} \mathcal{A}_j,
				\end{tikzcd}
				\]
				where the diagonal map is the inclusion.
			\end{enumerate}
		\end{corollary}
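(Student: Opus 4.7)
The plan is to deduce the corollary directly from Lemma \ref{lem: uniqueness of free product}, which has already been stated. The two implications are essentially bookkeeping once one unpacks the definitions.

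For the direction (1) $\Rightarrow$ (2), I would set $\cC := \bigvee_{j \in J} \cA_j \subseteq \cA$ equipped with the restriction of $\tau$, and let $\widetilde{\iota}_j : \cA_j \hookrightarrow \cC$ denote the inclusion map. Taking $\varphi_j : \cB \to \cA_j$ to be the inclusion of the common subalgebra, I would verify that the triple $(\cC, (\widetilde{\iota}_j)_{j \in J})$ satisfies the three defining properties of the free product in Lemma \ref{lem: uniqueness of free product}: the composition $\widetilde{\iota}_j \circ \varphi_j : \cB \to \cC$ is the common inclusion of $\cB$ into $\cA$ and hence independent of $j$; the images $(\widetilde{\iota}_j(\cA_j))_{j \in J}$ are freely independent with amalgamation over $\cB$ by the assumption in (1); and $\cC$ is generated by these images by the very definition of the join. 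The uniqueness clause of Lemma \ref{lem: uniqueness of free product} then furnishes a canonical tracial $\mathrm{W}^*$-isomorphism $\phi : \median_{\cB}(\cA_j)_{j \in J} \to \cC$ satisfying $\phi \circ \iota_j = \widetilde{\iota}_j$, which is exactly the desired commutative diagram.

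For (2) $\Rightarrow$ (1), I would use that free independence with amalgamation over $\cB$ is a purely trace-theoretic (or conditional-expectation-theoretic) condition on products of centered elements. In the abstract free product $\median_{\cB}(\cA_j)_{j \in J}$ the images $(\iota_j(\cA_j))_{j \in J}$ are freely independent over $\cB$ by construction. Given a tracial $\mathrm{W}^*$-isomorphism $\phi$ that intertwines $\iota_j$ with the inclusions $\cA_j \hookrightarrow \bigvee_{j \in J} \cA_j \subseteq \cA$, and noting that $\phi$ also intertwines the conditional expectations onto $\cB$ (since the conditional expectation is uniquely determined by trace-preservation), the free independence transfers to $(\cA_j)_{j \in J} \subseteq \cA$. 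This gives (1).

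The only genuinely nontrivial ingredient is Lemma \ref{lem: uniqueness of free product} itself; beyond that, the main obstacle, if any, is purely formal: one must be careful that the conditional expectations in the two pictures are compatible under $\phi$, i.e., that $E_{\varphi(\cB)} \circ \phi = \phi \circ E_{\cB}$, which follows because conditional expectations onto a given subalgebra are uniquely characterized by the trace. Once that is noted, the equivalence is immediate.
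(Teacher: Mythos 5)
Your proposal is correct and follows the same route as the paper: the direction (1) $\Rightarrow$ (2) invokes the uniqueness clause of Lemma~\ref{lem: uniqueness of free product} after checking that $\bigvee_{j \in J}\cA_j$ satisfies the defining properties, and (2) $\Rightarrow$ (1) uses that a trace-preserving isomorphism intertwines the conditional expectations onto $\cB$, so the centered-moment conditions for free independence transfer. The only difference is stylistic: you spell out the verification of the free-product properties for the join slightly more explicitly than the paper does.
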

		
		\begin{proof}
			(1) $\implies$ (2) because of the uniqueness of the free product up to canonical isomorphism (Lemma \ref{lem: uniqueness of free product}).
			
			For (2) $\implies$ (1), first note that conditional expectations commute with isomorphisms, that is, if $\mathcal{P} \subseteq \mathcal{M}_1$ is an inclusion of tracial $\mathrm{W}^*$-algebras and $\phi: \mathcal{M}_1 \to \mathcal{M}_2$ is an isomorphism, then {{$\phi\circ E_{\mathcal{P}} = E_{\phi(\mathcal{P})}\circ \phi$.}}  In particular, {{$\phi$}} preserves the conditional expectation onto $\cB$. Since the vanishing moment conditions for free independence of $(\iota_j(\cA_j))_{j \in J}$ over $\cB$ hold in $\median_{\mathcal{B}}(\mathcal{A}_j)_{j \in J}$, it follows that they hold for $(\cA_j)_{j \in J}$ in $\bigvee_{j \in J} \cA_j \subseteq \cA$ as well.
		\end{proof}
		
		The free product construction allows us, for instance, to build a larger non-commutative probability space containing any two given non-commutative probability spaces, and which agree on a common subspace.  More precisely, if $\cA_1$ and $\cA_2$ are non-commutative probability spaces containing $\cB$, then there is some non-commutative probability space $\cA$ containing $\cB$, which also contains both $\cA_1$ and $\cA_2$ (namely, $\cA = \cA_1 *_{\cB} \cA_2$).  Further properties of amalgamated free products that we will use in our arguments are given in Appendix \ref{sec: AFP}.

			\subsection{Non-commutative Laws and the weak* topology}\label{subsecNon-commutativeLaws}
			
			
			In this section, we describe the analog of laws or probability distributions for non-commutative random variables from a tracial von Neumann algebra. Following the terminology in \cite{gangbo2022duality}, we denote by $\bW$ a set of representatives of the isomorphism classes of tracial $\mathrm{W}^*$-algebras with separable predual, so that each tracial von Neumann algebras with separable predual is isomorphic to a unique element of $\bW$. 
			
			We denote by  ${\rm NCP}_d:=\bC\langle x_1, \cdots, x_d \rangle$, the universal unital algebra generated by variables $x_1 , \cdots , x_d$.  Note that ${\rm NCP}_d$ can be equipped with a unique $*$-operation (i.e., an antilinear involution satisfying $(xy)^* = y^*x^*$) such that $x_j=x_j^*$, which makes it into a $*$-algebra.
			We then define $\Sigma_{d,R}$ to be the linear functionals $\lambda: {\rm NCP}_d\rightarrow \bC$ that satisfy 
			\begin{align*}
				\lambda(1)=1,\quad \lambda(pp^*)\geq 0,\quad  \lambda(pq)=\lambda(qp) \qquad \forall p,q\in {\rm NCP}_d
			\end{align*}
			which are $R$-exponentially bounded, meaning that for every $k \in \mathbb{N}$ and any monomial $\phi$ of degree $k$,
			$$
			|\lambda(\phi)| \leq R^k.
			$$
			For $\cA\in \bW$, there is a natural map $\lambda:\{X\in L^\infty(\cA)_{sa}:\|X\|_\infty\leq R\}\rightarrow \Sigma_{d, R}$ given by
			$$
			\lambda_X(p) := \tau\big(p(X)\big) \quad \hbox{ for }p\in {\rm NCP}_d.
			$$
			It is shown in \cite{gangbo2022duality} that given $\lambda \in \Sigma_{d,R}$, one can construct $\cA\in \bW$ and find $X\in L^\infty(\cA)_{sa}^d$ such that ${\lambda}_X = \lambda$. The Wasserstein metric is defined for $\lambda_1,\lambda_2\in \Sigma_{d,R}$ as
			$$
			d_W^2(\lambda_1,\lambda_2) := \inf \Big\{\|X_1-X_2\|_{L^2(\cA)}^2: \cA\in \bW,\ X_1,X_2\in L^\infty(\cA)_{sa}^d,\ {\lambda}_{X_1}=\lambda_1,\  {\lambda}_{X_2}=\lambda_2\Big\}.
			$$
			We let $\Sigma_d^\infty$ denote the union of $\Sigma_{d,R}$ over all $R>0$ with the natural equivalence.
			
			Since we want to work with random variables in the $L^2$ space of the von Neumann algebra, we define ${\Sigma}_d^2$ to be the closure of $\Sigma_d^\infty$ in the Wasserstein metric.  We will show that laws in $\Sigma_d^2$ can be represented by elements of $L^2(\cA)_{sa}^d$ for some $\cA\in \bW$, by proving an equivalent construction of $\Sigma_d^2$.
			
			Alternatively, $\widetilde{\Sigma}_d^2$ is defined as the equivalence classes of operators in $\sqcup_{\cA\in \bW} L^2(\cA)_{sa}^d$, with the relation that $X\in L^2(\cA)_{sa}^d$ is equivalent to $Y\in L^2(\cB)_{sa}^d$ if there exists a coupling that consists of $\cC\in \bW$ and tracial $W^*$-embeddings $\iota_1:\cA\rightarrow \cC$ and $\iota_2:\cB\rightarrow \cC$ such that $\iota_1(X) = \iota_2(Y)$.  Given $X\in L^2(\cA)_{sa}^d$, we denote its law by $\widetilde{\lambda}_X\in \widetilde{\Sigma}_d^2$.  This space is naturally equipped with a quotient metric, which is equivalent to the non-commutative Wasserstein metric defined as
			\begin{align} \label{wass}
				\widetilde{d}_W^2(\lambda,\mu) := \inf_{\cA\in \bW}\big\{\|X-Y\|_{L^2(\cA)}^2: X,Y\in L^2(\cA)_{sa}^d \text{ such that } \widetilde{\lambda}_X=\lambda \text{ and } \widetilde{\lambda}_Y=\mu\big\}.
			\end{align}
			
			
			\begin{lemma}
				The definition of $\widetilde{\Sigma}_d^2$ as a quotient space with respect to the couplings of von Neumann algebra is equivalent to $\Sigma_d^2$ as the closure of $\Sigma_d^\infty$ under the Wasserstein metric.
			\end{lemma}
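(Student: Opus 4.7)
The plan is to exhibit a canonical bijective isometry $\Phi:\widetilde{\Sigma}_d^2\to \Sigma_d^2$ by moving between unbounded elements of $L^2$ and Cauchy sequences of bounded laws. Given a representative $X\in L^2(\cA)_{\sa}^d$ of $\widetilde{\lambda}_X$, let $X^{(R)}$ denote the spectral truncation whose $j$th coordinate is $\mathbf{1}_{[-R,R]}(X^j)\,X^j\,\mathbf{1}_{[-R,R]}(X^j)\in L^\infty(\cA)_{\sa}$. Then $\|X^{(R)}\|_\infty\leq R$, $X^{(R)}\to X$ in $L^2(\cA)$, and $\lambda_{X^{(R)}}\in\Sigma_{d,R}\subseteq\Sigma_d^\infty$. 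Since
\[
d_W\bigl(\lambda_{X^{(R)}},\lambda_{X^{(R')}}\bigr)\leq \|X^{(R)}-X^{(R')}\|_{L^2(\cA)}
\]
straight from the definition of $d_W$, the family $\{\lambda_{X^{(R)}}\}_R$ is Cauchy in $(\Sigma_d^\infty,d_W)$, so I set $\Phi(\widetilde{\lambda}_X)$ to be its limit in $\Sigma_d^2$. This is well-defined on equivalence classes because Borel functional calculus commutes with tracial $\mathrm{W}^*$-embeddings: if $(\cC,\iota_1,\iota_2)$ is a coupling with $\iota_1(X)=\iota_2(Y)$, then $\iota_1(X^{(R)})=\iota_2(Y^{(R)})$, forcing $\lambda_{X^{(R)}}=\lambda_{Y^{(R)}}$ for every $R$.

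For surjectivity of $\Phi$, take $\lambda\in\Sigma_d^2$ and a sequence $(\lambda_n)\subseteq\Sigma_d^\infty$ with $d_W(\lambda_n,\lambda)\to 0$; passing to a subsequence I may assume $d_W(\lambda_n,\lambda_{n+1})<2^{-n}$. Using the definition of $d_W$ on $\Sigma_d^\infty$, for each $n$ pick a tracial $\mathrm{W}^*$-algebra $\cC_n$ and bounded tuples $Y_n,Z_n\in L^\infty(\cC_n)_{\sa}^d$ with $\lambda_{Y_n}=\lambda_n$, $\lambda_{Z_n}=\lambda_{n+1}$, and $\|Y_n-Z_n\|_{L^2(\cC_n)}<2^{-n}$. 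Since $Z_n$ and $Y_{n+1}$ share a common non-commutative law, the generated subalgebras $\mathrm{W}^*(Z_n)$ and $\mathrm{W}^*(Y_{n+1})$ are canonically isomorphic as tracial $\mathrm{W}^*$-algebras. Invoking Lemma \ref{lem: uniqueness of free product} and the joint-embedding machinery of Appendix \ref{sec: AFP}, one iteratively amalgamates $\cC_n$ with $\cC_{n+1}$ over this common subalgebra to produce a single $\cA\in\bW$ containing all the $Y_n$'s with their pairwise $L^2$-distances preserved. The sequence $(Y_n)$ is then Cauchy in $L^2(\cA)_{\sa}^d$ and converges to some $X\in L^2(\cA)_{\sa}^d$; applying the well-definedness step to truncations of $X$ yields $\Phi(\widetilde{\lambda}_X)=\lambda$.

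Finally, I verify that $\Phi$ is an isometry, which gives injectivity for free and completes the identification. On bounded laws $d_W$ and $\widetilde{d}_W$ agree, since bounded couplings are a fortiori $L^2$-couplings, and conversely any $L^2$-coupling of bounded tuples can be spectrally truncated at a level above the operator-norm bound without changing the laws or increasing the $L^2$-distance. For general $X, Y\in L^2(\cA)_{\sa}^d$, applying this equality to the truncations $X^{(R)}, Y^{(R)}$ and passing $R\to\infty$, together with the gluing construction above applied to near-optimal couplings of $\lambda_{X^{(R)}}$ and $\lambda_{Y^{(R)}}$, shows $d_W\bigl(\Phi(\widetilde\lambda_X),\Phi(\widetilde\lambda_Y)\bigr)=\widetilde{d}_W(\widetilde\lambda_X,\widetilde\lambda_Y)$. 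The principal obstacle is the iterated amalgamated-free-product gluing in the surjectivity step; the rest amounts to continuity and functional-calculus bookkeeping that rides on the algebraic machinery of Appendix \ref{sec: AFP}.
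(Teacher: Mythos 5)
Your proof takes essentially the same approach as the paper's: show the inclusion $\widetilde{\Sigma}_d^2\subseteq\Sigma_d^2$ by approximating an $L^2$ element by bounded tuples (you use spectral truncation, the paper uses an arbitrary $L^\infty$-dense sequence — same idea), and show $\Sigma_d^2\subseteq\widetilde{\Sigma}_d^2$ by realizing a $d_W$-Cauchy sequence of bounded laws inside a single tracial $\mathrm{W}^*$-algebra via iterated amalgamated free products over the common coupling subalgebras, then taking the $L^2$-limit of the bounded representatives. Two minor remarks: you use near-optimal couplings with $\|Y_n-Z_n\|<2^{-n}$, whereas the paper takes exact optimal ones — both are fine, since for laws in $\Sigma_d^\infty$ the coupling infimum is attained by weak-$*$ compactness of $\Sigma_{2d,R}$ and continuity of the quadratic cost in the joint law; and you frame the result as a bijective isometry $\Phi$ and sketch the isometry argument, which is somewhat more than the paper spells out, though that final step (the lower bound $\widetilde{d}_W\leq d_W\circ\Phi\times\Phi$ via gluing near-optimal truncated couplings) is only sketched and would need the same inductive-limit machinery as the surjectivity step to be made rigorous.
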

			\begin{proof}
				Given $\widetilde{\lambda} \in \widetilde{\Sigma}_d^2$, we may find $\cA \in \bW$ and $X \in L^2(\cA)_{sa}^d$ such that $\widetilde{\lambda}_X = \widetilde{\lambda}$. By the density of $L^\infty(\cA)_{sa}^d$ in $L^2(\cA)_{sa}^d$, we take a sequence $\{X_i\}_{i  \ge 1}$ in $ L^\infty(\cA)_{sa}^d$ that converges to $X$ in $L^2(\cA)_{sa}^d$. Clearly, $\widetilde{\lambda}_{X_i}$ converges in the Wasserstein metric, showing that $\widetilde{\Sigma}_d^2 \subset\Sigma_d^2$.

				For the other direction, we consider a sequence with $\lambda_i \in \Sigma_d^\infty$ and ${d}_W^2(\lambda_i,\lambda_{i+1})\leq 2^{-i}$ for $i\in \{1,2,\ldots\}$. We construct by induction a sequence of von Neumann algebra, $\{\cA_i\}_{i \ge 1}$, representing the non-commutative law $\lambda_i$ by $X_i\in L^\infty(\cA_i)_{sa}^d$ with tracial $W^*$-embeddings $\iota_i:\cA_i\rightarrow \cA_{i+1}$  so that
				$$
				\|\iota_i(X_i)-X_{i+1}\|_{L^2(\cA_{i+1})}^2 = {d}^2_W(\lambda_i,\lambda_{i+1}).
				$$
				From the definition of the Wasserstein metric, there is a von Neumann algebra $\cB$ and $\widetilde{X},\widetilde{Y}\in L^\infty(\cB)_{sa}^d$ such that $\lambda_{\widetilde{X}} =\lambda_i$, $\lambda_{\widetilde{Y}} = \lambda_{i+1}$, and
				$$
				\widetilde{d}^2_W(\lambda_i,\lambda_{i+1}) = \|\widetilde{X}-\widetilde{Y}\|_{L^2(\cB)}^2.
				$$
				We may define a tracial $W^*$-embedding $\widetilde{\iota}:W^*(\widetilde{X}) \rightarrow \cA_i$ by identifying $\iota(\widetilde{X}) = X$. We then define $\cA_{i+1}$ as the free product of $\cA_i$ and $\cB$ with almagamation over $W^*(\widetilde{X})$. We let $\iota_i$ be the tracial $W^*$-embedding of $\cA_i$ into $\cA_{i+1}$, which satisfies the inductive hypothesis.
				
				We next define $\cA_\infty$ as the inductive limit of the $\cA_i$, and one can consider the sequence $\{\widetilde{X}_i\}_{ i \ge 1}$ taking values in $ L^\infty(\cA_\infty)_{sa}^d$.  It follows that
				$$
				\|\widetilde{X}_i - \widetilde{X}_{i+1}\|_{L^2(\cA_{\infty})}^2 = \|\iota(X_i)-X_{i+1}\|_{L^2(\cA_{i+1})}^2 \leq 2^{-i}.
				$$
				Therefore, we have a Cauchy sequence and a limit point $X\in L^2(\cA_\infty)_{sa}^d$, which defines $\widetilde{\lambda}_X\in \widetilde{\Sigma}_d^2$ and shows $\Sigma_d^2\subset \widetilde{\Sigma}_d^2$.
			\end{proof}

			Here is a list of some properties for non-commutative laws:
			\begin{itemize}

				
				\item It was shown in \cite[\S 5.5]{gangbo2022duality} that $\Sigma_{d,R}$ is not separable with the Wasserstein metric (and therefore neither is $\Sigma_d^2$).
				\item The spaces $\Sigma_{d,R}$ (which can be viewed as subsets of $\Sigma_d^2$ with the operator norm bounded by $R$) are compact in the weak* topology of convergence for the evaluation of every polynomial in ${\rm NCP}_d$.
				
				
				\item The continuous functions $C(\Sigma_d^2)$ can be identified with tracial $W^*$--functions $(f_\cA)_{\cA\in \bW}$ such that $f_{\cA}\in C(L^2(\cA)_{sa}^d)$ for all $\cA\in \bW$, where we recall the following definition of tracial function from \cite[Definition 3.5]{gangbo2022duality}. 
			\end{itemize}


			
			\begin{definition}[Definition 3.5 in \cite{gangbo2022duality}] A tracial $W^*$--function on $\Sigma_d^2$ with values in $(-\infty, +\infty]$ is a collection of functions $f_\cA: L^2(\cA)^d_{sa} \to (-\infty, +\infty]$ for $\cA\in \bW$, such that whenever $\iota: \cA \to \cB$ is a tracial $W^*$--embedding, $f_\cA=f_\cB \circ \iota$ (here $\iota$ is extended to a map $L^2(\cA)^d_{sa} \to L^2(\cB)^d_{sa}$).
			\end{definition}

			
			

			\section{Control problem with free individual noise and common noise}\label{sec:problem}
			
			\subsection{Example Problems} \label{subsec: example problems}
			
			Before introducing the general problems we will consider, we introduce a handful of the motivating problems. These examples will be further worked out in Section \ref{sec:examples}.

			\subsubsection{Quadratic cost} \label{subsec:quadratic} 
			
			The simplest type of drift is when the control space corresponds directly with the drift of the process, there is a single common noise, and free individual noise for each element.  The dynamics may be expressed in $L^2(\cA)_{sa}^d$, for a von Neuman algebra $\cA\in \bW$, as
			\begin{align}\label{eqn:quadratic_dynamics}
				dX^j_t = \alpha_t^j\, dt + \beta_C\, \mathbbm{1}_{\cA}\, dW_t^0 + \beta_F\, dS_t^j, \quad j\in\{1,\ldots,d\},
			\end{align}
			where $(W_t^0)_{t\in [0,T]}$ is a standard Brownian motion that affects all elements proportional to the identity element, and $(S_t^j)_{t\in [0,T]}^{j\in \{1,\ldots,d\}}$ are freely independent semi-circular processes.  These processes represent a source of non-commutative noise, for example, arising from the Gaussian unitary ensemble of random matrices.
			
			In Section \ref{sec:examples}, we solve exactly the problem of minimizing a quadratic terminal cost involving the second moments along with first moments squared, 
			$$
			g_{\cA}(X) = \sum_{i=1}^d\sum_{j=1}^d g_{ij}^0\tau(X^i\, X^j)+ \sum_{i=1}^d\sum_{j=1}^d g_{ij}^1\tau(X^i)\tau(X^j).
			$$
			That is, given $t_0 \in [0,T]$ and a non-commutative law $\lambda_0$, setting $\mathbb{A}^{t_0,T}_{\cA}$ to be the set of admissible control policies for $\cA\in \bW$ (defined in Section \ref{sec 2.2.2} later precisely), we solve the variational problem
			$$
			\overline{V}(t_0,\lambda_0) := \inf_{\cA\in \bW} \inf_{\widetilde{\alpha}\in \mathbb{A}_{\cA}^{t_0,T}}\Big\{ \mathbb{E}\big[\int_{t_0}^T \frac{1}{2}\|\alpha_t\|^2_{L^2(\cA)}dt + g_{\cA}(X_T)\big]: \lambda_{X_{t_0}}=\lambda_0, X \hbox{ solves (\ref{eqn:quadratic_dynamics}) on $[t_0,T]$}\Big\}.
			$$

			One can generalize this example in several ways. For instance,  one can add a running cost $\phi_{\cA}(X_t)$ to the integral in the cost for some  tracial $W^*$--function $ (\phi_{\cA})_{\cA\in \bW}$, and   $(g_{\cA})_{\cA\in \bW}$  can be any  tracial $W^*$--function.  In this case, 
			the  corresponding Hamiltonian can be expressed as a tracial $W^*$--function on $\Sigma_{2d}^2$, $(H_{\cA})_{\cA\in \bW}$, by 
			$$
			H_{\cA}(X,P) = \frac{1}{2}\|P\|_{L^2(\cA)}^2 - \phi_{\cA}(X).
			$$
			The value function $\overline{V}$ defined above solves the Hamilton-Jacobi equation
			\begin{align*}
				-\partial_t \overline{V}(t,\lambda) + \frac{1}{2}\|\partial \overline{V}(t,\lambda)\|^2_\lambda -\frac{\beta_C^2}{2}\, \Delta \overline{V}(t,\lambda) -\frac{\beta_F^2}{2}\, \Theta \overline{V}(t,\lambda) =&\ \phi(\lambda),\\
				\overline{V}(T,\lambda) =&\ g(\lambda),
			\end{align*}
			in a manner to be described later.
			
			{This example arises in the theory of large deviation for random matrices, which we will explore in more detail in the subsequent paper.}

			\subsubsection{Eikonal controls}\label{subsec:eikonal}  
			
			For a related example, we suppose that the controls are constrained so that $\|\alpha\|_{L^2(\cA)}\leq 1$. This results in a similar equation with the $1$-homogeneous Hamiltonian $H_{\cA}(P) = \|P\|_{L^2(\cA)}$ rather than the quadratic one.  Alternatively, instead of restricting the $L^2$ norm of the control, it is possible to restrict the operator norm $\|\alpha\|_{\infty}\leq 1$, resulting in an $L^1$ norm, i.e.,  $H_{\cA}(P) = \tau(|P|)$ where $|P| = P_+ + P_-$ given the decomposition into positive and negative definite parts $P= P_+ - P_-$. These examples have nonsmooth solutions, as illustrated in Section \ref{sec:examples}.
			
			
			
			\subsubsection{Controlled von Neumann equation}\label{subsec:con-schrodinger}
			
			We can consider the dynamics driven by the commutator of the state $X_t$ and the control $\alpha_t$:
			\begin{align}\label{eqn:schrodinger_dynamics}
				dX^j_t = {\rm i}[X^j_t,\alpha_t^j]\, dt,  \quad j\in\{1,\ldots,d\}.
			\end{align}
			This equation is motivated by the von Neumann equation in quantum mechanics where a system evolves through unitary conjugation.  Indeed, a straightforward computation shows that
			\[
			X_t^j = U_t^j X_0^j (U_t^j)^*
			\]
			where $U_t^j$ is the unitary given by
			\[
			dU_t^j = {\rm i} \alpha_t^j U_t^j\,dt.
			\]
			In Section \ref{sec:schrodinger}, we show that when the value function
			\begin{align*}
				\overline{V}(t_0, \lambda_0):= &\ \inf_{\cA\in \bW}\inf_{\widetilde{\alpha}\in \mathbb{A}_\cA^{t_0,T}}\Big\{\int_{t_0}^T \frac{1}{2}\|\alpha_t\|^2_{L^2(\cA)}dt + g_{\cA}(X_T): \lambda_{X_{t_0}}=\lambda_0, (X_t)_t \hbox{ solves (\ref{eqn:schrodinger_dynamics}) on $[t_0,T]$}\Big\} 
			\end{align*}
			admits a minimizer, we have a \emph{non-commutative Hopf-Lax formula} 
			\begin{multline*}
				\overline{V}(t_0, \lambda_0) \\
				= \inf_{\cA\in \bW}\inf_{\alpha\in L^2(\cA)^d_{sa}}\Big\{\frac{(T-t_0)}{2}\|\alpha\|_{L^2(\cA)}^2 + g_{\cA}\big(e^{-{\rm i}\alpha (T-t_0)}\, X\, e^{{\rm i}\alpha (T-t_0)}\big): X\in L^2(\cA)^d_{sa}, \lambda_X=\lambda_0\Big\}.
			\end{multline*}
			
			Another motivation for this setup is Voiculescu's liberation theory \cite{voiculescu1999analogues6}, a version of information theory where additive perturbations are replaced by perturbations through unitary conjugation, so for instance the analogue of semi-circular Brownian motion to an initial condition would be conjugating the initial condition by a tuple of free unitary Brownian motions as in \cite{biane1997freebrownian}.  Similarly, the non-commutative Hopf-Lax formula above is the liberation analogue of the standard Hopf-Lax formula based on additive perturbations.  More generally, one could consider both a control term and stochastic term in the von Neumann equation, setting
			\[
			dU_t^j = {\rm i} (dS_t^j + \alpha_t^j \,dt) U_t^j
			\]
			and
			\[
			dX_t^j = {\rm i} [dS_t^j + \alpha_t^j\,dt, X_t^j]
			\]
			for a free Brownian motion $S_t^j$, although we will focus in this paper on the deterministic case.
			
			
			This example does not quite satisfy our general assumptions in \S \ref{sec: setup} since we want the Lagrangian to extend to $X$ and $\alpha$ in $L^2$ and be a globally Lipschitz function plus a quadratic function of $\alpha$.  However, it is an important example of a drift which is an $E$-linear tracial vector-field as defined in \S \ref{sec 2.2.3}(b) below.  In other words, whenever there is a tracial $W^*$--embedding $\iota:\cA\rightarrow \mathcal{B}$, 
			$$
			[\iota\, X, \iota\, \alpha] =  \iota\, [X, \alpha] \hbox{ and } E [\iota\, X, \beta] = [X,E\, \beta] \quad \text{ for any } X,\alpha\in L^2(\cA)_{sa}^d \text{ and } \beta\in L^2(\cB)_{sa}^d,
			$$
			where $E: \cB\rightarrow \mathcal{A}$ denotes the conditional expectation adjoint to  $\iota:\cA\rightarrow \mathcal{B}$.

		\subsection{Setup and Assumptions for the General Problem} \label{sec: setup}
		
		Given a tracial $\mathrm{W}^*$-algebra $\cA$, the ``state space'' in mean field games terminology corresponds will be the space of the self-adjoint $d$-tuples in the space $L^2(\cA)_{sa}^d$.  Throughout this section, we assume that $T>0$ is a given terminal time.  We now describe the terminology and setup for the Brownian motions, control policies, drift function, and cost functions for the general non-commutative stochastic optimization problem \eqref{eq:d4-c}.  Throughout the paper, \textbf{Assumption A} will refer to the conditions on the control policies, drift, and cost functions in \S \ref{sec 2.2.2}, \S \ref{sec 2.2.3}, and \S \ref{sec:cost_functions}.  Another more restrictive set of hypotheses called \textbf{Assumption B} will be given in \S \ref{sec: Assumption B}.
		
		\subsubsection{Classical and free Brownian motions}
		We assume to be given a non--atomic complete filtered probability space $\big(\Omega,\mathcal{F},(\cF_t)_{0\leq t \leq T}, \bP\big)$ which supports the Brownian motion $(W_t^0)_{t\in [0,T]}\in C([0,T]; L^2(\Omega, \mathcal{F},\bP))$: 
		\begin{itemize}
			\item[(a)]  $W_0^0=0$.
			\item[(b)]  For $0\leq s\leq t\leq T$, $W_t^0 - W_s^0$ is normally distributed with mean 0 and variance $t-s$.
			\item[(c)]  For any sequence of times  $0=t_0\le t_1 \le t_2 \le \cdots \le t_{k-1}  \le t_k= T,$ the collection of increments  $W_{t_{j+1}}^0-W_{t_j}^0$ for $j\in \{0,1,\ldots, k-1\}$ are (mutually) independent.
			
		\end{itemize}
		We assume that  $\mathcal F= \sigma (W^0_s : 0\le s\le T)$ and  $\mathcal F_t= \sigma (W^0_t : 0\le s\le t)$ for $t\in [0,T]$.  

		Throughout the paper, we write products between the Brownian motions and the von Neumann algebra, corresponding to the canonical embedding of scalars by $\mathbf{1}_{\cA}\, W_t^0\in L^\infty(\cA)_{sa}$ or $\mathbbm{1}_{\cA}\, W_t^0\in L^\infty(\cA)_{sa}^d$. We emphasize that the Wiener probability space may be fixed throughout, whereas it is not possible to fix the non-commutative probability space.

		~

		One can similarly define a free semi-circular process. Analogous to the notion of filtration of $\sigma$-algebra, we call an increasing collection of sub-von Neumann algebra a free filtration. Let $0\le t_0\le t_1\le T.$ For a given $\cA\in \bW$ and a free filtration $(\cA_t)_{t\in [t_0,t_1]}$,  we say that a $d$-dimensional process $(S_t)_{t\in [t_0,t_1]}\in C([t_0,t_1];L^\infty(\cA)_{sa}^{d})$ is a \emph{free semi-circular process (or free Brownian motion) compatible with the free filtration $(\cA_t)_{t\in [t_0,t_1]}$} on the interval $[t_0,t_1]$, if it satisfies  the following properties:
		
		\begin{itemize}
			\item[(a)] $S_{t_0} = 0$.
			
			\item[(b)] For $t_0\leq s\leq t\leq t_1$ and $l\in \{1,\ldots, d\}$, the increment $S_t^l - S_s^l$ is semi-circularly distributed with mean 0 and variance $t-s$, and the components $\{S_t^l - S_s^l\}_{l=1}^d$ are freely independent.
			
			\item[(c)] $S_t\in L^\infty(\cA_t)_{sa}^{d}$ for all $t\in [t_0,t_1]$.
			\item[(d)] For $t_0 \leq s\leq t\leq t_1$,   $S_t - S_s$ is freely independent of $\cA_s$.
		\end{itemize}
		\hfill\break
		Note that (c) and (d) together imply that 
		for  $t_0=s_0\le s_1 \le \cdots \le s_k= t_1,$
		the collection of increments $S_{s_{j+1}}^l-S_{s_j}^l$ for $j\in \{0,1,\ldots, k-1\}$ and  $l\in \{1,\ldots, d\}$ are freely independent (but the converse is not true since $(\mathcal{A}_t)_t$ could be larger than algebra generated by the initial conditions and semi-circulars).
		
		We list now the general assumptions on the problem that we will refer to as Assumption \textbf{A}.
		\subsubsection{Sets of the controls and admissible control policies} \label{sec 2.2.2}
		We assume that controls in $\cA$  belong to some subset $\bA_\cA$ of $L^2(\cA)_{sa}^d$ which satisfies
		\begin{itemize}
			\item[(a)] $\bA_\cA\subset L^2(\cA)_{sa}^d$ is closed and convex.
			\item[(b)]  $0 \in \bA_\cA$.
			\item[(c)]  For any $\cB\in \bW$ and a tracial $W^*$--embedding $\iota:\cA \to \cB$ (with its adjoint $E$), we have 
			$$\iota\, \bA_\cA \subset \bA_\cB \quad {\rm and} \quad E\, \bA_\cB \subset \bA_\cA.$$
		\end{itemize}
		Given $\cA\in \bW$, $[t_0, t_1] \subset [0, T]$, and $x_0\in L^2(\cA)_{sa}^d$, we let  $\bA_{\cA,x_0}^{t_0,t_1}$ be the collection of \emph{admissible} control policies 
		$$
		\widetilde{\alpha} = \Big((\alpha_t)_{t\in [t_0,t_1]},(\cA_t)_{t\in [t_0,t_1]}, (S_t)_{t\in [t_0,t_1]}\Big)
		$$
		that satisfy the following properties:
		\begin{itemize}
			\item[(a)]  $(\cA_t)_{t\in [t_0,t_1]}$ is a free filtration, i.e., an increasing sequence of tracial $W^*$ subalgebras of $\cA$, such that $x_0\in L^2(\cA_{t_0})_{sa}^d$.
			\item[(b)]   $(S_t)_{[t_0,t_1]}$ is a $d$-dimensional free semi-circular process compatible with $(\cA_t)_{t\in [t_0,t_1]}$.
			\item[(c)]   For every $s\in [t_0,t_1]$, we have $(\alpha_t)_{t\in [t_0,s]}\in L^2\big([t_0,s]\times (\Omega, \mathcal{F}_s,\mathbb{P}); \bA_{\cA_s}\big)$,  i.e., $(\alpha_t)_{t\in [t_0,t_1]}$ is progressively measurable and freely progressive.  
		\end{itemize}
		
		Note that progressive measurability is in general a stronger condition than adaptedness. However if each path is continuous, then  adaptedness does imply progressive measurability. 
		
		\begin{remark}We may have $\bA_{\cA, x_0}^{t_0,t_1}=\emptyset$ even when  $\bA_{\cA} \not=\emptyset$, since not every tracial $W^*$--algebra $\cA$ contains a free semi-circular process.
		\end{remark}

		\subsubsection{Drift function} \label{sec 2.2.3}
		We consider drift functions $b_\cA: L^2(\cA)^d_{sa}\times \bA_\cA\rightarrow L^2(\cA)^d_{sa}$ that associate a `tangent vector' for every state and control. We make the following assumptions.
		\begin{itemize}
			\item[(a)] $(b_{\cA})_{\cA  \in \bW}$ defines a tracial vector-field in the sense that for any $\cA,\cB\in \bW$ with a tracial $W^*$--embedding $\iota:\cA\rightarrow \mathcal{B}$, $X\in L^2(\cA)^d_{sa}$ and  $\alpha \in \bA_{\cA}\subset L^2(\cA)^d_{sa}$, 
			\begin{align}\label{eqn:tracial_vector_field}
				\iota\, b_\cA(X, \alpha) =&\  b_\mathcal{B}(\iota\, X,{\iota}\, \alpha).
			\end{align}
			\item[(b)]  $(b_{\cA})_{\cA\in \bW}$ is \emph{$E$-affine} in the sense that for any $\cA\in \bW$ and $X\in L^2(\cA)^d_{sa}$, $\alpha\mapsto b_{\cA}(X,\alpha)$  is affine, also for any tracial $W^*$--embedding  $\iota:\cA\rightarrow \cB$ with its adjoint $E:L^2(\cB)_{sa}^d\rightarrow L^2(\cA)_{sa}^d$,  
			\begin{align}\label{eqn:E_linear}
				b_{\cA}(X, {E}\, \alpha) = E\, b_\mathcal{B}(\iota\, X,\alpha) \hbox{ for all }  \alpha\in\bA_\mathcal{B}.
			\end{align}
			\item[(c)]  $(b_{\cA})_{\cA\in \bW}$ is uniformly continuous on bounded sets, i.e. for any $M>0$ there is a modulus of continuity $\omega_M$ such that for all $\cA\in \bW$, $X,Y\in L^2(\cA)$ and  $\alpha,\beta\in \bA_\cA$ with $\max\{\|X\|_{L^2(\cA)}, \|Y\|_{L^2(\cA)}, \|\alpha\|_{L^2(\cA)}, \|\beta\|_{L^2(\cA)}\}\leq M$,  
			\begin{align*}
				\|b_\cA(X,\alpha) - b_\cA(Y,\beta)\|_{L^2(\cA)}^2 \leq&\ \omega_M\big(\|X-Y\|_{L^2(\cA)}^2 + \|\alpha-\beta\|_{L^2(\cA)}^2\big).
			\end{align*}
			\item[(d)] There is a constant $\bar{C}>0$ such that for all $\cA\in \bW$, $X_1,X_2\in L^2(\cA)$ and  $\alpha\in \bA_\cA$,
			\begin{align}\label{eqn:C_monotonicity}
				\| b_{\cA}(X_1,\alpha)-b_{\cA}(X_2,\alpha)\|_{L^2(\cA)} \leq \bar{C}\|X_1-X_2\|_{L^2(\cA)}.
			\end{align}
		\end{itemize}
		
		\begin{remark}
			These assumptions on the drift, especially (d), imply that for any $M>0$, there exists a constant $\upsilon_M>0$ such that for all $X\in L^2(\cA)_{sa}^d$ with $\|X\|_{L^2(\cA)}\leq M$ and  $\alpha,\alpha'\in \mathbb{A}_{\cA}$,
			\begin{align}\label{eqn:b_continuity}
				\|b_{\cA}(X,\alpha)-b_{\cA}(X,\alpha')\|_{L^2(\cA)}\leq \upsilon_M\|\alpha-\alpha'\|_{L^2(\cA)} \hbox{ and } \|b_{\cA}(X,\alpha)\|_{L^2(\cA)}\leq \upsilon_M\big(1+\|\alpha\|_{L^2(\cA)}\big).
			\end{align}
		\end{remark}
		
		\subsubsection{Cost functions}\label{sec:cost_functions}
		We consider a running cost $L_{\cA}:L^2(\cA)_{sa}^d\times \mathbb{A}_{\cA}\rightarrow \mathbb{R}$ and a terminal cost $g_{\cA}:L^2(\cA)_{sa}^d \rightarrow   \mathbb{R}$ that satisfy the following assumptions.
		\begin{itemize}
			\item[(a)] Both $(L_{\cA})_{\cA\in \bW}$ and $(g_{\cA})_{\cA\in \bW}$ are tracial $W^*$--functions. Equivalently, $(L_{\cA})_{\cA\in \bW}$ may be defined on the space of joint non-commutative laws in $\Sigma_{2d}^2$ and $(g_{\cA})_{\cA\in \bW}$ is a function on the space of non-commutative laws in $\Sigma_d^2$.
			\item[(b)]   $(L_\cA)_{\cA\in \bW}$ is \emph{$E$-convex} in the control variable, meaning that for any $\cA\in \bW$ and $X\in L^2(\cA)^d_{sa}$, $\alpha \mapsto L_\cA(X,\alpha)$ is convex, and   for any tracial $W^*$--embedding  $\iota:\cA\rightarrow \mathcal{B}$ with its adjoint $E:L^2(\mathcal{B})_{sa}^d\rightarrow L^2(\mathcal{A})_{sa}^d$, 
			\begin{align}\label{eqn:E_convex}
				L_\cA( X, {E}\, \alpha) \leq  L_{\mathcal{B}}(\iota\, X, \alpha) \hbox{ for all }  \alpha\in\bA_\mathcal{B}.
			\end{align}
			\item[(c)] Similar to the drift, we assume that $(L_{\cA})_{\cA\in \bW}$ is uniformly continuous on bounded sets.
			\item[(d)]  There exists a constant $C_1>0$ such that for all $\cA\in \bW$, $X\in L^2(\cA)^d_{sa}$ and $\alpha \in \bA_{\cA}$,
			\begin{align}\label{eqn:lower_and_upper_bounds}
				-C_1 +\frac{1}{C_1}\|\alpha\|_{L^2(\cA)}^2\leq&\  L_{\cA}(X,\alpha)\leq C_1\big(1+ \|X\|_{L^2(\cA)} + \|\alpha\|_{L^2(\cA)}^2\big),\\
				-C_1\leq&\  g_{\cA}(X)\leq C_1\big(1+ \|X\|_{L^2(\cA)}\big).\nonumber
			\end{align}
			Also  $(L_{\cA})_{\cA\in \bW}$ and 
			$ (g_{\cA})_{\cA\in \bW}$ are Lipschitz with respect to $X$:  There exists a constant $C_2>0$ such that for all $\cA\in \bW$, $X_1,X_2\in L^2(\cA)^d_{sa}$ and $\alpha \in \bA_{\cA}$,
			\begin{align}\label{eqn:Lipschitz_bounds}
				|L_{\cA}(X_1,\alpha) -L_{\cA}(X_2,\alpha)| \leq&\ C_2\|X_1-X_2\|_{L^2(\cA)}, \\
				|g_{\cA}(X_1) -g_{\cA}(X_2)|\leq&\ C_2\|X_1-X_2\|_{L^2(\cA)}. \nonumber 
			\end{align}
		\end{itemize}

		
		
		\subsubsection{Additional assumptions (Assumption B)} \label{sec: Assumption B}
		
		As mentioned above the assumptions in Sections \ref{sec 2.2.2}, \ref{sec 2.2.3} and \ref{sec:cost_functions} are refered to \textbf{Assumption A}.  Sometimes to simplify the analysis, we also assume the following conditions, which we refer to as \textbf{Assumption B:}
		\begin{enumerate}[label=(\alph*)] \item The control set is $\mathbb{A}_{\cA} = L^2(\cA)_{sa}^d$ and the drift function has the form
			$$
			b_{\cA}(X,\alpha)=\alpha.
			$$ 
			\item The Lagrangian  $(L_{\cA})_{\cA\in \bW}$ is jointly $E$-convex in $(X,\alpha)$ and the terminal cost $(g_{\cA})_{\cA\in \bW}$ is $E$-convex. This means that for any $\cA\in \bW$, the maps $(X,\alpha) \mapsto L_\cA(X,\alpha)$ and $X\mapsto g_{\cA}(X)$ are convex, and  for any tracial $W^*$--embedding  $\iota:\cA\rightarrow \mathcal{B}$ with its adjoint $E:L^2(\mathcal{B})_{sa}^d\rightarrow L^2(\mathcal{A})_{sa}^d$, 
			\begin{align*}
				L_\cA( E\, X, {E}\, \alpha) \leq  L_{\mathcal{B}}(X, \alpha)& \hbox{ for all }  X\in L^2(\cB)_{sa}^d \hbox{ and }\alpha\in\bA_\mathcal{B},\\
				g_\cA( E\, X) \leq  g_{\mathcal{B}}(X)& \hbox{ for all }  X\in L^2(\cB)_{sa}^d.
			\end{align*}
			
		\end{enumerate}

		\subsubsection{Stochastic differential equations on the von Neumann algebra}
		
		Now we describe how the process $X_t$ in the variational problem \eqref{eq:d4-c} is constructed from the control as a solution to a free stochastic differential equation (SDE).  We specify diffusion coefficients $\beta_C\geq0$ and  $\beta_F\geq 0$.
		Let $\cA\in \bW$ and $[t_0, t_1] \subset [0, T]$. For $x_0\in L^2(\cA)^d_{sa}$ and $\widetilde{\alpha}\in \mathbb{A}_{\mathcal{A},x_0}^{t_0,t_1}$, we consider the SDE on  $ L^2(\cA)^d_{sa}$:  
		\begin{align}\label{eqn:common_noise}
			\begin{cases} dX_t = b_\cA(X_t,\alpha_t)\, dt +  \beta_C\, \mathbbm{1}_{\cA}\, dW_t^0 +  \beta_F\, dS_t,\\
				X_{t_0}=x_0.
			\end{cases}
		\end{align}
		We consider strong solutions such that $t\mapsto X_t$ is continuous in the $L^2$ norm, which are adapted to the common noise and freely adapted; this means more explicitly that $\omega\mapsto X_t(\omega)$ is $\mathcal{F}_t$ measurable and $X_t\in L^2(\cA_t)_{sa}^d$ $\mathbb{P}$-a.s.\ for each $t\in [t_0,t_1]$, and satisfies the integral equation, expressed here component-wise,
		$$
		X_t^j = x_0^j + \int_{t_0}^t b^j_\cA(X_s,\alpha_s)\, ds + \beta_C\, \mathbf{1}_{\cA}\, (W_s^0-W_{t_0}^0)+ \beta_F\, (S_s^j-S_{t_0}^j), \quad j=1, \cdots, d.
		$$
		{Here $(S_t)_{t\in [t_0,T]}$ denotes the free Brownian motion on $[t_0,T]$. Note that $S_{t_0}=0$.}
		
		We denote by $X_t[t_0,x_0,\widetilde{\alpha}]$ the solution of (\ref{eqn:common_noise}) on $[t_0,t_1]$, or $X_t[\widetilde{\alpha}]$ when $(t_0,x_0)$ are clear from the context. 
		%
		%
		The cost associated to a control policy $\widetilde{\alpha}\in \mathbb{A}^{t_0,T}_{\cA,x_0}$ is defined as
		$$
		\bE\bigg[\int_{t_0}^T L_{\cA}(X_t[\widetilde{\alpha}],\alpha_t)dt + g_{\cA}(X_T[\widetilde{\alpha}])\bigg].
		$$
		
		\begin{remark}\label{rem:commute}
			Observe that under the assumptions above, if $\cA, \cB\in \bW$ and $\iota: \cA \to \cB$ is a tracial $W^*$--embedding, then  $Y:=\iota\, X$ satisfies the analogue of \eqref{eqn:common_noise} where $(X, b_\cA, \alpha, S)$ is replaced by $(Y, b_\cB, \iota\, \alpha, \, \iota\, S)$, where the original filtration $\cA_t$ (embedded in $\cB$) is used as the filtration in $\cB$ as well.
		\end{remark}

		
		
		\subsubsection{The value function}
		
		As mentioned in the introduction, we want to allow the ambient algebra $\cB$ to vary in the optimization problems.  We will therefore define several versions of the value function.  The first $\widetilde{V}_{\cA}$ only looks at filtrations in a fixed algebra, the second $\overline{V}_{\cA}$ allows arbitrary extensions of the given algebra $\cA$, and the third $\overline{V}$ is defined on the space of non-commutative laws.
		
		For $\cA \in \bW$,  we consider the value function on $L^2(\cA)^d_{sa}$, defined as follows: For $t_0 \in [0,T]$ and  $x_0\in L^2(\cA)_{sa}^d$, 
		\begin{align}\label{eqn:l2_value}
			\widetilde{V}_{\cA}(t_0,x_0) := \inf_{\widetilde{\alpha} \in \bA_{\cA,x_0}^{t_0,T}}\Big\{\bE\Big[\int_{t_0}^T L_{\cA} (X_t[\widetilde{\alpha}], \alpha_t)dt + g_{\cA}(X_T[\widetilde{\alpha}])\Big]:  X_{t_0}[\tilde\alpha]=x_0 \Big\},
		\end{align}
		where  $X_t[\widetilde \alpha]$ denotes a solution to \eqref{eqn:common_noise} and $\bE$ denotes expectation with respect to
		the common noise. Note that if $\cA$ does not support a free Brownian motion $(S_t)_{t\in [t_0,T]}$ freely independent of $x_0$, i.e. if   $\bA_{\cA,x_0}^{t_0,T}$ is an empty set,  then this definition will result in $+\infty$.
		
		The eventual value function  is defined as follows: For $\cA \in \bW,$
		\begin{align} \label{def:bar}
			\overline{V}_\cA(t_0, x_0):= \inf_{\iota: \mathcal A \to \mathcal B} \widetilde{V}_{\mathcal B}(t_0, \iota\, x_0),
		\end{align}
		where the infimum is performed over the set of $(\cB, \iota)$ such that $\cB \in \bW$ and $\iota: \mathcal A \to \mathcal B$ is a tracial $W^*$--embedding.  {The function $\overline{V}_{\cA}$ will be our main object of study, while $\widetilde{V}_{\cA}$ is largely an intermediate step in making the definition and lacks many desirable properties such as continuous dependence on the initial condition (since for some choices of $\cA$, a small perturbation of $x_0$ can preclude the existence of a freely independent Brownian motion).}
		
		We will also view this value function as a function on the space of non-commutative laws, i.e.\ the non-commutative Wasserstein space.  For $t_0 \in [0,T]$ and  $\lambda \in \Sigma_{d}^2,$ take  any $\cA \in \bW$ and $x_0\in \cA$ such that $\lambda_{x_0}=\lambda$, and then define the value function 
		\begin{align}\label{eqn:value}
			\overline{V}(t_0, \lambda) := \overline{V}_\cA(t_0, x_0).
		\end{align}
		We will show that this is well-defined, i.e.\ independent of the particular $\cA$ and $x_0$ used to represent $\lambda$, in Lemma \ref{lem:nov18.2023.2} below.
			
			\subsubsection{The Hamiltonian}
			
			The Hamilton-Jacobi equation satisfied by the value function includes the \emph{Hamiltonian} $H$ which is the Fenchel-Legendre dual of the Lagrangian.  In the non-commutative setting, the Hamiltonian is defined as follows: For a tracial function $L_{\cA}:L^2(\cA)_{sa}^d\times \mathbb{A}_{\cA}\rightarrow \mathbb{R}$, let 
			\begin{align}\label{eqn:Hamiltonian_equivalence}
				H_\cA( X,P) := \sup_{\iota:\cA\rightarrow \cB\in \bW}\ \sup_{\alpha\in \bA_{\mathcal{B}}}\Big\{ \big\langle b_{\mathcal{B}}(\iota\, X,\alpha),\iota\, P\big\rangle_{L^2(\cB)} - L_{\cB}(\iota\, X,\alpha)\Big\},\qquad    X,P\in L^2(\cA)_{sa}^d.
			\end{align}
			We will show that the Hamiltonian is a tracial $\mathrm{W}^*$-function in Lemma \ref{lem:Hamiltonian-bis} below.
			
			\subsubsection{Generalizations} \label{sec: generalizations of setup}
			
			Before going on to prove properties of the value function that follow from Assumption A, we remark that the setup could be significantly generalized, although this would make the analysis correspondingly more complicated, and hence we leave it for future work.
			
			First, we could include non-constant coefficients in the diffusion terms.  For the free semi-circular process, this might correspond to tracial $W^*$-tensor field $\eta_{\cA}:L^2(\cA)_d^{sa}\rightarrow L^2(\cA)_d^{sa}\otimes L^2(\cA)_d^{sa}$.   We refer to \cite{biane1998stochastic,freesde1,freesde2} for a theory of free stochastic differential equations. Most of the theory we develop will go through, but we will remark at certain points where complications arise.
			
			Many important examples of controlled partial differential equations feature drifts that are unbounded. Similar, unbounded operators commonly arise in quantum mechanics. In these cases, the drift may be defined on a dense subset and continuous with respect to a stronger topology while  we impose that $X \to \bar C\, X-b_\cA(X, \alpha)$ is monotone, i.e.: 
			\begin{equation}
				\label{eq:june28.2024.1}
				\langle b_{\cA}(X_1,\alpha)-b_{\cA}(X_2,\alpha),X_1-X_2\rangle_{L^2(\cA)}\leq \bar{C}\|X_1-X_2\|^2_{L^2(\cA)}.
			\end{equation}
			Significant effort was made in the theory of infinite dimensional viscosity solutions to handle such cases.   Due to the complications when handling such unbounded terms, we do not include them in our current analysis. We note that the example of the controlled von Neumann equation $b_{\cA}(X,\alpha)={\rm i}[X,\alpha]$ does not satisfy the uniform continuity requirements but does satisfy \eqref{eq:june28.2024.1} with $\bar{C}=0$.

			
			\subsection{Properties of the Hamiltonian} \label{subsec: Hamiltonian properties}
			
			In this section, we establish basic properties of the Hamiltonian and show that is a tracial $\mathrm{W}^*$-function.  We first show that under only the assumption that $(b_{\cA})_{\cA\in \bW}$ is a tracial $W^*$ vector-field and $(\cL_{\cA})_{\cA\in \bW}$ is a tracial $W^*$--function, the Hamiltonian is a tracial $W^*$--function.  The additional assumptions will allow the Hamiltonian to be realized in a single von Neumann algebra, and we will adopt Assumption \textbf{A} for the remainder after this lemma, although some analogous results may hold under weaker assumptions.
			
			%
			%
				%
			
			\begin{lemma}\label{lem:Hamiltonian-bis} 
				If  $(L_{\cA})_{\cA\in \bW}$ is a tracial $W^*$--function and $(b_{\cA})_{\cA\in \bW}$ is a tracial $W^*$ vector-field, then $(H_{\cA})_{\cA\in \bW}$ is a tracial $W^*$--function.  
			\end{lemma}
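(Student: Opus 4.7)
The goal is to verify that for any tracial $W^*$-embedding $\iota_0 : \cA \to \cB$ and any $X, P \in L^2(\cA)_{sa}^d$, we have $H_\cA(X,P) = H_\cB(\iota_0 X, \iota_0 P)$. I would prove the two inequalities separately, and the nontrivial direction uses amalgamated free products.

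For the inequality $H_\cA(X,P) \ge H_\cB(\iota_0 X, \iota_0 P)$, I would use a simple composition argument: any admissible triple $(\cC, \iota': \cB \to \cC, \alpha \in \bA_\cC)$ appearing in the supremum defining $H_\cB(\iota_0 X, \iota_0 P)$ gives rise to the triple $(\cC, \iota' \circ \iota_0 : \cA \to \cC, \alpha)$, which is admissible in the supremum defining $H_\cA(X, P)$. Since the expressions $\langle b_\cC(\iota'\iota_0 X, \alpha), \iota'\iota_0 P\rangle_{L^2(\cC)} - L_\cC(\iota'\iota_0 X,\alpha)$ coincide for both, taking the supremum over the $H_\cB$ side is bounded above by the supremum on the $H_\cA$ side.

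The reverse inequality $H_\cA(X,P) \le H_\cB(\iota_0 X,\iota_0 P)$ is the main obstacle, because a general candidate $(\cC, \iota : \cA \to \cC, \alpha \in \bA_\cC)$ for $H_\cA(X,P)$ is not directly a candidate for $H_\cB(\iota_0 X, \iota_0 P)$; one must first jointly embed $\cB$ and $\cC$ into a common algebra in a way that matches the two copies of $\cA$. For this, I would invoke Lemma~\ref{lem: uniqueness of free product} to form the amalgamated free product $\cD := \cB \median_\cA \cC$ with tracial $W^*$-embeddings $j_\cB: \cB \to \cD$ and $j_\cC : \cC \to \cD$ satisfying $j_\cB \circ \iota_0 = j_\cC \circ \iota$. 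Then $j_\cB: \cB \to \cD$ is an admissible embedding for $H_\cB(\iota_0 X, \iota_0 P)$, and $\alpha' := j_\cC(\alpha) \in \bA_\cD$ by the inclusion property of admissible controls. Now using the matching identity $j_\cB \iota_0 = j_\cC \iota$ on both $X$ and $P$, the tracial $W^*$ vector-field property $b_\cD(j_\cC \iota X, j_\cC \alpha) = j_\cC b_\cC(\iota X, \alpha)$, the fact that $j_\cC$ is an $L^2$-isometry, and the tracial $W^*$-function property $L_\cD(j_\cC \iota X, j_\cC \alpha) = L_\cC(\iota X, \alpha)$, the expression
\[
\langle b_\cD(j_\cB \iota_0 X, \alpha'), j_\cB \iota_0 P\rangle_{L^2(\cD)} - L_\cD(j_\cB \iota_0 X, \alpha')
\]
reduces to $\langle b_\cC(\iota X, \alpha), \iota P\rangle_{L^2(\cC)} - L_\cC(\iota X, \alpha)$. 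Taking the supremum over $(\cC, \iota, \alpha)$ yields the desired bound.

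The only delicate point beyond bookkeeping is the use of amalgamated free products to realize the two embeddings in a common algebra; once this is set up, the tracial and vector-field properties transport the quantities as needed. No continuity or convexity of $L$ or $b$ is required for this step, so Assumption~A beyond the tracial compatibility is not needed here.
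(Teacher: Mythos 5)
Your proposal is correct and follows essentially the same two-step structure as the paper's proof: the easy direction by composing embeddings, and the hard direction by forming the amalgamated free product $\cB \median_{\cA} \cC$ via Lemma~\ref{lem: uniqueness of free product} so the two copies of $\cA$ are identified. The transport identities you list (tracial vector-field consistency for $b$, $L^2$-isometry, tracial $W^*$-function property for $L$) are exactly what the paper uses implicitly in the equality chain, so the argument is complete and well-calibrated to the hypotheses.
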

			\begin{proof} Let $\cA,\cB\in \bW$ and $\iota:\cA\rightarrow \mathcal{B}$ be a tracial $W^*$--embedding. Note that if $\cC\in \bW$ and $\kappa:\cB\rightarrow \mathcal{C}$ is a tracial $W^*$--embedding then $\kappa \circ \iota:\cA\rightarrow \mathcal{C}$ is a tracial $W^*$--embedding. Thus for any  $X, P\in L^2(\cA)^d_{sa}$  and $\alpha \in \bA_{\cC}$, we have 
				\[
				H_\cA(X,P) \geq \Big\langle b_\cC\big(\kappa\circ\iota\, X,\alpha\big),\kappa  \circ\iota\, P\Big\rangle_{L^2(\cC)} - L_{\cC}\big(\kappa\circ\iota\, X, \alpha\big).
				\]
				Maximizing over the set of $(\cC, \kappa)$, we obtain that $H_\cA(X,P)\geq H_\cB( \iota\, X, \iota\, P).$
				
				To show the converse inequality, suppose that $\cB_2 \in \bW$ and $\iota_2: \cA \to \cB_2$ is a tracial $W^*$--embedding.  Let $\cC \in \bW$ be the free product of $\cB$ and $\cB_2$ with amalgamation over $\cA$.
				By  Lemma \ref{lem: uniqueness of free product}, there exist tracial $W^*$--embeddings $\phi_1: \cB \to \cC$ and $\phi_2: \cB_2 \to \cC$ such that $\phi_1 \circ \iota= \phi_2 \circ \iota_2$. Note that for any  $\alpha \in \bA_{\cB_2}$,
				\begin{align*}
					\Big\langle b_{\cB_2}\big( \iota_2\, X,\alpha\big),\iota_2\, P\Big\rangle_{\cB_2} - L_{\cB_2}\big(\iota_2\, X, \alpha\big)
					=&\ \Big\langle b_\cC\big(\phi_2\circ  \iota_2 \, X, \phi_2\, \alpha\big),\phi_2\circ  \iota_2 P\Big\rangle_{L^2(\cC)} - L_{\cC}\big(\phi_2\circ  \iota_2 X, \phi_2\, \alpha\big)\\
					=&\ \Big\langle b_\cC\big(\phi_1\circ  \iota \, X, \phi_2\, \alpha\big),\phi_1\circ  \iota P\Big\rangle_{L^2(\cC)} - L_{\cC}\big(\phi_1\circ  \iota X, \phi_2\, \alpha\big)\\
					\leq &\ H_\cB( \iota\, X, \iota\, P).
				\end{align*}
				Maximizing over the set of $(\cB_2, \iota_2)$, we conclude that  $H_\cA(X,P)\leq H_\cB( \iota\, X, \iota\, P).$ \end{proof}
			
			In the following lemma, we show that the Hamiltonian inherits both structure and estimates from our assumptions. The estimate we obtain in (\ref{eqn:Hamiltonian_bound}) below will be indispensable for the theory of viscosity solutions developed in \cite{ishii1993viscosity,lions1988viscosity}.
			\begin{lemma}\label{lem:Hamiltonian} Under Assumption \textbf{A}, the following hold.  
				\begin{enumerate}
					\item[(i)] The Hamiltonian satisfies 
					\begin{align}\label{eqn:Hamiltonian}
						H_\cA(X,P)= \sup_{\alpha \in \bA_{\cA}}\big\{ \langle b_\cA(X,\alpha),P\rangle_{L^2(\cA)} - L_{\cA}(X, \alpha)\big\}  \quad \hbox{ for } X,P\in L^2(\cA)_{sa}^d.
					\end{align}
					\item[(ii)] $(H_\cA)_{\cA\in \bW}$ is a tracial $W^*$--function (in both variables) which is $E$-convex in the second variable. 
					\item[(iii)] $H_{\cA}$ is uniformly continuous on bounded sets. Also for any $X,Y\in L^2(\cA)_{sa}^d$ and $\gamma>1$,
					\begin{align}\label{eqn:Hamiltonian_bound}
						H_{\cA}\big(X,\gamma(X-Y)\big)-H_{\cA}\big(Y,\gamma(X-Y)\big) \geq - \left(\frac{2\bar{C}+C_2^2}{2}\gamma\,\|X-Y\|_{L^2(\cA)}^2 + \frac{1}{2\gamma}\right).
					\end{align}
				\end{enumerate}
			\end{lemma}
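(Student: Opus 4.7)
The plan is to handle the three assertions in order, since (ii) and (iii) depend on the single-algebra formula from (i).

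For (i), the inequality $H_\cA(X,P) \geq \sup_{\alpha \in \bA_\cA}[\langle b_\cA(X,\alpha),P\rangle - L_\cA(X,\alpha)]$ is immediate by taking $\cB = \cA$ and $\iota = \mathrm{id}$ in the defining supremum \eqref{eqn:Hamiltonian_equivalence}. For the reverse, given any tracial $W^*$--embedding $\iota:\cA\to\cB$ with adjoint conditional expectation $E$, and any $\alpha\in\bA_\cB$, I would compress to $E\alpha$. Using that $E\bA_\cB\subset\bA_\cA$ (by Section 2.2.2(c)), the $E$--affinity of $b$, the tracial property of $L$ and $b$, and the duality $\langle\iota\, \cdot,\cdot\rangle_{L^2(\cB)} = \langle\cdot,E\, \cdot\rangle_{L^2(\cA)}$, one computes
\[
\langle b_\cB(\iota X,\alpha),\iota P\rangle_{L^2(\cB)} = \langle E\, b_\cB(\iota X,\alpha),P\rangle_{L^2(\cA)} = \langle b_\cA(X, E\alpha),P\rangle_{L^2(\cA)},
\]
while $L_\cB(\iota X,\alpha) \geq L_\cA(X,E\alpha)$ by $E$--convexity (2.20). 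Thus each term inside the supremum in \eqref{eqn:Hamiltonian_equivalence} is bounded above by a term with $E\alpha \in \bA_\cA$ inside the supremum in \eqref{eqn:Hamiltonian}, giving the reverse inequality.

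For (ii), the tracial $W^*$--property follows from Lemma \ref{lem:Hamiltonian-bis} (already proved under weaker hypotheses). Convexity of $P\mapsto H_\cA(X,P)$ is immediate from (i) since $H_\cA(X,\cdot)$ is a supremum of affine functions. To verify $E$--convexity in $P$, given $\iota:\cA\to\cB$ and $P\in L^2(\cB)^d_{\mathrm{sa}}$, I apply (i) to the left side, use the tracial property of $L$ and $b$ to rewrite
\[
H_\cA(X,EP) = \sup_{\alpha\in\bA_\cA}\bigl\{\langle b_\cB(\iota X,\iota\alpha),P\rangle_{L^2(\cB)} - L_\cB(\iota X,\iota\alpha)\bigr\},
\]
and then note that since $\iota\bA_\cA\subset\bA_\cB$, this supremum is majorized by the supremum over all $\beta\in\bA_\cB$, which equals $H_\cB(\iota X,P)$ by (i) again.

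For (iii), the crux is the reverse estimate \eqref{eqn:Hamiltonian_bound}. Using (i), for every $\alpha\in\bA_\cA$,
\[
\bigl[\langle b_\cA(X,\alpha),\gamma(X-Y)\rangle - L_\cA(X,\alpha)\bigr] - \bigl[\langle b_\cA(Y,\alpha),\gamma(X-Y)\rangle - L_\cA(Y,\alpha)\bigr]
\]
equals $\gamma\langle b_\cA(X,\alpha) - b_\cA(Y,\alpha),X-Y\rangle + L_\cA(Y,\alpha) - L_\cA(X,\alpha)$. Cauchy--Schwarz combined with the monotonicity-type bound \eqref{eqn:C_monotonicity} controls the first term from below by $-\bar C\gamma\|X-Y\|^2$, while the Lipschitz estimate \eqref{eqn:Lipschitz_bounds} gives $L_\cA(Y,\alpha) - L_\cA(X,\alpha)\geq -C_2\|X-Y\|$. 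Applying Young's inequality in the form $C_2\|X-Y\|\leq \tfrac{\gamma C_2^2}{2}\|X-Y\|^2 + \tfrac{1}{2\gamma}$ and then taking the supremum over $\alpha$ on both sides (the inequality is uniform in $\alpha$, so it passes through $\sup$) yields \eqref{eqn:Hamiltonian_bound}. For uniform continuity on bounded sets, the upper bound \eqref{eqn:lower_and_upper_bounds} together with \eqref{eqn:b_continuity} shows that the integrand in \eqref{eqn:Hamiltonian} is upper bounded by $\upsilon_M(1+\|\alpha\|)\|P\| - \tfrac{1}{C_1}\|\alpha\|^2 + C_1$, a concave-in-$\|\alpha\|$ expression whose maximum is attained for $\|\alpha\|$ bounded by a constant depending only on bounds for $\|X\|$ and $\|P\|$; one may therefore restrict the supremum to a bounded set of controls, on which the uniform continuity of $b$ and $L$ (Section 2.2.3(c) and Section 2.2.4(c)) transfers to $H_\cA$. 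The main obstacle throughout is making consistent use of the interplay between $E$--affinity of $b$, $E$--convexity of $L$, and the invariance $E\bA_\cB\subset \bA_\cA$; once (i) reduces the supremum to a single algebra, the remaining estimates are parallel to their classical analogues.
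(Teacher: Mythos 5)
Your proposal is correct and follows essentially the same route as the paper: part (i) via taking $\cB=\cA$ for one inequality and compressing by the conditional expectation (using $E\bA_\cB\subset\bA_\cA$, $E$-affinity of $b$, and $E$-convexity of $L$) for the other; part (ii) by invoking Lemma~\ref{lem:Hamiltonian-bis} for the tracial property and rewriting the supremum over $\iota\bA_\cA\subset\bA_\cB$ for $E$-convexity; part (iii) by pairing the monotone bound on $b$ and the Lipschitz bound on $L$ with Young's inequality, and restricting the supremum to bounded controls via the coercivity of $L$ for uniform continuity. The only cosmetic difference is that in (iii) you pass the uniform-in-$\alpha$ lower bound through the supremum directly, whereas the paper picks a near-optimal $\alpha_2$ for $H_\cA(Y,\cdot)$ and plugs it into $H_\cA(X,\cdot)$; these are equivalent.
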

			
			\begin{proof} (i)  We first note that $\geq$ in  \eqref{eqn:Hamiltonian} follows immediately from taking $\mathcal{B}=\cA$ in \eqref{eqn:Hamiltonian_equivalence}.  For the other direction, by assumptions \eqref{eqn:E_linear} and \eqref{eqn:E_convex},  for any $X\in L^2(\cA)^d_{sa}$, $\mathcal{B}\in \bW$, and $\alpha\in\bA_\mathcal{B}$, we have
				$$
				\langle b_\cA( X,E\, \alpha),P\rangle_{L^2(\cA)} - L_{\cA}(X,E\, \alpha)\geq \langle  b_{\mathcal{B}}(\iota\, X, \alpha),\iota\, P\rangle_{L^2(\cB)} - L_{\mathcal{B}}(\iota\, X, \alpha).
				$$
				Thus for every $\mathcal{B}\in \bW$ and $\alpha\in\bA_\mathcal{B}$, we may bound the argument of the supremum in \eqref{eqn:Hamiltonian_equivalence} using  $E\, \alpha\in \bA_\cA$,  proving $\leq$ in  \eqref{eqn:Hamiltonian}.
				
				(ii) By Lemma \ref{lem:Hamiltonian-bis}, $(H_\cA)_{\cA\in \bW}$ is a tracial function. 
				To prove the $E$-convexity of $(H_{\cA})_{\cA\in \bW}$, consider $\cA,\cB\in \bW$ with a tracial $W^*$--embedding $\iota:\cA\rightarrow \cB$. Then for any $X\in L^2(\cA)^d_{sa}$, $\alpha\in \bA_{\cA}$, and $P\in L^2(\mathcal{B})^d_{sa}$,
				$$
				\langle b_{\cA}(X,\alpha),E\, P\rangle_{L^2(\cA)} - L_{\cA}(X,\alpha)=\langle b_{\cB}(\iota\, X,\iota\, \alpha), P\rangle_{L^2(\cB)} - L_{\mathcal{B}}(\iota\, X, \iota\, \alpha) \leq H_{\mathcal{B}}(\iota\, X, P).
				$$
				Maximizing over $\alpha \in \bA_{\cA}$, we use \eqref{eqn:Hamiltonian} to conclude that $ H_{\mathcal{A}}( X, E\, P) \leq H_{\mathcal{B}}(\iota\, X, P).$

				(iii)    Uniform continuity on bounded sets follows from the uniform continuity on bounded sets of $b_{\cA}$ and $L_{\cA}$ along with the coercive lower bound on $L_{\cA}$ in \eqref{eqn:lower_and_upper_bounds}, which allows us to restrict to a bounded set for $\alpha\in \bA_{\cA}$. To provide more details, 
				fix some $M>0$ and $X,P\in L^2(\cA)_{sa}^d$ with $\max\{\|X\|_{L^2(\cA)},\|P\|_{L^2(\cA)}\}\leq M$. We first note that the Hamiltonian is bounded below by, using that $0\in \mathbb{A}_{\cA}$,
				\[
				H_{\cA}(X,P) \geq\ \langle b_{\cA}(X, 0),P\rangle_{L^2(\cA)} - L_{\cA}(X,0)\geq -M\, \|b_{\cA}(X,0)\|_{L^2(\cA)} - C_1(1+M),
				\]
				and by (\ref{eqn:b_continuity})
				$$
				\|b_{\cA}(X,0)\|_{L^2(\cA)} \leq \upsilon_M.
				$$
				Hence there is a constant $C_M>0$ depending only on $M$ such that 
				\begin{align}\label{eq:nov14.2023.1}
					H_{\cA}(X,P) \geq& -C_M.
				\end{align}
				
				In light of \eqref{eqn:Hamiltonian}, there exists $\bar{\alpha}\in \bA_\cA$ such that
				$$
				H_{\cA}(X,P) \leq1+ \langle b_{\cA}(X, \bar{\alpha}),P\rangle_{L^2(\cA)} - L_{\cA}(X,\bar{\alpha}).
				$$
				Then by conditions  \eqref{eqn:b_continuity} and  \eqref{eqn:lower_and_upper_bounds},
				\begin{align}\label{eq:nov14.2023.2}
					H_{\cA}(X,P) +\frac{1}{C_1}\|\bar{\alpha}\|_{L^2(\cA)}^2 \leq 1+ \upsilon_M(1+\|\bar{\alpha}\|_{L^2(\cA)})\, M+C_1 .
				\end{align}
				By \eqref{eq:nov14.2023.1} and \eqref{eq:nov14.2023.2},  for some constant $M'$ that depends only on $M$,
				\begin{align} \label{201}
					\|\bar{\alpha}\|_{L^2(\cA)}\leq M'.
				\end{align}
				We can go back to \eqref{eq:nov14.2023.2} to conclude that $H_{\cA}(X,P)\leq C_M$ (by increasing the value of $C_M$ if necessary).
				
				Now given $X_1,P_1,X_2,P_2\in L^2(\cA)_{sa}^d$ with $\max\{\|X_1\|_{L^2(\cA)},\|X_2\|_{L^2(\cA)},\|P_1\|_{L^2(\cA)},\|P_2\|_{L^2(\cA)}\}\leq M$ and $\epsilon>0$, one can find $\alpha_1\in \bA_\cA$ with $\|\alpha_1\|_{L^2(\cA)}\leq M'$ (see \eqref{201}) such that
				\begin{align*}
					H_{\cA}(X_1,P_1)&-H_{\cA}(X_2,P_2) \\
					&\leq \epsilon+ \langle b_{\cA}(X_1, \alpha_1),P_1\rangle_{L^2(\cA)} - L_{\cA}(X_1,\alpha_1) - \langle b_{\cA}(X_2, \alpha_1),P_2\rangle_{L^2(\cA)} + L_{\cA}(X_2,\alpha_1)\\
					&\leq \epsilon+\| b_{\cA}(X_1, \alpha_1) -  b_{\cA}(X_2, \alpha_1)\|_{L^2(\cA)}\, \|P_1\|_{L^2(\cA)}\\
					&\ \ \  \ \  + \|b_\cA(X_2,\alpha_1)\|_{L^2(\cA)}\, \|P_1-P_2\|_{L^2(\cA)} + |L_\cA(X_1,\alpha_1)-L_\cA(X_2,\alpha_1)|\\
					&\leq  \epsilon+(1+ M)\, C_2\, \|X_1-X_2\|_{L^2(\cA)} +\upsilon_M(1+M')\, \|P_1-P_2\|_{L^2(\cA)}.
				\end{align*}
				Taking $\epsilon\rightarrow 0$,
				we establish a uniform continuity of $H_{\cA}$ on bounded sets.

				
				To prove (\ref{eqn:Hamiltonian_bound}), for given $\gamma>1$, $\epsilon>0$ and $X,Y\in L^2(\mathcal{A})_{sa}^d$,  let $\alpha_2\in \mathbb{A}_{\cA}$ be such that
				$$
				H_{\cA}\big(Y,\gamma(X-Y)\big) \leq \epsilon + \langle b_{\cA}(Y, \alpha_2),\gamma(X-Y)\rangle_{L^2(\cA)} - L_{\cA}(Y,\alpha_2).
				$$
				Then
				\begin{align*}
					&\ H_{\cA}\big(X,\gamma(X-Y)\big)-H_{\cA}\big(Y,\gamma(X-Y)\big)\\
					\geq&\ \langle b_{\cA}(X, \alpha_2),\gamma(X-Y)\rangle_{L^2(\cA)} - \langle b_{\cA}(Y, \alpha_2),\gamma(X-Y)\rangle_{L^2(\cA)} -L_{\cA}(X,\alpha_2) +L_{\cA}(Y,\alpha_2) - \epsilon\\
					\geq&\ -\bar{C}\, \gamma\, \|X-Y\|_{L^2(\cA)}^2 - C_2\|X-Y\|_{L^2(\cA)}-\epsilon\geq - \big(\frac{2\bar{C}+C_2^2}{2}\gamma\,\|X-Y\|_{L^2(\cA)}^2 + \frac{1}{2\gamma}\big)-\epsilon,
				\end{align*}
				using the Lipschitz bound conditions (\ref{eqn:C_monotonicity}) and (\ref{eqn:Lipschitz_bounds}). Taking $\epsilon\rightarrow 0$ completes the proof.
			\end{proof}
			
			We now introduce a theorem on well-posedness for solutions to the SDE. This is essentially  \cite[Theorem 6.16]{yong1999stochastic} in a Hilbert space setting.  Much more general results can be readily found, for example, in \cite{fabbri2017stochastic}, but the generality complicates things, so we present a simpler setting.
			
			We could use a general It{\^o} formula, such as proven later in Lemma \ref{lem:Ito}, but for the theorem we need only to consider the simpler case of the test functions made up of the norm squared, which we can handle explicitly.

			\begin{theorem}\label{thm:well_posedness}
				Suppose that Assumption \textbf{A} holds.  Given $\cA\in \bW$, $x_0\in L^2(\cA_{t_0})^d_{sa}$, and $\widetilde{\alpha}\in \bA_{\cA,x_0}^{t_0,t_1}$, there exists a unique solution $(X_t)_{t\in [t_0,t_1]}\in C([t_0,t_1];L^2(\Omega,\mathcal{F},\bP;L^2(\cA)^d_{sa}))$ to  (\ref{eqn:common_noise}) with $X_{t_0}=x_0$ which is adapted in the sense that for every $s\in [t_0,t_1]$, $X_s\in L^2(\Omega,\mathcal{F}_s,\bP;L^2(\cA_s)^d_{sa})$. Furthermore, letting 
				\begin{align*}
					M:=  \|x_{0}\|_{L^2(\cA)}^2,\qquad  N:=\mathbb{E}\Big[\int_{t_0}^{t_1}\|\alpha_t\|_{L^2(\cA)}^2 dt\Big],
				\end{align*} 
				there exists $\widetilde{C}_{M,N}>0$, depending only on $M$ and $N$, such that for $t_0\leq t\leq t_1$,
				\begin{align}\label{eqn:time_bound_L2}
					\bE\Big[\big\|X_t-x_0\big\|_{L^2(\cA)}^2\Big]\leq \widetilde{C}_{M,N}\, (t-t_0).
				\end{align}
				
				Also, there exists $\widetilde{C}>0$ depending only on $\bar{C}$ (see \eqref{eqn:C_monotonicity}) such that if $\widehat{X}$ is a solution with another random initial condition $\widehat{x}_0\in L^2(\Omega,\mathcal{F}_{t_0},\bP;L^2(\cA_{t_0})^d_{sa})$ and the same control, then
				\begin{align}\label{eqn:initial_condition_continuity_2}
					\bE\Big[\big\|X_t-\widehat{X}_t\big\|_{L^2(\cA)}^2\Big]\leq \widetilde{C}\, \mathbb{E}\big[\|x_0 - \widehat{x}_0\|_{L^2(\cA)}^2\big].
				\end{align}


			\end{theorem}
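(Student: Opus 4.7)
The plan is to reformulate the SDE as a fixed-point equation on the Banach space
\[
\mathcal{H} := \Big\{ Y \in C\big([t_0,t_1]; L^2(\Omega; L^2(\cA)^d_{sa})\big) : Y_s \in L^2(\Omega, \mathcal{F}_s, \bP; L^2(\cA_s)^d_{sa}) \text{ for all } s \in [t_0,t_1] \Big\}
\]
via the map
\[
(\Phi Y)_t := x_0 + \int_{t_0}^t b_\cA(Y_s, \alpha_s)\,ds + \beta_C\, \mathbbm{1}_\cA\, (W_t^0 - W_{t_0}^0) + \beta_F\, (S_t - S_{t_0}).
\]
Since the free Brownian term appears with constant coefficient, no free It\^{o} integration is needed; the process $\beta_F(S_t - S_{t_0})$ is simply the evaluation of the given free Brownian motion. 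I would first verify that $\Phi$ preserves the adaptedness built into $\mathcal{H}$: for $s \leq t$, the tracial vector field property \eqref{eqn:tracial_vector_field} applied to the inclusion $\cA_s \hookrightarrow \cA$ gives $b_\cA(Y_s,\alpha_s) \in L^2(\cA_s)^d_{sa}$, and the noise increments $W_t^0 - W_{t_0}^0$ and $S_t - S_{t_0}$ are in $\mathcal{F}_t$ and $L^2(\cA_t)$ respectively by the definitions of the free filtration and admissible control policy.

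For existence and uniqueness, the Lipschitz bound \eqref{eqn:C_monotonicity} combined with Cauchy--Schwarz yields
\[
\bE\big\|(\Phi Y_1)_t - (\Phi Y_2)_t\big\|_{L^2(\cA)}^2 \leq \bar{C}^2 (t-t_0) \int_{t_0}^t \bE\big\|Y_1(s) - Y_2(s)\big\|_{L^2(\cA)}^2\,ds,
\]
and iterating gives the standard factorial decay $\bar{C}^{2n}(t_1-t_0)^{2n}/n!$, so that some iterate $\Phi^n$ is a strict contraction on $\mathcal{H}$, producing a unique fixed point. That $\Phi$ actually maps $\mathcal{H}$ into itself (in particular, that $(\Phi Y)_t$ is $L^2$-continuous in $t$) follows from the growth bound \eqref{eqn:b_continuity} together with an a priori $L^2$ estimate obtained by a Gronwall argument directly on $\bE \|Y_t\|^2_{L^2(\cA)}$.

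For the bound \eqref{eqn:time_bound_L2}, I would apply the triangle inequality to the three terms in the integral form of the SDE. For the drift, Cauchy--Schwarz combined with \eqref{eqn:b_continuity} gives
\[
\bE\Big\|\int_{t_0}^t b_\cA(X_s, \alpha_s)\,ds\Big\|_{L^2(\cA)}^2 \leq (t-t_0)\int_{t_0}^t \upsilon_M^2 \big(1 + \bE\|\alpha_s\|_{L^2(\cA)}\big)^2\,ds,
\]
which is controlled by $M$, $N$ and $(t-t_0)$. For the classical noise, the isometry $\bE[(W_t^0 - W_{t_0}^0)^2] = t-t_0$ combined with $\|\mathbbm{1}_\cA\|_{L^2(\cA)}^2 = d$ gives the order $d(t-t_0)$ contribution. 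For the free noise, each $S_t^j - S_{t_0}^j$ is semicircular of variance $t-t_0$, so $\tau((S_t^j - S_{t_0}^j)^2) = t-t_0$ and $\|S_t - S_{t_0}\|_{L^2(\cA)}^2 = d(t-t_0)$. Combining these with the aforementioned a priori bound on $\bE \|X_t\|_{L^2(\cA)}^2$ yields \eqref{eqn:time_bound_L2}.

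Finally, for the Lipschitz estimate \eqref{eqn:initial_condition_continuity_2}, I would exploit the fact that with identical controls (and the same realizations of the noises), the noise terms cancel in the difference, giving the pathwise identity
\[
X_t - \widehat{X}_t = (x_0 - \widehat{x}_0) + \int_{t_0}^t \big(b_\cA(X_s,\alpha_s) - b_\cA(\widehat{X}_s,\alpha_s)\big)\,ds.
\]
Taking $L^2$ norms, squaring, and applying \eqref{eqn:C_monotonicity} followed by Gronwall's inequality gives $\bE\|X_t - \widehat{X}_t\|_{L^2(\cA)}^2 \leq e^{\widetilde{C}(t-t_0)}\bE\|x_0 - \widehat{x}_0\|_{L^2(\cA)}^2$ with $\widetilde{C}$ depending only on $\bar{C}$. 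The main subtlety (rather than obstacle) is verifying that the Picard iterates stay in $\mathcal{H}$, which is where the interplay between classical measurability and the non-commutative adaptedness must be handled carefully via the tracial vector field property; once this is in place, everything else reduces to standard Hilbert-space SDE arguments.
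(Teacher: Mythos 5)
Your overall approach (Picard fixed-point for existence/uniqueness, Gronwall for the estimates) matches the paper's, and your observation that $\Phi$ preserves the free adaptedness built into $\mathcal{H}$ via the tracial vector-field property \eqref{eqn:tracial_vector_field} is a nice explicit justification that the paper only states tersely in Appendix~\ref{apx:SDE_theory}. The contraction argument via $\Phi^n$ (factorial decay) is a standard alternative to the paper's exponentially weighted $C_\gamma$-norm; both work.

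However, there is a genuine gap in your drift estimate for \eqref{eqn:time_bound_L2}. You bound
$\|b_\cA(X_s,\alpha_s)\|_{L^2(\cA)} \leq \upsilon_M(1 + \|\alpha_s\|_{L^2(\cA)})$
directly, but the growth bound \eqref{eqn:b_continuity} requires $\|X_s(\omega)\|_{L^2(\cA)} \leq M$ to hold almost surely for a \emph{deterministic} $M$, and this is not available for the solution $X_s$ (which is an unbounded $L^2(\cA)$-valued random variable because of the Gaussian common noise). An a priori estimate on $\bE\|X_s\|^2_{L^2(\cA)}$ does not supply an a.s.\ bound, so the clause you appeal to ("combining with the aforementioned a priori bound") does not close the argument. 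The fix, which the paper uses, is to decompose
\[
b_\cA(X_s,\alpha_s) = \bigl[b_\cA(X_s,\alpha_s) - b_\cA(x_0,\alpha_s)\bigr] + b_\cA(x_0,\alpha_s),
\]
bound the first bracket via the \emph{global} Lipschitz estimate \eqref{eqn:C_monotonicity} by $\bar{C}\|X_s - x_0\|_{L^2(\cA)}$ (which then closes a Gronwall loop on $\bE\|X_s - x_0\|^2$), and apply the growth bound $\upsilon_M(1+\|\alpha_s\|)$ only at the deterministic $x_0$ with $\|x_0\|_{L^2(\cA)}^2 = M$. With this decomposition your Gronwall plan works; without it, the $\upsilon_M$ subscript has no valid meaning along the trajectory. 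A secondary, purely stylistic difference: the paper computes $\frac{d}{dt}\bE\|X_t - x_0\|^2$ directly, where the noise terms contribute the constant $d(\beta_C^2+\beta_F^2)$ (cross terms vanish by the martingale property), rather than using a triangle inequality on the integral form; this gives a slightly sharper constant but is not essential.
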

			\begin{proof}
				Using a standard Picard iteration method, we can find a unique solution for small times that is adapted to the filtration; see Appendix \ref{apx:SDE_theory}.  Let $\delta>0$ be arbitrary and recall the constant
				$v_M$ from \eqref{eqn:b_continuity}. Then,
			using Assumption \textbf{A}, 
			\begin{align*}
				&\frac{d}{dt}\bE\Big[\|X_t-x_0\|^2_{L^2(\cA)}\Big]\\
				=&\   2\bE\Big[\langle X_t-x_0,b_{\cA}(X_t,\alpha_t)\rangle_{L^2(\cA)}\Big] + d\, (\beta_C^2 + \beta_F^2)\\
				=&\  2\bE\Big[\langle X_t-x_0,b_{\cA}(X_t,\alpha_t)-b_{\cA}(x_0,\alpha_t)\rangle_{L^2(\cA)} + \langle X_t-x_0,b_{\cA}(x_0,\alpha_t)\rangle_{L^2(\cA)}\Big] + d\, (\beta_C^2 + \beta_F^2) \\
				\leq&\ 2\bE\Big[\bar{C}\, \|X_t-x_0\|_{L^2(\cA)}^2\big] +\|X_t-x_0\|_{L^2(\cA)}\, \upsilon_M\big(1+\|\alpha_t\|_{L^2(\cA)}\big)\Big] + d\, (\beta_C^2 + \beta_F^2)\\
				\leq&\ (2\bar{C} +1 + \delta)\, \bE\big[\|X_t-x_0\|_{L^2(\cA)}^2\big] +\frac{\upsilon_M^2}{ \delta}\bE\big[\|\alpha_t\|^2_{L^2(\cA)}\big] + \upsilon_M^2 + d\, (\beta_C^2 + \beta_F^2),
			\end{align*}
			{where the first identity follows from Appendix \ref{apx:SDE_theory}.} 
			By Gr\"onwall's inequality,
			$$
			\bE\Big[\|X_t-x_0\|^2_{L^2(\cA)}\Big] \leq y_t,
			$$
			where $y_t$ is defined by, setting $r:=2\bar{C}+1$ and $D_M := \upsilon_M^2  + d\, (\beta_C^2 + \beta_F^2)$,
			\begin{align*}
				y_t :=&\  e^{(r+\delta)\, (t-t_0)}\Big(\int_{t_0}^t e^{-(r+\delta)\, (s-t_0)}\big( \frac{\upsilon_M^2}{ \delta}\bE\big[\|\alpha_s\|^2_{L^2(\cA)_{sa}^d}\big] + D_M\big)ds\Big)\\
				\leq&\ e^{(r+\delta)\, (t-t_0)}\big( \frac{\upsilon_M^2}{ \delta}\, N + (t-t_0) D_M\big) .
			\end{align*}
			Choosing $\delta: = \frac{1}{t-t_0}$, we arrive at (\ref{eqn:time_bound_L2}) for some constant $\widetilde{C}_{M,N}$. 

			To show (\ref{eqn:initial_condition_continuity_2}),  note that 
			\begin{align*}
				\frac{d}{dt}\bE\big[\|X_t-\widehat{X}_t\|^2_{L^2(\cA)}\Big] =&\ 2 \bE\Big[\langle X_t-\widehat{X}_t,b_{\cA}(X_t,\alpha_t)-b_{\cA}(\widehat{X}_t,{\alpha}_t)\rangle_{L^2(\cA)} \Big]\\
				\leq&\ 2\mathbb{E}\Big[\|X_t-\widehat{X}_t\|_{L^2(\cA)}\|b(X_t,{\alpha}_t)-b(\widehat{X}_t,{\alpha}_t)\|_{L^2(\cA)} \Big]\\
				\leq&\ 2\bar{C}\, \bE\Big[\|X_t-\widehat{X}_t\|^2_{L^2(\cA)}\Big].
			\end{align*}
			Then (\ref{eqn:initial_condition_continuity_2}) follows from the Gr\"onwall's inequality. 
		\end{proof}
		
		\subsection{Properties of the value function} \label{subsec: value function}
		
		In this section, we prove well-definedness and continuity properties for the value function, including showing that $\overline{V}$ is well-defined on the space of non-commutative laws.
		
		We begin by proving an auxiliary lemma about amalgamating several different non-commutative filtrations and associated Brownian motions.  Of course, the analogs of these lemmas in classical probability would be proved using conditional distributions, but for the non-commutative setting we will use amalgamated free products.  In the following lemma, we describe how to glue together different choices of filtration and free Brownian motion on an interval $[t_1,T]$ together with a fixed choice of filtration and free Brownian motion on $[t_0,t_1]$, where $0\le t_0\le t_1\le T$.  While the general case is used in Proposition \ref{prop:dynamic_programming}, the applications in this section are mostly in the case $t_0 = t_1$. These lemmas require significant technical effort to prove, so in order to maintain the flow of this section, we include detailed arguments in the appendix (\S \ref{subsec: amalgamation of filtrations}).
		
		\begin{lemma}[{Lemma \ref{lem: amalgamation of Brownian motions appendix}}] \label{lem: amalgamation of Brownian motions}
			Let $0 \leq t_0 \leq t_1 \leq T$ and $K$ be any index set.  Let $\cA$ be a tracial von Neumann algebra equipped with a filtration $(\cA_t)_{t \in [t_0,t_1]}$ and a compatible $d$-variable free Brownian motion $(S_t^0)_{t \in [t_0,t_1]}$. 
			Let $(\cB^k)_{k \in K}$ be a family of tracial von Neumann algebras and let $\iota_k: \cA \to \cB^k$ be a tracial $\mathrm{W}^*$-embedding.  Suppose each $\cB^{k}$ has a filtration $(\cB_t^k)_{t \in [t_1,T]}$ and a compatible $d$-variable free Brownian motion $(S_t^k)_{t \in [t_1,T]}$.  Assume that $\iota_k(\cA_{t_1}) \subseteq \cB_{t_1}^k$.
			
			Then there exists an algebra $\cB$, a tracial $\mathrm{W}^*$-embeddings $\iota: \cA \to \cB$ and $\widetilde{\iota}_k: \cB^k \to \cB$, a filtration $(\cB_t)_{t \in [t_0,T]}$, and a compatible  $d$-variable free Brownian motion $(S_t)_{t \in [t_0,T]}$ such that the following hold:
			\begin{enumerate}
				\item We have $\widetilde{\iota}_k \circ \iota_k|_{\cA_{t_0}} = \iota|_{\cA_{t_0}}$ for each $k \in K$.
				\item We have $\iota(S_t^0) = S_t$ for $t \in [t_0,t_1]$.
				\item We have $\widetilde{\iota}_k(S_t^k) = S_t - S_{t_1}$ for $t \in [t_1,T]$.
				\item We have $\iota(\cA_t) \subseteq \cB_t$ for $t \in [t_0,t_1]$.
				\item We have $\widetilde{\iota}_k(\cB_t^k) \subseteq \cB_t$ for $t \in [t_1,T]$.
			\end{enumerate}
			Furthermore, for any given $k_0 \in K$, the embedding $\iota$ can be taken to be $\iota = \widetilde{\iota}_{k_0} \circ \iota_{k_0}$.
		\end{lemma}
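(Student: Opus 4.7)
My plan is to construct $\cB$ as an amalgamated free product, choosing the amalgamating subalgebra to encode exactly the data on which all embeddings must agree: $\cA_{t_0}$ together with a fresh free Brownian motion on $[t_1,T]$ that will be identified with each $S^k$ and with the continuation of $S^0$.

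\textbf{Construction.} I will first extend $\cA$ to $\widehat\cA := \cA \median_{\cA_{t_1}} (\cA_{t_1} \median \mathrm{W}^*(\widetilde S^+))$ by freely adjoining a $d$-variable free Brownian motion $\widetilde S^+$ on $[t_1,T]$ over $\cA_{t_1}$, then set the extended filtration $\widehat\cA_t := \cA_t$ on $[t_0,t_1]$ and $\widehat\cA_t := \cA_{t_1} \vee \mathrm{W}^*(\widetilde S^+_r : r \in [t_1,t])$ on $[t_1,T]$, and extended Brownian motion $\widehat S^0_t := S^0_t$ on $[t_0,t_1]$ and $\widehat S^0_t := S^0_{t_1} + \widetilde S^+_t$ on $[t_1,T]$. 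Compatibility with the filtration will follow from past-future independence of free Brownian motion together with freeness of $\widetilde S^+$ from $\cA$ over $\cA_{t_1}$. Next I will introduce $\cP := \cA_{t_0} \median \mathrm{W}^*(S^+)$, with $S^+$ a $d$-variable free Brownian motion on $[t_1,T]$, and check that the natural maps $\theta_\cA : \cP \to \widehat\cA$ (via $\cA_{t_0} \hookrightarrow \cA$ and $S^+_r \mapsto \widetilde S^+_r$) and $\theta_k : \cP \to \cB^k$ (via $\iota_k$ on $\cA_{t_0}$ and $S^+_r \mapsto S^k_r$) are tracial $\mathrm{W}^*$-embeddings; this holds because in each case the Brownian motion involved is free of the appropriate copy of $\cA_{t_0}$ (using $S^k \perp \cB^k_{t_1} \supseteq \iota_k(\cA_{t_0})$ and $\widetilde S^+ \perp \cA_{t_1} \supseteq \cA_{t_0}$). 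I then set
\[
\cB := \widehat\cA \median_\cP \Bigl( \median_{\cP,\, k \in K} \cB^k \Bigr),
\]
with canonical embeddings $\widetilde{\iota}_\cA$, $(\widetilde{\iota}_k)_{k \in K}$, common embedding $\theta := \widetilde{\iota}_\cA \circ \theta_\cA = \widetilde{\iota}_k \circ \theta_k$, and take $\iota := \widetilde{\iota}_\cA \circ \iota_\cA$ where $\iota_\cA : \cA \hookrightarrow \widehat\cA$. The filtration on $\cB$ will be $\cB_t := \widetilde{\iota}_\cA(\widehat\cA_t)$ on $[t_0,t_1]$ and $\cB_t := \widetilde{\iota}_\cA(\widehat\cA_t) \vee \bigvee_k \widetilde{\iota}_k(\cB^k_t)$ on $[t_1,T]$, with $S_t := \widetilde{\iota}_\cA(\widehat S^0_t)$; by the amalgamation, $S_t - S_{t_1} = \theta(S^+_t) = \widetilde{\iota}_k(S^k_t)$ for all $k$ and $t \geq t_1$.

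\textbf{Verifications and main obstacle.} Conditions (1)--(5) will follow formally: (1) holds because $\widetilde{\iota}_k \circ \iota_k|_{\cA_{t_0}} = \theta|_{\cA_{t_0}} = \iota|_{\cA_{t_0}}$ by the amalgamation, and (2)--(5) are immediate from the definitions of $S_t$ and $\cB_t$. The real work will be verifying that $S$ is a free Brownian motion compatible with $(\cB_t)$. The case $s \leq t \leq t_1$ is routine, but for $s,t \in [t_1,T]$ I will need to show that $S_t - S_s = \theta(S^+_t - S^+_s)$ is freely independent of the join $\cB_s = \widetilde{\iota}_\cA(\widehat\cA_s) \vee \bigvee_k \widetilde{\iota}_k(\cB^k_s)$. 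The key tool will be the standard ``sub-free-product'' lemma: inside $\median_\cR(\cM_j)_{j \in J}$, the join of subalgebras $\cN_j \subseteq \cM_j$ each containing a common $\cR_0 \subseteq \cR$ is canonically isomorphic to $\median_{\cR_0}(\cN_j)_{j \in J}$. Applying this with $\cR_0 := \theta(\cP_s)$, where $\cP_s := \cA_{t_0} \vee \mathrm{W}^*(S^+_r : r \in [t_1,s])$, will give that $\widetilde{\iota}_\cA(\widehat\cA_s)$ and the $\widetilde{\iota}_k(\cB^k_s)$'s are freely independent over $\theta(\cP_s)$ inside $\cB$. Since $\theta(S^+_t - S^+_s)$ sits in the amalgamating core $\theta(\cP)$, is free of $\theta(\cP_s)$ within $\theta(\cP)$ (by past-future independence of $S^+$ combined with freeness of $\cA_{t_0}$ from $\mathrm{W}^*(S^+)$), and has vanishing conditional expectation onto $\theta(\cP_s)$, a standard alternating-moments computation reducing everything via conditional expectations onto $\theta(\cP_s)$ will then yield the desired freeness from $\cB_s$. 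The mixed case $s < t_1 < t$ will be handled by writing $S_t - S_s = (S_{t_1} - S_s) + (S_t - S_{t_1})$ and combining the two pieces with mutual freeness of $\widetilde{\iota}_\cA(\cA)$ and the increments of $\widetilde S^+$.

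\textbf{The ``furthermore'' clause.} To obtain $\iota = \widetilde{\iota}_{k_0} \circ \iota_{k_0}$, I will replace $\widehat\cA$ by $\cB^{k_0}$ itself, using the filtration $\iota_{k_0}(\cA_t)$ on $[t_0,t_1]$ and $\cB^{k_0}_t$ on $[t_1,T]$, and extended Brownian motion $\iota_{k_0}(S^0_t)$ concatenated with $\iota_{k_0}(S^0_{t_1}) + S^{k_0}_t$. Compatibility is immediate from $S^{k_0}$ being free of $\cB^{k_0}_{t_1} \supseteq \iota_{k_0}(\cA_{t_1})$. The same amalgamation of $\cB^{k_0}$ with $(\cB^k)_{k \neq k_0}$ over $\cP$ via the $\theta_k$'s then produces $\iota = \widetilde{\iota}_{k_0} \circ \iota_{k_0}$ by construction.
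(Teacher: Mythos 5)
Your overall architecture matches the paper's: build $\cB$ as an amalgamated free product over a subalgebra that encodes the data on which all the Brownian motions must agree, then verify compatibility using free-product manipulations. The differences are cosmetic — the paper amalgamates over $\cA_{t_1} \median \mathrm{W}^*(Z)$ and takes $\iota = \widetilde{\iota}_{k_0}\circ\iota_{k_0}$ directly (so the ``furthermore'' clause is automatic), whereas you amalgamate over the smaller $\cP = \cA_{t_0} \median \mathrm{W}^*(S^+)$ and add an explicit extra factor $\widehat\cA$, then re-run the construction with $\cB^{k_0}$ in place of $\widehat\cA$ to get the ``furthermore.'' Both work.

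The genuine gap is in the compatibility verification. The ``sub-free-product lemma'' you quote — that inside $\median_{\cR}(\cM_j)$ the join of subalgebras $\cN_j \subseteq \cM_j$ containing a common $\cR_0 \subseteq \cR$ is $\median_{\cR_0}(\cN_j)$ — is false as stated. (Take $\cR_0 = \bC$ and $\cN_1 = \cN_2$ a nontrivial subalgebra of $\cR$: the join is a single algebra, not a free product over $\bC$.) The correct sub-free-product fact (Lemma~\ref{lem: free product embedding} plus Corollary~\ref{cor: independence and embeddings}) requires each $\cN_j$ to contain the \emph{full} amalgamating subalgebra $\cR = \theta(\cP)$, which $\widehat\cA_s$ and $\cB^k_s$ do \emph{not}, since the future increments of the Brownian motion stick out of them. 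To pass from freeness over $\cP$ to freeness over $\cP_s$ you need to ``re-base'' the amalgamation, i.e.\ you need the identification $\median_{\cP_s \median \cP_{s,T}}(\cN_j \median \cP_{s,T}) \cong (\median_{\cP_s} \cN_j) \median \cP_{s,T}$, which is precisely the content of Lemma~\ref{lem: AFP manipulation} (proved in Appendix~\ref{sec: AFP} with a delicate case analysis on reduced words). Before it applies you must also first use the correct sub-free-product lemma with the full $\cP$, i.e.\ join the algebras $\cN_j \vee \theta_j(\cP_{s,T})$ (which \emph{do} contain $\cP$) and observe that inside $\cM_j$ this join is a free product $\cN_j \median \cP_{s,T}$. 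Your phrase ``a standard alternating-moments computation reducing everything via conditional expectations onto $\theta(\cP_s)$'' refers to exactly that missing argument, but it is the hardest step of the whole lemma, not routine, and should either be cited or carried out.
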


		The following is an important application of the previous lemma, which will allow us to choose filtrations in $\cB$ such that $\cB_{t_0} \supseteq \iota(\cA)$.  In turn, the freedom in choosing the initial subalgebra of filtration provided by Lemma \ref{lem: arranging initial filtration} will help us to apply Lemma \ref{lem: amalgamation of Brownian motions}.
		
		\begin{lemma}[{Lemma \ref{lem: arranging initial filtration appendix}}] \label{lem: arranging initial filtration}
			Suppose that Assumption \textbf{A} holds.  Let $\cA$ be a tracial $\mathrm{W}^*$-algebra, let $x_0 \in L^2(\cA)_{\sa}^d$, and let $t_0 \in [0,T]$.  Then for every $\epsilon > 0$, there exists a tracial $\mathrm{W}^*$-algebra $\cB$, a tracial $\mathrm{W}^*$-embedding $\iota: \cA \to \cB$, and a control policy $\alpha \in \mathbb{A}_{\mathcal{B},\iota x_0}^{t_0,T}$ associated with a filtration $(\cB_t)_{t \in [t_0,T]}$ and compatible $d$-variable free Brownian motion $(S_t)_{t \in [t_0,T]}$, such that
			\begin{enumerate}
				\item $\iota(\cA) \subseteq \cB_{t_0}$
				\item $\mathbb{E}\left[ \int_{t_0}^T L_{\mathcal{B}}(X_t[\widetilde \alpha],\alpha_t)\,dt + g_{\mathcal{B}}(X_T[\alpha]) \right] \leq 
				\overline{V}_{\mathcal{A}}(t_0,x_0) + \epsilon$.
			\end{enumerate}
		\end{lemma}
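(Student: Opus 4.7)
The plan is to start from a near-optimal choice $(\cB', \iota')$ for $\overline{V}_{\cA}(t_0, x_0)$ and enlarge $\cB'$ by taking an amalgamated free product with $\cA$ so as to force $\iota(\cA)$ into the initial subalgebra of the filtration. Concretely, by the definition \eqref{def:bar} and \eqref{eqn:l2_value}, given $\epsilon > 0$ we can choose $\cB' \in \bW$, a tracial $W^*$-embedding $\iota': \cA \to \cB'$, and an admissible policy $\widetilde{\alpha}' = \bigl((\alpha'_t)_{t\in [t_0,T]}, (\cB'_t)_{t \in [t_0,T]}, (S'_t)_{t \in [t_0,T]}\bigr) \in \mathbb{A}_{\cB', \iota' x_0}^{t_0,T}$ whose cost is within $\epsilon$ of $\overline{V}_{\cA}(t_0,x_0)$. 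The only obstruction to using this directly is that $\iota'(\cA) \subseteq \cB'_{t_0}$ need not hold; we only know $\iota' x_0 \in L^2(\cB'_{t_0})_{\sa}^d$.

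Form the amalgamated free product $\cB := \cB' \median_{W^*(\iota' x_0)} \cA$, where $W^*(\iota' x_0) \subseteq \cB'_{t_0}$ is identified with $W^*(x_0) \subseteq \cA$. Let $j: \cB' \to \cB$ and $\iota: \cA \to \cB$ be the canonical embeddings, so $j(\iota' x_0) = \iota(x_0)$. Define the new filtration $\cB_t := W^*\bigl(\iota(\cA) \cup j(\cB'_t)\bigr)$ for $t \in [t_0,T]$, and set $S_t := j(S'_t)$, $\alpha_t := j(\alpha'_t)$. By construction $\iota(\cA) \subseteq \cB_{t_0}$, and the progressive-measurability/freely-progressive property of $\alpha'_t$ transfers through $j$, while condition \ref{sec 2.2.2}(c) on control sets guarantees $\alpha_t \in \mathbb{A}_{\cB_t}$.

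The heart of the argument is to verify that $(S_t)_{t \in [t_0,T]}$ is a free Brownian motion compatible with $(\cB_t)_{t \in [t_0,T]}$. The distributional and $L^\infty(\cB_t)$-membership properties are immediate from $j$ being a trace-preserving embedding. The free independence condition — that $S_t - S_s$ is free from $\cB_s$ for $t_0 \leq s \leq t \leq T$ — is the essential point, since in general a free product construction can destroy previously existing freeness relations. Here the argument is: by Corollary \ref{cor: independence and embeddings} and Lemma \ref{lem: uniqueness of free product}, $j(\cB')$ and $\iota(\cA)$ are free over $W^*(x_0) \subseteq j(\cB'_s)$ in $\cB$; moreover $S_t - S_s$ is free from $\cB'_s$ inside $\cB'$, hence $j(S_t - S_s)$ is free from $j(\cB'_s)$ in $\cB$. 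The conclusion that $S_t - S_s$ is free from $W^*\bigl(\iota(\cA) \cup j(\cB'_s)\bigr) = \cB_s$ is precisely the content of the amalgamation machinery developed in Appendix \ref{sec: AFP} and applied in Lemma \ref{lem: amalgamation of Brownian motions} in the degenerate case $t_1 = t_0$ (with the modified $\cB'$ playing the role of the post-algebra). This step is the main technical obstacle; everything else is bookkeeping.

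Once compatibility is established, $\widetilde{\alpha} := \bigl((\alpha_t)_t, (\cB_t)_t, (S_t)_t\bigr) \in \mathbb{A}_{\cB, \iota x_0}^{t_0,T}$. By Remark \ref{rem:commute}, the state process transforms covariantly under $j$, so $X_t[\widetilde{\alpha}] = j\bigl(X_t[\widetilde{\alpha}']\bigr)$ for all $t \in [t_0,T]$. Because $(L_{\cA})_{\cA \in \bW}$ and $(g_{\cA})_{\cA \in \bW}$ are tracial $W^*$-functions, the cost is preserved under $j$, yielding
\[
\mathbb{E}\!\left[\int_{t_0}^T L_{\cB}(X_t[\widetilde{\alpha}], \alpha_t)\,dt + g_{\cB}(X_T[\widetilde{\alpha}])\right] = \mathbb{E}\!\left[\int_{t_0}^T L_{\cB'}(X_t[\widetilde{\alpha}'], \alpha'_t)\,dt + g_{\cB'}(X_T[\widetilde{\alpha}'])\right] \leq \overline{V}_{\cA}(t_0,x_0) + \epsilon,
\]
which completes the proof.
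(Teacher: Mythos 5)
Your overall strategy is close to the paper's — take a near-optimal $(\cB', \iota', \widetilde{\alpha}')$ and enlarge the ambient algebra so that $\iota(\cA)$ lands in the initial subalgebra — but the crucial freeness step is a genuine gap, and the citation you give for it does not cover your construction.

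You form $\cB := \cB' \median_{\mathrm{W}^*(\iota'x_0)} \cA$, i.e.\ you amalgamate $\cB'$ with $\cA$ over $\mathrm{W}^*(x_0)$, and then claim that compatibility of $(S_t)$ with $(\cB_t)$ is ``precisely the content of ... Lemma \ref{lem: amalgamation of Brownian motions} in the degenerate case $t_1 = t_0$.'' But Lemma \ref{lem: amalgamation of Brownian motions} requires every ``post-algebra'' $\cB^k$ in the family being amalgamated to already carry a filtration with a compatible free Brownian motion, and the free product it constructs is taken over $\widetilde{\cA} = \cA_{t_0} * \mathcal{Z}$, where $\mathcal{Z}$ is the \emph{Brownian} subalgebra. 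In your setup the second factor is $\cA$ itself, which does not carry a free Brownian motion freely independent of $\cA$ (it cannot — that is precisely the deficiency you are repairing), and your amalgamation base is $\mathrm{W}^*(x_0)$, which does \emph{not} contain the Brownian motion. So neither the hypotheses nor the base of amalgamation in Lemma \ref{lem: amalgamation of Brownian motions} match your construction, and the lemma does not apply.

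The underlying freeness assertion you need — that in $(\cB'_s \vee \mathcal{D}') \median_{\mathrm{W}^*(x_0)} \cA$ (where $\mathcal{D}' := \mathrm{W}^*(S'_r - S'_s : r \in [s,t])$) the subalgebra $\mathcal{D}'$ is free from $\cB'_s \vee \cA$ over $\bC$ — does appear to be true, but it is a \emph{different} statement from Lemma \ref{lem: AFP manipulation}. That lemma gives $\median_{\cB \median_{\cD}\cC}(\cA_j \median_{\cD}\cC)_j \cong (\median_{\cB}\cA_j)\median_{\cD}\cC$, whereas you need (with $P = \cB'_s$, $Q = \mathcal{D}'$, $R = \cA$, $\mathcal{G} = \mathrm{W}^*(x_0)$) the identity $(P * Q)\median_{\mathcal{G}}R \cong (P\median_{\mathcal{G}}R)*Q$, in which the left-hand amalgamation base is $\mathcal{G}$, not $\mathcal{G}*Q$. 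Verifying this requires its own decomposition argument of a difficulty comparable to Lemma \ref{lem: AFP manipulation}, which you do not supply. The paper's proof avoids this obstacle entirely by first building the auxiliary algebra $\cB^1 := \cA * \cC$ (with $\cC$ generated by a fresh free Brownian motion $(Z_t)$, and filtration $\cB_t^1 = \iota_1(\cA)\vee\psi(\cC_{t_0,t})$), which \emph{does} satisfy the hypotheses of Lemma \ref{lem: amalgamation of Brownian motions}, and then amalgamates $\cB^0 = \cB'$ with $\cB^1$ over $\mathrm{W}^*(x_0)*\mathcal{Z}$ in that lemma's framework. If you want to keep your more direct construction, you would need to prove the $(P*Q)\median_{\mathcal{G}}R \cong (P\median_{\mathcal{G}}R)*Q$ identity as a separate lemma; as written, the proof has a gap at this point.
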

		
	%
	
	With Lemmas \ref{lem: amalgamation of Brownian motions} and \ref{lem: arranging initial filtration}, we can now prove various properties of the value function that we want, starting with consistency condition of being a tracial $\mathrm{W}^*$-function.
	
	\begin{lemma}\label{lem:nov18.2023.2} Suppose that Assumption \textbf{A} holds.
		Then for all $t_0 \in [0,T]$, $(\overline{V}_\cA(t_0,\cdot))_{\cA \in \bW}$ is a tracial $W^*$--function. 
	\end{lemma}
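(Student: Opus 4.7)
The plan is to establish the two inequalities $\overline{V}_{\cA}(t_0, x_0) \leq \overline{V}_{\cB}(t_0, \iota x_0)$ and $\overline{V}_{\cB}(t_0, \iota x_0) \leq \overline{V}_{\cA}(t_0, x_0)$ separately, working directly from the definition \eqref{def:bar}. The first inequality is essentially formal: any tracial $W^*$-embedding $\kappa: \cB \to \cC'$ yields a tracial $W^*$-embedding $\kappa \circ \iota : \cA \to \cC'$ satisfying $(\kappa \circ \iota)(x_0) = \kappa(\iota\, x_0)$, so the infimum defining $\overline{V}_{\cA}(t_0, x_0)$ ranges over a class that includes every value $\widetilde{V}_{\cC'}(t_0, \kappa \iota\, x_0)$ appearing in the infimum defining $\overline{V}_{\cB}(t_0, \iota\, x_0)$.

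For the reverse inequality, the plan is to fix an arbitrary tracial $W^*$-embedding $\iota': \cA \to \cB'$ and construct a single tracial $W^*$-algebra containing both $\cB$ and $\cB'$ compatibly with $\iota, \iota'$. The natural choice is the amalgamated free product $\cC := \cB \median_{\cA} \cB'$ furnished by Lemma \ref{lem: uniqueness of free product}, which supplies tracial $W^*$-embeddings $\kappa: \cB \to \cC$ and $\psi: \cB' \to \cC$ with $\kappa \circ \iota = \psi \circ \iota'$; in particular $\kappa(\iota\, x_0) = \psi(\iota'\, x_0)$.

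Next, given any admissible policy $\widetilde{\alpha} = ((\alpha_t)_t, (\cB'_t)_t, (S_t)_t) \in \bA_{\cB',\iota' x_0}^{t_0,T}$, I would transfer it to $\cC$ by pushforward through $\psi$, obtaining the candidate $\widetilde{\alpha}^\psi := ((\psi\, \alpha_t)_t, (\psi(\cB'_t))_t, (\psi\, S_t)_t)$. Admissibility of $\widetilde{\alpha}^\psi$ in $\bA_{\cC,\psi \iota' x_0}^{t_0,T}$ is verified using that $\psi$ is a trace-preserving unital $*$-homomorphism: it carries semi-circular elements to semi-circular elements and preserves free independence of increments with respect to the pushed-forward filtration, while property (c) of the control sets in \S \ref{sec 2.2.2} gives $\psi\, \alpha_t \in \bA_{\psi(\cB'_t)}$. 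By Remark \ref{rem:commute}, the corresponding state process starting from $\psi \iota' x_0$ is $\psi\, X_t[\widetilde{\alpha}]$, and since $(L_\cA)_{\cA}$ and $(g_\cA)_{\cA}$ are tracial $W^*$-functions, the cost of $\widetilde{\alpha}^\psi$ equals the cost of $\widetilde{\alpha}$. Taking the infimum over $\widetilde{\alpha}$ yields $\widetilde{V}_{\cC}(t_0, \kappa\iota\, x_0) = \widetilde{V}_{\cC}(t_0, \psi\iota' x_0) \leq \widetilde{V}_{\cB'}(t_0, \iota' x_0)$, whence $\overline{V}_{\cB}(t_0, \iota\, x_0) \leq \widetilde{V}_{\cB'}(t_0, \iota' x_0)$. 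Taking the infimum over $(\cB', \iota')$ concludes.

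The main obstacle is the careful bookkeeping around admissibility after pushforward: one must check that the filtration condition, compatibility of the free Brownian motion (both the semi-circular distributions of the increments and their free independence from the past), progressive measurability, and the constraint $\alpha_t \in \bA_{\cC_t}$ all survive the tracial $W^*$-embedding $\psi$. This is where the structural hypotheses on $(\bA_{\cA})_{\cA \in \bW}$ and the moment-preserving nature of tracial $W^*$-embeddings are used decisively. Everything else is a formal rearrangement of infima and an application of the existence of the amalgamated free product.
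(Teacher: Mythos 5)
Your proof is correct and takes essentially the same route as the paper: the easy inequality is the same formal rearrangement of infima, and the hard direction uses the same key tool, namely the amalgamated free product $\cC = \cB \median_\cA \cB'$ (via Lemma \ref{lem: uniqueness of free product}) to identify the two copies of $\cA$, followed by pushforward of control policies through $\psi$ and the fact that $L$, $g$ are tracial $W^*$-functions. The only cosmetic difference is that the paper fixes an $\epsilon$-optimal algebra and policy up front and argues $\overline{V}_\cB(t_0,\iota x_0) \le \overline{V}_\cA(t_0,x_0) + 2\epsilon$, whereas you push forward every admissible policy and take infima directly; both yield the identical conclusion.
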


	\begin{proof} Let $\cA,\cB\in \bW$ and $\iota:\cA\rightarrow \mathcal{B}$ be a tracial $W^*$--embedding. 
		Note that if $\cC\in \bW$ and $\kappa:\cB\rightarrow \mathcal{C}$ is a tracial $W^*$--embedding then $\kappa \circ \iota:\cA\rightarrow \mathcal{C}$ is also a tracial $W^*$--embedding and thus, $\overline{V}_\cA(t_0, x_0) \leq \widetilde{V}_\cC(t_0, \kappa \circ \iota\, x_0)$ for any $x_0 \in L^2(\cA)^d_{sa}$.  Minimizing over all $\cC\in \bW$ and $\kappa:\cB\to\cC$, we conclude that $\overline{V}_\cA(t_0, x_0) \leq \overline{V}_\cB(t_0, \iota\, x_0).$ 
		
		We now show the reverse direction. Given $\epsilon>0$, there exists $\mathcal{B}^1 \in \bW$ and a tracial $W^*$--embedding $\iota_1:\cA\rightarrow \mathcal{B}^1$ such that 
		\begin{align} \label{291}
			\widetilde{V}_{\mathcal{B}^1}(t_0, \iota_1\, x_0) < \overline{V}_\cA(t_0, x_0) + \epsilon.
		\end{align}
		Let $\widetilde \alpha^1=\big((\alpha^1_t)_t, (\mathcal{B}^1_t)_t, (S^1_t)_t\big) \in  \mathbb{A}^{t_0,T}_{\mathcal{B}^1, \iota_1 x_0}$ be a control policy 
		such that
		\begin{equation}\label{eq:nov18.2023.2}
			\bE\bigg[\int_{t_0}^T L_{\mathcal{B}^1} (X_t[\widetilde \alpha^1], \alpha_t^1)dt + g_{\mathcal{B}^1}(X_T[\widetilde \alpha^1])\bigg] < \widetilde{V}_{\mathcal{B}^1}(t_0, \iota_1\, x_0) + \epsilon ,
		\end{equation}
		where $X_t[\widetilde \alpha^1]$ denotes a solution to \eqref{eqn:common_noise} (with $\cA$ replaced by $\mathcal{B}^1$) on $ L^2(\mathcal{B}^1)_{sa}^d$  with the initial condition $X_{t_0}[\widetilde \alpha^1]=\iota_1\, x_0$.

		
		
		{Using Lemma \ref{lem: uniqueness of free product}, one can take a tracial $W^*$--algebra $\cC$ and
			tracial $W^*$--embeddings $\phi: \cB \to \cC$ and $\phi_1: \mathcal{B}^1 \to \cC$ such that $\phi \circ \iota= \phi_1 \circ \iota_1$.}
		In light of Remark \ref{rem:commute}, setting
		\begin{align*}
			X_t&:= \phi_1\big(X_t [\widetilde \alpha^1] \big) \in C([t_0, T];L^2( \Omega,\mathcal{F},\mathbb{P}; L^2(\cC)^d_{sa})) , \\
			\widetilde \alpha&:= \Big((\phi_1\, \alpha_t^1)_t, (\phi_1\, \mathcal{B}^1_t)_t, (\phi_1\, S^1_t)_t\Big) 
			\in \mathbb{A}^{t_0,T}_{\cC , \phi_1(\iota_1 x_0)}
		\end{align*}
		(i.e. in particular $\alpha_t := \phi_1\, \alpha_t^1$)
		and
		\begin{align*}
			S_t:=  \phi_1\, S^1_t\quad  \text{for } t\in [t_0,T],  
		\end{align*} 
		we have  
		\begin{align*}
			\begin{cases} dX_t = b_\cC(X_t,\alpha_t)\, dt +  \beta_C\, \mathbbm{1}_{\cC}\, dW_t^0 +  \beta_F\, dS_t, \\
				X_{t_0}=\phi_1(\iota_1\, x_0).
			\end{cases}
		\end{align*}
		Thus interchangeably using the notations $X_t$ and  $X_t[\widetilde{\alpha}],$
		\begin{align*}
			\widetilde{V}_{\cC}\big(t_0, \phi_1(\iota_1\, x_0)\big) \leq&\  \bE\bigg[\int_{t_0}^T L_{\cC} (X_t[\widetilde{\alpha}], \alpha_t)dt + g_{\cC}(X_T[\widetilde{\alpha}])\bigg]\\
			=&\ \bE\bigg[\int_{t_0}^T L_{\mathcal{B}^1} (X_t[\widetilde{\alpha}^1], \alpha_t^1)dt + g_{\mathcal{B}^1}(X_T[\widetilde{\alpha}^1])\bigg].
		\end{align*}
		Therefore using this along with the fact that $\phi \circ \iota=\phi_1 \circ \iota_1,$ we conclude that 
		\[
		\widetilde{V}_{\cC}\big(t_0, \phi ( \iota\, x_0)\big) \overset{\eqref{eq:nov18.2023.2}}{\le} \widetilde{V}_{\mathcal{B}^1}(t_0, \iota_1\, x_0) + \epsilon \overset{\eqref{291}}{\le}  \overline{V}_\cA(t_0, x_0) + 2 \epsilon.
		\]
		Since $\phi: \cB \to \cC$ is a tracial $W^*$--embedding, $\overline{V}_\cB\big(t_0, \iota\, x_0\big) \leq 2 \epsilon+ \overline{V}_\cA(t_0, x_0).$ By the arbitrariness of $\epsilon>0$, we infer $\overline{V}_\cB\big(t_0, \iota\, x_0\big) \leq \overline{V}_\cA(t_0, x_0)$.
	\end{proof}
	
	In the next lemma, we verify that two versions of values functions \eqref{eqn:l2_value} and  \eqref{def:bar} are same, under the additional Assumption \textbf{B}, provided that $\cA$ admits a free Brownian motion freely independent of the initial condition.  To clarify the terminology in the statement below, for a function $f: X \to \bR$ and $X_0 \subseteq X$, we say that the infimum $\inf_{x \in X} f(x)$ is \emph{witnessed by $X_0$} if $\inf_{x \in X_0} f(x) = \inf_{x \in X} f(x)$, but note that this does not require that the infimum is \emph{achieved} in $X_0$ or even in $X$.
	
	\begin{lemma}\label{lem:decreasing}
		Suppose that Assumptions \textbf{A} and \textbf{B} hold. 
		
		1. {Let $\cA \in \mathbb{W}$, and let $x_0 \in L^2(\cA)_{\sa}^d$ and $t_0 \in [0,T]$.  Suppose that $\cA$ admits a $d$-variable free Brownian motion $(S_t^0)_{t \in [t_0,T]}$ freely independent of $\mathrm{W}^*(x_0)$. For  $t \in [t_0,T],$ let
			\[
			\cA_t^0 := \mathrm{W}^*(x_0, (S^0_s)_{s \in [t_0,t]}).
			\]
			Then we have
			\[
			\overline{V}_{\cA}(t_0,x_0) = \widetilde{V}_{\cA}(t_0,x_0),
			\]
			and the infimum in \eqref{eqn:l2_value} is witnessed by control policies $\widetilde{\alpha}\in \mathbb{A}_{\cA,x_0}^{t_0,T}$  that use the given filtration $(\cA_t^0)_{t \in [t_0,T]}$ and free Brownian motion $(S_t^0)_{t \in [t_0,T]}$.
			
			2. Let $\cA \in \mathbb{W}$. Let $\mathcal{C}$ be a tracial von Neumann algebra generated by a $d$-variable free Brownian motion $(S^1_t)_{t \in [t_0,T]}$, and define $\mathcal{C}_{t_0,t} := \mathrm{W}^*(S^1_s: s \in [t_0,t])$ for $t \in [t_0,T]$. Let $\iota_1: \cA \to \cA \median \cC$ and $\iota_2: \cC \to \cA * \cC$ be the inclusions associated to the free product.  Then for any  $x_0 \in L^2(\cA)_{\sa}^d$,
			\[
			\overline{V}_{\cA}(t_0,x_0) = \widetilde{V}_{\cA * \cC}(t_0,\iota_1(x_0)),
			\]
			and the infimum is witnessed by control polices that use the fixed filtration $(\iota_1(\cA) \vee \iota_2(\cC_{t_0,t}))_{t \in [t_0,T]}$ and free Brownian motion $(\iota_2(S^1_t))_{t \in [t_0,T]}$.
		} 
	\end{lemma}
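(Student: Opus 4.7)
The plan is to prove Part 1 directly and deduce Part 2 from it together with Lemma \ref{lem:nov18.2023.2}. The inequality $\overline{V}_\cA(t_0,x_0) \le \widetilde{V}_\cA(t_0,x_0)$ is immediate by taking $\iota = \mathrm{id}$ in the definition of $\overline{V}_\cA$; this is permissible precisely because the hypothesized $(S^0_t)$ together with the filtration $(\cA^0_t)$ makes $\mathbb{A}^{t_0,T}_{\cA,x_0}$ nonempty. The substantive task is the reverse inequality: given $\epsilon > 0$, an embedding $\iota:\cA\to\cB$, and a control policy $\widetilde{\alpha}^1 = ((\alpha^1_t),(\cB_t),(\tilde S_t))$ with cost at most $\overline{V}_\cA(t_0,x_0)+2\epsilon$, I need to produce $\widetilde{\alpha}^0 \in \mathbb{A}^{t_0,T}_{\cA,x_0}$ built from the prescribed $(\cA^0_t)$ and $(S^0_t)$ with no larger cost.

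To achieve this, I first observe that $\mathrm{W}^*(x_0,(S^0_t)) \subseteq \cA$ and $\mathrm{W}^*(\iota x_0,(\tilde S_t)) \subseteq \cB$ have identical non-commutative distributions---both are generated by $x_0$ together with a freely independent $d$-variate free Brownian motion---so there is a canonical trace-preserving $*$-isomorphism $\phi$ between them. Using Lemma \ref{lem: uniqueness of free product}, I form the amalgamated free product $\cD := \cA \median_\phi \cB$ with inclusions $\jmath_1,\jmath_2$ satisfying $\jmath_1(x_0)=\jmath_2(\iota x_0)$ and $\hat S_t := \jmath_1(S^0_t)=\jmath_2(\tilde S_t)$. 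Setting $\cD^0_t := \jmath_1(\cA^0_t)$, I define the candidate control by $\alpha^0_t := E^\cD_{\cD^0_t}(\jmath_2(\alpha^1_t))$, viewed via $\jmath_1^{-1}$ as an element of $L^2(\cA^0_t)^d_{\sa}$. The central identity I need is
\[
\jmath_1(X^0_t) \;=\; E^\cD_{\cD^0_t}\bigl(\jmath_2(X^1_t)\bigr) \qquad \text{for all } t \in [t_0,T],
\]
and the key ingredient is the commuting-square observation that for $s \le t$, the increments $\hat S_r - \hat S_s$ for $s<r\le t$ are freely independent of $\jmath_2(\cB_s)$ in $\cD$ (inherited from compatibility of $(\tilde S_t)$ with $(\cB_t)$ in $\cB$, preserved by the trace-preserving $\jmath_2$), while $\cD^0_s \subseteq \jmath_2(\cB_s)$. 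A standard alternating-word moment computation then yields $E^\cD_{\cD^0_t}\big|_{\jmath_2(\cB_s)}=E^\cD_{\cD^0_s}\big|_{\jmath_2(\cB_s)}$, so that $E^\cD_{\cD^0_t}(\jmath_2(\alpha^1_s))=\jmath_1(\alpha^0_s)$ for all $s \le t$; substituting into the Bochner integral representation of $X^1_t$ reduces it to the SDE for $X^0_t$.

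With the identity in hand, Assumption \textbf{B}---joint $E$-convexity of $L_\cA$ and $E$-convexity of $g_\cA$---applied to the embedding $\cD^0_t \hookrightarrow \cD$ gives $L_\cA(X^0_t,\alpha^0_t)\le L_\cB(X^1_t,\alpha^1_t)$ pointwise in $t$ and $g_\cA(X^0_T)\le g_\cB(X^1_T)$; integrating and taking expectation over the classical noise yields $\mathrm{cost}(\widetilde{\alpha}^0)\le\mathrm{cost}(\widetilde{\alpha}^1)$, finishing Part 1 after letting $\epsilon\to 0$. For Part 2, I apply Part 1 inside $\widetilde{\cA} := \cA \median \cC$ with initial condition $\iota_1 x_0$ and Brownian motion $(\iota_2 S^1_t)$, which is freely independent of all of $\iota_1(\cA)$ by the unamalgamated free product, hence in particular of $\mathrm{W}^*(\iota_1 x_0)$. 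Combining the output $\overline{V}_{\widetilde{\cA}}(t_0,\iota_1 x_0)=\widetilde{V}_{\widetilde{\cA}}(t_0,\iota_1 x_0)$ with $\overline{V}_{\widetilde{\cA}}(t_0,\iota_1 x_0)=\overline{V}_\cA(t_0,x_0)$ from Lemma \ref{lem:nov18.2023.2} gives the claimed equality; since the filtration $\mathrm{W}^*(\iota_1 x_0,(\iota_2 S^1_r)_{r\le t})$ delivered by Part 1 is contained in $\iota_1(\cA)\vee\iota_2(\cC_{t_0,t})$ and any control progressive with respect to a smaller filtration is also progressive with respect to a larger one, the witness transfers.

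The hardest step will be the commuting-square identity together with the subsequent manipulations: while freeness over $\mathbb{C}$ makes the underlying moment computation clean (via alternating-word formulas in the free product $\cD^0_t = \cD^0_s \median \mathrm{W}^*(\hat S_r - \hat S_s: s<r\le t)$), one must verify that the resulting pointwise identity is compatible with the Bochner integral and the classical expectation. A secondary technical point is the joint $(t,\omega)$-progressive measurability of $\alpha^0$, which I would handle by approximating $\alpha^1$ with simple processes and passing to the $L^2$-limit using strong continuity of the conditional expectations.
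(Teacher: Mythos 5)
Your proof is essentially the same as the paper's. Both rest on the same three pillars: (i) the minimal filtration $\cA_t^0 = \mathrm{W}^*(x_0, (S^0_s)_{s\le t})$ has a canonical isomorphism type, so the corresponding minimal filtration $\cB_t^0 := \mathrm{W}^*(\iota x_0, (\tilde S_s)_{s\le t})$ in any competitor $\cB$ can be identified with it; (ii) projecting the control onto this minimal filtration preserves the SDE for the trajectory, via the commuting-square identity $E_{\cB_t^0}[\alpha_s] = E_{\cB_s^0}[\alpha_s]$ for $s\le t$ (which is exactly Lemma~\ref{lem: free product commuting square} applied to the increment algebra freely independent of $\cB_s$); and (iii) $E$-convexity of $L$ and $g$ then forces the cost down. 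The only organizational difference is that you form the amalgamated free product $\cD = \cA \median_\phi \cB$ upfront so that both $\cA$ and $\cB$ live in one ambient algebra, whereas the paper carries out the conditioning entirely inside $\cB$ (Steps 1--3) and only at the end (Step 4) transports the conditioned policy to $\cA$ via the canonical isomorphism $\cB_T^0 \to \cA_T^0$. These are equivalent: since $\cD_t^0 = \jmath_2(\cB_t^0)$ and conditional expectations onto a common subalgebra restrict consistently, your $E^\cD_{\cD_t^0}\circ\jmath_2$ equals $\jmath_2\circ E^\cB_{\cB_t^0}$, so the extra free product is not doing any mathematical work beyond the bookkeeping. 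Your Part 2 is also identical to the paper's.

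One point worth tightening is the progressive measurability of $\alpha^0$. You propose to approximate $\alpha^1$ by simple processes, but this does not by itself resolve the issue: for a simple process $\beta_t = y\,\mathbf{1}_{[a,b)}(t)$ the conditioned process is $E_{\cB_t^0}[y]\,\mathbf{1}_{[a,b)}(t)$, so you still need measurability of $t\mapsto E_{\cB_t^0}[y]$ for a \emph{fixed} $y$, and the target subalgebra varies in $t$ while $t\mapsto E_{\cB_t^0}[y]$ need not be continuous. The paper handles this cleanly by reducing weak measurability of $t\mapsto E_{\cB_t^0}[y]$ to the measurability of $t\mapsto\norm{E_{\cB_t^0}[x\pm y]}^2_{L^2}$ via polarization, and noting these are monotone increasing in $t$, hence Borel; you should substitute this (or an equivalent) for your approximation sketch.
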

	
	\begin{proof}
		\textbf{Step 1: Conditioning control policy.} Let $\iota: \cA \to \cB$ be a tracial $\mathrm{W}^*$-embedding and let $ \widetilde{\alpha}  = ((\alpha_t)_{t \in [t_0,T]}, (\cB_t)_{t \in [t_0,T]},(S_t)_{t \in [t_0,T]})$ be a control policy  in $\cB$.  Let
		\[
		\cB_t^0 := \mathrm{W}^*(\iota(x_0), (S_s)_{s \in [t_0,t]});
		\]
		it is straightforward to check that the Brownian motion $(S_t)_{t \in [t_0,T]}$ is compatible with the filtration $(\cB_t^0)_{t \in [t_0,T]}$. For $t \in [t_0,T]$, let $E_{\cB_t^0}:\cB \to \cB_t^0$ be the trace-preserving conditional expectation, and define
		\[
		\alpha_t^0 := E_{\cB_t^0}[\alpha_t].
		\]
		We claim that $\alpha_t^0$ is measurable as a function on $[t_0,T]$ times the underlying probability space.  Since the Hilbert space $L^2(\cB)$ is assumed to be separable, it suffices to check weak measurability, so fix some $y \in L^2(\cB)$ and by linearity we can assume without loss of generality that $y$ is self-adjoint.  Then
		\[
		\ip{\alpha_t^0,y}_{L^2(\cB)} = \ip{E_{\cB_t^0}[\alpha_t],y}_{L^2(\cB)} = \ip{\alpha_t, E_{\cB_t^0}[y]}_{L^2(\cB)}.
		\]
		Since $t\mapsto\alpha_t$ is measurable, it suffices to check that $t \mapsto E_{\cB_t^0}[y]$ is weakly measurable.  Let $z \in L^2(\cB)_{\sa}$.  Then,
		\[
		\ip{E_{\cB_t^0}[y],z}_{L^2(\cB)} = \ip{E_{\cB_t^0}[y], E_{\cB^0_t}[z]}_{L^2(\cB)} = \frac{1}{4} \left( \norm{E_{\cB_t^0}[x+y]}_{L^2(\cB)}^2 - \norm{E_{\cB_t^0}[x-y]}_{L^2(\cB)}^2 \right),
		\]
		and $\norm{E_{\cB_t^0}[x+y]}_{L^2(\cB)}^2$ along with $\norm{E_{\cB_t^0}[x-y]}_{L^2(\cB)}^2$ are increasing functions of $t$, hence measurable.  Hence, $t\mapsto\alpha_t^0$ is measurable as desired, and by construction it is adapted to $(\cB_t^0)_t$.
		
		~
		
		\textbf{Step 2: Conditioning trajectory.}  Let $\widetilde{\alpha}^0 = ((\alpha_t^0)_{t \in [t_0,T]}, (\cB_t^0)_{t \in [t_0,T]},(S_t)_{t \in [t_0,T]})$ be the control policy in $\cB$.  We claim that $$X_t[\widetilde{\alpha}^0] = E_{\cB_t^0}  (X_t[\widetilde{\alpha}]),$$ which in particular implies that the initial condition is $X_{t_0}[\widetilde{\alpha}^0] = X_{t_0}[\widetilde{\alpha}] = \iota(x_0)$.  Note that
		\[
		E_{\cB_t^0}[X_t[\widetilde{\alpha}]] = x_0 + \int_{t_0}^t E_{\cB_t^0}[\alpha_s] ds + \beta_C\, \mathbbm{1}_\cA\, (W_t^0-W^0_{t_0}) + \beta_F\, (S_t - S_{t_0}) = X_t[\widetilde{\alpha}],
		\]
		while
		\[
		X_t[\widetilde{\alpha}^0] = x_0 + \int_{t_0}^t E_{\cB_s^0}[\alpha_s]ds + \beta_C\, \mathbbm{1}_\cA\, (W_t^0-W^0_{t_0}) + \beta_F\, (S_t - S_{t_0}) = X_t[\widetilde{\alpha}].
		\]
		Hence, it suffices to show that for $s \leq t$, we have
		\[
		E_{\cB_t^0}[\alpha_s] = E_{\cB_s^0}[\alpha_s].
		\]
		For this, we use a certain fact about conditional expectations inside free products (Lemma \ref{lem: free product commuting square}).  Let $\cD := \mathrm{W}^*(S_{s'} - S_s: s' \in [s,t])$, and recall that $\cD$ is freely independent of $\cB_s$ since the free Brownian motion $(S_t)_{t \in [t_0,T]}$ is compatible with the filtration $(\cB_t)_{t \in [t_0,T]}$.  Thus, $\cB_s \vee \cD \cong \cB_s * \cD$ (Corollary \ref{cor: independence and embeddings}); moreover, the subalgebra $\cB_t^0 = \cB_s^0 \vee \cD$ in $\cB$ corresponds to the subalgebra $\cB_s^0 * \cD$ in $\cB_s * \cD$.  Let $\overline \iota: \cB_s \to \cB_s * \cD$ be the canonical inclusion into the free product; then Lemma \ref{lem: free product commuting square} shows that
		\[
		E_{\cB_s^0 * \cD} \circ \overline \iota(\alpha_s) = \overline \iota \circ E_{\cB_s^0}[\alpha_s] \text{ in } \cB_s * \cD,
		\]
		hence
		\[
		E_{\cB_t^0}[\alpha_s] = E_{\cB_s^0}[\alpha_s] \text{ in } \cB,
		\]
		which proves our claim.
		
		~
		
		\textbf{Step 3: Comparing the value functions.} For the control policy  $\widetilde{\alpha}^0$ defined above, 
		by $E$-convexity of $(L_{\cA})_{\cA\in \bW}$ and $(g_{\cA})_{\cA\in \bW}$ (see Assumption \textbf{B}), 
		\begin{align*}
			\bE\bigg[\int_{t_0}^T L_{\cB} ( X_t[\widetilde{\alpha}^0], \alpha_t^0)dt + g_{\cB}(X_T[\widetilde{\alpha}])\bigg] &= \bE\bigg[\int_{t_0}^T L_{\cB} ( E_{\cB_t^0}[X_t[\widetilde{\alpha}^0]], E_{\cB_t^0}[\alpha_t^0])dt + g_{\cB}(E_{\cB_T^0}[X_T[\widetilde{\alpha}]])\bigg] \\
			&\leq \bE\bigg[\int_{t_0}^T L_{\cB} ( X_t[\widetilde{\alpha}], \alpha_t) dt + g_{\cB}(X_T[{\widetilde{\alpha}}])\bigg].
		\end{align*}

		\textbf{Step 4: Transformation from $\cB_t^0$ to $\cA_t^0$.}  Recall that
		\[
		\cB_T^0 = \mathrm{W}^*(\iota(x_0),(S_t)_{t \in [t_0,T]}) \subseteq \cB, \qquad \cA_T^0 = \mathrm{W}^*(x_0,(S_t^0)_{t \in [t_0,T]}) \subseteq \cA.
		\]
		Because the tracial von Neumann algebra generated by a free Brownian motion is unique up to a canonical isomorphism, and the free product of two given tracial von Neumann algebras is unique up to a canonical isomorphism (Lemma \ref{lem: uniqueness of free product}), similar to the beginning of the proof of Lemma \ref{lem: amalgamation of Brownian motions}, we conclude there is a unique tracial $\mathrm{W}^*$-isomorphism $\phi: \cB_T^0 \to \cA_T^0$ such that $\phi(\iota(x_0)) = x_0$ and $\phi(S_t) = S_t^0$ for $t \in [t_0,T]$.  In particular, we also have $\phi(\cB_t^0) = \cA_t^0$ for $t \in [t_0,T]$.  Let $\alpha_t' := \phi(\alpha_t^0)$ for $t \in [t_0,T]$, which is now a process in $\cA_0^T \subseteq \cA$; and consider the control policy $\widetilde{\alpha}' = ((\alpha_t')_{t \in [t_0,T]}, (\cA_t^0)_{t \in [t_0,T]},(S_t^0)_{t \in [t_0,T]})$ in $\cA$.  It is immediate that $X_t[\widetilde{\alpha}'] = \phi(X_t[\widetilde{\alpha}^0])$.  Since $L$ and $g$ are tracial $\mathrm{W}^*$-functions, the value achieved by the control policy $\widetilde{\alpha}'$ is the same as the value achieved by $\widetilde{\alpha}^0$.  Therefore, we have shown that the infimum for $\overline{V}_{\cB}(t_0,\iota (x_0))$ is witnessed by control policies in $\cA$ that use the given free Brownian motion $(S_t^0)_{t \in [t_0,T]}$ and filtration $(\cA_t^0)_{t \in [t_0,T]}$, and so in particular $\overline{V}_{\cA}(t_0,x_0) = \widetilde{V}_{\cA}(t_0,x_0)$.
		
		~
		
		\textbf{Step 5: Proof of second claim.}    Finally, we prove the  second part of Lemma.   Since the free Brownian motion $(\iota_2(S^1_t))_{t \in [t_0,T]}$ is freely independent of the initial condition $\iota_1(x_0)$, the preceding argument applies and shows that the infimum in the definition of $\overline{V}_{\cA * \cC}(t_0, \iota_1(x_0))$ is witnessed by control policies using the given  free Brownian motion $(\iota_2(S^1_t))_{t \in [t_0,T]}$ and the filtration $(\cB_t)_{t \in [t_0,T]}$ given by $\cB_t = \mathrm{W}^*(\iota_1(x_0), (\iota_2(S^1_s))_{s \in [t_0,t]})$.  Of course, $\cB_t \subseteq \iota_1(\cA) \vee \iota_2(\cC_{t_0,t})$, and the free Brownian motion $(\iota_2(S^1_t))_{t \in [t_0,T]}$ is still compatible with this larger filtration.  Hence, any control policy using this free Brownian motion and the smaller filtration is also a valid control policy with respect to the larger filtration.
	\end{proof}
	
	\begin{lemma} \label{lem: decreasing 2}
		Suppose Assumptions \textbf{A} and \textbf{B} hold.  Then for any  $t_0\in [0,T]$, $(\overline{V}_\cA(t_0,\cdot))_{\cA \in \bW}$ is $E$-convex. 
	\end{lemma}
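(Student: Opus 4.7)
The plan is to combine Lemma \ref{lem:decreasing}(2) with the linearity of the dynamics and the $E$-convexity of the cost functions under Assumption \textbf{B}. Fix a tracial $W^*$-algebra $\mathcal{C}$ generated by a $d$-variable free Brownian motion $(S^1_t)_{t\in [t_0,T]}$, with natural filtration $\mathcal{C}_{t_0,t} = \mathrm{W}^*(S^1_s : s \in [t_0,t])$, and for any $\mathcal{A}' \in \bW$ let $\iota_1^{\mathcal{A}'}: \mathcal{A}' \to \mathcal{A}' \median \mathcal{C}$ and $\iota_2^{\mathcal{A}'}: \mathcal{C} \to \mathcal{A}' \median \mathcal{C}$ denote the canonical inclusions. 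By Lemma \ref{lem:decreasing}(2), for every such $\mathcal{A}'$ and $Z \in L^2(\mathcal{A}')_{\sa}^d$, $\overline{V}_{\mathcal{A}'}(t_0,Z) = \widetilde{V}_{\mathcal{A}' \median \mathcal{C}}(t_0, \iota_1^{\mathcal{A}'}(Z))$, and the infimum is witnessed by control policies using the fixed filtration $(\iota_1^{\mathcal{A}'}(\mathcal{A}') \vee \iota_2^{\mathcal{A}'}(\mathcal{C}_{t_0,t}))_t$ and the fixed free Brownian motion $(\iota_2^{\mathcal{A}'}(S^1_t))_t$; I shall refer to control policies of this form as \emph{standard}.

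For convexity in $X$, given $X_0, Y_0 \in L^2(\mathcal{A})_{\sa}^d$ and $\theta \in [0,1]$, I will pick $\epsilon$-optimal standard policies $\widetilde{\alpha}^{X_0}, \widetilde{\alpha}^{Y_0}$ in $\mathcal{A} \median \mathcal{C}$ starting from $\iota_1^\mathcal{A}(X_0)$ and $\iota_1^\mathcal{A}(Y_0)$, and consider the standard policy $\widetilde{\gamma}$ defined by $\gamma_t := \theta \alpha^{X_0}_t + (1-\theta)\alpha^{Y_0}_t$ starting from $\iota_1^\mathcal{A}(\theta X_0 + (1-\theta) Y_0)$. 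Under Assumption \textbf{B}(a) the drift is $b(X,\alpha) = \alpha$, so the SDE \eqref{eqn:common_noise} is affine in its data; since $\widetilde{\gamma}$ shares the same noise as the other two policies, this gives $X_t[\widetilde{\gamma}] = \theta X_t[\widetilde{\alpha}^{X_0}] + (1-\theta) X_t[\widetilde{\alpha}^{Y_0}]$ (the identity $\theta + (1-\theta) = 1$ absorbs the noise coefficients). Convexity of $L$ and $g$ from Assumption \textbf{B}(b) then bounds the cost of $\widetilde{\gamma}$ by $\theta \cdot \mathrm{cost}(\widetilde{\alpha}^{X_0}) + (1-\theta) \cdot \mathrm{cost}(\widetilde{\alpha}^{Y_0})$; letting $\epsilon \to 0$ and invoking Lemma \ref{lem:decreasing}(2) yields $\overline{V}_\mathcal{A}(t_0, \theta X_0 + (1-\theta) Y_0) \leq \theta \overline{V}_\mathcal{A}(t_0, X_0) + (1-\theta) \overline{V}_\mathcal{A}(t_0, Y_0)$.

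For the embedding inequality, let $\iota: \mathcal{A} \to \mathcal{B}$ be a tracial $W^*$-embedding with adjoint conditional expectation $E$, and fix $X \in L^2(\mathcal{B})_{\sa}^d$. I will extend $\iota$ via the universal property of the free product to a tracial $W^*$-embedding $J: \mathcal{A} \median \mathcal{C} \to \mathcal{B} \median \mathcal{C}$ with adjoint conditional expectation $F: \mathcal{B} \median \mathcal{C} \to \mathcal{A} \median \mathcal{C}$. The commuting square property (Lemma \ref{lem: free product commuting square} in Appendix \ref{sec: AFP}) yields $F \circ \iota_1^\mathcal{B} = \iota_1^\mathcal{A} \circ E$ and $F \circ \iota_2^\mathcal{B} = \iota_2^\mathcal{A}$, and, applied to the sub-free-product $\mathcal{B} \median \mathcal{C}_{t_0,t}$ inside $\mathcal{B} \median \mathcal{C}$, shows that $F$ maps the standard filtration on $\mathcal{B} \median \mathcal{C}$ into the standard filtration on $\mathcal{A} \median \mathcal{C}$. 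Given a standard control $\widetilde{\alpha}$ in $\mathcal{B} \median \mathcal{C}$ starting from $\iota_1^\mathcal{B}(X)$, I define $\widetilde{\beta}$ by $\beta_t := F(\alpha_t)$, which is a standard control in $\mathcal{A} \median \mathcal{C}$ starting from $\iota_1^\mathcal{A}(EX)$; linearity of $F$ together with the identities above yields $X_t[\widetilde{\beta}] = F(X_t[\widetilde{\alpha}])$. Applying the $E$-convexity of $L$ and $g$ from Assumption \textbf{B}(b) to the embedding $J$ gives $L_{\mathcal{A} \median \mathcal{C}}(X_t[\widetilde{\beta}], \beta_t) \leq L_{\mathcal{B} \median \mathcal{C}}(X_t[\widetilde{\alpha}], \alpha_t)$ and $g_{\mathcal{A} \median \mathcal{C}}(X_T[\widetilde{\beta}]) \leq g_{\mathcal{B} \median \mathcal{C}}(X_T[\widetilde{\alpha}])$, so that $\mathrm{cost}(\widetilde{\beta}) \leq \mathrm{cost}(\widetilde{\alpha})$. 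Taking the infimum over standard $\widetilde{\alpha}$ and invoking Lemma \ref{lem:decreasing}(2) on both sides then produces $\overline{V}_\mathcal{A}(t_0, EX) \leq \overline{V}_\mathcal{B}(t_0, X)$.

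The main obstacle is not the combinatorial bookkeeping but the commuting square assertion: one needs both the identities $F \circ \iota_j^\mathcal{B} = \iota_j^\mathcal{A} \circ E_j$ (with $E_1 = E$ and $E_2 = \mathrm{id}$) and the invariance of the standard filtration under $F$, which together force $\widetilde{\beta}$ to be a valid standard control with $X_t[\widetilde{\beta}] = F(X_t[\widetilde{\alpha}])$. Once Lemma \ref{lem: free product commuting square} is in place, the rest of the argument is a clean transfer of control policies between free products that preserves initial conditions, trajectories, and---via $E$-convexity---the cost.
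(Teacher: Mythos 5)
Your argument is correct, and for the embedding inequality $\overline{V}_\cA(t_0,EX)\le\overline{V}_\cB(t_0,X)$ it follows essentially the same route as the paper: invoke Lemma \ref{lem:decreasing}(2) to reduce to standard policies on the free product with a free-Brownian-motion algebra, extend $\iota$ to $J:\cA\median\cC\to\cB\median\cC$ via Lemma \ref{lem: free product embedding}, use Lemma \ref{lem: free product commuting square} together with associativity to check that $\beta_t:=F(\alpha_t)$ lives in the standard filtration of $\cA\median\cC$, and close with $E$-convexity of $L$ and $g$.

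For the convexity in $X$, your approach diverges from the paper in a way worth noting. The paper first calls Lemma \ref{lem: arranging initial filtration} to obtain, for each initial point $x_k$, a possibly different algebra $\cB^k\supseteq\iota_k(\cA)$ carrying an $\epsilon$-optimal policy whose time-$t_0$ subalgebra contains all of $\iota_k(\cA)$, and then applies the amalgamation Lemma \ref{lem: amalgamation of Brownian motions} to push both policies into a single ambient algebra $\cB$ with a shared filtration and Brownian motion before convexly combining the controls. You instead apply Lemma \ref{lem:decreasing}(2) directly, which already puts both $\epsilon$-optimal policies in the single canonical algebra $\cA\median\cC$ with the same filtration $(\iota_1(\cA)\vee\iota_2(\cC_{t_0,t}))_t$ and the same Brownian motion $\iota_2(S^1_t)$. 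Since the time-$t_0$ subalgebra already contains $\iota_1(\cA)$, the convex combination $\gamma_t=\theta\alpha_t^{X_0}+(1-\theta)\alpha_t^{Y_0}$ is automatically an adapted standard policy for the initial datum $\iota_1(\theta X_0+(1-\theta)Y_0)$; linearity of the SDE under Assumption \textbf{B}(a) and convexity of $L,g$ under \textbf{B}(b) then close the argument. This sidesteps Lemma \ref{lem: amalgamation of Brownian motions} entirely for this step, at the cost of relying more heavily on Lemma \ref{lem:decreasing}(2) (which is proved before the present lemma, so there is no circularity). Your route is somewhat shorter; the paper's route is perhaps more illustrative of the general amalgamation machinery it builds up.
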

	
	\begin{proof}
		\textbf{Step 1: Convexity  of $x\mapsto \overline{V}_\cA(t_0,x)$.} 
		Let  $\cA \in \bW$ be any tracial $W^*$--algebra and $t_0 \in [0,T]$.  Fix $x_0, x_1 \in L^2(\cA)^d_{sa}$ and $\epsilon>0$. 
		By Lemma \ref{lem: arranging initial filtration}, for $k=0,1,$ there exist $\cB^k \in \bW$, a tracial $W^*$--embedding $\iota_k: \cA \to \cB^k$, and a control policy
		\begin{equation}\label{eq:oct15.2024.1}
			\widetilde{\alpha}^k := \Big((\alpha^k_t)_{t\in [t_0, T]},(\cB^k_t)_{t\in [t_0, T]}, (S^k_t)_{t\in [t_0,T]}\Big) \in \bA_{\cB^k, \iota_k (x_k)}^{t_0, T}
		\end{equation}
		such that $\iota_k(\cA) \subseteq \cB^k_{t_0}$, $X_{t_0}\big[\widetilde{\alpha}^k\big]=\iota_k(x_k)$ and 
		\begin{equation}\label{eq:oct15.2024.0}
			\bE\bigg[\int_{t_0}^T L_{\cB^k} \Big(X_t[\widetilde{\alpha}^k], \alpha^k_t\Big)dt + g_{\cB^k}\Big(X_T[\widetilde{\alpha}^k]\Big)\bigg]  \leq \overline{V}_{\cA}(t_0,x_k) + \epsilon .
		\end{equation}
		Now we apply Lemma \ref{lem: amalgamation of Brownian motions} with $t_0 = t_1$ and $\cA_{t_0} = \cA_{t_1} = \cA$ (since the time interval has length zero, the free Brownian motion on $[t_0,t_1]$ reduces to $0$).
		Recalling $\iota_k(\cA) \subset \cB^k_{t_0}$, by Lemma \ref{lem: amalgamation of Brownian motions}, there exists a tracial $W^*$--algebra $\cB,$ tracial $W^*$--embeddings $\iota: \cA \to \cB$, $\widetilde \iota_k: \cB^k \to \cB$, and a filtration $(\cB_t)_{t \in [t_0,T]}$ with a $d$--variable free Brownian motion $(S_t)_{t\in [t_0,T]}$ such that 
		\begin{enumerate}
			\item $\widetilde{\iota}_k(S^k_t)=S_t$ for each $t \in [t_0, T].$
			\item $\widetilde{\iota}_k(\cB^k_t) \subset \cB_t$ for each $t \in [t_0, T].$
			\item $(S_t)_{t\in [t_0,T]}$ is a Brownian motion compatible with the filtration $(\cB_t)_{t \in [t_0,T]}.$
			\item $\widetilde{\iota}_k \circ \iota _k|_{\cA} =\iota|_{\cA}$.
		\end{enumerate}
		By \eqref{eq:oct15.2024.1},
		\[
		{\widetilde \iota_k  \widetilde{\alpha}^k}  = \Big((\widetilde{\iota}_k \alpha^k_t)_{t\in [t_0, T]},(\widetilde{\iota}_k \cB^k_t)_{t\in [t_0, T]}, (\widetilde{\iota}_k S^k_t)_{t\in [t_0,T]}\Big) \in \bA_{\widetilde{\iota}_k\cB^k, \widetilde{\iota}_k \circ \iota_k (x_k)}^{t_0, T}.
		\]
		Since $(\widetilde{\iota}_k \alpha^k_t)_{t\in [t_0,s]}\in L^2\Big([t_0,s]\times (\Omega, (\widetilde{\iota}_k \mathcal{B}^k_t)_t,\mathbb{P})\Big)$, (2) implies that  
		$
		(\widetilde{\iota}_k \alpha^k_t)_{t\in [t_0,s]}\in L^2\Big([t_0,s]\times (\Omega, (\mathcal{B}_t)_t,\mathbb{P})\Big).
		$
		This, together with (2) and (3), implies that 
		\[
		\beta^k: =  \Big((\widetilde{\iota}_k \alpha^k_t)_{t\in [t_0, T]},(\cB_t)_{t\in [t_0, T]}, (S_t)_{t\in [t_0,T]}\Big) \in \bA_{\cB, \widetilde{\iota}_k \circ \iota_k (x_k)}^{t_0, T}.
		\]
		Observe that
		\[
		X_t[\beta^k] = X_t\Big[\widetilde{\iota}_k\Big(\widetilde{\alpha}^k\Big)\Big]= \widetilde{\iota}_k\Big( X_t\Big[\widetilde{\alpha}^k\Big] \Big), \qquad X_{t_0}[ \beta^k]= \widetilde{\iota}_k \circ \iota_k (x_k).
		\]
		Since $L$ and $g$ are tracial functions, we have 
		\begin{align} \label{eq:oct15.2024.3}
			\bE\bigg[\int_{t_0}^T L_{\cB} \Big(X_t[ \beta^k ], \widetilde{\iota}_k \alpha^k_t\Big)dt + g_{\cB}\Big(X_T [\beta^k] \Big)\bigg] &=  \bE\bigg[\int_{t_0}^T L_{\cB^k} \Big(X_t[\widetilde{\alpha}^k], \alpha^k_t\Big)dt + g_{\cB^k}\Big(X_T[\widetilde{\alpha}^k]\Big)\bigg] \nonumber  \\
			&  \overset{\eqref{eq:oct15.2024.0}}{\le}  \overline{V}_{\cA}(t_0,x_k) + \epsilon. 
		\end{align}
		For $s \in (0,1), $ we set 
		\[
		\widetilde{\beta}^s:=\Big(\big(\beta^s_t\big)_{t\in [t_0, T]},(\cB_t)_{t\in [t_0, T]}, (S_t)_{t\in [t_0,T]}\Big), \qquad \beta^s_t:= (1-s)  \widetilde \iota_0 \alpha^0_t + s  \widetilde \iota_1 \alpha^1_t,
		\]
		and 
		$$
		z_s:= (1-s)  \widetilde \iota_0 \circ \iota_0(x_0)+ s \widetilde \iota_1 \circ \iota_1(x_1).
		$$ 
		Since $\widetilde{\beta}^s \in \bA_{\cB, z_s}^{t_0, T}$, we infer that
		\[
		\overline{V}_{\cB}(t_0, z_s) \leq \bE\bigg[\int_{t_0}^T L_{\cB} \Big(X_t[\widetilde{\beta}^s], \beta^s _t\Big)dt + g_{\cB}\Big(X_T[\widetilde{\beta}^s]\Big)\bigg].
		\]
		We use the fact that 
		\[
		X_t[\widetilde{\beta}^s]=(1-s) X_t[{\beta^0}]+ s X_t[{\beta^1}], \qquad \beta^s_t= (1-s)  \widetilde \iota_0 \alpha^0_t + s \widetilde \iota_1 \alpha^1_t , \qquad X_{t_0}[\widetilde{\beta}^s]= z_s
		\]
		along with the convexity property of $L_\cB$ and $g_\cB$ to conclude that  
		\begin{multline*}
			\overline{V}_{\cB}(t_0, z_s) \leq 
			(1-s) \bE\bigg[\int_{t_0}^T L_{\cB} \Big(X_t[{\beta^0}],  \widetilde \iota_0 \alpha^0_t \Big)dt + g_{\cB}\Big( X_T[{\beta^0}]\Big)\bigg] 
			\\
			+  s \bE\bigg[\int_{t_0}^T L_{\cB} \Big( X_t[{\beta^1}],  \widetilde \iota_1 \alpha^1_t \Big)dt + g_{\cB}\Big( X_T[{\beta^1}]\Big)\bigg].
		\end{multline*}
		Combining this with \eqref{eq:oct15.2024.3}, we  obtain
		\begin{equation}\label{eq:oct15.2024.5}
			\overline{V}_{\cB}(t_0, z_s)  \leq (1-s) \overline{V}_{\cA}(t_0, x_0) +s \overline{V}_{\cA}(t_0, x_1) +\epsilon.
		\end{equation} 
		Since $\widetilde{\iota}_k \circ \iota_k(x_k)=\iota(x_k),$ 
		\[
		z_s= (1-s)\iota(x_0)+ s\iota(x_1)=\iota\big((1-s)x_0 + s x_1 \big).
		\]
		Since $\iota: \cA \rightarrow \cB$ is a tracial $W^*$--embedding and $\overline V$ is a tracial $\mathrm{W}^*$--function (see Lemma \ref{lem:nov18.2023.2}), \eqref{eq:oct15.2024.5} implies that 
		\[
		\overline{V}_{\cA}\big(t_0,(1-s)x_0 + s x_1 \big) \leq (1-s) \overline{V}_{\cA}(t_0, x_0) +s \overline{V}_{\cA}(t_0, x_1) +\epsilon.
		\]
		Since $\epsilon>0$ was arbitrary, we have shown convexity of $\overline{V}_{\cA}(t_0, \cdot)$.
		
		~
		
		\textbf{Step 2: $E$-convexity  of $(\overline{V}_\cA(t_0,\cdot))_{\cA \in \bW}$.}  Since we already showed that $\overline{V}$ is a tracial $\mathrm{W}^*$--function (Lemma \ref{lem:nov18.2023.2}) and we showed convexity of $\overline{V}_{\cA}$ in Step 1, it only remains to show that for any  tracial $W^*$--embedding $\iota: \cA \rightarrow \cB$ and its adjoint $E: \cB \rightarrow \cA$, we have
		\begin{align} \label{226}
			\overline{V}_\cA(t_0, Ey_0) \le \overline{V}_\cB(t_0, y_0),\qquad \forall y_0 \in L^2(\cB)^d_{sa}. 
		\end{align}
		As in Lemma \ref{lem:decreasing}, let $\cC$ be the tracial $\mathrm{W}^*$--algebra generated by a free Brownian motion $(S_t)_{t \in [t_0,T]}$.  Let $\overline \iota_1: \cB \to \cB * \cC$ and $\overline \iota_2: \cC \to \cB * \cC$ be the inclusions from the free product construction.  By Lemma \ref{lem:decreasing}, for each $\epsilon > 0$, there exists a control policy of the form
		\[
		\widetilde{\alpha} = ((\alpha_t)_{t \in [t_0,T]}, (\overline \iota_1(\cA) \vee \overline \iota_2(\cC_{t_0,t}))_{t \in [t_0,T]}, (\overline \iota_2(S_t))_{t \in [t_0,T]}),
		\]
		where $\cC_{t_0,t} := \mathrm{W}^*(S_s: s \in [t_0,t])$, such that
		\[
		\bE\bigg[\int_{t_0}^T L_{\cB * \cC} \Big(X_t[\widetilde{\alpha}], \alpha_t\Big)dt + g_{\cB * \cC}\Big(X_T[\widetilde{\alpha}]\Big)\bigg]  \leq \overline{V}_{\cB}(t_0,y_0) + \epsilon.
		\]
		Now let $\iota': \cA * \cC \to \cB * \cC$ be the tracial $\mathrm{W}^*$--embedding induced from the embedding $\iota: \cA \to \cB$ (Lemma \ref{lem: free product embedding}), and let $E': \cB * \cC \to \cA * \cC$ be the corresponding conditional expectation.  We want to define a control policy $\widetilde{\beta}$ in $\cA * \cC$ by
		\[
		\beta_t = E' \alpha_t,\qquad t \in [t_0,T],
		\]
		where the filtration is $(\overline \iota_1\circ \iota(\cA) \vee \overline \iota_2(\cC_{t_0,t}))_{t \in [t_0,T]}$ and the Brownian motion is the same $(\overline \iota_2(S_t))_{t \in [t_0,T]}$.  Measurability of $\beta_t$ follows from the fact that $E'$ is a contraction from $L^2(\cB * \cC)$ to $L^2(\cA * \cC)$, hence continuous.
		
		We next need to check that $\beta_t \in \overline \iota_1\circ \iota(\cA) \vee \overline \iota_2(\cC_{t_0,t})$.  Here note by associativity (Lemma \ref{lem: associativity}), we have a canonical isomorphism $\cB * \cC \cong \cB * \cC_{t_0,t} * \cC_{t,T}$, and this also restricts to an isomorphism $\cA * \cC \cong \cA * \cC_{t,t_0} * \cC_{t,T}$.  Let $\iota_t':\cA * \cC_{t_0,t} \to \cB * \cC_{t_0,t}$  be the inclusion and let $E_t': \cB * \cC_{t_0,t} \to \cA * \cC_{t_0,t}$ be the corresponding conditional expectation.  Let $\phi_t: \cB * \cC_{t_0,t} \to \cB * \cC$ be the canonical inclusion.  By Lemma \ref{lem: free product commuting square}, we have
		\[
		E' \circ \phi_t = \phi_t \circ E_t' \text{ on } \cB * \cC_{t_0,t}.
		\]
		Recall $\alpha_t \in \overline \iota_1(\cA) \vee \overline \iota_2(\cC_{t_0,t}) \subseteq \cB * \cC$, that is, $\alpha_t$ is in the image of $\phi_t$, and therefore the above identity implies that $E' \alpha_t$ is in the image of $\phi_t \circ E_t'$.  This means that $\beta_t$ is in the image of $\cA * \cC_{t_0,t}$ or $\beta_t \in \overline \iota_1 \circ \iota(\cA) \vee \overline \iota_2(\cC_{t_0,t})$, as desired.  Hence, $\tilde{\beta}$ is a valid control policy using the asserted filtration.
		
		Finally,   noting that $X_t[\widetilde{\beta}] = E' X_t[\widetilde{\alpha}]$, by $E$-convexity of $L$ and $g$ (Assumption \textbf{B}), we have
		\begin{align*}
			\overline{V}_{\cA}(t_0, E y_0) &\leq \bE\bigg[\int_{t_0}^T L_{\cA * \cC} \Big(X_t[\widetilde{\beta}], \beta_t\Big)\, dt + g_{\cA * \cC}\Big(X_T[\widetilde{\beta}]\Big)\bigg] \\
			&\leq \bE\bigg[\int_{t_0}^T L_{\cB * \cC} \Big(X_t[\widetilde{\alpha}], \alpha_t\Big)\, dt + g_{\cB * \cC}\Big(X_T[\widetilde{\alpha}]\Big)\bigg] \\
			&\leq \overline{V}_{\cB}(t_0,y_0) + \epsilon.
		\end{align*}
		Since $\epsilon>0$ was arbitrary, we have $\overline{V}_{\cA}(t_0,E y_0) \leq \overline{V}_{\cB}(t_0,y_0)$ as desired.
	\end{proof}

	
	

	\begin{proposition}\label{prop:continuity} 
		Suppose that Assumption \textbf{A} holds. For $\cA \in \bW$,  $(t,x)\mapsto \overline{V}_\cA(t,x)$ is continuous and bounded such that  for $(t_0,x_0)\in [0,T]\times L^2(\mathcal{A})_{sa}^d,$
		\begin{align}\label{eqn:global_bound}
			-C_1\, (1+T)\leq \overline{V}_\cA(t_0,x_0) \leq C  (\|x_0\|_{L^2(\cA)} + 1).
		\end{align}
		Also, it is Lipschitz in space and H\"{o}lder in time: 
		\begin{align}\label{eqn:Lipschitz_estimate}
			\big|\overline{V}_\cA(t_1,x_1) - \overline{V}_\cA(t_2,x_2)\Big| \leq C\, \|x_1-x_2\|_{L^2(\cA)} + C_{M_2}\, \sqrt{|t_1-t_2|}
		\end{align}
		for $(t_1,t_2,x_1,x_2)\in [0,T]\times[0,T]\times L^2(\mathcal{A})_{sa}^d\times L^2(\mathcal{A})_{sa}^d$, where $M_2:=\max\{\|x_1\|_{L^2(\cA)},\|x_2\|_{L^2(\cA)}\}$.
		
		Equivalently, for the value function $\overline{V}$ defined on the space of laws in \eqref{eqn:value}, we have 
		\begin{align}\label{eqn:free_global_bound}
			-C_1\, (1+T)\leq \overline{V}(t_0,\lambda_0) \leq C_{M_3}\quad  \hbox{ for }(t_0,\lambda_0)\in [0,T]\times \Sigma_d^2,
		\end{align}
		where $M_3$ is the second moment of $\lambda_0$,
		and
		\begin{align}\label{eqn:free_continuity estimate}
			\big|\overline{V}(t_1,\lambda_1) - \overline{V}(t_2,\lambda_2)\big| \leq C \, d_W(\lambda_1,\lambda_2) + C_{M_4}\,  \sqrt{|t_1-t_2|}
		\end{align}
		for $(t_0,\lambda_1,\lambda_2)\in [0,T]\times \Sigma_d^2\times \Sigma_d^2,$ where $M_4$ denotes the maximum of the second moments of $\lambda_1,\lambda_2$.
	\end{proposition}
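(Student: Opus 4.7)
The approach is to establish the bounds \eqref{eqn:global_bound} first and then combine them with the stability estimates of Theorem \ref{thm:well_posedness} together with the amalgamation results in Lemmas \ref{lem: amalgamation of Brownian motions} and \ref{lem: arranging initial filtration} to obtain \eqref{eqn:Lipschitz_estimate}; finally, the law-space formulation is deduced using that $\overline{V}_\cA$ is a tracial $\mathrm{W}^*$-function (Lemma \ref{lem:nov18.2023.2}). The lower bound in \eqref{eqn:global_bound} is immediate from \eqref{eqn:lower_and_upper_bounds}, since every admissible cost is at least $-C_1(T-t_0)-C_1$. For the upper bound, I would pass from $\cA$ to the free product $\cA \median \cC$ where $\cC$ is generated by a free Brownian motion freely independent of $\iota(x_0)$, take the admissible constant control $\alpha \equiv 0$, and bound the resulting pure-noise trajectory in $L^2$ by $\|x_0\|_{L^2(\cA)}^2 + d(\beta_C^2+\beta_F^2)T$; the upper bounds in \eqref{eqn:lower_and_upper_bounds} then yield the claimed linear estimate.

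For the space-Lipschitz component of \eqref{eqn:Lipschitz_estimate}, fix $\epsilon > 0$ and use Lemma \ref{lem: arranging initial filtration} to produce $\iota:\cA \to \cB$ with a policy $\widetilde\alpha$ realizing $\overline{V}_\cA(t_0,x_2)$ to within $\epsilon$ and with $\iota(\cA)\subseteq \cB_{t_0}$. Since $\iota(x_1) \in \cB_{t_0}$ too, the same policy is admissible from $\iota(x_1)$. Estimate \eqref{eqn:initial_condition_continuity_2} bounds the $L^2(\cB)$-distance between the two trajectories by $\widetilde C^{1/2}\|x_1-x_2\|_{L^2(\cA)}$, after which the Lipschitz bounds \eqref{eqn:Lipschitz_bounds} on $L$ and $g$ convert this to a cost-difference estimate linear in $\|x_1-x_2\|_{L^2}$ with constant independent of $\|x_1\|, \|x_2\|$. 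Symmetrizing and letting $\epsilon \to 0$ gives the space-Lipschitz bound.

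The time-Hölder part will be the main obstacle, because without the dynamic programming principle (which is established later) both directions must be handled separately via the amalgamation lemmas. For $t_1<t_2$ and the inequality $\overline{V}_\cA(t_1,x_0) \leq \overline{V}_\cA(t_2,x_0) + C_M\sqrt{t_2-t_1}$, take a near-optimal policy for $(t_2,x_0)$ in some $\cB^k \supseteq \cA$ via Lemma \ref{lem: arranging initial filtration} and then apply Lemma \ref{lem: amalgamation of Brownian motions} (with $[t_0^{\mathrm{lem}},t_1^{\mathrm{lem}}]=[t_1,t_2]$ and the trivial free Brownian motion on $[t_1,t_2]$) to extend the filtration and free Brownian motion to $[t_1,T]$; run the zero control on $[t_1,t_2]$ and the original control on $[t_2,T]$. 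The pure-noise deviation $\mathbb{E}\|X_{t_2}-\iota x_0\|_{L^2}^2 = d(\beta_C^2+\beta_F^2)(t_2-t_1)$ combined with \eqref{eqn:initial_condition_continuity_2} and \eqref{eqn:Lipschitz_bounds} controls the $[t_2,T]$ portion of the cost difference by $O(\sqrt{t_2-t_1})$, while the linear upper bound on $L_\cB(X,0)$ makes the $[t_1,t_2]$ portion $O(t_2-t_1)$. For the reverse inequality, take a near-optimal policy for $(t_1,x_0)$ restricted to $[t_2,T]$, with the shifted free Brownian motion $(S_t-S_{t_2})_{t\in[t_2,T]}$ and the same filtration; the coercive lower bound on $L$ combined with the upper bound \eqref{eqn:global_bound} already proved yields $\mathbb{E}\int_{t_1}^T\|\alpha_t\|^2\,dt \leq C(1+\|x_0\|_{L^2})$, whence \eqref{eqn:time_bound_L2} gives $\mathbb{E}\|X_{t_2}[\widetilde\alpha]-x_0\|^2 \leq \widetilde C_{M,N}(t_2-t_1)$ and \eqref{eqn:initial_condition_continuity_2} transfers this to a uniform $L^2$-bound on trajectories on $[t_2,T]$ starting from $x_0$; \eqref{eqn:Lipschitz_bounds} then finishes.

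Finally, since $\overline{V}_\cA$ is a tracial $\mathrm{W}^*$-function, $\overline{V}(t,\lambda)$ is well-defined on $\Sigma_d^2$; the bounds \eqref{eqn:free_global_bound} follow because $\|x_0\|_{L^2(\cA)}^2$ equals the second moment of $\lambda_{x_0}$, and \eqref{eqn:free_continuity estimate} follows by choosing representatives $X_1,X_2 \in L^2(\cA)_{\mathrm{sa}}^d$ of $\lambda_1,\lambda_2$ in a common algebra with $\|X_1-X_2\|_{L^2(\cA)}^2$ arbitrarily close to $d_W^2(\lambda_1,\lambda_2)$ and applying \eqref{eqn:Lipschitz_estimate}.
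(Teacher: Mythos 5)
Your proof follows the paper's strategy closely: the bounds \eqref{eqn:global_bound} via \eqref{eqn:lower_and_upper_bounds} and the trivial control, the space-Lipschitz estimate via Lemma \ref{lem: arranging initial filtration} and \eqref{eqn:initial_condition_continuity_2}, the time-H\"older estimate via \eqref{eqn:time_bound_L2} and \eqref{eqn:initial_condition_continuity_2}, and the passage to law-space via Lemma \ref{lem:nov18.2023.2}. There is one substantive point where you go further, and it is an improvement. The paper, for $t_1<t_2$, takes a near-optimal policy for $(t_1,x_0)$ and \emph{restricts} it to $[t_2,T]$, yielding only the one-sided bound $\overline{V}_\cA(t_2,x_0)-\overline{V}_\cA(t_1,x_0)\leq C_M\sqrt{t_2-t_1}$; the reverse inequality is left implicit, and unlike the spatial case there is no symmetry between $t_1$ and $t_2$ to invoke, because restriction shortens the horizon whereas the other direction requires \emph{extending} a policy from $[t_2,T]$ back to $[t_1,T]$. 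You correctly identify this asymmetry and supply the missing direction: run zero control with a fresh free Brownian motion on $[t_1,t_2]$, glued to the near-optimal policy via Lemma \ref{lem: amalgamation of Brownian motions}; the running cost on $[t_1,t_2]$ contributes $O(t_2-t_1)$ by the linear upper bound on $L_\cB(\cdot,0)$, and the trajectory deviation on $[t_2,T]$ contributes $O(\sqrt{t_2-t_1})$ by the noise estimate and \eqref{eqn:initial_condition_continuity_2}.

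One small imprecision in that extension step: you write ``the trivial free Brownian motion on $[t_1,t_2]$.'' What is actually required is a genuine free Brownian motion on $[t_1,t_2]$ (the \emph{control} is set to zero, not the Brownian motion), and it must be freely independent of $x_0$; such a process need not exist in $\cA$. As in your argument for the upper bound in \eqref{eqn:global_bound}, one should first embed $\cA$ into $\cA \median \cC$ with $\cC$ generated by a free Brownian motion and then invoke Lemma \ref{lem: amalgamation of Brownian motions}. With that detail made explicit, your argument is correct and complete.
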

	
	
	\begin{proof}
		We first prove a version of (\ref{eqn:global_bound}) for the function $\widetilde{V}_{\cA}(t_0,x_0).$
		The lower bound follows immediately from the lower bounds in (\ref{eqn:lower_and_upper_bounds}) with any control policy, and the upper bound in (\ref{eqn:global_bound}) follows from considering the trivial control $\alpha_t \equiv 0$. {{Indeed, the corresponding trajectory $(X_t)_t$ becomes
				\[
				X_t=x_0+ \beta_C \mathbbm{1}_{\cA}(W^0_t-W^0_{t_0}) +\beta_F(S_t-S_{t_0}).
				\]
				Thus,  for any $0\le  t \le T,$
				\[
				\bE[\|X_t\|_{L^2(\cA)}]  \leq \|x_0\|_{L^2(\cA)}+ (\beta_C+\beta_F) \sqrt{t}.
				\]
				By the upper bound condition on $\cL_\cA$ and $g_\cA$ in (\ref{eqn:lower_and_upper_bounds}), we obtain the upper bound  in (\ref{eqn:global_bound}) for $\widetilde{V}_{\cA}(t_0,x_0).$ Hence,  the estimate  (\ref{eqn:global_bound})  follows by taking the infimum over all tracial $W^*$-embeddings $\iota :\cA \rightarrow \cB,$ noting that  $\|x_0\|_{L^2(\cA)} = \|\iota x_0\|_{L^2(\cB)}$.
		}}

		~

		{Now, we prove Lipschitz continuity in the spatial variable. Let $x_1\in L^2(\cA)^d_{sa},$ $t_0\in [0,T]$ and $\epsilon>0$. 
			By Lemma \ref{lem: arranging initial filtration},   there exists a tracial $\mathrm{W}^*$-algebra $\cB$, a tracial $W^*$--embedding $\iota: \cA \to \cB$, and a control policy $\widetilde \alpha \in \mathbb{A}_{\mathcal{B},\iota x_1}^{t_0,T}$ associated with a filtration
			$(\cB_t)_{t \in [t_0,T]}$ and a compatible free Brownian motion $(S_t)_{t \in [t_0,T]}$, such that $\iota(\cA) \subseteq \cB_{t_0}$ and
			\begin{align} \label{333}
				\mathbb{E}\left[ \int_{t_0}^T L_{\mathcal{B}}(X_t[\widetilde 
				\alpha],\alpha_t)\,dt + g_{\mathcal{B}}(X_T[\alpha]) \right] \leq 
				\overline{V}_{\mathcal{A}}(t_0,x_1) + \epsilon,
			\end{align}
			where $({X}_t[\widetilde{\alpha}])_{t\in [t_0,T]}$ satisfies the initial condition $\widehat{X}_{t_0}[\widetilde{\alpha}]=\iota x_1$.  
			We remark, which will be useful later, that from  (\ref{eqn:global_bound}) and the lower bound (\ref{eqn:lower_and_upper_bounds}),   such   control policy $\widetilde  \alpha$ satisfies a priori bound
			\begin{align} \label{l2 bound}
				\bE\Big[\int_{t_0}^T \|\alpha_t\|_{L^2(\cB)}^2dt\Big] \leq   C( \|x_1\|_{L^2(\cA)}+ T+1) .
			\end{align}
			Note that $\widetilde \alpha \in \mathbb{A}_{\mathcal{B},\iota x_2}^{t_0,T}$, since $\iota x_2 \in \iota(A) \subseteq \cB_{t_0}.$
			Let $(\widehat{X}_t[\widetilde{\alpha}])_{t\in [t_0,T]}$ be a solution to \eqref{eqn:common_noise}  in $L^2(\cB)^d_{sa}$ with the same filtration
			$(\cB_t)_{t \in [t_0,T]}$ and free Brownian motion $(S_t)_{t \in [t_0,T]}$, satisfying the initial condition $\widehat{X}_{t_0}[\widetilde{\alpha}]=\iota x_2$. Then  using  \eqref{333},
			\begin{align*}
				&\ \overline V_{\cA}(t_0,x_2) -\overline V_{\cA}(t_0,x_1)\\
				&\leq \epsilon + \mathbb{E}\Big[\int_{t_0}^T \Big(L_{\cB} (\widehat{X}_t[\widetilde{\alpha}], \alpha_t)- L_{\cB} (X_t[\widetilde{\alpha}], \alpha_t)\Big)dt + g_{\cB}(\widehat{X}_T[\widetilde{\alpha}]) - g_{\cB}({X}_T[\widetilde{\alpha}])\Big]\\
				&\overset{\eqref{eqn:Lipschitz_bounds}}{\leq} \epsilon + \mathbb{E}\Big[\int_{t_0}^T C_2\, \|\widehat{X}_t[\widetilde{\alpha}] -X_t[\widetilde{\alpha}]\|_{L^2(\cB)}dt +C_2\, \|\widehat{X}_T[\widetilde{\alpha}] -X_T[\widetilde{\alpha}]\|_{L^2(\cB)} \Big]\\
				&\overset{\eqref{eqn:initial_condition_continuity_2}}{\le} \epsilon +   C_2\, (1 +T) \, \sqrt{\widetilde{C}}\, \|\iota x_2-\iota x_1\|_{L^2(\cB)} = \epsilon +   C_2\, (1 +T) \, \sqrt{\widetilde{C}}\, \|x_2-x_1\|_{L^2(\cA)}.
			\end{align*}
			As $\epsilon>0$ is arbitrary, we deduce a Lipschitz bound (\ref{eqn:Lipschitz_estimate}) in the spatial variable.
		}
		
		~

		{We now show H\"{o}lder continuity in time.  Take $t_1,t_2\in [0,T]$, $x_0\in L^2(\cA)_{sa}^d$ and $\epsilon>0$. We may assume $t_1<t_2$. By definition of $\overline V_\cA$ in \eqref{def:bar}, we take a tracial $\mathrm{W}^*$-algebra $\cB$, a tracial $W^*$--embedding $\iota: \cA \to \cB$, and a control policy $\widetilde  \alpha \in \mathbb{A}_{\mathcal{B},\iota x_0}^{t_1,T}$ associated with free Brownian motion $(S_t)_{t\in [t_1,T]}$ and a  filtration
			$(\cB_t)_{t \in [t_1,T]}$  such that 
			\begin{align} \label{334}
				\mathbb{E}\left[ \int_{t_1}^T L_{\mathcal{B}}(X_t[\alpha],\alpha_t)\,dt + g_{\mathcal{B}}(X_T[\alpha]) \right] \leq 
				\overline{V}_{\mathcal{A}}(t_1,x_0) + \epsilon.
			\end{align}
			Set $N:=\mathbb{E}\big[\int_{t_1}^{T}\|\alpha_t\|^2_{L^2(\cB)}dt\big]
			$, which is bounded independent of $t_1$ and $t_2$, due to \eqref{l2 bound}.  We define $\widetilde{\alpha}' $ to be a  restriction of $\widetilde{\alpha} $ to $[t_2,T]$, associated with  free Brownian motion $(S_t - S_{t_2})_{t\in [t_2,T]}$ and  a filtration $(\cB_t)_{t\in [t_2,T]}$. Note that $\widetilde \alpha' \in \mathbb{A}_{\mathcal{B},\iota x_0}^{t_2,T}$, since $\iota x_0 \in \cB_{t_1} \subseteq \cB_{t_2}  $.
			Let $(\widehat{X}_t[\widetilde{\alpha}'])_{t\in [t_2,T]}$ be the solution to \eqref{eqn:common_noise} in $L^2(\cB)^d_{sa}$, satisfying the initial condition  $\widehat{X}_{t_2}[\widetilde{\alpha}']=\iota  x_0$.
			By (\ref{eqn:time_bound_L2}) in  Theorem \ref{thm:well_posedness}, setting $M:=\|\iota x_0\|_{L^2(\cB)} = \|x_0\|_{L^2(\cA)}$, 
			$$
			\mathbb{E}\Big[\|\widehat{X}_{t_2}[\widetilde{\alpha}']-X_{t_2}[\widetilde{\alpha}]\|^2_{L^2(\cB)}\Big] = \mathbb{E}\Big[\| X_{t_2}[\widetilde{\alpha}] - \iota x_0\|^2_{L^2(\cB)}\Big] \leq \widetilde{C}_{M,N}\, (t_2-t_1).
			$$
			Hence for $t\in [t_2,T]$, it follows from (\ref{eqn:initial_condition_continuity_2}) that
			$$
			\mathbb{E}\Big[\|\widehat{X}_{t}[\widetilde{\alpha}']-X_{t}[\widetilde{\alpha}]\|^2_{L^2(\cB)}\Big]\leq \widetilde{C}\, \mathbb{E}\Big[\|\widehat{X}_{t_2}[\widetilde{\alpha}']-X_{t_2}[\widetilde{\alpha}]\|^2_{L^2(\cB)}\Big] \le \widetilde{C}\widetilde{C}_{M,N}\, (t_2-t_1). 
			$$
			Thus, using this along with   the assumptions on $L_\cB$ and $g_\cB,$
			\begin{align*}
				&\ \overline {V}_\cA(t_2,x_0) -\overline 
				{V}_\cA(t_1,x_0)\\
				&\overset{\eqref{334}}{\le} \epsilon + \bE\Big[
				-\int_{t_1}^{t_2}L_{\cB} (X_t[\widetilde{\alpha}], \alpha_t)dt + \int_{t_2}^T \Big(L_{\cB} (\widehat{X}_t[\widetilde{\alpha}'], \alpha_t')- L_{\cB} (X_t[\widetilde{\alpha}], \alpha_t)\Big)dt \\
				&  \qquad + g_{\cB}(\widehat{X}_T[\widetilde{\alpha}']) - g_{\cB}({X}_T[\widetilde{\alpha}])\Big]\\
				&\leq \epsilon + 2\, C_1\, (t_2-t_1)  + C_2\, (1+T)\, \sqrt{\widetilde{C}\, \widetilde{C}_{M,N}}\, \sqrt{t_2-t_1}.
			\end{align*}
			As $\epsilon>0$ is arbitrary, 
			we deduce a Lipschitz bound (\ref{eqn:Lipschitz_estimate}) in time variable.
		}
		
		~

		Finally, (\ref{eqn:free_global_bound}) and (\ref{eqn:free_continuity estimate}) follow immediately from the same bounds by selecting representatives of $\lambda_1,\lambda_2$ in a von Neumann algebra.
	\end{proof}

	\section{Viscosity Solutions} \label{sec: viscosity solutions}
	
	In this section, we develop a new theory of viscosity solutions on the space of  non-commutative laws.  In particular, we show that the value function for the stochastic optimal control problem described in the previous section is a viscosity solution.  Although we do not establish a comparison principle on the space of non-commutative laws, we can do this in the case where there is no common noise by relating our problem to already developed theory of viscosity solutions on Hilbert space.
	
	The notion of viscosity solution was invented to understand non-smooth solutions of first and second-order elliptic and parabolic PDE.  The basic idea is that whenever the solution $u$ has a Taylor approximation from above or from below (or equivalently can be touched from above or below by a smooth test function), then substituting the first and second order terms from the Taylor expansion in place of the gradient and Hessian in the differential equation will produce an inequality in one direction.  The theory of viscosity solutions was first developed in \cite{vis1,vis2,vis3} and then has been extended to infinite-dimensional Hilbert space \cite{visinf1, crandall1990viscosity, ishii1993viscosity}.  The first author and Tudorascu adapted the theory of viscosity solutoins to Hamilton-Jacobi equation on the Wasserstein space \cite{gangbo2019}.  The notion of viscosity solution was applied in the random matrix setting in \cite{jekel2020elementary}, which is one motivation for the present work.
	
	We consider the following equation on  the space of non-commutative laws $\Sigma_{d}^2$:
	\begin{align}\label{eqn:non_commutative}
		-\partial_t V(t,\lambda) + H\big(\lambda, -\partial V(t,\lambda)\big) - \frac{\beta_C^2}{2}\, \Delta V(t,\lambda)- \frac{\beta_F^2}{2}\, \Theta V(t,\lambda)=&\ 0,\\
		V(T,\lambda) =&\ g(\lambda). \nonumber
	\end{align}
	In Definition \ref{def:WassSpaceViscosity}, we define viscosity solutions for this equation through the related collection of equations for each von Neumann algebra $\cA\in \bW$,
	\begin{align}\label{eqn:von_Neumann}
		-\partial_t V_\cA(t,X) +H_{\cA}\big(X, -\nabla V_\cA(t,X)\big) - \frac{\beta_C^2}{2}\, \Delta_{\cA} V(t,X)- \frac{\beta_F^2}{2}\, \Theta_{\cA} V(t,X)=&\ 0,\\
		V_\cA(T,X)=&\ g_{\cA}(X), \nonumber
	\end{align}
	where the common noise Laplacian $\Delta_\cA$ and the free individual noise Laplacian $\Theta_\cA$ will be defined below.  In fact, we will take a supremum of the left-hand side over the von Neumann algebras, leading to some asymmetry between the arguments for subsolutions and supersolutions.  Note that this is a parabolic equation expressed backwards in time, with the terminal condition at time $T$.
	
	Given a function $V:[0,T]\times \Sigma_{d}^2 \rightarrow \bR$, for each $\cA\in \bW$ we define
	\begin{align} \label{defv}
		V_{\cA}(t,X) := V(t,\lambda_X),\quad  X\in L^2(\cA)_{sa}^d \text{ and } t \in [0,T].
	\end{align}
	This is consistent with our notation for the value function $(\overline{V}_{\cA})_{\cA\in \bW}$ and $\overline V$ in \eqref{eqn:value}, due to Lemma \ref{lem:nov18.2023.2}.
	
	
	For every $\cA \in \bW$, viscosity subsolutions and supersolutions to (\ref{eqn:von_Neumann}) can be defined in a standard way on the Hilbert space $L^2(\cA)^d_{sa}$.
	However, as will be seen later, this is the case only if we assume that free individual noise is \emph{not} present, since the operator $\Theta$ is not defined on every $L^2(\cA)^d_{sa}$. The standard class of test functions on the Hilbert space  $L^2(\cA)^d_{sa}$ is given by 
	$$
	\mathcal{X}_{\cA} := \Big\{\phi\in C^1([0,T]\times L^2(\cA)_{sa}^d): \nabla^2 \phi\in C([0,T]\times L^2(\cA)_{sa}^d; \text{BL}(L^2(\cA)_{sa}^d)) \Big\},
	$$
	{where  BL denotes the space of bounded linear operators.} Examples of $\Phi\in \mathcal{X}_\cA$ include, $\Phi(x) = \|x-x_0\|_{L^2(\cA)}^2$ for $x_0\in L^2(\cA)_{sa}^d$ fixed, and $\Phi(x) = g(P_N\, x)$ where $P_N$ is a projection onto a $N$-dimensional subspace and $g$ is a smooth function.
	
	It is not so straightforward to choose a set of smooth test functions on $\Sigma_d^2$ that are  tracial $W^*$--functions. For first order equations, one could get around this by expressing the notion of viscosity solutions using subdifferentials on the Wasserstein space, which is done in the commutative setting in \cite{gangbo2019}, but this does not immediately help with second-order equations.  Instead, in our setting of non-commutative laws, following the idea of an $L$-derivative \cite{cardaliaguet2019master}, we define the collection $\mathcal{X}_\Sigma$ of  admissible test functions $ (U_{\cA})_{\cA \in \bW}$ on $[0,T]\times \Sigma_d^2$ that satisfy the following properties:
	\begin{enumerate}[label=(\alph*)]
		\item For any $\cA\in \bW$, $U_{\cA}:[0,T]\times L^2(\cA)_{sa}^d \rightarrow \mathbb{R}$ and for each $t\in [0,T]$, $(U_{\cA}(t,\cdot))_{\cA\in \bW}$ is a tracial $W^*$--function.
		\item For any $\cA\in \bW$, $U_\cA\in C^{1,1}([0,T]\times L^2(\cA)^d_{sa})$ (i.e. first derivatives, both in time and spatial variables, are Lipschitz).
		\item For any $\cA\in \bW$,  $X,A\in L^2(\cA)_{sa}^d$,  $B\in L^\infty(\cA)_{sa}^d$ and $t\in [0,T]$, the partial second derivatives ${\rm Hess}\, U_{\cA}(t,X)[A,B]$ exists and $t\mapsto {\rm Hess}\, U_{\cA}(t,X)[A,B]$ is continuous.
		\item There is a constant $K>0$ such that, with $\cA,X,A,B,t$ as in (c) and $Y\in L^2(\cA)_{sa}^d$,
		\begin{align} \label{hess}
			\big|\text{Hess}\, U_{\cA}(t,X)[A,B] - \text{Hess}\, U_{\cA}(t,Y)[A,B] \big|\leq K\,  \|X-Y\|_{L^2(\cA)}\, \|A\|_{L^2(\cA)}\, \|B\|_{L^\infty(\cA)}.
		\end{align}
	\end{enumerate}

	On the space $\mathcal{X}_\Sigma$, we define the infinite-dimensional operators that will be part of the Hamilton-Jacobi-Bellman equation. For  $(U_{\cA})_{\cA\in \bW}\in \mathcal{X}_\Sigma$ or $U_{\cA}\in \mathcal{X}_\cA$,
	the \emph{common noise Laplacian} is defined to be
	$$
	\Delta_{\cA} U(t,X) : =  {\rm Hess}\, U_\cA(t,X)\big[\mathbbm{1}_{\cA}, \mathbbm{1}_{\cA}\big],\quad  X\in L^2(\cA)_{sa}^d,  
	$$
	where we recall that $\mathbbm{1}_{\cA}\in L^2(\cA)_{sa}^d$ has the algebra unit in every component.
	
	When defining the free individual noise Laplacian, it is not sufficient to consider a function $U_\cA$ defined on a single von Neumann algebra $\cA$, but instead we should use a tracial $\mathrm{W}^*$-function $(U_{\cA})_{\cA\in \bW}$. This is because not all von Neumann algebras support freely independent semicircle laws.  For $\cA\in \bW$, we consider a tracial $W^*$-embedding $\iota: \cA \to \cB$ where $\cB$ contains a $d$-dimensional semicircle element $S = (S^1,\cdots, S^d)$ that is freely independent of $\iota(\cA)$.  Then, for $(U_{\cA})_{\cA\in \bW}\in \mathcal{X}_\Sigma$, the \emph{free individual noise Laplacian} is defined to be
	\begin{align}\label{eqn:free_laplacian}
		\Theta_{\cA}\, U(t,X) :=&\  \sum_{l=1}^{d}{\rm Hess}\, U_{\mathcal{B}}(t,\iota\, X)\big[S^l \mathbf{e}_{\cA}^l, S^l \mathbf{e}_{\cA}^l\big], \quad X\in L^2(\cA)_{sa}^d, 
	\end{align}
	where $\mathbf{e}_{\cA}^l\in L^2(\cA)_{sa}^d$ has the algebra unit in the $l$-th component and zero in the other components.
	
	This definition is motivated by the classical fact that the Laplacian of a function on $\bR^d$ can be expressed as
	\[
	\Delta u(x) =  \mathbb{E}[\ip{\text{Hess} u(x)Z,Z}],
	\]
	where $Z$ is a standard Gaussian random vector in $\bR^d$.  A similar approach is used in \cite[\S 4.3]{jekel2022tracial} and \cite{jekel2023martingale}, while many previous works gave a more explicit definition of the Laplacian for non-commutative polynomials, power series, and the like, in terms of non-commutative derivative operations and traces; see Appendix \ref{apx:free_laplacian} for more detail.  We point out that such explicit computations are not necessarily possible for tracial $\mathrm{W}^*$-functions in general since a function might be smooth with respect to the Wasserstein distance but \emph{not} continuous with respect to (weak-$*$) convergence in non-commutative law, and hence unable to be approximated by trace polynomials.  Hence, it is necessary for us to define the Laplacian directly in terms of free semi-circulars.
	
	\begin{remark} ~
		\begin{enumerate}[label=\roman*.]
			\item Note that for  $(U_{\cA})_{\cA\in \bW}\in \mathcal{X}_\Sigma$, \eqref{eqn:free_laplacian} does \emph{not} depend on the choice of von Neumann algebra $\cB$ or the $\mathrm{W}^*$-embedding $\iota: \cA \to \cB$. {Indeed, the common noise Laplacian and free individual noise Laplacian are tracial $\mathrm{W}^*$-functions.}
			\item With the tuple from the definition \eqref{eqn:free_laplacian}, $\iota\, X+ \sqrt{t-s} \, \beta_F\, S$ has the same non-commutative law as $\iota X+ \beta_F\, (S_{t}- S_{s})$ where  $S_t-S_s$ is freely independent from $\iota X$. This holds because $S_t-S_s$ has the same law as $\sqrt{t-s} {S}$, and both are freely independent of $\iota X$. This fact will be used in the proof of the mixed-It\^{o} formula, Lemma \ref{lem:Ito}.
		\end{enumerate}
	\end{remark}

	\subsection{Intrinsic Viscosity Solution} \label{subsec: intrinsic viscosity}
	
	For a metric space $\mathcal{Y}$, let $\textup{USC}(\mathcal{Y})$ denote the upper-semicontinuous and bounded above functions, and $\textup{LSC}(\mathcal{Y})$ denote the lower-semicontinuous and bounded below functions. We say that a function $\Phi\in \textup{LSC}(\mathcal{Y})$ \emph{touches} a function $U\in \textup{USC}(\mathcal{Y})$ \emph{from above (below)} at $y\in \mathcal{Y}$ if $\Phi(y)=U(y)$ and $\Phi(y')\geq U(y')$ (resp.\ $\Phi(y')\leq U(y')$) for all $y'\in \mathcal{Y}$.
	
	\begin{definition}\label{def:WassSpaceViscosity} Suppose that $(U_{\cA})_{\cA\in \bW}\in \textup{USC}([0,T]\times \Sigma_d^2)$ (resp.\ $\textup{LSC}([0,T]\times \Sigma_d^2)$). We say that $(U_{\cA})_{\cA\in \bW}$ is a \emph{free viscosity sub(super)solution} of (\ref{eqn:non_commutative}) if 
		\begin{enumerate}
			\item $U_{\cA}(T,x)\leq (\geq) g_{\cA}(x)$ for any $\cA\in \bW$ and $x\in L^2(\cA)_{sa}^d$.
			\item Whenever $(\Phi_{\cA})_{\cA\in \bW}\in \mathcal{X}_\Sigma$ touches $(U_{\cA})_{\cA\in \bW}$ from above (below) at $(t_0,\lambda_0)\in [0,T)\times \Sigma_d^2$,
			\begin{align*}
				\sup_{\cA\in \bW, x_0\in L^2(\cA)_{sa}^d,\lambda_{x_0}=\lambda_0}\Big\{&-\partial_t \Phi_{\cA}(t_0,x_0) + H_{\cA}\big(x_0, -\nabla \Phi_\cA(t_0,x_0)\big)\\
				&\ - \frac{\beta_C^2}{2}\, \Delta_{\cA} \Phi(t_0,x_0)- \frac{\beta_F^2}{2}\, \Theta_{\cA} \Phi(t_0,x_0)\Big\}\leq (\geq) 0.
			\end{align*}
		\end{enumerate}
	\end{definition}

	We say that  a continuous function $(U_{\cA})_{\cA\in \bW} $ on $[0,T]\times \Sigma_d^2$ is a 
	\emph{free viscosity solution} if it is a free viscosity subsolution \emph{and} supersolution.

	~

	We first establish an It\^{o} formula, which will then be used to establish a dynamic programming principle. The dynamic programming principle is then used to prove that the value function, defined in \eqref{eqn:value}, is a free viscosity subsolution.
	
	\subsubsection{Mixed-It\^{o} formula}
	
	We essentially need two versions of Taylor's theorem.
	The following lemma provides the first-order Taylor's theorem.
	\begin{lemma} For any $(U_{\cA})_{\cA \in \bW} \in \mathcal{X}_\Sigma$, $X,Y\in L^2(\cA)_{sa}^d$ and $t\in [0,T]$,
		$$
		\big|U_{\cA}(t,Y)-U_{\cA}(t,X) -\langle \nabla U_{\cA}(t,X),Y-X\rangle_{L^2(\cA)}\big|\leq \|U_{\cA}\|_{C^{1,1}}\, \|Y-X\|_{L^2(\cA)}^2.
		$$
	\end{lemma}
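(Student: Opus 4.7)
The plan is to apply the fundamental theorem of calculus along the line segment from $X$ to $Y$, then bound the resulting integral using the Lipschitz property of $\nabla U_{\cA}$ that comes from the $C^{1,1}$ hypothesis built into the definition of $\mathcal{X}_\Sigma$. Since we are working in the Hilbert space $L^2(\cA)_{sa}^d$ (not yet using the $\Sigma$-structure), this reduces to the standard Hilbert-space argument.

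Concretely, I would introduce $f(s) := U_{\cA}(t, X + s(Y-X))$ for $s \in [0,1]$ and observe that, since $U_{\cA}(t,\cdot) \in C^1(L^2(\cA)_{sa}^d)$, the map $f$ is continuously differentiable with
\[
f'(s) = \bigl\langle \nabla U_{\cA}(t, X + s(Y-X)),\, Y-X \bigr\rangle_{L^2(\cA)}.
\]
The fundamental theorem of calculus then gives $U_{\cA}(t,Y) - U_{\cA}(t,X) = \int_0^1 f'(s)\,ds$, so subtracting $\langle \nabla U_{\cA}(t,X), Y-X\rangle_{L^2(\cA)}$ yields
\[
U_{\cA}(t,Y) - U_{\cA}(t,X) - \bigl\langle \nabla U_{\cA}(t,X), Y-X\bigr\rangle_{L^2(\cA)} = \int_0^1 \bigl\langle \nabla U_{\cA}(t, X+s(Y-X)) - \nabla U_{\cA}(t,X),\, Y-X\bigr\rangle_{L^2(\cA)}\,ds.
\]

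The second step is to invoke Cauchy--Schwarz inside the integral and the Lipschitz continuity of $\nabla U_{\cA}(t,\cdot)$: by definition of the $C^{1,1}$ norm, $\|\nabla U_{\cA}(t, X+s(Y-X)) - \nabla U_{\cA}(t,X)\|_{L^2(\cA)} \leq \|U_{\cA}\|_{C^{1,1}} \cdot s\,\|Y-X\|_{L^2(\cA)}$. Integrating against $s$ on $[0,1]$ produces a factor of $1/2$, so the estimate actually gives the bound $\tfrac{1}{2}\|U_{\cA}\|_{C^{1,1}}\|Y-X\|_{L^2(\cA)}^2$, which is certainly no larger than the claimed $\|U_{\cA}\|_{C^{1,1}}\|Y-X\|_{L^2(\cA)}^2$.

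There is no real obstacle here; the only mildly delicate point is verifying that $f$ is differentiable with the claimed derivative, which is standard given that $U_{\cA}(t,\cdot)$ is Fr\'echet differentiable on the Hilbert space $L^2(\cA)_{sa}^d$ (this is encoded in $U_{\cA} \in C^{1,1}$). The tracial $\mathrm{W}^*$-function and Hessian conditions on $\mathcal{X}_\Sigma$ play no role in this particular lemma; only property (b) is used.
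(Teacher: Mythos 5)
Your argument is correct and is exactly the standard $C^{1,1}$ Taylor estimate that the paper's one-line proof ("This follows directly from $U_{\cA}\in C^{1,1}$") refers to; you have simply written out the details (line-segment interpolation, fundamental theorem of calculus, Cauchy--Schwarz, Lipschitz bound on the gradient). Your observation that the argument actually produces the sharper constant $\tfrac{1}{2}\|U_{\cA}\|_{C^{1,1}}$ is also correct, and of course implies the stated bound.
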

	
	\begin{proof}
		This follows directly from $U_{\cA}\in C^{1,1}([0,T]\times L^2(\cA)_{sa}^d)$.
	\end{proof}
	
	The next lemma provides the second order Taylor's theorem.
	\begin{lemma}  \label{second taylor}
		For any $(U_{\cA})_{\cA \in \bW} \in \mathcal{X}_\Sigma$, $t\in [0,T]$ and $X,Y\in L^2(\cA)_{sa}^d$ with $X-Y\in L^\infty(\cA)^d_{sa}$, 
		\begin{align*}
			&\ \big|U_{\cA}(t,Y)-U_{\cA}(t,X) -\langle \nabla U_{\cA}(t,X),Y-X\rangle_{L^2(\cA)}-\frac{1}{2}{\rm Hess}\, U_{\cA}(t,X)[Y-X, Y-X]\big|\\
			\leq&\ \frac{K}{2} \|Y-X\|_{L^2(\cA)}\, \|Y-X\|_{L^2(\cA)}\,  \|Y-X\|_{L^\infty(\cA)},
		\end{align*}
		where $K>0$ is a constant from \eqref{hess}.
	\end{lemma}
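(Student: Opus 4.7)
The plan is to reduce to the one-dimensional Taylor expansion by freezing the direction $Z := Y-X$ and introducing the auxiliary function
\[
f(s) := U_{\cA}(t, X + sZ), \qquad s \in [0,1].
\]
By the $C^{1,1}$ hypothesis in (b) of the definition of $\mathcal{X}_\Sigma$, $f$ is $C^1$ on $[0,1]$ with
\[
f'(s) = \langle \nabla U_{\cA}(t, X+sZ), Z \rangle_{L^2(\cA)}.
\]
Since by assumption $Z \in L^{\infty}(\cA)^d_{sa}$, property (c) guarantees that $\mathrm{Hess}\, U_{\cA}(t, X+sZ)[Z,Z]$ is well-defined, and property (d) (together with standard arguments using the symmetric definition of the Hessian as the second mixed derivative) will imply that $f$ is actually $C^2$ on $[0,1]$ with
\[
f''(s) = \mathrm{Hess}\, U_{\cA}(t, X+sZ)[Z,Z].
\]

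Assuming this, I would then invoke the elementary identity with integral remainder for $C^2$ functions on $[0,1]$:
\[
f(1) - f(0) - f'(0) - \tfrac{1}{2} f''(0) = \int_0^1 (1-s) \bigl( f''(s) - f''(0) \bigr)\, ds.
\]
Substituting the formulas above, the left-hand side is exactly the quantity inside the absolute value in the statement. For the right-hand side, property (d) of $\mathcal{X}_\Sigma$ applied with $A = B = Z$ (using $Z \in L^\infty$ for the $B$ slot) yields
\[
\bigl| \mathrm{Hess}\, U_{\cA}(t, X+sZ)[Z,Z] - \mathrm{Hess}\, U_{\cA}(t, X)[Z,Z] \bigr| \leq K \, s \, \|Z\|_{L^2(\cA)}^{2} \, \|Z\|_{L^{\infty}(\cA)}.
\]
Integrating against $(1-s)\,ds$ gives a factor $\int_0^1 s(1-s)\,ds = 1/6$, producing a bound of $\tfrac{K}{6}\|Z\|_{L^2(\cA)}^2 \|Z\|_{L^\infty(\cA)}$, which is certainly dominated by the asserted bound $\tfrac{K}{2}\|Z\|_{L^2(\cA)}^2 \|Z\|_{L^\infty(\cA)}$.

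The main obstacle is the justification that $f$ is genuinely $C^2$ with $f''(s)$ equal to the Hessian expression above, since the Hessian in $\mathcal{X}_\Sigma$ is only assumed to exist as a mixed partial derivative with one argument bounded in operator norm. The key points to verify are: (i) $s \mapsto \nabla U_{\cA}(t, X+sZ)$ is weakly differentiable in $L^2(\cA)^d_{sa}$ with derivative represented by $\mathrm{Hess}\, U_{\cA}(t, X+sZ)[Z, \cdot\,]$ (tested against $L^\infty$ directions $Z$, which suffices to pair with $Z$); (ii) the Lipschitz estimate (d) gives continuity of $s \mapsto f''(s)$. Both follow from unwinding the definition of the Hessian together with the density of $L^\infty$ in $L^2$ and the bound in (d); once $f''$ is continuous on $[0,1]$, the integral Taylor formula is the standard one from calculus on the line, and no further subtlety remains.
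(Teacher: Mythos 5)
Your proof is correct and takes a genuinely different route from the paper. You work with the scalar path $f(s)=U_{\cA}(t,X+sZ)$ and apply the one-dimensional second-order Taylor formula with integral remainder, obtaining the sharper constant $\tfrac{K}{6}$ which is then majorized by the stated $\tfrac{K}{2}$. The paper instead introduces the vector-valued remainder $\phi(H):= U_{\cA}(t,X+H)-U_{\cA}(t,X)-\langle\nabla U_{\cA}(t,X),H\rangle-\tfrac12\mathrm{Hess}\,U_{\cA}(t,X)[H,H]$, applies the fundamental theorem of calculus in the $B$-direction to bound $|\langle\nabla\phi(H),B\rangle|$ via the Hessian-Lipschitz hypothesis \eqref{hess}, and then applies the fundamental theorem of calculus a second time along $t\mapsto\phi(tH)$; this keeps the argument at the first-derivative level throughout, never invoking a second-order Taylor expansion. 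The regularity subtlety you flag --- that $f$ is $C^2$ with $f''(s)=\mathrm{Hess}\,U_{\cA}(t,X+sZ)[Z,Z]$ --- is real, but it is no worse than the analogous implicit step in the paper's proof: the paper's identity
\[
\langle\nabla U_{\cA}(t,X+H)-\nabla U_{\cA}(t,X),B\rangle_{L^2(\cA)}=\int_0^1 \mathrm{Hess}\,U_{\cA}(t,X+sH)[H,B]\,ds
\]
already presupposes differentiability of $s\mapsto\langle\nabla U_{\cA}(t,X+sH),B\rangle$ with derivative given by the Hessian. Both arguments rest on the same reading of property~(c) in the definition of $\mathcal{X}_\Sigma$, and both exploit the Lipschitz estimate \eqref{hess} to obtain continuity of the relevant derivative, which is exactly what is needed for the fundamental theorem of calculus (paper) or for the integral remainder formula (your version). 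Net assessment: your approach is a valid alternative, slightly more direct and yielding a slightly better constant, at the cost of invoking one more degree of classical Taylor regularity on the scalar path.
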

	
	\begin{proof}
		Fix $X\in L^2(\cA)_{sa}^d$, $t\in [0,T]$  and let
		$$
		\phi(H): = U_{\cA}(t,X+H) - U_{\cA}(X)- \langle \nabla U_{\cA}(t,X),H\rangle_{L^2(\cA)} - \frac{1}{2}{\rm Hess}\, U_{\cA}(t,X)[H,H].
		$$
		We have that, for $H\in L^2(\cA)_{sa}^d$ and  $B\in L^\infty(\cA)_{sa}^d$, 
		\begin{align*}
			\langle \nabla \phi(H), B\rangle_{L^2(\cA)} =&\ \langle\nabla U_{\cA}(t,X+H) - \nabla U_{\cA}(t,X), B\rangle_{L^2(\cA)} - {\rm Hess}\, U_{\cA}(t,X)[H, B]\\
			=&\ \int_0^1 \Big( {\rm Hess}\, U_{\cA}(t,X + sH)[H,B] - {\rm Hess}\, U_{\cA}(t,X)[H, B]\Big) ds.
		\end{align*}
		By  the condition \eqref{hess},
		$$
		\big|\langle\nabla \phi(H), B\rangle_{L^2(\cA)}\big| \leq K\, \|H\|_{L^2(\cA)}\, \|H\|_{L^2(\cA)}\, \|B\|_{L^\infty(\cA)}.
		$$
		It follows that
		\begin{align*}
			\phi(H)-\phi(0) =&\ \int_0^1 \langle \nabla\phi(t\, H), H\rangle_{L^2(\cA)}\, dt\\
			\leq&\ \frac{K}{2}\|H\|_{L^2(\cA)}\, \|H\|_{L^2(\cA)}\, \|H\|_{L^\infty(\cA)},
		\end{align*}
		yielding  the second order Taylor's theorem. 
	\end{proof}
	~
	
	We are now ready to state a free version of the mixed-It\^o formula.

	\begin{lemma}[Mixed It{\^o} formula] \label{lem:Ito}
		Suppose that Assumption \textbf{A} holds.    For  $\cA\in \bW$, $t_0\in [0,T)$, $x_0\in L^2(\cA)_{sa}^d$, and   $\widetilde{\alpha}\in {\mathbb{A}}_{\cA,x_0}^{t_0,T}$, let $(X_t)_{t\in [t_0,T]}$ be the corresponding solution to \eqref{eqn:common_noise}.  Then for any $(U_\cA)_{\cA\in \bW}\in \mathcal{X}_\Sigma$ and $t\in [t_0, T]$,
		\begin{align}\label{eqn:Ito}
			\mathbb{E}\big[U_{\cA}(t,X_t)\big] =&\ U_{\cA}(t_0,x_0)+\mathbb{E}\Big[\int_{t_0}^t\Big(\partial_t U_{\cA}(s,X_s) +  \big\langle \nabla U_{\cA}(s,X_s),b_{\cA}(X_s,\alpha_s)\big\rangle_{L^2(\cA)} \\
			&\ +  \frac{\beta_C^2}{2}\Delta_{\cA} U(s,X_s) + \frac{\beta_F^2}{2}\Theta_{\cA} U(s,X_s)\Big) ds\Big].\nonumber
		\end{align}
	\end{lemma}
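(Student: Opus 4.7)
My plan is to discretize time with a partition $t_0 = s_0 < s_1 < \cdots < s_n = t$ of mesh $\delta_n \to 0$, telescope
\[
U_\cA(t,X_t) - U_\cA(t_0,x_0) = \sum_{i=0}^{n-1}\bigl[U_\cA(s_{i+1},X_{s_{i+1}}) - U_\cA(s_i,X_{s_i})\bigr],
\]
and expand each summand with a mixed first/second-order Taylor formula. On each subinterval, I decompose the spatial increment as $\Delta X_i := X_{s_{i+1}} - X_{s_i} = I_i + C_i + F_i$, where $I_i := \int_{s_i}^{s_{i+1}} b_\cA(X_s,\alpha_s)\,ds$, $C_i := \beta_C\, \mathbbm{1}_\cA\, (W^0_{s_{i+1}} - W^0_{s_i})$, and $F_i := \beta_F (S_{s_{i+1}} - S_{s_i})$. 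Using the a priori bounds from Theorem~\ref{thm:well_posedness} and Cauchy--Schwarz, $\sum_i \bE\|I_i\|_{L^2(\cA)}^2 = O(\delta_n)$, while $\bE\|C_i\|_{L^2(\cA)}^2$ and $\bE\|F_i\|_{L^2(\cA)}^2$ are each of order $\delta_n$; crucially $C_i, F_i \in L^\infty(\cA)$ with $\|F_i\|_\infty \leq 2\beta_F\sqrt{\delta_n}$ and $\|C_i\|_\infty = \beta_C|W^0_{s_{i+1}} - W^0_{s_i}|$. The temporal increment $U_\cA(s_{i+1},\cdot) - U_\cA(s_i,\cdot)$ is handled separately using $C^{1,1}$ regularity in $t$ and produces the Riemann sum for $\int \partial_t U_\cA$.

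For the spatial part at fixed time $s_i$, I further split
\[
U_\cA(s_i, X_{s_i} {+} \Delta X_i) - U_\cA(s_i, X_{s_i}) = \bigl[U_\cA(s_i, X_{s_i} {+} \Delta X_i) - U_\cA(s_i, X_{s_i} {+} C_i {+} F_i)\bigr] + \bigl[U_\cA(s_i, X_{s_i} {+} C_i {+} F_i) - U_\cA(s_i, X_{s_i})\bigr].
\]
The first bracket is treated by the first-order Taylor formula; together with the Lipschitz bound on $\nabla U_\cA$, it produces $\big\langle \nabla U_\cA(s_i,X_{s_i}), I_i\big\rangle_{L^2(\cA)}$ plus an error controlled by $\|I_i\|_{L^2}(\|C_i\|_{L^2} + \|F_i\|_{L^2} + \|I_i\|_{L^2})$, which sums in expectation to $O(\sqrt{\delta_n})$. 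The second bracket is where I invoke the second-order Taylor estimate of Lemma~\ref{second taylor}, which is available precisely because $C_i + F_i \in L^\infty(\cA)$; its remainder is bounded by $K\|C_i+F_i\|_{L^2}^2\,\|C_i+F_i\|_\infty$ and therefore also sums to $O(\sqrt{\delta_n})$. This split is what lets me handle the fact that $I_i$ is only in $L^2$, not $L^\infty$.

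It then remains to identify the leading order expectations. The first-order cross-term $\bE\big\langle \nabla U_\cA(s_i,X_{s_i}), C_i\big\rangle$ vanishes by the tower property, since $\nabla U_\cA(s_i,X_{s_i})$ is $\cF_{s_i}$-measurable while the Wiener increment is independent with mean zero; and $\bE\big\langle \nabla U_\cA(s_i,X_{s_i}), F_i\big\rangle$ vanishes because $\nabla U_\cA(s_i,X_{s_i}) \in L^2(\cA_{s_i})$ while $F_i$ is freely independent of $\cA_{s_i}$ with $\tau(F_i^l) = 0$, so each trace factorizes. For the Hessian term I would expand $\tfrac12{\rm Hess}\,U_\cA[C_i + F_i, C_i + F_i]$ into four pieces: the $CC$ piece factors the scalar $(W^0_{s_{i+1}} - W^0_{s_i})^2$ out of the bilinear form (classical expectation $\delta_n$) to produce $\tfrac{\beta_C^2}{2}\delta_n\,\Delta_\cA U(s_i,X_{s_i})$; the mixed $CF$ pieces vanish in classical expectation because the Brownian increment factors out with mean zero; and the $FF$ piece, using the remark following~\eqref{eqn:free_laplacian} that $\beta_F(S_{s_{i+1}} - S_{s_i})$ has the same joint non-commutative law as $\sqrt{\delta_n}\,\beta_F\,\tilde S$ for a free semicircular $\tilde S$ freely independent of $\cA_{s_i}$, contributes $\tfrac{\beta_F^2}{2}\delta_n\,\Theta_\cA U(s_i,X_{s_i})$, provided the off-diagonal pieces vanish.

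The subtlest point, which I expect to be the main obstacle, is the vanishing of the off-diagonal Hessian pieces ${\rm Hess}\,U[\tilde S^l \mathbf{e}^l_\cA, \tilde S^m \mathbf{e}^m_\cA]$ for $l \neq m$; this is what allows the double sum in the $FF$ piece to collapse to the diagonal sum defining $\Theta_\cA$ in~\eqref{eqn:free_laplacian}. This should follow from the free independence of the distinct semicircular coordinates together with the tracial structure of $U_\cA$: after approximation by (trace) polynomial test functions, the bilinear form reduces to traces of products with centered ``letters'' $\tilde S^l, \tilde S^m$ lying in pairwise-free subalgebras which are also free from $W^*(X_{s_i})$, and such alternating centered traces vanish by the defining relation of free independence --- this is the free It\^o rule $dS^l\,dS^m = \delta_{l,m}\,dt$ in the present framework, consistent with the explicit Laplacian computations in Appendix~\ref{apx:free_laplacian}. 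Once this is settled, collecting all contributions, taking $\bE$, and sending $n \to \infty$ using uniform continuity of $b_\cA$, $\nabla U_\cA$ and ${\rm Hess}\,U_\cA$ along the $L^2$-continuous trajectory $X_\cdot$ yields~\eqref{eqn:Ito}.
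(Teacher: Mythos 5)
Your overall strategy mirrors the paper's: both telescope over a fine time mesh, isolate the drift, common-noise and free-noise contributions, Taylor-expand to second order, and identify the Laplacian terms via the independence/free-independence structure. The decompositions differ slightly. You take a single second-order Taylor step in $C_i + F_i$ (which then requires handling the cross terms $\mathrm{Hess}\,U[C_i,F_i]$, correctly discharged by factoring out the independent scalar Wiener increment), whereas the paper introduces intermediate states $Y^1 = X_{t_i} + C_i$, $Y^2 = Y^1 + F_i$, $Y^3_r = Y^2 + \int_{t_i}^r b\,ds$ and does two separate second-order Taylor expansions, one for $C_i$ and one for $F_i$; for the drift the paper uses an exact fundamental-theorem-of-calculus/chain-rule identity along the absolutely continuous path $Y^3$, whereas you use a first-order Taylor estimate plus a Lipschitz correction on $\nabla U$. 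These are cosmetic differences that lead to comparable error analysis.

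There is, however, a genuine gap in the step you yourself flag as the subtlest. You propose to prove the vanishing of the off-diagonal Hessian term $\mathrm{Hess}\,U_\cB(s_i,\iota Y)[\tilde S^l \mathbf{e}^l_\cA, \tilde S^m\mathbf{e}^m_\cA]$, $l\neq m$, by ``approximation by (trace) polynomial test functions'' and then computing alternating centered traces. The paper explicitly cautions (in the discussion immediately following the definition of $\Theta_\cA$ in~\eqref{eqn:free_laplacian}) that a tracial $\mathrm{W}^*$-function in $\mathcal{X}_\Sigma$ may fail to be continuous in the weak-$*$ topology on non-commutative laws and hence need not be approximable by trace polynomials; this is precisely why the free Laplacian is defined directly via free semicirculars rather than via trace-polynomial calculus. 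So a reduction to the polynomial case is not available in this generality. Relatedly, your assertion that $\nabla U_\cA(s_i,X_{s_i}) \in L^2(\cA_{s_i})$ for tracial $\mathrm{W}^*$-functions is plausible but not established in the paper and is not needed. The argument the paper relies on (``by free independence of $S^{j_1},S^{j_2}$ and $\iota Y^1_{t_i}$ along with the tracial property'') is a symmetry argument: since $-S^l$ is again semicircular and freely independent of $\{\iota Y, S^m\}$, the joint non-commutative law of $(\iota Y, S^l, S^m)$ is invariant under $S^l\mapsto -S^l$; hence the tracial $\mathrm{W}^*$-function $(s,t)\mapsto U_\cB(\iota Y + sS^l\mathbf{e}^l + tS^m\mathbf{e}^m)$ is even in $s$, so $\partial_s|_{s=0}\equiv 0$ for all $t$, and differentiating in $t$ gives the vanishing mixed second partial. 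The same symmetry (with $m$ absent) gives $\langle\nabla U_\cB(\iota Y), S^l\mathbf{e}^l\rangle = 0$ without any claim about $\nabla U$ lying in a subalgebra. You should replace the trace-polynomial reduction by this direct symmetry argument.
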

	\begin{proof}

		Fix $U = (U_\cA)_{\cA\in \bW}\in \mathcal{X}_\Sigma$, $t\in [t_0,T]$ and $\delta>0$. Then we take a partition $t_0<t_1<\ldots<t_N=t$ with $t_{i+1}-t_i<\delta$ for $i\in \{0,\ldots, N-1\}$. 
		We define 
		\begin{align*}
			Y_{t_{i}}^1 &:= X_{t_i} + \beta_C\mathbbm{1}_{\cA}\big(W_{t_{i+1}}^0-W_{t_i}^0\big),  \\
			Y^2_{t_{i}} &:=  Y_{t_{i}}^1 + \beta_F \big(S_{t_{i+1}} - S_{t_i}\big),
		\end{align*}
		and for $r\in [t_i,t_{i+1})$,
		$$
		Y^3_r = Y_{t_{i}}^2 + \int_{t_i}^r b_{\cA}(X_s,\alpha_s)ds.
		$$
		Then,
		\begin{align} \label{211}
			\mathbb{E}\Big[\|Y^3_r - X_r\|^2_{L^2(\cA)}\Big] &=   \mathbb{E}\Big[\|\beta_C\mathbbm{1}_{\cA}\big(W_{t_{i+1}}^0-W_{r}^0\big) +  \beta_F \big(S_{t_{i+1}} - S_{}\big)  \|^2_{L^2(\cA)}\Big] \nonumber   \\
			&\leq d\, \delta\, (\beta_F^2+\beta_C^2).
		\end{align}
		We write 
		\begin{align}  \label{several}
			U_{\cA}(t_{i+1},X_{t_{i+1}}) &- U_{\cA}(t_{i},X_{t_i})  \nonumber\\
			&= \Big(U_{\cA}(t_{i+1},X_{t_{i+1}}) - U_{\cA}(t_{i},Y^2_{t_{i}})\Big)\nonumber\\
			&+ \Big(U_{\cA}(t_{i},Y^2_{t_{i}}) - U_{\cA}(t_{i},Y^1_{t_{i}})\Big)+ \Big(U_{\cA}(t_{i},Y^1_{t_{i}}) - U_{\cA}(t_{i},X_{t_i})\Big).
		\end{align}
		To control the first term of RHS above, by the fundamental theorem of Calculus and the chain rule,
		\begin{align*}
			U_{\cA}(t_{i+1},X_{t_{i+1}}) - U_{\cA}(t_{i},Y^2_{t_{i}})  =&\ \int_{t_i}^{t_{i+1}}\Big(\partial_t U_{\cA}(s,Y^3_s) + \big\langle \nabla U_{\cA}(s, Y_s^3), b_{\cA}(X_s,\alpha_s)\big\rangle_{L^2(\cA)}\Big)ds.
		\end{align*}
		Note that we have, for any $\epsilon>0$,
		\begin{align*}
			\mathbb{E}\Big[\big|\langle\nabla U_{\cA}(s,Y_s^3),&  b_{\cA}(X_s,\alpha_s)\rangle_{L^2(\cA)} - \langle \nabla U_{\cA}(s,X_s), b_{\cA}(X_s,\alpha_s)\rangle_{L^2(\cA)}\big|\Big]\\
			&\leq\ \|U_{\cA}\|_{C^{1,1}}\, \sqrt{\mathbb{E}\big[\| Y^3_s-X_s\|_{L^2(\cA)}^2\big]}\,  \sqrt{\mathbb{E}\big[\| b_{\cA}(X_s,\alpha_s)\|_{L^2(\cA)}^2\big]}\\
			&\overset{\eqref{211}}{\le}\ \|U_{\cA}\|_{C^{1,1}}\, \sqrt{d\, \delta\, (\beta_F^2+\beta_C^2)}\big(\frac{1}{2\epsilon} + \frac{\epsilon}{2} \mathbb{E}\big[\|b_{\cA}(X_s,\alpha_s)\|_{L^2(\cA)}^2\big]\big).
		\end{align*}
		and
		\begin{align*}
			\mathbb{E}\Big[\big|\partial_t U_{\cA}(s,Y^3_s) - \partial_t U_{\cA}(s,X_s)\big|\Big]
			&\leq  \|U_{\cA}\|_{C^{1,1}}\, \sqrt{\mathbb{E}\big[\| Y^3_s-X_s\|_{L^2(\cA)}^2\big]}\\
			&\overset{\eqref{211}}{\le}  \|U_{\cA}\|_{C^{1,1}}\, \sqrt{d\, \delta\, (\beta_F^2+\beta_C^2)}.
		\end{align*}
		Hence  for any $i\in \{0,\ldots, N-1\}$
		\begin{align*}
			&\ \Big|\mathbb{E}\Big[U_{\cA}(t_{i+1}, X_{t_{i+1}}) - U_{\cA}(t_i,Y_{t_i}^2) - \int_{t_i}^{t_{i+1}}\Big(\partial_t U_{\cA}(s,X_s) + \big\langle \nabla U_{\cA}(s, X_s), b_{\cA}(X_s,\alpha_s)\big\rangle_{L^2(\cA)}\Big)ds\Big]\Big|\\
			\leq&\  \|U_{\cA}\|_{C^{1,1}}\, \sqrt{d\, \delta\, (\beta_F^2+\beta_C^2)}\Big( (t_{i+1}-t_i)\Big(1+\frac{1}{2\epsilon}\Big) + \frac{\epsilon}{2} \int_{t_i}^{t_{i+1}}\mathbb{E}\big[\|b_{\cA}(X_s,\alpha_s)\|_{L^2(\cA)}^2\big]ds\Big).
		\end{align*}
		
		To control the last term of RHS in \eqref{several}, by the second order Taylor's theorem (Lemma \ref{second taylor}), 
		\begin{align*}
			&\ \Big|\mathbb{E}\Big[U_{\cA}(t_{i}, Y^1_{t_{i}})-U_{\cA}(t_{i},X_{t_i}) -\langle \nabla U_{\cA}(t_{i},X_{t_i}),Y_{t_{i}}^1-X_{t_i}\rangle_{L^2(\cA)}\\
			&\ -\frac{1}{2}{\rm Hess}\, U_{\cA}(t_{i},X_{t_i})\big[Y_{t_{i}}^1-X_{t_i}, Y_{t_{i}}^1-X_{t_i}\big]\Big]\Big|\\
			& \qquad \qquad \leq \mathbb{E}\Big[\frac{K}{2} \|Y_{t_{i}}^1-X_{t_i}\|_{L^2(\cA)}\, \|Y_{t_{i}}^1-X_{t_i}\|_{L^2(\cA)}\, \|Y_{t_{i}}^1-X_{t_i}\|_{L^\infty(\cA)}\Big]\\
			& \qquad \qquad  \leq K\, (d\, \beta_C^2)^{\frac{3}{2}}\, \sqrt{\delta}\, (t_{i+1}-t_i).
		\end{align*}
		By independence of $W_{t_{i+1}}^0-W_{t_i}^0$ and $\mathcal{F}_{t_i}$,
		$$
		\mathbb{E}\big[\langle \nabla U_{\cA}(t_{i},X_{t_{i}}),Y_{t_{i}}^1-X_{t_i}\rangle_{L^2(\cA)}\big]=0,
		$$
		and
		$$
		\mathbb{E}\Big[\frac{1}{2}{\rm Hess}\, U_{\cA}(t_{i},X_{t_i})\big[Y_{t_{i}}^1-X_{t_i}, Y_{t_{i}}^1-X_{t_i}\big]\Big] = (t_{i+1}-t_i)\mathbb{E}\Big[\frac{\beta_C^2}{2}\Delta_{\cA} U(t_{i},X_{t_i})\Big].
		$$
		In addition, using   \eqref{hess} and (\ref{eqn:time_bound_L2}) from Theorem \ref{thm:well_posedness} with the same definition of $M$ and $N$, for $s \in [t_i, t_{i+1}]$,
		$$
		\Big|\mathbb{E}\Big[\frac{\beta_C^2}{2}\Delta_{\cA} U(t_{i},X_{t_i})\Big] - \mathbb{E}\Big[\frac{\beta_C^2}{2}\Delta_{\cA} U(s,X_{s})\Big]\Big| \leq \frac{K\, \beta_C^2}{2}\,  \sqrt{\tilde{C}_{M,N}\, \delta}.
		$$
		Thus, 
		\begin{align*}
			\Big|\mathbb{E}\Big[U_{\cA}(t_{i}, Y^1_{t_{i}})-U_{\cA}(t_{i},X_{t_i})  &- \int_{t_i}^{t_{i+1}}\frac{\beta_C^2}{2}  \Delta_{\cA} U(s,X_{s})ds \Big]\Big| \\
			&\leq  \big(K(d\, \beta_C^2)^{\frac{3}{2}} + \frac{K\, \beta_C^2}{2}\,  \sqrt{\tilde{C}_{M,N}}\big)\, \sqrt{\delta}\,   (t_{i+1}-t_i).
		\end{align*}
		
		To control the middle term of RHS in \eqref{several},  
		let $\cB\in \bW$ and  $\iota:\cA\rightarrow \cB$ be a tracial $W^*$-embedding such that $\cB$ contains a $d$-variable semi-circular element ${S} = (S^1,\cdots,S^d)$ freely independent of $\iota ( \cA)$. The tuples $Y_{t_i}^2$ and $\iota Y_{t_i}^1 + \sqrt{t_{i+1}-t_i}\, \beta_F\, {S}$ have the same non-commutative law, so, by the tracial property of $(U_\cA)_{\cA \in \bW}$, 
		$$
		U_{\cA}(t_{i}, Y^2_{t_{i}}) - U_{\cA}(t_{i}, Y^1_{t_{i}}) = U_{\cB}(t_{i}, \iota\, Y^1_{t_{i}} + \sqrt{t_{i+1} - t_i}\, \beta_F\, S) - U_{\cB}(t_{i}, \iota\, Y^1_{t_{i}}).
		$$
		By free independence of $S$ and $\iota\, Y^1_{t_i}$,
		$$
		\langle \nabla  U_{\cB}(t_{i}, \iota\, Y^1_{t_{i}}), \beta_F\, S\rangle_{L^2(\cB)} = 0.
		$$
		Thus, using the second order Taylor's theorem  along with the fact  $\|S\|_{L^\infty(\cA)} = 2\|S\|_{L^2(\cA)} = 2$,  
		\begin{align*}
			\Big|U_{\cB}(t_{i}, \iota\, Y^1_{t_{i}} + \sqrt{t_{i+1} - t_i}\, \beta_F\, S) - U_{\cB}(t_{i}, \iota\, Y^1_{t_{i}}) - \frac{t_{i+1}-t_i}{2} &{\rm Hess}\, U_{\cB}(t_{i}, \iota\, Y^1_{t_{i}})[\beta_FS, \beta_FS] \Big|\\
			\leq&\ K (d\, \beta_F^2)^{\frac{3}{2}}\, \sqrt{\delta} \, (t_{i+1}-t_i).
		\end{align*}
		Note that, for $j_1\not= j_2$  in $\{1,\cdots,d\}$,
		$$
		{\rm Hess}\, U_{\cB}(t_{i}, \iota\, Y^1_{t_{i}})[\beta_FS^{j_1}\, \mathbf{e}^{j_1}_{\cA}, \beta_FS^{j_2}\, \mathbf{e}^{j_2}_{\cA}] = 0,
		$$
		by free independence of $S^{j_1},S^{j_2}$ and $\iota\, Y^1_{t_i}$ along with the tracial property of $(U_\cA)_{\cA \in \bW}$. Thus, again using \eqref{hess} and  (\ref{eqn:time_bound_L2}) from Theorem \ref{thm:well_posedness},
		\begin{align*}
			\mathbb{E}\Big[\big| \frac{t_{i+1}-t_i}{2}{\rm Hess}\, & U_{\cB}(t_{i}, \iota\, Y^1_{t_{i}})[\beta_FS, \beta_FS] - \int_{t_i}^{t_{i+1}} \frac{\beta_F^2}{2}\Theta_{\cA}\, U(s, X_s)ds\big|\Big]\\
			\leq&\  \frac{K\, \beta_F^2}{2}\, \sqrt{\tilde{C}_{M,N}}\,  \sqrt{\delta}\, (t_{i+1} - t_i).
		\end{align*}
		
		Putting the above estimates together and taking the summation over $i=0,\cdots,N-1$, the difference of LHS and RHS in (\ref{eqn:Ito}) is bounded by
		\begin{align*}
			C\sqrt{\delta}\Big(\Big(1+\frac{1}{\epsilon}\Big)(t-t_0) + \epsilon\, \int_{t_0}^t \mathbb{E}\big[\|b_\cA(X_s,\alpha_s)\|_{L^2(\cA)}^2\big]ds\Big),
		\end{align*}
		where $C$ is a  constant that does not depend on $\delta$ or $\epsilon$.
		Using bounds from Theorem \ref{thm:well_posedness} and the bounds on $b_{\cA}$,
		$$
		\mathbb{E}\big[\|b_\cA(X_s,\alpha_s)\|_{L^2(\cA)}^2\big] \leq C,
		$$
		where $C$ only depends on $M=\|x_0\|_{L^2(\cA)}$ and 
		$
		N = \mathbb{E}\Big[\int_{t_0}^T \|\alpha_s\|_{L^2(\cA)}^2ds\Big].
		$
		We can then set $\epsilon : = \delta^{\frac{1}{4}}$ and sending $\delta\rightarrow^+0$ proves the theorem.
	\end{proof}

	\subsubsection{Dynamic Programming Principle} 
	We now show that the solution to  \eqref{eqn:common_noise} satisfies a Markov property analogous to the classical result in \cite[Lemma 3.2]{yong1999stochastic}.
	
	\begin{lemma}\label{lem:Markov_property}
		Consider $\cA\in \bW$, $x_0\in L^2(\cA)_{sa}^d$ and $\widetilde{\alpha}\in \mathbb{A}_{\cA, x_0}^{t_0,T}$. Then for any $0\leq t_0\leq t_1\leq T$, there exist  a collection of control policies $\widetilde{\alpha}^{\bar{\omega}}\in \mathbb{A}_{\cA, X_{t_1}[\widetilde{\alpha}](\bar{\omega})}^{t_1,T}$ for $\bar{\omega}\in \Omega$ such that 
		\begin{enumerate}
			\item 
			For $\mathbb{P}$-a.e.\ $\bar{\omega}\in \Omega$,
			\begin{align}\label{eqn:Markov}
				&\ \mathbb{E}\Big[\int_{t_1}^T L_{\cA}\big(X_t[\widetilde{\alpha}], \alpha_t\big)dt + g_{\cA}\big(X_T[\widetilde{\alpha}]\big) \ \Big\vert \  \mathcal{F}_{t_1}\Big](\bar{\omega}) \nonumber \\
				=&\  \mathbb{E}\Big[\int_{t_1}^T L_{\cA}\big(X_t[t_1,X_{t_1}[\widetilde{\alpha}](\bar{\omega}), \widetilde{\alpha}^{\bar{\omega}}], \alpha^{\bar{\omega}}_t\big)dt + g_{\cA}\big(X_T[t_1,X_{t_1}[\widetilde{\alpha}](\bar{\omega}), \widetilde{\alpha}^{\bar{\omega}}]\big)\Big];
			\end{align}
			\item 
			For  every  $\bar{\omega}\in \Omega$, $(\alpha_t^{\bar{\omega}})_{t\in [t_1,T]}$ is independent of $\mathcal{F}_{t_1}$.
		\end{enumerate}
	\end{lemma}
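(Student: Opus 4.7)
The plan is to adapt the classical Markov-property argument \cite[Lemma 3.2]{yong1999stochastic} to the mixed classical--free setting by restarting the two noises separately: the classical Brownian motion via regular conditional distributions on Wiener space, and the free Brownian motion via the intrinsic increment property (d) from \S\ref{sec 2.2.2}.

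First I would construct a Borel measurable concatenation map $\Psi:\Omega\times\Omega\to\Omega$ on the standard Wiener space $(\Omega,\mathcal F,\mathbb P)$, characterized by
\[
W^0_t(\Psi(\bar\omega,\omega')) = W^0_{t\wedge t_1}(\bar\omega) + W^0_t(\omega') - W^0_{t\wedge t_1}(\omega'),\qquad t\in[0,T],
\]
so that $\Psi_*(\mathbb P\otimes\mathbb P) = \mathbb P$, and, for $\mathbb P$-a.e.\ $\bar\omega$, the pushforward $\Psi(\bar\omega,\cdot)_*\mathbb P$ equals the regular conditional probability $\mathbb P[\,\cdot\mid\mathcal F_{t_1}](\bar\omega)$. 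For $t\in[t_1,T]$ define
\[
\alpha^{\bar\omega}_t(\omega') := \alpha_t\bigl(\Psi(\bar\omega,\omega')\bigr).
\]
Because $\alpha^{\bar\omega}_t$ depends on $\omega'$ only through $(W^0_s(\omega')-W^0_{t_1}(\omega'))_{s\in[t_1,t]}$, it is adapted to $\mathcal G_t := \sigma(W^0_s - W^0_{t_1}:s\in[t_1,t])$, which is independent of $\mathcal F_{t_1}$; this yields property (2).

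Second, I would package $\widetilde\alpha^{\bar\omega}$ with the existing filtration $(\cA_t)_{t\in[t_1,T]}$ (restricted to the shorter interval) and the shifted free Brownian motion $(S_t - S_{t_1})_{t\in[t_1,T]}$. Compatibility of this shifted semi-circular process with $(\cA_t)_{t\in[t_1,T]}$ is immediate from properties (c)--(d) in \S\ref{sec 2.2.2} for the original $(S_t)_{t\in[t_0,T]}$, and $X_{t_1}[\widetilde\alpha](\bar\omega)\in L^2(\cA_{t_1})^d_{\mathrm{sa}}$ by adaptedness of the trajectory (Theorem \ref{thm:well_posedness}). Hence $\widetilde\alpha^{\bar\omega}\in\bA^{t_1,T}_{\cA,X_{t_1}[\widetilde\alpha](\bar\omega)}$ for $\mathbb P$-a.e.\ $\bar\omega$. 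Pathwise uniqueness for \eqref{eqn:common_noise} from Theorem \ref{thm:well_posedness}, applied pathwise in $\bar\omega$, then yields
\[
X_t\bigl[t_1,X_{t_1}[\widetilde\alpha](\bar\omega),\widetilde\alpha^{\bar\omega}\bigr](\omega') = X_t[\widetilde\alpha]\bigl(\Psi(\bar\omega,\omega')\bigr),\quad t\in[t_1,T],
\]
since both sides solve the same SDE with the same classical path $W^0(\Psi(\bar\omega,\cdot))$, the same free Brownian increments $S_\cdot - S_{t_1}$, the same control, and the same initial condition at time $t_1$. Combining this identity with $\Psi(\bar\omega,\cdot)_*\mathbb P = \mathbb P[\,\cdot\mid\mathcal F_{t_1}](\bar\omega)$ and the defining property of regular conditional expectations produces the Markov identity \eqref{eqn:Markov}.

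The main technical obstacle will be measurability bookkeeping in the infinite-dimensional setting. The controls take values in $L^2(\cA_t)^d_{\mathrm{sa}}$ rather than in Euclidean space, so one must verify that $(\bar\omega,\omega')\mapsto \alpha_t(\Psi(\bar\omega,\omega'))$ is jointly progressively measurable with values in $L^2(\cA)^d_{\mathrm{sa}}$, and that $\bar\omega\mapsto \widetilde\alpha^{\bar\omega}$ selects the restarted policy measurably in $\bar\omega$. Since $L^2(\cA)$ is separable under our standing hypotheses and $\Psi$ is Borel, weak (hence Bochner) measurability is preserved under composition; a standard Fubini argument applied to a countable dense family of bounded linear functionals then reduces these checks to the scalar case. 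A $\mathbb P$-null set of exceptional $\bar\omega$ (where the regular conditional probability or the $\cA_{t_1}$-valuedness of $X_{t_1}[\widetilde\alpha]$ fails) is discarded and $\widetilde\alpha^{\bar\omega}$ is set to the trivial policy there; conclusion (1) holds off this null set while conclusion (2) holds for every remaining $\bar\omega$.
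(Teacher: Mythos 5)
Your proposal takes essentially the same route as the paper's proof: restart the classical Brownian motion at $t_1$ by splicing the conditioned past with fresh increments, reuse the same free filtration with the shifted free Brownian motion $S_t - S_{t_1}$, and invoke pathwise uniqueness from Theorem \ref{thm:well_posedness}. The one point you defer to ``measurability bookkeeping'' is exactly where the paper does real work: it first invokes the functional representation theorem (Theorem 2.10 in \cite{yong1999stochastic}) to write $\alpha_t(\omega) = \eta(t,(W^0_{s\wedge t}(\omega))_s)$ with $\eta$ progressively measurable on path space, and only then does the composition with the splice map make rigorous sense; your claim that ``$\alpha^{\bar\omega}_t$ depends on $\omega'$ only through the increments'' implicitly relies on this representation, so it should be stated rather than absorbed into a general remark about separability.
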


	\begin{proof}
		For $s,t\in [0,T]$ we let $s\wedge t := \min\{t,s\}$. Define $\mathcal{B}_t = \mathcal{B}(\{u(s\wedge t)_{0\leq s\leq T}: u\in C([0,T])\})$ to be the Borel $\sigma$-algebra of paths stopped at time $t$ (and continued by a constant). We then let $\mathcal{B}_{t+}=\cup_{s>t}\mathcal{B}_s$ be the right limit.
		By Theorem 2.10 in \cite[Chapter 2]{yong1999stochastic}, there exists $\eta:[0,T]\times C([0,T])\rightarrow L^2(\cA)^d_{sa}$, which is progressively measurable with respect to $(\mathcal{B}_{t+})_{0\leq t\leq T}$, such that for $t\in [t_0,T]$
		$$
		\alpha_t (\omega)= \eta\big(t,(W^0_{s\wedge t}(\omega))_{0\leq s\leq T}\big).
		$$ 
		
		
		For each $\omega,\bar{\omega}\in \Omega$ and $t\in [t_1,T]$, we now define
		$$
		\alpha_t^{\bar{\omega}}(\omega) := \eta\big(t, (W^0_{s\wedge t_1}(\bar{\omega}) + W^0_{s\wedge t}(\omega) - W^0_{s\wedge t_1}(\omega))_{0\leq s\leq T}\big).
		$$
		Using this, we define the control policy $\widetilde{\alpha}^{\bar{\omega}}\in \mathbb{A}_{\cA, X_{t_1}[\widetilde{\alpha}](\bar{\omega})}^{t_1,T}$ with the same free semicircular process (we take the increment $S_t' = S_t - S_{t_1}$ for $t' \in [t_1,T]$) and free filtration as in $\widetilde{\alpha}$. This is a valid control policy because it inherits the progressive measurability from $\eta$ by composition with a continuous process. In particular, for $\mathbb{P}$-a.e.\ $\bar{\omega}\in \Omega$,  $(\alpha_t^{\bar{\omega}})_{t_1\leq t\leq T}$ is progressively measurable with respect to the filtration $(\mathcal F_t)_{t\in [t_1,T]}.$ Since $\alpha_t^{\bar{\omega}}$ is a function of the increments $W^0_s-W^0_{t_1}$ for $s\in [t_1,t]$, which are independent from $\mathcal{F}_{t_1}$, we deduce that $(\alpha_t^{\bar{\omega}})_{t\in [t_1,T]}$ is independent of $\mathcal{F}_{t_1}$.
		
		To show (\ref{eqn:Markov}), we argue that the free stochastic differential equation for $t\in [t_1,T]$ may be viewed on the probability space $\widehat{\Omega} := \{u\in C([0,T]): u(s) = 0,\ s\in [0,t_1]\}$. We define $\phi:\Omega \rightarrow \widehat{\Omega}$ by $\phi(\omega) = (W_{s}^0 (\omega) - W^{0}_{s\wedge t_1} (\omega) )_{0\leq s\leq T}$. We let $\widehat{\mathbb{P}} := \phi_\#\mathbb{P}$ and consider the filtration $(\widehat{\mathcal{F}}_t)_{t_1\leq t\leq T}$ generated by the canonical stochastic process $\widehat{W}^{0}_t := u(t)$. We claim this makes $(\widehat{W}_t^0)_{t_1\leq t\leq T}$ a standard Brownian motion on $(\widehat{{\Omega}}, (\widehat{\mathcal F}_t)_{t_1\leq t\leq T}, \widehat{\mathbb{P}})$.
		Let $f \in C(\bR^{k+n})$ and consider real numbers $0 \leq b_k< \cdots< b_0< t_1\leq a_1<a_2< \cdots<a_n$. Defining $\widehat f: \widehat \Omega \to \bR$ by 
		\begin{equation}\label{eq:dec29.2024.1}
			\widehat f(\widehat \omega)=f\Big(\widehat \omega(b_k), \cdots, \widehat \omega(b_1), \widehat \omega(a_1), \cdots, \widehat \omega(a_n) \Big), \qquad \hbox{for all } \widehat \omega \in \widehat \Omega,
		\end{equation}
		we have that 
		\begin{equation}
			\widehat f \circ \phi (\omega)= f\Big(0, \cdots, 0, W^0_{a_1} (\omega)-W^0_{t_1}(\omega), \cdots, W^0_{a_n}(\omega)-W^0_{t_1}(\omega) \Big). \label{eq:dec28.2024.1}
		\end{equation}
		Exploiting \eqref{eq:dec28.2024.1}, we compute $\int_{\Omega} \widehat f \circ \phi(\omega) \bP(d \omega)$ which by definition is nothing but $\int_{\widehat \Omega} \widehat f(\widehat \omega) \widehat{\mathbb{P}}(d \widehat \omega)$. We use the arbitrariness of $\widehat f$ to 
		deduce that $\widehat{\mathbb{P}}$ is the restriction of Wiener measure to $\widehat \Omega$.
		
		We denote by $\mathbb{P}^{\bar{\omega}}$ (resp. $\mathbb{E}^{\bar{\omega}}$)  the conditional probability measure  (resp. conditional expectation) given $\mathcal{F}_{t_1}$ at $\bar{\omega}$, i.e., for any random variable $Y\in L^2(\Omega,\mathcal{F}_T,\mathbb{P})$, 
		$$
		\mathbb{E}^{\bar{\omega}}[Y] = \int_{\Omega} Y(\omega)\mathbb{P}^{\bar{\omega}}(d\omega) = \mathbb{E}[Y\mid \mathcal{F}_{t_1}](\bar{\omega}).
		$$
		We continue to assume that $\widehat f$ is as in \eqref{eq:dec29.2024.1}. Since the increment $(W^0_s - W^0_{t_1})_{s\in [t_1,T]}$  is independent of $\mathcal{F}_{t_1}$, we exploit \eqref{eq:dec28.2024.1} to  check that 
		\begin{equation}\label{eq:dec28.2024.3} 
			\int_{\hat{\Omega}} \widehat f(\widehat \omega) ( \phi_\# \bP^{\bar{\omega}})(d\widehat \omega)= \int_{\hat{\Omega}} \widehat f(\widehat \omega) \widehat{\mathbb{P}}(d\widehat \omega).
		\end{equation}
		By the arbitrariness of $\widehat f$, we infer  $\phi_\#\mathbb{P}^{\bar{\omega}} = \widehat{\mathbb{P}}.$ For $t\in [t_1,T]$, we let $\mathcal{F}^{\bar{\omega}}_t$ be the completion of the $\sigma$-algebra generated by $\{W^0_s-W^0_{t_1}: s\in[t_1,t]\}$ with respect to the measure  $\mathbb{P}^{\bar{\omega}}$. Since $\mathbb{P}^{\bar{\omega}} \ll \mathbb{P}$ for $\mathbb{P}$-a.s. $\bar{\omega}$, this completion results in a larger filtration (despite having restricted the time interval) where $\mathcal{F}_t\subset \mathcal{F}_t^{\bar{\omega}}$ for each $t\in [t_1,T]$.
		
		For each $\bar{\omega}\in \Omega$ and $t\in [t_1,T]$, we define a control policy on $\widehat{\Omega}$ by
		$$
		\widehat{\alpha}_t^{\bar{\omega}}(u) := \eta\big(t, (W^0_{s\wedge t_1}(\bar{\omega}) + u_{s\wedge t})_{0\leq s\leq T}\big),\qquad u\in \widehat{\Omega}.
		$$
		We have that
		\begin{equation}\label{eq:dec31.2024.1}
			\widehat{\alpha}_t^{\bar{\omega}}\big(\phi(\omega)\big) = \alpha_t^{\bar{\omega}}(\omega) 
			\hbox{ for all $w \in \Omega$ and} \; t \in [t_1, T]
		\end{equation}
		and
		\begin{equation}\label{eq:jan01.2025.1}
			\widehat{\alpha}_t^{\bar{\omega}}\big(\phi(\omega)\big) = \alpha_t(\omega) = \alpha^{\bar \omega}_t(\omega) \hbox{ for }\ \mathbb{P}^{\bar{\omega}} \hbox{almost surely } \omega, \; \text{and for all} \;\; t \in [t_1, T].
		\end{equation}
		The second inequality follows as $W^0_{s\wedge t_1}(\omega) = W^0_{s\wedge t_1}(\bar{\omega})$ for all $s\in [0,t_1]$ for  $\mathbb{P}^{\bar{\omega}}$-almost every $\omega$. 
		Furthermore,
		\begin{equation}\label{eq:jan02.2025.1}
			\alpha_\tau(\omega) = \alpha_\tau(\bar \omega) \quad \hbox{ for }\ \mathbb{P}^{\bar{\omega}}\ \hbox{almost surely } \omega, \; \text{and for all} \;\; \tau \in [0, t_1],
		\end{equation}
		which implies
		\begin{equation}\label{eq:jan03.2025.1}
			X_{t_1}[\widetilde \alpha](\omega)=
			X_{t_1}[\widetilde \alpha](\bar\omega)\quad \hbox{ for }\ \mathbb{P}^{\bar{\omega}}\ \hbox{almost surely } \omega
		\end{equation}

		We now consider the control policy $\widetilde{{\hat{\alpha}}^{\bar{\omega}}}\in \hat{\mathbb{A}}_{\cA,X_{t_1}[\widetilde{\alpha}](\bar{\omega})}^{t_1,T}$, using the same  free filtration as for $\widetilde{\alpha}$ and the free Brownian motion $S'_t := S_t - S_{t_1}$ for $t \in [t_1,T]$.
		We let $\big(\widehat{X}_t\big[\widetilde{{\hat{\alpha}}^{\bar{\omega}}}\big]\big)_{t_1\le t\le T}$ denote the solution to the following SDE  on the probability space $(\widehat{\Omega},(\widehat{\mathcal{F}}_t)_{t_1\leq t\leq T},\widehat{\mathbb{P}})$ with Brownian motion $(\widehat{W}^0_t)_{t_1\leq t\leq T}$ and the initial condition  $X_{t_1}[\widetilde \alpha](\bar \omega)$:
		\begin{align}\label{eqn:coupled_dynamics}
			d\widehat{X}_t = b_{\cA}(\widehat{X}_t, \widehat{\alpha}_t)\, dt + \beta_C\, \mathbbm{1}_{\cA} d\widehat{W}_t^0 + \beta_F\, dS_t'.
		\end{align}
		Theorem \ref{thm:well_posedness} applies with this probability space, showing well-posedness. {{Using \eqref{eq:dec31.2024.1}, we see that}}
		$(\widehat{X}_t[\widetilde{{\hat{\alpha}}^{\bar{\omega}}}] \circ \phi)_{t_1\leq t\leq T}$ is a solution of (\ref{eqn:common_noise}) on the original probability space $\Omega$ with the control policy $\widetilde{\alpha}^{\bar{\omega}}$.
		It follows that for all $t\in [t_1,T]$ and $\mathbb{P}$ almost surely
		\begin{equation}\label{eq:composeFlow1}
			\widehat{X}_t\Big[\widetilde{{\hat{\alpha}}^{\bar{\omega}}}\Big] \circ \phi = X_t\Big[t_1,X_{t_1}[\widetilde{\alpha}](\bar{\omega}), \widetilde{\alpha}^{\bar{\omega}}\Big]
		\end{equation}
		and
		\begin{align} \label{111}
			&\ \widehat{\mathbb{E}}\bigg[\int_{t_1}^T L_{\cA}\Big(\widehat{X}_t\Big[\widetilde{{\hat{\alpha}}^{\bar{\omega}}}\Big], {{\widehat\alpha}_t^{\bar \omega}}\Big)dt + g_{\cA}\Big(
			\widehat{X}_T\Big[\widetilde{{\hat{\alpha}}^{\bar{\omega}}}\Big]\Big)\bigg]\nonumber \\
			=&\ \mathbb{E}\bigg[\int_{t_1}^T L_{\cA}\Big(X_t\Big[t_1,X_{t_1}[\widetilde{\alpha}](\bar{\omega}), \widetilde{{\alpha}}^{\bar{\omega}}\Big], {{\alpha_t^{\bar \omega}}}\Big)dt + g_{\cA}\Big(
			X_T\Big[t_1,X_{t_1}[\widetilde{\alpha}](\bar{\omega}), \widetilde{{\alpha}}^{\bar{\omega}}\Big]
			\Big)\bigg].
		\end{align}
		Note that $(W^0_t-W^0_{t_1})_{t_1\leq t \leq T}$ is a Brownian motion with filtration $(\cF_t^{\bar{\omega}})_{t_1 \leq t \leq T}$ on $(\Omega,  \cF^{\bar{\omega}}_T,\mathbb{P}^{\bar{\omega}})$. Let $X_t^{\bar{\omega}}[\widetilde{\alpha}]$ denote the solution to  \eqref{eqn:common_noise} on the  probability space $(\Omega,\mathcal{F}^{\bar{\omega}}_T,  (\cF^{\bar{\omega}}_t)_{t_1\leq t\le T},\mathbb{P}^{\bar{\omega}})$ with the initial condition  $X_{t_1}[\widetilde \alpha](\bar \omega)$:
			\begin{align}\label{eqn:omega_bar}
				X_t^{\bar{\omega}}[\widetilde{\alpha}](\omega)=X_{t_1}[\tilde \alpha](\bar \omega) + &
				\int_{t_1}^tb_{\cA}(
				X_s^{\bar{\omega}}[\widetilde{\alpha}](\omega), \alpha_s(\omega))\, ds \nonumber \\
				&+ \beta_C\, \mathbbm{1}_{\cA} ({W}_t^0(\omega)-W^0_{t_1}(\omega)) + \beta_F\, (S_t-S_{t_1}).
			\end{align}
			This holds $\mathbb{P}^{\bar{\omega}}$ almost everywhere, and $(X_t^{\bar{\omega}}[\widetilde{\alpha}])_{t_1\leq t\leq T}$ is $(\mathcal{F}^{\bar{\omega}}_t)_{t_1\leq t\leq T}$ progressively measurable.
			Since $\mathbb{P}^{\bar{\omega}}\ll \mathbb{P}$ and $\mathcal{F}_t\subset \mathcal{F}_t^{\bar{\omega}}$, for $\mathbb{P}$  a.s.\ $\bar{\omega}$ we also have that $X_t[\tilde{\alpha}]$ satisfies (\ref{eqn:omega_bar}) $\mathbb{P}^{\bar{\omega}}$ a.s. and is $(\mathcal{F}^{\bar{\omega}}_t)_{t_1\leq t\leq T}$ progressively measurable. Thus, using \eqref{eq:jan03.2025.1} and the uniqueness of solutions we deduce that for $\mathbb{P}$ a.s.\ $\bar{\omega}$, we have  $X_t^{\bar{\omega}}[\widetilde{\alpha}](\omega) = X_t[\widetilde{\alpha}](\omega)$ for $\mathbb{P}^{\bar{\omega}}$ almost every $\omega$. By the definition of the conditional probability
			$$
			\mathbb{E}\Big[\int_{t_1}^T L_{\cA}\big(X_t[\widetilde{\alpha}], \alpha_t\big)dt + g_{\cA}\big(X_T[\widetilde{\alpha}]\big)\Big| \mathcal{F}_{t_1}\Big](\bar{\omega}) = \mathbb{E}^{\bar{\omega}}\Big[\int_{t_1}^T L_{\cA}\big(X^{\bar{\omega}}_t[\widetilde{\alpha}], \alpha_t\big)dt + g_{\cA}\big(X_T^{\bar{\omega}}[\widetilde{\alpha}]\big)\Big].
			$$

			Finally, we have that $\mathbb{P}^{\bar{\omega}}$ almost surely
			$$
			\widehat{X}_t[\widetilde{{\hat{\alpha}}^{\bar{\omega}}}] \circ \phi = X_t^{\bar{\omega}}[\widetilde{\alpha}],
			$$
			and thus recalling $\phi_\#\mathbb{P}^{\bar{\omega}} = \widehat{\mathbb{P}}$,
			\begin{align*}
				&\ \mathbb{E}^{\bar{\omega}}\Big[\int_{t_1}^T L_{\cA}\big(X^{\bar{\omega}}_t[\widetilde{\alpha}], \alpha_t\big)dt + g_{\cA}\big(X_T^{\bar{\omega}}[\widetilde{\alpha}]\big)\Big]\\
				&= \widehat{\mathbb{E}}\Big[\int_{t_1}^T L_{\cA}\big(\widehat{X}_t[\widetilde{{\hat{\alpha}}^{\bar{\omega}}}], \widehat{\alpha}_t^{\bar{\omega}}\big)dt + g_{\cA}\big(\widehat{X}_T[\widetilde{{\hat{\alpha}}^{\bar{\omega}}}]\big)\Big]\\
				&\overset{\eqref{111}}{=} \mathbb{E}\Big[\int_{t_1}^T L_{\cA}\big(X_t[t_1,X_{t_1}[\widetilde{\alpha}](\bar{\omega}), \widetilde{\alpha}^{\bar{\omega}}], \alpha_t^{\bar{\omega}}\big)dt + g_{\cA}\big(X_T[t_1,X_{t_1}[\widetilde{\alpha}](\bar{\omega}), \widetilde{\alpha}^{\bar{\omega}}]\big)\Big].
			\end{align*}
			Therefore we conclude the proof.
			
	\end{proof}

	\begin{proposition}\label{prop:subdynamic_programming}
		Suppose that Assumption \textbf{A} holds.  The value function $\overline{V}$, defined in (\ref{eqn:value}), satisfies the sub-dynamic programming principle: For any $\cA\in \bW$, $x_0\in L^2(\cA)_{sa}^d$ and  $0\leq t_0\leq t_1\leq T$,
		\begin{align} \label{350}
			\overline{V}_{\cA}(t_0,x_0) \leq  \inf_{\widetilde{\alpha}\in \bA_{\cA,x_0}^{t_0,t_1}}\Big\{ \bE\Big[\int_{t_0}^{t_1} L_{\cA}(X_s[\widetilde{\alpha}], \alpha_s)ds +\overline{V}_{\cA}(t_1,X_{t_1}[\widetilde{\alpha}])\Big]: X_{t_0}[\widetilde{\alpha}]=x_0\Big\}.
		\end{align}
		Furthermore, $\overline{V}$ is a free viscosity subsolution in the sense of Definition \ref{def:WassSpaceViscosity}.
	\end{proposition}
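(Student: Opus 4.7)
My plan is to first establish the sub-dynamic programming principle \eqref{350} by a concatenation argument, and then derive the viscosity subsolution property from \eqref{350} via the mixed It\^o formula (Lemma~\ref{lem:Ito}).

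For the SDPP, fix $\widetilde{\alpha} \in \bA_{\cA,x_0}^{t_0,t_1}$ with trajectory $X_t = X_t[\widetilde{\alpha}]$, and let $\epsilon > 0$. The idea is to concatenate $\widetilde{\alpha}$ on $[t_0,t_1]$ with near-optimal controls on $[t_1,T]$ started from the $\cF_{t_1}$-measurable random endpoint $X_{t_1}$. Since the near-optimal controls for different endpoints might live in different extensions, I would first discretize: using the Lipschitz continuity of $\overline{V}_{\cA}$ in space (Proposition~\ref{prop:continuity}), choose a countable measurable partition $(A_i)_i$ of the essential range of $X_{t_1}$ into sets of $L^2$-diameter $\le \epsilon$, pick a representative $y_i \in A_i$, and for each $i$ select, by the definition of $\overline{V}_{\cA}(t_1,y_i)$, an embedding $\iota_i: \cA \to \cB^i$ together with a control policy $\widetilde{\beta}^i \in \bA_{\cB^i,\iota_i(y_i)}^{t_1,T}$ whose cost is within $\epsilon$ of $\overline{V}_{\cA}(t_1,y_i)$. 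By Lemma~\ref{lem: amalgamation of Brownian motions} applied with $K = \mathbb{N}$ (amalgamating all the $\cB^i$ with $\cA$ along the filtration at $t_1$ and along the $[t_0,t_1]$ Brownian motion carrying $\widetilde{\alpha}$), there exists a single tracial von Neumann algebra $\cC$, a filtration $(\cC_t)_{t \in [t_0,T]}$, and a compatible free Brownian motion $(S_t)_{t \in [t_0,T]}$ realising both $\widetilde{\alpha}$ on $[t_0,t_1]$ and every $\widetilde{\beta}^i$ on $[t_1,T]$ inside $\cC$. Gluing the $\widetilde{\beta}^i$ via the $\cF_{t_1}$-measurable indicators $\mathbbm{1}_{\{X_{t_1} \in A_i\}}$ and appending to $\widetilde{\alpha}$ produces an admissible $\widetilde{\gamma} \in \bA_{\cC,\iota x_0}^{t_0,T}$ whose expected cost is bounded above by the right-hand side of \eqref{350} plus $O(\epsilon)$, where the $O(\epsilon)$ absorbs the per-piece suboptimalities together with the discretization error controlled by the Lipschitz bounds \eqref{eqn:Lipschitz_bounds} on $L_{\cA}$ and $g_{\cA}$. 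Letting $\epsilon \to 0$ gives \eqref{350}.

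For the viscosity subsolution property, suppose $(\Phi_{\cA})_{\cA \in \bW} \in \mathcal{X}_\Sigma$ touches $(\overline{V}_{\cA})_{\cA \in \bW}$ from above at $(t_0,\lambda_0)$. Since every term in the PDE is a tracial $\mathrm{W}^*$-function, it suffices to verify the inequality at a single representative $(\cA,x_0)$ of $\lambda_0$, and I may enlarge $\cA$ via a free product so that it supports a $d$-dimensional free Brownian motion freely independent of $x_0$. Fix $\alpha_0 \in \bA_{\cA}$, take the constant control $\alpha_t \equiv \alpha_0$ on $[t_0,t_0+h]$ for small $h > 0$ with this Brownian motion and the filtration generated by it and $x_0$, and let $X_t$ be the resulting trajectory. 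The SDPP combined with $\Phi_{\cA}(t_0,x_0) = \overline{V}_{\cA}(t_0,x_0)$ and $\Phi_{\cA}(t_0+h,\cdot) \ge \overline{V}_{\cA}(t_0+h,\cdot)$ yields
\[
0 \le \bE\Big[\int_{t_0}^{t_0+h} L_{\cA}(X_s,\alpha_0)\,ds + \Phi_{\cA}(t_0+h, X_{t_0+h}) - \Phi_{\cA}(t_0,x_0)\Big].
\]
Applying Lemma~\ref{lem:Ito} to $\Phi_{\cA}$, dividing by $h$, and letting $h \downarrow 0$ gives
\[
0 \le L_{\cA}(x_0,\alpha_0) + \partial_t \Phi_{\cA}(t_0,x_0) + \langle \nabla \Phi_{\cA}(t_0,x_0), b_{\cA}(x_0,\alpha_0)\rangle_{L^2(\cA)} + \tfrac{\beta_C^2}{2}\,\Delta_{\cA}\Phi(t_0,x_0) + \tfrac{\beta_F^2}{2}\,\Theta_{\cA}\Phi(t_0,x_0).
\]
Taking the infimum over $\alpha_0 \in \bA_{\cA}$ and using Lemma~\ref{lem:Hamiltonian}(i) to identify that infimum with $-H_{\cA}(x_0,-\nabla\Phi_{\cA}(t_0,x_0))$ converts this into the subsolution inequality in Definition~\ref{def:WassSpaceViscosity}.

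The hard part will be the measurable gluing step in the SDPP. In the classical setting one uses regular conditional distributions plus a measurable selection of near-optimal controls, but here the near-optimal controls for different endpoints may live in different ambient algebras $\cB^i$, so they cannot be concatenated without first being transported into a common one. The amalgamation lemma (Lemma~\ref{lem: amalgamation of Brownian motions}) is tailored precisely for this: it identifies the copies of the initial filtration and the $[t_0,t_1]$-Brownian motion across all the $\cB^i$, producing a single $\cC$ supporting the full construction. Continuity of $\overline{V}$ (Proposition~\ref{prop:continuity}) controls the discretization error, while progressive measurability of the glued control with respect to the amalgamated filtration follows from the $\cF_{t_1}$-measurability of the switching event and the piecewise construction of the extension on $[t_1,T]$.
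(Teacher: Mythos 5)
Your overall strategy matches the paper's: discretize the essential range of $X_{t_1}$, select near-optimal continuation controls from the definition of $\overline{V}_{\cA}(t_1,\cdot)$ at a set of representative points, amalgamate the resulting algebras via Lemma~\ref{lem: amalgamation of Brownian motions}, glue the controls using $\mathcal{F}_{t_1}$-measurable indicators, and use the Lipschitz bound from Proposition~\ref{prop:continuity} together with \eqref{eqn:Lipschitz_bounds} and \eqref{eqn:initial_condition_continuity_2} to control the discretization error. The viscosity subsolution argument (constant controls, mixed It\^{o} formula, divide by $h$, optimize, invoke Lemma~\ref{lem:Hamiltonian}(i), then use traciality of all terms to take the supremum over representatives of $\lambda_0$) is also the same as the paper's.

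However, there is a genuine gap in the gluing step, hidden inside your ``the $O(\epsilon)$ absorbs the per-piece suboptimalities.'' The near-optimal policy $\widetilde{\beta}^i$ chosen for starting point $y_i$ is a priori a process on $[t_1,T]$ that is allowed to depend on $\mathcal{F}_{t_1}$ (the filtration is $\mathcal{F}_s = \sigma(W^0_r : r\le s)$, which contains $\mathcal{F}_{t_1}$), and its near-optimality controls only the \emph{unconditional} expectation $\mathbb{E}[\int_{t_1}^T L + g]$. After concatenation you need to estimate
\[
\sum_i \mathbb{E}\Big[\mathbbm{1}_{\{X_{t_1}\in A_i\}}\, \mathbb{E}\big[\textstyle\int_{t_1}^T L(X_t,\alpha_t^i)\,dt + g(X_T)\,\big|\,\mathcal{F}_{t_1}\big]\Big],
\]
and if $\alpha^i$ is correlated with the event $\{X_{t_1}\in A_i\}\in\mathcal{F}_{t_1}$, the conditional expectation on that event bears no controllable relation to the unconditional near-optimal cost from $y_i$. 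The paper resolves this with Lemma~\ref{lem:Markov_property}: after selecting $\widetilde{\alpha}^k$, it is replaced by a version $(\alpha_t^k)^{\bar\omega}$ that achieves at least as small an expected cost and is independent of $\mathcal{F}_{t_1}$, so that the conditional expectation on $\{X_{t_1}\in K_k\}$ equals the unconditional expectation and the Lipschitz estimate \eqref{eqn:initial_condition_continuity_2} applies cleanly. Without this decoupling your estimate does not close. (Secondary, minor: you glue over a countable index set; the paper cuts to a finite collection $K^{N,r}$ and bounds the residual mass using \eqref{411} and the a priori $L^2$ bound on $X_{t_1}$, which avoids having to justify the cost bound over infinitely many pieces. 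This is a matter of bookkeeping, not a conceptual gap.)
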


	\begin{proof}
		First note that if $\bA_{\cA,x_0}^{t_0,t_1}$ is empty, i.e., $\cA$ does not support a free Brownian motion independent of $x_0$, then RHS of (\ref{350}) is $+\infty$ and the inequality holds.  Hence let us assume that  $\cA$  supports a free Brownian motion on $[t_0,t_1]$ independent of $x_0$. 
		
		Given an arbitrary control policy $\widetilde{\alpha}\in \bA_{\cA,x_0}^{t_0,t_1}$, we will construct a larger space $\cB \in \bW$ with a tracial $W^*$-embedding $\iota:\cA\rightarrow \mathcal{B}$ and control policy $\widetilde{\alpha}'\in  \bA_{\mathcal{B},\iota\, x_0}^{t_0, T}$ which is $\epsilon$-optimal on $[t_1,T]$, i.e., 
		\begin{align} \label{nearoptimal}
			\bE\Big[ \overline{V}_{\cB}(t_1,X_{t_1}[\widetilde{\alpha}'])\Big] \geq -\epsilon + \bE\Big[\int_{t_1}^T L_{\mathcal{B}}(X_t[\widetilde{\alpha}'],\alpha'_t)dt + g_{\mathcal{B}}(X_T[\widetilde{\alpha}'])\Big].
		\end{align}
		Furthermore, the control policy will be consistent on $[t_0,t_1]$ in the sense that $\alpha_t' = \iota\, \alpha_t$ and $X_t[\widetilde{\alpha}'] = \iota\, X_t[\widetilde{\alpha}]$  on $[t_0,t_1]$. Indeed, given this consistency, we have
		\begin{align*}
			\overline{V}_{\cA}(t_0,x_0) &\le  \bE\Big[\int_{t_0}^{T} L_{\mathcal{B}}(X_t[\widetilde{\alpha}'], \alpha'_t)dt +g_{\mathcal{B}}(X_T[\widetilde{\alpha}'])\Big]\\
			& = \bE\Big[\int_{t_0}^{t_1} L_{\mathcal{B}} (X_t[\widetilde{\alpha}'], \alpha'_t)dt\Big] + \bE\Big[\int_{t_1}^{T}  L_{\mathcal{B}} 
			(X_t[\widetilde{\alpha}'], \alpha'_t)dt +g_{\mathcal{B}}(X_T[\widetilde{\alpha}'])\Big]\\
			& \overset{\eqref{nearoptimal}}{\le} \bE\Big[\int_{t_0}^{t_1} L_{\mathcal{A}}(X_t[\widetilde{\alpha}], \alpha_t)dt \Big]+ \bE\Big[ \overline{V}_{\cA}(t_1,X_{t_1}[\widetilde{\alpha}])\Big] + \epsilon,
		\end{align*}
		where the last inequality follows from the tracial property of the value function $
		(\overline{V}_\cA)_{\cA \in \bW}$. Taking $\epsilon \rightarrow 0^+$ proves the sub-dynamic programming principle.
		
		~
		
		In order to construct a tracial $W^*$-embedding $\iota:\cA\rightarrow \mathcal{B}$ along with a consistent policy $\widetilde{\alpha}'\in  \bA_{\mathcal{B},\iota\, x_0}^{t_0, T}$ satisfying \eqref{nearoptimal}, we reduce to taking a finite collection of points in the von Neumann algebra that approximate $X_{t_1}[\widetilde{\alpha}]$ and apply Lemma \ref{lem: amalgamation of Brownian motions} to amalgamate spaces with nearly optimal control policies for each point. Let $(\cA_t)_{t\in[t_0,T]}$ be the free filtration  and $(S_t^0)_{t\in [t_0,T]}$ be the compatible free Brownian motion of $\widetilde{\alpha}$.
		
		We recall that by adaptedness, $X_{t_1}[\widetilde{\alpha}]\in L^2(\cA_{t_1})_{sa}^d$. By separability of the space $L^2(\cA_{t_1})_{sa}^d$, there is a countable dense collection $\{z_k\}_{k=1}^\infty$ in $L^2(\cA_{t_1})_{sa}^d$. For a positive integer $N$,  let $K^{N,r} := \cup_{k=1}^N B_r(z_k)$ be the union of balls of radius $r>0$ around the first $N$ points. Then, for any $r>0,$
		\begin{align} \label{411}
			\lim_{N\rightarrow \infty}    \mathbb{P}(X_{t_1}[\widetilde{\alpha}]\notin K^{N,r})=  \mathbb{P}(X_{t_1}[\widetilde{\alpha}]\notin \cup_{k=1}^\infty  B_r(z_k) ) = 0 .
		\end{align} 
		We claim that for any $r>0$ and $\epsilon>0$, there exists sufficiently large $N$
		such that
		\begin{align} \label{351}
			\mathbb{E}\Big[\overline{V}_{\cA}(t_1,X_{t_1}[\widetilde{\alpha}])\Big] \geq - \epsilon + \mathbb{E}\Big[\mathbbm{1}_{X_{t_1}[\widetilde{\alpha}]\in K^{N,r}}\, \overline{V}_{\cA}(t_1,X_{t_1}[\widetilde{\alpha}])\Big].
		\end{align}
		Indeed, by H\"older inequality,  
		\begin{align*}
			\mathbb{E}\Big[ \big\vert \mathbbm{1}_{X_{t_1}[\widetilde{\alpha}]\notin K^{N,r}} \overline{V}_{\cA}(t_1,X_{t_1}[\widetilde{\alpha}])\big\vert \Big] &\le  \sqrt{\mathbb{P}( X_{t_1}[\widetilde{\alpha}]\notin K^{N,r}) } \sqrt{ \mathbb{E} |\overline{V}_{\cA}(t_1,X_{t_1}[\widetilde{\alpha}])|^2} \\
			&\overset{\eqref{eqn:global_bound}}{\le}  C\sqrt{\mathbb{P}( X_{t_1}[\widetilde{\alpha}]\notin K^{N,r}) } \sqrt{1 +  \mathbb{E} \norm{ X_{t_1}[\widetilde{\alpha}]}_{L^2(\cA)}^2}.
		\end{align*}
		By \eqref{eqn:time_bound_L2} in  Theorem \ref{thm:well_posedness}, 
		\begin{align*}
			\mathbb{E} \big[\norm{ X_{t_1}[\widetilde{\alpha}]}_{L^2(\cA)}^2\big] \le   2\bE \big[\big\| X_{t_1}[\widetilde{\alpha}]-x_0\big\|_{L^2(\cA)}^2\big] + 2
			\|x_0\big\|_{L^2(\cA)}^2 \leq \widetilde{C}_{M,N}\, (t_1-t_0) +  2
			\|x_0\big\|_{L^2(\cA)}^2 .
		\end{align*}
		This together with  \eqref{411} verifies \eqref{351}.

		Take a partition $K^{N,r} = \sqcup_{k=1}^N K_k$ such that $K_k\subset B_r(z_k)$. We choose
		\begin{align} \label{236}
			r = \frac{\epsilon}{4\, C_2(1+T)\sqrt{\widetilde{C}}},
		\end{align}
		using the Lipschitz coefficient of Proposition \ref{prop:continuity} so that $\overline{V}(t_1,\cdot)$ does not oscillate by more than $\epsilon$ in a ball of radius $r$. Setting $c_k:= \bP (X_{t_1}[\widetilde{\alpha}]\in K_k),$
		\begin{align} \label{352}
			\mathbb{E}\Big[\mathbbm{1}_{X_{t_1}[\widetilde{\alpha}]\in K^{N,r}}\, \overline{V}_{\cA}(t_1,X_{t_1}[\widetilde{\alpha}])\Big] &= \sum_{k=1}^N   \mathbb{E}\Big[\mathbbm{1}_{X_{t_1}[\widetilde{\alpha}]\in K_k}\, \overline{V}_{\cA}(t_1,X_{t_1}[\widetilde{\alpha}])\Big]  \nonumber   \\
			& \ge \sum_{k=1}^N 
			\mathbb{E}\Big[\mathbbm{1}_{X_{t_1}[\widetilde{\alpha}]\in K_k}\, 
			\big(\overline{V}_{\cA}(t_1,z_k) - \epsilon\big)\Big]  \nonumber  \\
			&  \ge  - \epsilon + \sum_{k=1}^N c_k\, \overline{V}_{\cA}(t_1,z_k).
		\end{align}
		Thus combining this with \eqref{351}, we obtain
		\begin{align} \label{237}
			\sum_{k=1}^N c_k\, \overline{V}_{\cA}(t_1,z_k) \le    \mathbb{E}\big[ \overline{V}_{\cA}(t_1,X_{t_1}[\widetilde{\alpha}])\big] + 2\epsilon.
		\end{align}

		We now apply Lemma \ref{lem: arranging initial filtration}, for each $k\in\{1,\cdots,N\}$, to infer the existence of a tracial $W^*$-algebra $\mathcal{B}^k$, a tracial $W^*$-embedding $\iota_k:\cA \rightarrow \mathcal{B}^k$, a control policy $\widetilde{\alpha}^k\in \bA^{t_1,T}_{\mathcal{B}^k, \iota_k z_k}$ with filtration $(\mathcal{B}_t^k)_{t\in [t_1,T]}$ and compatible $d$-variable free Brownian motion $(S_t^k)_{t\in [t_1,T]}$, such that $\iota_k(\cA)\subset \mathcal{B}_{t_1}^k$ and
		\begin{align} \label{353}
			\overline{V}_{\cA}(t_1,z_k) \geq&\ - \epsilon +  \bE\Big[\int_{t_1}^T L_{\mathcal{B}^k} (X_t[t_1,\iota_k z_k,\widetilde{\alpha}^k], \alpha_t^k)dt + g_{\mathcal{B}^k}(X_T[t_1,\iota_k z_k,\widetilde{\alpha}^k])\Big] .
		\end{align}
		{Furthermore, we may assume that $({\alpha}^k_t)_{t\in [t_1,T]}$ is independent of $\mathcal{F}_{t_1}$. To accomplish this, we first take $\bar{\omega}\in \Omega$ such that
			\begin{align*}
				&\bE\Big[\int_{t_1}^T L_{\mathcal{B}^k} (X_t[t_1,\iota_k z_k,\widetilde{\alpha}^k], \alpha_t^k)dt + g_{\mathcal{B}^k}(X_T[t_1,\iota_k z_k,\widetilde{\alpha}^k])\Big] \\
				& \ge \bE\Big[\int_{t_1}^T L_{\mathcal{B}^k} (X_t[t_1,\iota_k z_k,\widetilde{\alpha}^k], \alpha_t^k)dt + g_{\mathcal{B}^k}(X_T[t_1,\iota_k z_k,\widetilde{\alpha}^k])  \ \Big\vert \ \mathcal{F}_{t_1}\Big](\bar{\omega})  .
			\end{align*}
			By Lemma \ref{lem:Markov_property} with $t_0=t_1$,
			redefining \(\alpha_t^k := (\alpha_t^k)^{\bar{\omega}}\), we arrive at a process, satisfying \eqref{353}, that is \(\mathcal{F}_{t_1}\)-independent for all \(t \in [t_1,T]\), as required.}
		
		By construction we have $\iota_k(\cA_{t_1})\subset \cB_{t_1}^k$, so we may apply the amalgamation Lemma \ref{lem: amalgamation of Brownian motions} to find a larger algebra $\cB$ with tracial ${\rm W}^*$ embeddings $\iota:\cA\rightarrow \cB$ and $\widetilde{\iota}_k:\cB^k\rightarrow \cB$, a filtration $(\mathcal{B}_t)_{t\in [t_0,T]}$, and a compatible free Brownian motion $(S_t)_{t\in [t_0,T]}$ such that $\iota\, (S^0_t - S^0_{t_0}) = S_t-S_{t_0}$ for $t\in [t_0,t_1]$ and $\tilde{\iota}_k(S^k_t-S^k_{t_1}) = S_t-S_{t_1}$ for each $k$ and $t\in [t_1,T]$.  We define a control policy $\widetilde{\alpha}'_t\in \mathbb{A}^{t_0,T}_{\cB, \iota\, x_0}$ using the filtration $(\mathcal{B}_t)_t$, free Brownian motion $(S_t)_{t}$ and the control  $(\alpha'_t)_t$ defined as follows:
		{$\alpha_t':= \iota\, \alpha_t$ for $t\in [t_0,t_1)$, and on  $[t_1,T)$
			\begin{align} \label{234}
				\alpha_t' := \begin{cases}
					\widetilde  \iota_k \alpha_t^k \quad  &  \text{on the event $X_{t_1}[\widetilde{\alpha}] \in K_k$ for some $k=1,\cdots,N$}, \\
					0 &   \text{otherwise.}
				\end{cases}
			\end{align}
		}
		Note that the properties of Lemma \ref{lem: amalgamation of Brownian motions} imply that $(\alpha_t')_{t\in [t_0,T]}$ is adapted to $(\cB_t)_{t\in [t_0,T]}$. {Also, $(\alpha_t')_{t\in [t_0,T]}$ is adapted to $(\mathcal{F}_{t})_{t\in [t_0,T]}$.}
		
		For $t\in [t_0,t_1]$, both $\iota\, X_t[t_0,x_0,\widetilde{\alpha}]$ and $X_t[t_0,\iota\, x_0,\widetilde{\alpha}']$ solve the same SDE and thus agree by Theorem \ref{thm:well_posedness}. Hence they are consistent, i.e. $X_t[\widetilde{\alpha}'] = \iota\, X_t[\widetilde{\alpha}]$  (we omit the initial time and position for the sake of readability). Similarly, for $t\in [t_1,T]$,  on the event $X_{t_1}[\widetilde{\alpha}]\in K_k$,
		\begin{align*}
			d\, \widetilde{\iota}_k\, X_t[t_1,\iota_k\, X_{t_1}[\widetilde{\alpha}],\widetilde{\alpha}^k]=&\ \widetilde{\iota}_k\, b_{\cB_k}( X_t[t_1,\iota_k\, X_{t_1}[\widetilde{\alpha}],\widetilde{\alpha}^k], \alpha^k_t)dt +\beta_C\, \widetilde{\iota}_k\, \mathbbm{1}_{\cB_k}dW^0_t + \beta_F\,  \widetilde{\iota}_k\, dS_t^k\\
			=&\   b_{\cB}(\widetilde{\iota}_k\, X_t[t_1,\iota_k\, X_{t_1}[\widetilde{\alpha}],\widetilde{\alpha}^k], \alpha'_t)dt +\beta_C\, \mathbbm{1}_{\cB}dW^0_t +\beta_F\, dS_t,
		\end{align*}
		recalling that the free SDE formulation is shorthand for an integral form where $\int_{t_1}^t\beta_F\, \widetilde{\iota}_k dS_t^k = \beta_F\, \widetilde{\iota}_k (S_t^k-S_{t_1}^k)$ for $t\in [t_1,T]$.
		Thus it follows that $X_t[t_1,\iota\, X_{t_1}[\widetilde{\alpha}],\widetilde{\alpha}'] =\ \widetilde{\iota}_k\, X_t[t_1,\iota_k X_{t_1}[\widetilde{\alpha}],\widetilde{\alpha}^k]$,  
		since they solve the same SDE and thus agree by Theorem \ref{thm:well_posedness}.

		Then we bound RHS of \eqref{nearoptimal} as follows: Noting that  $X_t[ \widetilde \alpha'] = X_t[t_1,\iota\, X_{t_1}[\widetilde{\alpha}], \widetilde{\alpha}']$,
		\begin{align} \label{233}
			\bE & \Big[\int_{t_1}^T  L_{\mathcal{B}} (X_t[t_1,\iota\, X_{t_1}[\widetilde{\alpha}], \widetilde{\alpha}'], \alpha_t')dt + g_{\mathcal{B}}(X_T[t_1,\iota\, X_{t_1}[\widetilde{\alpha}], \widetilde{\alpha}'])\Big] \nonumber\\
			& =  \bE \Big[ \bE\Big[\int_{t_1}^T L_{\mathcal{B}} (X_t[t_1,\iota\, X_{t_1}[\widetilde{\alpha}], \widetilde{\alpha}'], \alpha_t')dt + g_{\mathcal{B}}(X_T[t_1,\iota\, X_{t_1}[\widetilde{\alpha}], \widetilde{\alpha}']) \ \Big\vert \  \mathcal{F}_{t_1}\Big] \Big]\nonumber \\
			& \le \sum_{k=1}^N    \bE \Big[ \bE\Big[\int_{t_1}^T L_{\mathcal{B}} (X_t[t_1,\iota\, X_{t_1}[\widetilde{\alpha}], \widetilde{\alpha}'], \alpha_t')dt + g_{\mathcal{B}}(X_T[t_1,\iota\, X_{t_1}[\widetilde{\alpha}], \widetilde{\alpha}']) \ \Big\vert \ 
			\mathcal{F}_{t_1} \Big] \mathbbm{1}_{X_{t_1}[\tilde{\alpha}]\in K_k} \Big] \nonumber \\
			&\qquad \qquad +C \mathbb{E} \Big[1+ \norm{ X_{t_1}[\widetilde{\alpha}]}_{L^2(\cA)}^2 +  \int_{t_1}^T  \norm{  {\alpha_t}'}_{L^2(\cA)}^2 dt  \Big]  \mathbb{P}(X_{t_1}[\widetilde{\alpha}]\notin K^{N,r}),
		\end{align}
		where we used the upper bound condition \eqref{eqn:lower_and_upper_bounds}  and  \eqref{eqn:time_bound_L2} in Theorem \ref{thm:well_posedness} to obtain the last term. Observe that by the Lipschitz condition \eqref{eqn:Lipschitz_bounds} along with \eqref{eqn:initial_condition_continuity_2}, for any $\cA' \in \bW$, $y_1,y_2 \in L^2(\cA')^d_{sa}$ and a control $\beta = (\beta_t)_t,$
		\begin{align} \label{238}
			&\Big\vert  \mathbb E\Big[ \int_{t_1}^T L_{\cA'} (X_t[t_1,y_1, \beta],\beta_t)dt + g_{\cA'}(X_T[t_1,y_1, \beta]) \Big] \nonumber \\
			&\qquad-   \mathbb E \Big[\int_{t_1}^T L_{\cA'} (X_t[t_1,y_2, \beta],\beta_t)dt + g_{\cA'}(X_T[t_1,y_2, \beta])\Big] \Big\vert  \le  C \norm{y_1-y_2}_{L^2(\cA')}.
		\end{align}
		Now recall the definition of $(\alpha'_t)_t$ in \eqref{234} and $\norm{X_{t_1}[\widetilde{\alpha}] - z_k}_{L^2(\cA)} \le r$ on the event $X_{t_1}[\widetilde{\alpha}]\in K_k$. {The independence of $({\alpha}^k_t)_{t\in [t_1,T]}$ and $\mathcal{F}_{t_1}$ also implies independence of $(X_t[t_1,\iota_k\, z_k, \tilde{\alpha}^k])_{t\in [t_1,T]}$  and $\mathcal{F}_{t_1}$.} We get that the first term in \eqref{233}  is bounded by
		\begin{align*}
			&\sum_{k=1}^N    \bE \Big[ \bE\Big[\int_{t_1}^T L_{\mathcal{B}^k} (X_t[t_1,\iota_k\, X_{t_1}[\widetilde{\alpha}], \widetilde{\alpha}^k], \alpha^k_t)dt + g_{\mathcal{B}^k}(X_T[t_1,\iota_k\, X_{t_1}[\widetilde{\alpha}],  \widetilde{\alpha}^k]) \ \Big\vert \ 
			\mathcal{F}_{t_1}\Big] \mathbbm{1}_{X_{t_1}[\widetilde{\alpha}]\in K_k} \Big]  \\
			&\overset{\eqref{238}}{\le} \sum_{k=1}^N    \bE \Big[  \Big ( \bE\Big[\int_{t_1}^T L_{\mathcal{B}^k} (X_t[t_1,\iota_k z_k, \widetilde{\alpha}^k], \alpha^k_t)dt + g_{\mathcal{B}^k}(X_T[t_1,\iota_k z_k , \widetilde{\alpha}^k]) \Big]  + Cr \Big) \mathbbm{1}_{X_{t_1}[\widetilde{\alpha}]\in K_k} \Big]\\
			& \overset{\eqref{353}}{\le}  \sum_{k=1}^N    \bE \Big[  
			\big ( \overline V_{\cA} (t_1, z_k) + \epsilon \big)\mathbbm{1}_{X_{t_1}[\widetilde{\alpha}]\in K_k}  \Big] +  Cr \\
			&\le  \sum_{k=1}^N   c_k\overline V_{\cA} (t_1, z_k)  + \epsilon + Cr \\
			&\overset{\eqref{237}}{\le}  \bE \big[\overline V_{\cA} (t_1, X_{t_1}[\widetilde{\alpha}])\big] +   3\epsilon  + Cr  \overset{\eqref{236}}{=}  \bE \big[\overline V_{\cA} (t_1, X_{t_1}[\widetilde{\alpha}])\big] +   C'\epsilon  .
		\end{align*}
		
		Next, the second term in \eqref{233} converges to 0, due to \eqref{411} and the boundedness of $\mathbb{E} \big[\norm{ X_{t_1}[\widetilde{\alpha}]}_{L^2(\cA)}^2\big]$ (see \eqref{eqn:time_bound_L2} in Theorem \ref{thm:well_posedness}).
		Thus noting $\overline{V}_{\cB}(t_1,\iota\, X_{t_1}[\widetilde{\alpha}]) = \overline{V}_{\cA}(t_1,X_{t_1}[\widetilde{\alpha}])$, we obtain \eqref{nearoptimal} and thus establish the sub-dynamic programming principle.
		
		~
		
		We now prove the viscosity subsolution property.  Suppose that $(\Phi_{\cA})_{\cA\in \bW}\in \mathcal{X}_\Sigma$ touches $(\overline{V}_\cA)_{\cA\in \bW}$ from above at   $(t_0,\lambda_0)\in [0,T)\times \Sigma_d^2$.  Let $\cA\in \bW$ and $x_0 \in L^2(\cA)_{sa}^d$ such that $\lambda_0 = \lambda_{x_0}$.  Take a larger von Neumann algebra $\cB$, along with a tracial $W^*$-embedding $\iota:\cA\rightarrow \cB$, which contains a free semi-circlular process $(S_t)_t$ freely independent of $\iota(\cA)$. Then for any $\bar{\alpha}\in \mathbb{A}_{\cA}$, consider {$\widetilde{\alpha} = ((\alpha_t)_{t},(\cA_t)_{t}, (S_t)_t
			) \in \bA_{\cB, \iota\, x_0}^{t_0,t_1}$} such that ${\alpha}_t=\iota\, \bar{\alpha}$ for $t\in [t_0,T)$, and then let $X_t[\widetilde{\alpha}]$ be  a solution to \eqref{eqn:common_noise}  on $L^2(\cB)_{sa}^d$  with an initial condition $X_{t_0}[\widetilde{\alpha}] = \iota\, x_0$. As the non-commutative law of $\iota\, x_0$, as an element in $\cB$, is $\lambda_0,$ for  any $0\leq t_0 < t_1\leq T$,
		\begin{align*}
			\Phi_{\cB}(t_0,\iota x_0) = \overline{V}_\cB (t_0,\iota x_0) &  \overset{\eqref{350}}{\le}  \  \bE\Big[\int_{t_0}^{t_1} L_{\cB}(X_s[\widetilde{\alpha}], \alpha_s)ds +\overline{V}_{\cB}(t_1,X_{t_1}[\widetilde{\alpha}])\Big].\\
			& \leq \  \bE\Big[\int_{t_0}^{t_1} L_{\cB}(X_s[\widetilde{\alpha}], \alpha_s)ds +\Phi_{\cB}(t_1,X_{t_1}[\widetilde{\alpha}])\Big].
		\end{align*}
		Using Lemma \ref{lem:Ito},  abbreviating   $X_{t}[\widetilde{\alpha}]$ to $X_{t}$, 
		\begin{align*}
			0 \leq &\  \bE\Big[\int_{t_0}^{t_1}\Big( L_{\cB}(X_s, \iota \, \bar{\alpha}) + \partial_t \Phi_{\cB}(s,X_s)\\
			&\  + \langle \nabla \Phi_{\cB}(s,X_s),b_{\cB}(X_s, \iota  \, \bar{\alpha})\rangle_{L^2(\cB)} + \frac{\beta_C^2}{2}\Delta_{\cB} \Phi (s,X_s) + \frac{\beta_F^2}{2}\Theta_{\cB} \Phi (s,X_s)\Big)ds\Big].
		\end{align*}
		Dividing by $t_1-t_0$ and then taking the limit $t_1\rightarrow^+t_0$, we find that 
		\begin{align} \label{355}
			-\partial_t \Phi_{\cB}(t_0,\iota\, x_0) &-\langle \nabla \Phi_{\cB}(t_0,\iota\, x_0),b_{\cB}(\iota\, x_0, \iota\, \bar{\alpha})\rangle_{L^2(\cB)} - L_{\cB}(\iota\, x_0,\iota\, \bar{\alpha})   \nonumber \\
			&-\frac{\beta_C^2}{2}\Delta_{\cB} \Phi(t_0,\iota\, x_0)-\frac{\beta_F^2}{2}\Theta_{\cB} \Phi(t_0,\iota\, x_0) \leq 0.
		\end{align}
		Note that as $\bar{\alpha}\in L^2(\mathcal{A})_{sa}^d$,
		$$
		\langle \nabla \Phi_{\cB}(\iota\, x_0, \iota\, \bar{\alpha}),b_{\cB}(\iota\, x_0, \iota\, \bar{\alpha})\rangle_{L^2(\cB)} = \langle \nabla \Phi_{\cA}(t_0, x_0),b_{\cA}(x_0,\bar{\alpha})\rangle_{L^2(\cA)}.
		$$
		Hence since all the functions in \eqref{355} are tracial $W^*$-functions, taking the supremum over all $\bar{\alpha} \in \bA_\cA$, by \eqref{eqn:Hamiltonian} in Lemma \ref{lem:Hamiltonian},
		$$
		-\partial_t \Phi_{\cA}(t_0,x_0) +H_{\cA}\big(x_0,-\nabla \Phi_{\cA} (t_0,x_0)\big) - \frac{\beta_C^2}{2}\Delta_{\cA} \Phi(t_0,x_0)-\frac{\beta_F^2}{2}\Theta_{\cA} \Phi(t_0, x_0)\leq 0.
		$$
		As this holds for arbitrary $\cA\in \bW$ which contains a non-commutative law $\lambda_0$,  one can take a supremum over all such  $\cA\in \bW$ and thus establish the free viscosity subsolution property of $\overline{V}$.
	\end{proof}
	
	We contrast this with the proof that the value function on a given von Neumann algebra satisfies the superdynamic programming principle and is a supersolution. 
	
	\begin{proposition}\label{prop:dynamic_programming}
		Suppose that Assumption \textbf{A} holds. The value function  $\overline{V}$, defined in (\ref{eqn:value}), satisfies the super-dynamic programming principle: For any $\cA\in \bW$, $x_0\in L^2(\cA)_{sa}^d$ and  $0\leq t_0\leq t_1\leq T$,
		$$
		\overline{V}_{\cA}(t_0,x_0) \geq \inf_{\iota:\cA\rightarrow \cB}\inf_{\widetilde{\alpha}\in \bA_{\cB, \iota\, x_0}^{t_0,t_1}}\Big\{ \bE\Big[\int_{t_0}^{t_1} L_{\cB}(X_s[\widetilde{\alpha}], \alpha_s)ds +\overline{V}_{\cB}(t_1,X_{t_1}[\widetilde{\alpha}])\Big]: X_{t_0}[\widetilde{\alpha}]=\iota\, x_0\Big\}.
		$$
		Furthermore, $\overline{V}$ is a free viscosity supersolution in the sense of Definition \ref{def:WassSpaceViscosity}.
	\end{proposition}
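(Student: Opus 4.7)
The plan is to mirror the proof of the sub-dynamic programming principle and subsolution property in Proposition \ref{prop:subdynamic_programming}, but with the direction of the inequalities reversed. Because $\overline{V}$ is defined as an infimum over extensions $\iota\colon\cA\to\cB$, we need not worry about amalgamating different filtrations as in the proof of the sub-DPP; instead we simply start with a near-optimal extended control on $[t_0,T]$, cut it at $t_1$, and use the Markov property (Lemma \ref{lem:Markov_property}) to identify the conditional tail cost with the cost of a restricted control policy, which is then bounded below by $\widetilde{V}_{\cB}(t_1,\cdot)\ge \overline{V}_{\cB}(t_1,\cdot)$.

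More precisely, for super-DPP, fix $\epsilon>0$. By definition of $\overline{V}_{\cA}(t_0,x_0)$, pick $\cB\in\bW$, a tracial $W^*$-embedding $\iota\colon\cA\to\cB$ and $\widetilde{\alpha}\in\bA_{\cB,\iota x_0}^{t_0,T}$ with
\[
\overline{V}_{\cA}(t_0,x_0)+\epsilon \;\ge\; \bE\Big[\int_{t_0}^{t_1} L_{\cB}(X_t[\widetilde{\alpha}],\alpha_t)\,dt+\int_{t_1}^{T}L_{\cB}(X_t[\widetilde{\alpha}],\alpha_t)\,dt+g_{\cB}(X_T[\widetilde{\alpha}])\Big].
\]
Applying Lemma \ref{lem:Markov_property}, there exist control policies $\widetilde{\alpha}^{\bar\omega}\in\bA_{\cB,X_{t_1}[\widetilde{\alpha}](\bar\omega)}^{t_1,T}$ such that $\mathbb{P}$-a.s.
\[
\bE\Big[\int_{t_1}^{T}L_{\cB}(X_t[\widetilde{\alpha}],\alpha_t)\,dt+g_{\cB}(X_T[\widetilde{\alpha}])\,\Big|\,\cF_{t_1}\Big](\bar\omega)\;\ge\;\widetilde{V}_{\cB}\!\big(t_1,X_{t_1}[\widetilde{\alpha}](\bar\omega)\big)\;\ge\;\overline{V}_{\cB}\!\big(t_1,X_{t_1}[\widetilde{\alpha}](\bar\omega)\big),
\]
where the last inequality uses $\overline{V}_{\cB}\le\widetilde{V}_{\cB}$ (the trivial embedding $\cB\to\cB$ is allowed). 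Since $X_{t_1}[\widetilde{\alpha}]$ is $\cF_{t_1}$-measurable and $\overline{V}_{\cB}$ is continuous (Proposition \ref{prop:continuity}), we may take expectations and use the tower property to conclude
\[
\overline{V}_{\cA}(t_0,x_0)+\epsilon \;\ge\; \bE\Big[\int_{t_0}^{t_1}L_{\cB}(X_t[\widetilde{\alpha}],\alpha_t)\,dt+\overline{V}_{\cB}(t_1,X_{t_1}[\widetilde{\alpha}])\Big].
\]
The right-hand side is an admissible candidate for the infimum in the super-DPP (the restriction of $\widetilde\alpha$ to $[t_0,t_1]$ lies in $\bA_{\cB,\iota x_0}^{t_0,t_1}$), so letting $\epsilon\to 0$ gives the claimed inequality.

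For the supersolution property, let $(\Phi_{\cA})_{\cA\in\bW}\in\cX_\Sigma$ touch $(\overline V_\cA)_{\cA\in\bW}$ from below at $(t_0,\lambda_0)$, and fix any $\cA$ and $x_0\in L^2(\cA)_{sa}^d$ representing $\lambda_0$. For each $0<t_1-t_0$ small and each $\epsilon>0$, the super-DPP supplies $(\cB,\iota,\widetilde{\alpha})$ with
\[
\Phi_{\cB}(t_0,\iota x_0)=\overline{V}_{\cB}(t_0,\iota x_0)+\epsilon(t_1-t_0)\;\ge\;\bE\Big[\int_{t_0}^{t_1}L_{\cB}(X_s[\widetilde\alpha],\alpha_s)\,ds+\Phi_{\cB}(t_1,X_{t_1}[\widetilde\alpha])\Big],
\]
using $\Phi_{\cB}\le\overline{V}_{\cB}$ everywhere and equality at $(t_0,\iota x_0)$. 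Applying the It\^o formula (Lemma \ref{lem:Ito}) to $\Phi_{\cB}$ along $X_s[\widetilde\alpha]$ and bounding the Lagrangian term from below using the Fenchel inequality $L_{\cB}(X_s,\alpha_s)+\langle\nabla\Phi_{\cB}(s,X_s),b_{\cB}(X_s,\alpha_s)\rangle\ge -H_{\cB}(X_s,-\nabla\Phi_{\cB}(s,X_s))$, we obtain
\[
\bE\Big[\int_{t_0}^{t_1}\!\Big(-\partial_t\Phi_{\cB}+H_{\cB}(\cdot,-\nabla\Phi_{\cB})-\tfrac{\beta_C^2}{2}\Delta_{\cB}\Phi-\tfrac{\beta_F^2}{2}\Theta_{\cB}\Phi\Big)(s,X_s[\widetilde\alpha])\,ds\Big]\;\ge\;-\epsilon(t_1-t_0).
\]
Dividing by $t_1-t_0$ and sending $t_1\to t_0^+$ (using continuity of the integrand, $X_s[\widetilde\alpha]\to\iota x_0$ in $L^2$ by Theorem \ref{thm:well_posedness}, and dominated convergence) yields the inequality at $(t_0,\iota x_0)$. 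Since $\lambda_{\iota x_0}=\lambda_0$, this particular $(\cB,\iota x_0)$ contributes to the supremum in Definition \ref{def:WassSpaceViscosity}, so the supremum is $\ge-\epsilon$; letting $\epsilon\to 0$ finishes the proof.

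The main obstacle is the Markov step: we need the conditional tail cost to genuinely coincide with a fresh control policy starting from $X_{t_1}[\widetilde\alpha](\bar\omega)$ in the same algebra $\cB$, and we need the resulting object to be $\cF_{t_1}$-measurable so the tower property makes sense. This is exactly what Lemma \ref{lem:Markov_property} provides by constructing $\widetilde\alpha^{\bar\omega}$ as a measurable selection via Doob's functional representation of $\alpha_t$. A secondary delicate point is verifying the $L^2$-continuity $X_s[\widetilde\alpha]\to\iota x_0$ as $s\downarrow t_0$ uniformly enough to pass to the limit in the It\^o identity; this follows from the a priori estimate \eqref{eqn:time_bound_L2} together with the uniform continuity on bounded sets of the integrand (Lemma \ref{lem:Hamiltonian}(iii) and properties of $\Phi\in\cX_\Sigma$).
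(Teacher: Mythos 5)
Your proof follows essentially the same route as the paper: pick an $\epsilon$-optimal extension, use the tower property and Lemma \ref{lem:Markov_property} to rewrite the conditional tail cost as the cost of a fresh policy bounded below by $\overline{V}_{\cB}(t_1,\cdot)$, then deduce the supersolution property via the It\^{o} formula (Lemma \ref{lem:Ito}) and the Fenchel inequality for the Hamiltonian. The only flaw is a notational slip: in the displayed estimate for the supersolution step you wrote $\Phi_{\cB}(t_0,\iota x_0)=\overline{V}_{\cB}(t_0,\iota x_0)+\epsilon(t_1-t_0)$, but $\Phi_{\cB}(t_0,\iota x_0)$ equals $\overline{V}_{\cB}(t_0,\iota x_0)$ on the nose (touching at $\lambda_0$), so the $\epsilon(t_1-t_0)$ should be added to both sides; your surrounding prose makes the intent clear and the rest of the argument is unaffected.
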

	\begin{proof}
		For any  $t_0\in [0,T]$, $x_0\in L^2(\cA)_{sa}^d$  and  $\epsilon>0$, there exist $\cB\in \bW$ along with a tracial $W^*$-embedding $\iota: \cA \rightarrow \cB$ and  a control policy $\widetilde{\alpha}\in \bA_{\cB, \iota\, x_0}^{t_0,T}$ such that  for the corresponding  solution $(X_t[\widetilde{\alpha}])_t $ to \eqref{eqn:common_noise} on $L^2(\cB)_{sa}^d$  with $X_{t_0}[\widetilde{\alpha}] = \iota\, x_0$,  \begin{align*} 
			\overline{V}_{\cA}(t_0,x_0) &\geq   -\epsilon + \bE\Big[\int_{t_0}^{T} L_{\cB}(X_s[\widetilde{\alpha}], \alpha_s)ds +g_{\cB}(X_{T}[\widetilde{\alpha}])\Big] \\
			&=  -\epsilon + \bE\Big[\int_{t_0}^{t_1}  L_{\cB}(X_s[\widetilde{\alpha}], \alpha_s)ds  + \bE \Big [ \int_{t_1}^{T}  L_{\cB}(X_s[\widetilde{\alpha}], \alpha_s)ds   +g_{\cB}(X_{T}[\widetilde{\alpha}]) \ \Big\vert \  \mathcal F_{t_1} \Big] \Big] \nonumber\\
			&=  -\epsilon + \bE\Big[\int_{t_0}^{t_1}  L_{\cB}(X_s[\widetilde{\alpha}], \alpha_s)ds  \\
			& \qquad \qquad +  \bE \Big [ \int_{t_1}^{T}  L_{\cB}(X_s[t_1, X_{t_1}[\widetilde \alpha],  \widetilde \alpha], \alpha_s)ds   +g_{\cB}(X_T[t_1, X_{t_1}[\widetilde \alpha],  \widetilde \alpha ] ) \ \Big\vert \   \mathcal F_{t_1} \Big] \Big] 
		\end{align*}
		By Lemma \ref{lem:Markov_property}, for $\mathbb P$-a.s.  $\bar{\omega},$ there exists   a control policy $\widetilde{\alpha}^{\bar{\omega}}$ such that 
		\begin{align*}
			&\ \bE \Big [ \int_{t_1}^{T}  L_{\cB}(X_s[t_1, X_{t_1}[\widetilde \alpha],  \widetilde \alpha], \alpha_s)ds   +g_{\cB}(X_T[t_1, X_{t_1}[\widetilde \alpha],  \widetilde \alpha ] ) \ \Big\vert \ 
			\mathcal F_{t_1} \Big] (\bar{\omega})\\
			=&\ \mathbb{E}\Big[\int_{t_1}^T L_{\cA}\big(X_t[t_1,X_{t_1}[\widetilde{\alpha}](\bar{\omega}), \widetilde{\alpha}^{\bar{\omega}}], \alpha^{\bar{\omega}}_t\big)dt + g_{\cA}\big(X_T[t_1,X_{t_1}[\widetilde{\alpha}](\bar{\omega}), \widetilde{\alpha}^{\bar{\omega}}]\big)\Big]\\
			\geq&\ \bar{V}_{\cB}(t_1,X_{t_1}[\tilde{\alpha}](\bar{\omega})).
		\end{align*}
		Combining the above inequalities, we obtain 
		$$
		\overline{V}_{\cA}(t_0,x_0)\geq  -\epsilon + \bE\Big[\int_{t_0}^{t_1} L_{\cB}(X_s[\widetilde{\alpha}], \alpha_s)ds +\overline{V}_{\cB}(t_1,X_{t_1}[\widetilde{\alpha}])\Big].
		$$
		As $\epsilon>0$ is arbitrary,  we establish the super-dynamic programming principle. 
		
		~
		
		Next, in order to show that  $\overline{V}$ is a free viscosity supersolution,  we  assume that $(\Phi_{\cA})_{\cA\in \bW}\in \mathcal{X}_\Sigma$ touches $(\overline{V}_\cA)_{\cA\in \bW}$ from below at  $(t_0,\lambda_0)\in [0,T)\times \Sigma_d^2$.  Let $\cA\in \bW$ and $x_0 \in L^2(\cA)_{sa}^d$ such that $\lambda_0 = \lambda_{x_0}$.  For $t_1\in (t_0,T]$ and $\epsilon>0$, we choose $\cB\in \bW$ along with a tracial $W^*$-embedding $\iota: \cA \rightarrow \cB$ and $\widetilde{\alpha}\in \bA_{\cB,\iota\, x_0}^{t_0,t_1}$ such that 
		\begin{align*}
			\epsilon\, (t_1 - t_0) \geq&\ \bE\Big[\int_{t_0}^{t_1} L_{\cB}(X_t[\widetilde{\alpha}], \alpha_t)ds - \overline{V}_{\cA}(t_0,x_0)  + \overline{V}_{\cB}(t_1,X_{t_1}[\widetilde{\alpha}])\Big]\\
			\geq&\ \bE\Big[\int_{t_0}^{t_1} L_{\cB}(X_t[\widetilde{\alpha}], \alpha_t)ds - \Phi_{\cB}(t_0,\iota\, x_0)  + \Phi_{\cB}(t_1,X_{t_1}[\widetilde{\alpha}])\Big].
		\end{align*}
		Abbreviating $X_{t}[\widetilde{\alpha}]$ to $X_{t}$, by Lemma \ref{lem:Ito}, 
		\begin{align*}
			\epsilon \geq&\ \frac{1}{t_1-t_0}\bE\Big[\int_{t_0}^{t_1} \Big(L_{\cB}(X_t , \alpha_t) +\partial_t \Phi_{\cB}(t,X_t) + \langle\nabla \Phi_{\cB}(t,X_t ), b_\cB (X_t,\alpha_t) \rangle_{L^2(\cB)} \\
			& \qquad \qquad \qquad + \frac{\beta_C^2}{2}\Delta_{\cB} \Phi(t,X_t)+ \frac{\beta_F^2}{2}\Theta_{\cB} \Phi(t,X_t)\Big)dt \Big]\\
			\geq&\ \frac{1}{t_1-t_0}\bE\Big[\int_{t_0}^{t_1} \Big(\partial_t \Phi_{\cB}(t,X_t)  - H_{\cB}\big(X_t ,-\nabla\Phi_{\cB}(t,X_t )\big)+ \frac{\beta_C^2}{2}\Delta_{\cB} \Phi(t,X_t)+ \frac{\beta_F^2}{2}\Theta_{\cB} \Phi(t,X_t)\Big)dt \Big].
		\end{align*}
		Taking $\epsilon\rightarrow^+ 0$ and then $t_1\rightarrow^+ t_0$, setting $y_0 := \iota x_0$, 
		$$
		-\partial_t \Phi_{\cB}(t_0, y_0)  + H_{\cB}\big( y_0,-\nabla\Phi_{\cB}(t_0, y_0)\big)- \frac{\beta_C^2}{2}\Delta_{\cB} \Phi (t_0, y_0)- \frac{\beta_F^2}{2}\Theta_{\cB} \Phi (t_0, y_0) \geq 0.
		$$
		This shows that $\overline{V}$ is a free viscosity supersolution.
	\end{proof}
	
	By Propositions \ref{prop:subdynamic_programming}  and \ref{prop:dynamic_programming}, we establish the following theorem.
	\begin{theorem}
		Suppose that Assumption \textbf{A} holds. Then the value function  $\overline{V}$, defined in (\ref{eqn:value}), is a free viscosity solution.
	\end{theorem}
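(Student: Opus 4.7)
The plan is to assemble the theorem directly from the two main propositions just established, together with the continuity/terminal-value information already proved for $\overline{V}$. There is essentially no new analytic content: the work has been done in Propositions \ref{prop:subdynamic_programming} and \ref{prop:dynamic_programming}, and in Proposition \ref{prop:continuity} together with Lemma \ref{lem:nov18.2023.2}. What remains is to verify that the definition of free viscosity solution (Definition \ref{def:WassSpaceViscosity}) is met in all of its parts.

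First I would verify the regularity required to apply the definition: by Proposition \ref{prop:continuity}, $\overline{V}$ is continuous on $[0,T]\times\Sigma_d^2$ with respect to the joint weak-$*$/Wasserstein topology, and by the global bound \eqref{eqn:free_global_bound} it is bounded on bounded sets; in particular, for every $\lambda_0\in\Sigma_d^2$ the function $\overline{V}$ lies in both $\mathrm{USC}$ and $\mathrm{LSC}$ locally around $(t_0,\lambda_0)$, which is what the definition requires when one touches from above or below by a test function in $\mathcal{X}_\Sigma$. By Lemma \ref{lem:nov18.2023.2}, the family $(\overline{V}_\cA)_{\cA\in\bW}$ is a tracial $\mathrm{W}^*$-function, so the representation \eqref{defv} applies and the definition of free viscosity (sub)solution is meaningful for $\overline{V}$.

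Next I would verify the terminal condition $\overline{V}(T,\lambda) = g(\lambda)$. For any $\cA\in\bW$, $x_0\in L^2(\cA)_{sa}^d$, and $\widetilde\alpha\in\mathbb{A}_{\cA,x_0}^{T,T}$, the interval of integration $[T,T]$ is degenerate, so the running cost contributes nothing and $X_T[\widetilde\alpha]=x_0$; thus $\widetilde V_\cA(T,x_0)=g_\cA(x_0)$. Taking the infimum over embeddings in \eqref{def:bar} and using that $g$ is a tracial $\mathrm{W}^*$-function gives $\overline{V}_\cA(T,x_0)=g_\cA(x_0)$, hence $\overline{V}(T,\lambda)=g(\lambda)$. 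This confirms item (1) in the definition for both the subsolution and supersolution directions.

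Finally, for the differential inequalities I invoke Proposition \ref{prop:subdynamic_programming}, which states that $\overline{V}$ is a free viscosity subsolution of \eqref{eqn:non_commutative}, and Proposition \ref{prop:dynamic_programming}, which states that $\overline{V}$ is a free viscosity supersolution of \eqref{eqn:non_commutative}. Together with the continuity established in Proposition \ref{prop:continuity}, these yield that $\overline{V}$ is a free viscosity solution in the sense of Definition \ref{def:WassSpaceViscosity}, completing the proof. Since the two propositions do all of the genuine work (including the It\^o formula of Lemma \ref{lem:Ito}, the Markov property of Lemma \ref{lem:Markov_property}, and the sub/super dynamic programming arguments), there is no real obstacle here; the only subtlety worth flagging explicitly is the asymmetry between the two propositions (the subsolution property uses a fixed algebra while the supersolution property genuinely requires passing to embeddings), which is already encoded in the supremum over $(\cA,x_0)$ with $\lambda_{x_0}=\lambda_0$ that appears in Definition \ref{def:WassSpaceViscosity}.
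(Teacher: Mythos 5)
Your proposal is correct and takes essentially the same approach as the paper: the theorem is obtained directly by combining Propositions \ref{prop:subdynamic_programming} and \ref{prop:dynamic_programming}, which respectively establish the free viscosity subsolution and supersolution properties. Your explicit verification of the terminal condition $\overline{V}(T,\lambda)=g(\lambda)$ and the continuity/USC/LSC requirements via Proposition \ref{prop:continuity} and Lemma \ref{lem:nov18.2023.2} is a helpful bit of bookkeeping that the paper leaves implicit in the statements of those propositions, but it does not constitute a different route.
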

	
	\subsection{Hilbert Space Viscosity Solution} \label{subsec: Hilbert viscosity}
	
	An alternative definition is to make use of the Hilbert spaces in the following way. As will be explained below, this definition only applies when there is \emph{no} free individual noise, and thus throughout this section we assume that $\beta_F=0$.  Recall that for a function $V:[0,T]\times \Sigma_{d}^2 \rightarrow \bR$,  for each $\cA\in \bW$,  $V_\cA$ is  defined as in \eqref{defv}. 
	\begin{definition}\label{def:Hilbert_space_subsolution}
		We say that a $V \in \textup{USC}([0,T]\times \Sigma_{d}^2)$ is a \emph{Hilbert space viscosity subsolution} to (\ref{eqn:non_commutative}) if for every Von Neumann algebra $\cA\in \bW$, $V_\cA$ is a viscosity subsolution (in a Hilbert sense) to (\ref{eqn:von_Neumann}) on $L^2(\cA)_{sa}^d$. 
	\end{definition}
	{Recall that the free individual noise Laplacian \eqref{eqn:free_laplacian} is  defined for a tracial $W^*$-function, \emph{not} for a function on a fixed von Neumann algebra. Thus the above definition makes only sense in the absence of the free individual noise, i.e. $\beta_F=0.$}

	\begin{definition}\label{def:Hilbert_space_supersolution}
		We say that a $V \in \textup{LSC}([0,T]\times\Sigma_d^2)$ is a \emph{Hilbert space viscosity supersolution} to (\ref{eqn:non_commutative})  if for every $(t,\lambda)\in [0,T)\times \Sigma_d^2$, there exist a Von Neumann algebra $\cA\in \bW$ and $x\in L^2(\cA)_{sa}^d$ with $\lambda = \lambda_x$, such that there is a viscosity supersolution $U_\cA$ (in a Hilbert sense) to (\ref{eqn:von_Neumann}) on $L^2(\cA)_{sa}^d$ with $U_{\cA}(t,x) = V(t,\lambda)$.
	\end{definition}
	
	We say $V\in C([0,T]\times \Sigma_d^2)$ is a \emph{Hilbert space viscosity solution} if it is both a Hilbert space viscosity subsolution and supersolution. We will omit some details from the proof that the value function is a Hilbert space viscosity solution, as they can be found in the literature on stochastic control in Hilbert spaces and are mostly repetitive with our results for the intrinsic viscosity solution.
	
	\begin{proposition}\label{prop:ValueFunctionHilbertSpaceViscositySolution}
		Suppose that Assumption \textbf{A} holds. If there is no free noise (i.e. $\beta_F=0$), then the value function $\overline{V}$, defined in (\ref{eqn:value}), is a Hilbert space viscosity solution.
	\end{proposition}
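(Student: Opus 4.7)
The key observation is that with $\beta_F = 0$ the controlled SDE \eqref{eqn:common_noise} reduces to a classical stochastic differential equation in the Hilbert space $L^2(\cA)_{sa}^d$ driven only by the scalar Brownian motion $(W_t^0)$, and the free-individual Laplacian $\Theta_\cA$ drops out of \eqref{eqn:von_Neumann}. The argument then reduces, on each fixed $\cA$, to an application of standard Hilbert-space viscosity theory, with Lemma \ref{lem:Ito} (specialized to $\beta_F = 0$) serving as the Hilbert-space It\^o formula and the dynamic programming principles of Propositions \ref{prop:subdynamic_programming} and \ref{prop:dynamic_programming} providing the required DPPs.

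For the subsolution part, fix $\cA \in \bW$ and a test function $\Phi \in \mathcal{X}_\cA$ touching $\overline{V}_\cA$ from above at $(t_0, x_0)$. For each $\bar{\alpha} \in \bA_\cA$, pass to an embedding $\iota: \cA \to \cA'$ where $\cA'$ carries a free Brownian motion freely independent of $\iota(\cA)$ (needed only to meet the admissibility definition). The constant policy $\alpha_t \equiv \iota\bar{\alpha}$ gives an admissible control; since $\beta_F = 0$ and the drift is $E$-affine \eqref{eqn:E_linear}, the resulting trajectory stays in $\iota(\cA) \vee \mathrm{W}^*(W^0)$ and pulls back via $\iota^{-1}$ to a classical process $X_t$ in $L^2(\cA)_{sa}^d$ solving $dX_t = b_\cA(X_t, \bar{\alpha})\,dt + \beta_C \mathbbm{1}_\cA\, dW_t^0$. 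The tracial property of $\overline{V}$ (Lemma \ref{lem:nov18.2023.2}) combined with Proposition \ref{prop:subdynamic_programming} yields
\[
\Phi(t_0, x_0) \le \mathbb{E}\Big[\int_{t_0}^{t_1} L_\cA(X_s, \bar{\alpha})\,ds + \Phi(t_1, X_{t_1})\Big].
\]
Applying Lemma \ref{lem:Ito} to $\Phi(t, X_t)$, dividing by $t_1 - t_0$, sending $t_1 \downarrow t_0$, and then taking $\sup_{\bar{\alpha} \in \bA_\cA}$ via \eqref{eqn:Hamiltonian} produces the Hilbert-space HJB subsolution inequality.

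For the supersolution part, given $(t_0, \lambda_0)$, we will choose a representative $(\cA, x_0)$ of $\lambda_0$ together with a Hilbert-space supersolution $U_\cA$ on $L^2(\cA)_{sa}^d$ whose value at $(t_0, x_0)$ equals $\overline{V}(t_0, \lambda_0)$. Using Lemma \ref{lem: arranging initial filtration} (combined with Lemma \ref{lem: amalgamation of Brownian motions}) one selects $\cA$ large enough to contain a free Brownian motion freely independent of $x_0$ and such that $\widetilde{V}_\cA(t_0, x_0) = \overline{V}(t_0, \lambda_0)$ (approximated to arbitrary precision by enlarging $\cA$, then made exact by a compactness/limit argument on the resulting minimizing sequence of algebras). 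Take $U_\cA := \widetilde{V}_\cA$. Then $\widetilde{V}_\cA$ satisfies the standard Hilbert-space super-DPP with respect to controls in $\cA$, and the HJB supersolution inequality follows by applying Lemma \ref{lem:Ito} to any $\Phi \in \mathcal{X}_\cA$ touching $\widetilde{V}_\cA$ from below and passing to the limit as in Proposition \ref{prop:dynamic_programming}.

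The main obstacle is the supersolution step: the intrinsic super-DPP (Proposition \ref{prop:dynamic_programming}) optimizes over extensions $\iota: \cA \to \cB$, whereas the Hilbert-space notion uses test functions on a fixed $L^2(\cA)_{sa}^d$. Without the $E$-convexity hypothesis of Assumption \textbf{B} (which would give $\overline{V}_\cA = \widetilde{V}_\cA$ directly via Lemma \ref{lem:decreasing}), one must either carefully select $\cA$ to be approximately optimal for the defining infimum of $\overline{V}$, or project the extension-optimal trajectories back to $\cA$ via the conditional expectation $E_{\iota(\cA) \vee \mathrm{W}^*(W^0)}$, relying on $E$-affineness of $b$ and $E$-convexity of $L$ in the control to ensure the cost does not increase. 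Once this reduction is effected, the proof reduces to the Hilbert-space viscosity framework cited in the introduction.
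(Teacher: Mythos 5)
Your subsolution argument matches the paper's, and your overall strategy for the supersolution---pick a representative $(\cA,x_0)$ with $\widetilde{V}_\cA(t_0,x_0)=\overline{V}(t_0,\lambda_0)$ and invoke standard Hilbert-space theory for $\widetilde{V}_\cA$---is the same as what the paper does. However, the phrase ``made exact by a compactness/limit argument on the resulting minimizing sequence of algebras'' is where you are vague, and the vagueness matters: there is no useful compactness available on the collection of tracial $\mathrm{W}^*$-algebras (indeed $\Sigma_{d,R}$ is not even separable in the Wasserstein metric). The device the paper uses is the countable (amalgamated) free product $\cA := \median(\cA_i)_{i\ge 1}$ of a minimizing sequence $(\cA_i,x_i)$, identifying the copies of $x_0$. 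Each $\cA_i$ tracially embeds into $\cA$, hence every admissible control policy in $\cA_i$ pushes forward to one in $\cA$ achieving the same cost, so $\widetilde{V}_\cA(t_0,x_0)\le\inf_i\widetilde{V}_{\cA_i}(t_0,x_i)=\overline{V}(t_0,\lambda_0)$; combined with the trivial bound $\widetilde{V}_\cA\ge\overline{V}_\cA=\overline{V}$ this gives exact equality with no limiting argument needed. You should replace ``compactness'' with this push-forward argument through the free product. Your alternative second route---projecting extension-optimal trajectories back to $\cA$ via conditional expectation---does require the \emph{joint} $E$-convexity of $L$ and $E$-convexity of $g$ from Assumption \textbf{B} (as you partly flag), so it cannot serve as a fallback under Assumption \textbf{A} alone; the free-product construction is what lets the proposition stand without Assumption \textbf{B}.
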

	
	\begin{proof}
		We first establish that $\overline{V}$ is a Hilbert space viscosity subsolution.  This follows from the subdynamic programming principle (Proposition \ref{prop:subdynamic_programming}), along with an It\^{o} formula for the test functions $\Phi\in \chi_{\cA}$. These results are standard and also very similar to the arguments above.  
		
		To establish that $\overline{V}$ is a Hilbert space  viscosity supersolution, consider that, by a standard Hilbert space theory, for any  $\cA \in \bW$, the value function $\widetilde{V}_{\cA}$, defined in \eqref{eqn:l2_value}, is a viscosity solution on $L^2(\cA)_{sa}^d$. We fix $(t_0,\lambda_0)\in [0,T)\times \Sigma_d^2$ and consider a minimizing sequence of Von Neumann algebras, $\cA_i\in \bW$, states $x_i\in L^2(\cA)_{sa}^d$ such that $\lambda_{x_i} = \lambda_0$, and controls, $\tilde{\alpha}^i\in \mathbb{A}_{\cA,x_i}^{t_0,T}$ for the variational problem  \eqref{def:bar}.  By considering the free product of these Von Neumann algebras, $\cA = *(\cA_i)_{i=1}^\infty$, and then taking  $x_0\in \cA$ such that $\lambda_{x_0}=\lambda_0$,  we can restrict the control policies in $\cA$ to obtain
		$$
		\overline{V}(t_0,\lambda_0) =\widetilde{V}_{\cA}(t_0,x_0)= \inf_{\widetilde{\alpha}\in \bA_{\cA,x_0}^{t_0,T}}\Big\{ \bE\Big[\int_t^T L_{\cA}(X_s[\widetilde{\alpha}], \alpha_s)ds + g_{\cA}(X_T[\widetilde{\alpha}])\Big]\Big\}.
		$$
		Since {$\widetilde{V}_{\cA}$ is a viscosity supersolution of (\ref{eqn:von_Neumann}) on $L^2(\cA)_{sa}^d$ and touches $\bar{V}_{\cA}$ from below at $(t,\lambda_0)$, it follows that $\overline{V}_{\cA}$ is supersolution. This holds for all $(t_0,\lambda_0)$ so $\overline{V}$ is a  Hilbert space viscosity supersolution of (\ref{eqn:non_commutative}).}
	\end{proof}
	
	Using the comparison principle for the viscosity solutions to Hamilton-Jacobi equations on the (infinite-dimensional) Hilbert spaces (see \cite{crandall1985hamilton,crandall1986hamilton,lions1988viscosity,crandall1990viscosity,crandall1991viscosity,ishii1993viscosity,fabbri2017stochastic}), one can deduce  the comparison principle for the Hilbert space viscosity subsolution and supersolution.
	\begin{theorem}\label{thm:comparison}
		Suppose that Assumption \textbf{A} holds. If $V^1$ is a Hilbert space viscosity subsolution to (\ref{eqn:non_commutative}) in the sense of Definition \ref{def:Hilbert_space_subsolution} and $V^2$ is a Hilbert space viscosity supersolution to (\ref{eqn:non_commutative}) in the sense of Definition \ref{def:Hilbert_space_supersolution}, then $V^1(t,\lambda)\leq V^2(t,\lambda)$ for all $(t,\lambda)\in [0,T]\times \Sigma_{d}^2$.
		
		In particular, the value function $\overline{V}$,  defined in  \eqref{eqn:value}, is a unique Hilbert space viscosity solution to (\ref{eqn:non_commutative}).
	\end{theorem}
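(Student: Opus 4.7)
The plan is to reduce the claim to the classical Hilbert-space viscosity comparison principle applied inside a single von Neumann algebra, exploiting the asymmetry baked into Definitions \ref{def:Hilbert_space_subsolution} and \ref{def:Hilbert_space_supersolution}: a subsolution is required to restrict to a Hilbert-space subsolution on \emph{every} $\cA \in \bW$, while a supersolution need only be witnessed on \emph{some} algebra at each point. This asymmetry is precisely what lets the two hypotheses meet on a common Hilbert space. Since $\beta_F = 0$, the only second-order term is $(\beta_C^2/2)\Delta_\cA$, a rank-one nonnegative quadratic form in the Hessian, so no genuinely infinite-dimensional second-order degeneracy obstructs the standard theory.

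Fix $(t_0,\lambda_0) \in [0,T] \times \Sigma_d^2$; the case $t_0 = T$ is immediate from the terminal inequalities, so suppose $t_0 < T$. By Definition \ref{def:Hilbert_space_supersolution}, choose $\cA \in \bW$, $x_0 \in L^2(\cA)^d_{\sa}$ with $\lambda_{x_0} = \lambda_0$, and a Hilbert-space viscosity supersolution $U_\cA$ of \eqref{eqn:von_Neumann} on $L^2(\cA)^d_{\sa}$ with $U_\cA(t_0,x_0) = V^2(t_0,\lambda_0)$. By Definition \ref{def:Hilbert_space_subsolution}, the associated function $V^1_\cA$ defined via \eqref{defv} is a Hilbert-space viscosity subsolution to the same equation on the same Hilbert space, with identical Hamiltonian $H_\cA$, identical second-order coefficient $(\beta_C^2/2)\Delta_\cA$, and identical terminal datum $g_\cA$. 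I would then invoke the Crandall--Lions--Ishii comparison principle for second-order parabolic HJB equations on Hilbert space (\cite{crandall1985hamilton,crandall1986hamilton,lions1988viscosity,crandall1990viscosity,ishii1993viscosity,fabbri2017stochastic}) to conclude $V^1_\cA \le U_\cA$ on $[0,T] \times L^2(\cA)^d_{\sa}$; evaluating at $(t_0,x_0)$ yields $V^1(t_0,\lambda_0) = V^1_\cA(t_0,x_0) \le U_\cA(t_0,x_0) = V^2(t_0,\lambda_0)$, which is the desired inequality.

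The uniqueness statement is then immediate. By Proposition \ref{prop:ValueFunctionHilbertSpaceViscositySolution}, $\overline{V}$ is simultaneously a Hilbert-space sub- and supersolution, so running the comparison inequality in both directions against any other Hilbert-space solution $W$ gives $\overline{V} \equiv W$. The main technical checkpoint --- and the step I expect to require most care --- is verifying that the hypotheses of the classical Hilbert-space comparison principle are satisfied in our setup. The relevant ingredients are: the terminal ordering $V^1_\cA(T,\cdot) \le g_\cA(\cdot) \le U_\cA(T,\cdot)$ furnished by the two definitions; uniform continuity of $H_\cA$ on bounded sets from Lemma \ref{lem:Hamiltonian}(iii); the Lipschitz and linear-growth estimate \eqref{eqn:Lipschitz_estimate} of Proposition \ref{prop:continuity}, which localizes the comparison to bounded sets; and crucially the monotonicity estimate \eqref{eqn:Hamiltonian_bound} on $H_\cA(X,\gamma(X-Y)) - H_\cA(Y,\gamma(X-Y))$ proved in Lemma \ref{lem:Hamiltonian}, which is exactly the Crandall--Ishii structural condition needed to close the doubling-of-variables argument in the presence of a possibly unbounded drift $b_\cA$. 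Once these hypotheses are matched to the assumptions of the cited Hilbert-space theorems, the comparison step itself is a direct quotation rather than a fresh argument.
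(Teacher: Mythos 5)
Your proposal follows the same route as the paper: fix $(t_0,\lambda_0)$, use the witnessing pair $(\cA,x_0,U_\cA)$ supplied by the supersolution definition, invoke the (for-every-$\cA$) subsolution definition to put $V^1_\cA$ on the same Hilbert space, and then quote the classical Hilbert-space comparison principle to get $V^1_\cA(t_0,x_0)\le U_\cA(t_0,x_0)$, with uniqueness following from Proposition~\ref{prop:ValueFunctionHilbertSpaceViscositySolution}. The paper is somewhat more explicit at the final checkpoint you flag: it verifies the hypotheses of \cite{ishii1993viscosity} item by item (F6--F9), and in doing so it isolates the precise role of the rank-one form of the common-noise Laplacian (for F7 and F9) and of the estimate \eqref{eqn:Hamiltonian_bound} (for F7), whereas you name these ingredients but do not carry out the matching; that bookkeeping is routine but is the part the paper actually writes out. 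Your appeal to \eqref{eqn:Lipschitz_estimate} of Proposition~\ref{prop:continuity} is not how the paper localizes (it is not needed once F6--F9 are in place), but this is a cosmetic difference, not a gap.
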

	
	\begin{proof}
		We first prove that for any fixed $\cA\in \bW$,  viscosity solutions on $L^2(\cA)_{sa}^d$ satisfy the comparison principle. We do this by checking the assumptions F6-F9 from \cite[Section 6]{ishii1993viscosity}. The result from \cite{ishii1993viscosity} allows for more general term and singular drifts and infinite-dimensional diffusions, which complicate the matter. The equation is expressed using a perturbation equation with a stronger norm, which we may take to be $h(x) = \frac{1}{2}\|x\|^2_{L^2(\cA)}$. The perturbed equation is 
		$$
		F_{\cA,\delta}^\pm \big(x, u(t,x), \nabla u(t,x),{\rm Hess}\, u(t,x)\big) = H_{\cA}\big(x, -\nabla u(t,x) \mp \delta\, x\big) - \frac{\beta_C^2}{2}\Delta_{\cA}u(t,x) \mp \frac{\beta_C^2\, d\, \delta}{2}.
		$$ 
		Unlike most cases considered in \cite{ishii1993viscosity}, this function is continuous at $\delta=0$, and we can readily check all the conditions for $\delta=0$. 
		\begin{itemize}
			\item Assumption F6 states monotonicity of dependence of $F_{\cA,\delta}^{\pm}$ on $u$. This assumption always holds as we have no dependence of $F_{\cA,\delta}^{\pm}$ on $u$ itself.
			\item Assumption F7 combines both a structural assumption on the diffusion and a quantitative estimate of the Hamiltonian. Since these terms are separate in our equation we may address them separately. For the diffusion, fix $\theta>1$, and for $A, B\in \text{BL}(L^2(\cA)_{sa}^d)$ such that for all $a,b\in L^2(\cA)_{sa}^d$ and some $\gamma>1$
			$$
			\langle a,A\, a\rangle_{L^2(\cA)} + \langle b,B\, b\rangle_{L^2(\cA)} \leq \theta\, \gamma\, \|a-b\|_{L^2(\cA)}^2.
			$$ 
			To satisfy the hypothesis we must show the monotonicity of the equation (up to an error, which is not needed).
			This follows from
			\begin{align*}
				-\frac{\beta_C^2}{2}\langle\mathbbm{1}_{\cA}, A\, \mathbbm{1}_{\cA}\rangle_{L^2(\cA)} - \frac{\beta_C^2}{2}\langle \mathbbm{1}_{\cA}, B\, \mathbbm{1}_{\cA}\rangle_{L^2(\cA)}\geq -\frac{\beta_C^2\, \theta\, \gamma}{2}\, \|\mathbbm{1}_\cA-\mathbbm{1}_\cA\|_{L^2(\cA)}^2=0.
			\end{align*}
			
			For the Hamiltonian, for $\gamma>1$ and $X,Y\in L^2(\cA)_{sa}^d$, we must show that
			$$
			H_{\cA}(X, -\gamma(X-Y)) - H_{\cA}(Y,-\gamma(X-Y))\geq -\omega_1\Big(\gamma\, \|X-Y\|^2_{L^2(\cA)} + \frac{1}{\gamma}\Big)
			$$
			for a function $\omega_1$ on $[0,\infty)$ with $\lim_{y\rightarrow^+0}\omega_1(y)=0$.
			This has been shown in Lemma \ref{lem:Hamiltonian} with
			$$
			\omega_1(y) = \max \Big\{\bar{C} + C_2^2, \frac{1}{2}\Big\}\, y.
			$$

			\item Hypothesis F8 states uniform continuity of the Hamiltonian and the diffusion coefficient on bounded sets. The uniform continuity of the Hamiltonian is shown in Lemma \ref{lem:Hamiltonian}, and our diffusion coefficient is constant. 
			\item This assumption F9 states convergence with respect to finite-dimensional approximations of the Hessian term. For the common noise, it is entirely in the one-dimensional space spanned by $\mathbf{1}_\cA$, so the one-dimensional approximation is exact, satisfying F9. 
		\end{itemize}
		Having met the hypotheses, we have the comparison principle for each fixed $\cA\in \bW$.
		

		~
		
		Now let us prove that $V^1 \le V^2.$
		By Definition \ref{def:Hilbert_space_supersolution}, for each $(t,\lambda)\in [0,T]\times \Sigma_{d}^2$, there exist a Von Neumann algebra $\cA\in \bW$, $x\in L^2(\cA)^d_{sa}$ with $\lambda_x=\lambda$, and a supersolution $U^2_{\cA}$ of (\ref{eqn:von_Neumann}) on  $L^2(\cA)^d_{sa}$ such that $U^2_{\cA}(t, x) = V^2(t,\lambda)$.  Also by Definition \ref{def:Hilbert_space_subsolution},  for such $\cA\in \bW$, the function $V^1_\cA$, defined as in \eqref{defv} through a function $V^1$, is a viscosity subsolution of (\ref{eqn:von_Neumann}).  Therefore, by the comparison principle  on $L^2(\cA)_{sa}^d$,  we have $V^1_\cA(t,x) \leq U^2_{\cA}(t,x)$ and thus
		\begin{align*}
			V^1(t,\lambda) = V^1_\cA(t,x) \leq U^2_{\cA}(t,x) = V^2(t,\lambda).
		\end{align*} 
		
		The comparison principle and Proposition \ref{prop:ValueFunctionHilbertSpaceViscositySolution} immediately imply the existence and  uniqueness of Hilbert space viscosity solutions.
	\end{proof}

	\section{Examples}\label{sec:examples}
	\subsection{Linear-quadratic setting}\label{sec:linear_quadratic}
	
	We first consider the setting of Assumption \textbf{B} when $L_{\cA}$ and  $g_{\cA}$ are both quadratic.  These examples admit exact solutions, although they show less non-commutativity due to the cyclic property of the trace.  
	
	We consider the case where
	\begin{align*}
		\bA_{\cA} = L^2(\cA)_{sa}^d,\quad b_{\cA}(X,\alpha) = \alpha,\quad  L_{\cA}(X,\alpha)= \frac{1}{2}\|\alpha\|_{L^2(\cA)}^2.
	\end{align*}
	For coefficients $g_{ij}^0,g_{ij}^1\in \bR$ such that  $g_{ij}^0 = g_{ji}^{0}$ and $g_{ij}^1 = g_{ji}^{1}$, let
	$$
	g_{\cA}(X) := \sum_{i=1}^d\sum_{j=1}^d g_{ij}^0\tau(X^i\, X^j)+ \sum_{i=1}^d\sum_{j=1}^d g_{ij}^1\tau(X^i)\tau(X^j),$$
	showcasing the two types of quadratic terms that may arise. More general linear-quadratic terms can be handled similarly. 
	
	The Hamilton-Jacobi equation has the form
	\begin{align} \label{hj1}
		-\partial_t U_{\cA}(t,X) + \frac{1}{2}\|\nabla U_{\cA}(t,X)\|^2_{L^2(\cA)} - \frac{\beta_C^2}{2}\Delta_\cA U(t,X) - \frac{\beta_F^2}{2}\Theta_\cA U(t,X) = 0,
	\end{align}
	with the terminal condition
	$$
	U_{\cA}(T,X) = g_{\cA}(X).
	$$
	
	We begin with a candidate solution of the form 
	$$
	{U}_{\cA}(t,X) = e(t) + \sum_{i=1}^d\sum_{j=1}^d a_{ij}^0(t)\tau(X^i\, X^j)+\sum_{i=1}^d\sum_{j=1}^d a_{ij}^1(t)\tau(X^i)\tau(X^j)
	$$
	such that $a_{ij}^0, a_{ij}^1\in \bR$ and $a_{ij}^0(t) = a_{ji}^{0}(t)$ and $a_{ij}^1 (t)= a_{ji}^{1}(t)$.
	Then we have 
		\begin{align*}
			\nabla^j U_{\cA}(t,X) = 2\sum_{i=1}^d (a_{ij}^0(t))\, X^i + 2\sum_{i=1}^d  (a_{ij}^1(t))\, \tau(X^i)\, \mathbf{1}_\cA.
		\end{align*}
		This yields
		$$
		\frac{1}{2}\|\nabla U_{\cA}(t,X)\|_{L^2(\cA)}^2 = 2\sum_{i=1}^d\sum_{j=1}^d\sum_{k=1}^d \Big({a_{ik}^0}(t){a_{kj}^0}(t)\tau(X^i\, X^j)+\big({a_{ik}^0}(t) + {a_{ik}^1}(t)\big)\, {a_{kj}^1}(t)\tau(X^i)\tau(X^j) \Big).
		$$
		The Hessian of $U_{\cA}$ is given by
		$$
		{\rm Hess}\, U_{\cA}(t,X)[A, B] = 2\, \sum_{i=1}^d\sum_{j=1}^d \Big({a_{ij}^0}(t)\tau(A^i\, B^j) + {a_{ij}^1}(t)\tau(A^i)\tau(B^j)\Big),
		$$
		implying that the common noise Laplacian is  given by
		$$
		\Delta_{\cA} U(t,X) =   2\,  \sum_{i=1}^d\sum_{j=1}^d\big( {a_{ij}^0}(t) + {a_{ij}^1}(t)\big).
		$$
		The free individual noise Laplacian is  
		$$
		\Theta_{\cA} U(t,X) =   2\,  \sum_{i=1}^da_{ii}^0(t).
		$$

		Computing the coefficient of $\tau(X^i\, X^j)$ term  in \eqref{hj1},
		$$
		-(a_{ij}^0)'(t) + 2\, \sum_{k=1}^d {a_{ik}^0}(t){a_{kj}^0}(t)=0,
		$$
		with $a_{ij}^0(T) = g_{ij}^0$. Computing the coefficient of 
		$\tau(X^i)\, \tau(X^j)$ term  in \eqref{hj1},
		$$
		-(a_{ij}^1)'(t) + 2\, \sum_{k=1}^d \big({a_{ik}^0}(t)+{a_{ik}^1}(t)\big){a_{kj}^1}(t)=0,
		$$
		with $a_{ij}^1(T) = g_{ij}^1$.  The constant term  in \eqref{hj1} reads as
		$$
		-e'(t) -   \beta_C^2\, \sum_{i=1}^d\sum_{j=1}^d \big({a_{ij}^0}(t) + {a_{ij}^1}(t)\big) -   \beta_F^2\, \sum_{i=1}^da_{ii}^0(t)=0,
		$$
		and $e(T)=0$.

		We can also easily express this solution on the space of non-commutative laws. 
		Recall that we consider $\lambda$ to be a map from the space of polynomials of $d$ complex variables to $\bC$. For $i=1,\cdots,d$, define  $q^i$ to be the canonical polynomial $q^i(X):= X^i$.
		
		The value function of non-commutative laws is then given by
		$$
		V(t,\lambda) := e(t) + \sum_{i=1}^d \sum_{j=1}^d\Big(a_{ij}^0(t)\, \lambda\big(q^i\, q^j\big) + a_{ij}^1(t)\, \lambda\big(q^i\big)\, \lambda\big(q^j\big)\Big).
		$$

		\subsection{Eikonal example}
		
		Next, we consider the free Eikonal equation to illustrate a nonsmooth solution. We set 
		\begin{align*}
			\bA_{\cA} = \big\{\alpha\in L^2(\cA)_{sa}^d:\|\alpha\|_{L^2(\cA)}\leq 1\big\},\quad b_{\cA}(X,\alpha) = \alpha,\quad L_{\cA}(X,\alpha)= 1,\quad \beta_C=\beta_F = 0.
		\end{align*}
		We consider the terminal cost of 
		$$
		g_{\cA}(X) = d_W(\bar{\lambda},\lambda_X),
		$$
		where $\bar{\lambda}$ is a given non-commutative law and $d_W$ denotes the non-commutative $L^2$-Wasserstein distance (recall \eqref{wass} for the definition). This example satisfies Assumption \textbf{A}. However, the terminal condition is not $E$-convex, violating Assumption \textbf{B}.
		
		
		{We claim that the following  function $\overline{V}$ is a  free viscosity solution:}
		\begin{align} \label{555}
			\overline{V}(t,\lambda) := \max\{T-t, d_W(\bar{\lambda},\lambda)\}.
		\end{align}
		In order to deduce that $\overline{V}$ is a free viscosity subsolution, we may observe that the maximum of two free viscosity subsolutions is a free viscosity subsolution and check the terms individually. For $\cA \in \bW$ and $X\in \cA$, set
		\begin{align*}
			U^1_{\cA}(t,X) := T-t,\quad U^2_{\cA}(t,X): = d_W(\bar{\lambda},\lambda_X),\quad U_\cA := \max \{U^1_{\cA},U^2_{\cA}\}. 
		\end{align*}
		Firstly, for any  $\cA \in \bW$, $U^1_{\cA}$ 
		satisfies $U^1_{\cA}(T,X) = 0\leq g_{\cA}(X)$ 
		and 
		$$
		-\partial_t U^1_{\cA}(t,X) + \|\nabla U^1_{\cA}(t,X)\|_{L^2(\cA)} - 1 = 0. 
		$$
		{{Since $U^1_\cA$ is a smooth function, we verify that  $(U^1_{\cA})_{\cA \in \bW}$  is a viscosity solution.}}
		
		Now we show that $(U^2_{\cA})_{\cA \in \bW}$  is a free viscosity subsolution.  Since $X\mapsto d_W(\bar{\lambda},\lambda_X)$ is 1-Lipschitz, {{if $(\phi_\cA)_{\cA \in \bW}$ is a tracial function which touches $(U^2_\cA)_{\cA \in \bW}$ from above at $(t_0,\lambda_0)$, then   for all $\cA \in \bW$, $t \ge 0$ and $X, X_0 \in L^2(\cA)^d_{\rm sa}$ with $\lambda_0=\lambda_{X_0},$
				\begin{align*}
					\phi_\cA(t_0, X_0)-\phi_\cA(t, X) \leq  d_W(\bar{\lambda},\lambda_{X_0})-d_W(\bar{\lambda},\lambda_X) 
					\le \|X-X_0\|_{L^2(\cA)}.
				\end{align*}
				This implies that $\partial_t \phi_\cA(t_0, X_0)=0$ and $\|\nabla \phi_\cA(t_0,X_0)\|_{L^2(\cA)}\leq 1$, verifying that $(U^2_{\cA})_{\cA \in \bW}$ is a viscosity subsolution.}}

		To show that $\overline{V}$ is a viscosity supersolution, let $\lambda_1$  be an arbitrary non-commutative law in $\Sigma_d^2$. Suppose that $\phi=(\phi_\cA)_{\cA \in \bW}$ is a tracial function which touches $ (U_\cA)_{\cA \in \bW}$  from below at $(t_1, \lambda_1)$  with $t_1 \in [0,T)$. Let  $\cA$ be a von Neumann algebra such that there exist $X_1,\bar{X} \in L^2(\cA)_{sa}^d$ satisfying $\lambda_{\bar{X}}=\bar{\lambda}$, $\lambda_{X_1}=\lambda_1$ and $d_W(\bar{\lambda},\lambda_1) = \|\bar{X}-X_1\|_{L^2(\cA)}$.

		In the case $T-t_1 \ge d_W( \bar \lambda, \lambda_1)$, we have
		$\phi_{\cA}(t_1,X_1)  =  T-t_1$ and $\phi_{\cA}(t,X_1) \le T-t$. Thus $\partial_t \phi_{\cA}(t_1,X_1) = -1$, implying
		\begin{equation}\label{eq:june30.2024.1}
			-\partial_t \phi_{\cA}(t_1,X_1) + \|\nabla \phi_{\cA}(t_1,X_1)\|_{L^2(\cA)} - 1  \ge  0.
		\end{equation}
		
		In the other case $T-t_1 < d_W( \bar \lambda, \lambda_1)$, we have $U_\cA = U^2_\cA$  near $(t_1, X_1)$. Thus   $\phi_{\cA}(t_1,X_1) = d_W( \bar \lambda, \lambda_1) $ and $\phi_{\cA}(t,X_1) \le  d_W( \bar \lambda, \lambda_1)$ for any $t$ near $t_1$, implying that $\partial_t \phi_{\cA}(t_1,X_1) = 0.$ Also note that  $\lambda_1 \not =\bar \lambda$ and so, 
		$\|\bar X-\cdot\|_{L^2(\cA)}$
		is differentiable at $X_1$. Since $\phi_\cA (t_1,\cdot)  -\|\bar X - \cdot\|_{L^2(\cA)}$ has the maximum at $X_1$, {{its  derivative exists at $X_1$ and vanishes there. As the $L^2(\cA)$-norm of a derivative of   $\|\bar X -  \cdot\|_{L^2(\cA)}$ is equal to 1 at any differentiable point, we have   $ \|\nabla \phi_{\cA} (t_1,X_1)\|_{L^2(\cA)} = 1,$ implying \eqref{eq:june30.2024.1}.
				Therefore we conclude that $\overline{V}$ is a viscosity supersolution.}}
		
		~

		
		To illustrate this example further, suppose that $d=1$ and $\bar{\lambda}$ has a spectral measure $\mu\in \mathcal{P}_2(\bR)$, where $\mathcal{P}_2(\bR)$ denotes the set of probability measures on $\bR$ having a finite second moment.  Then, on $\cA=\bC$, for any real number $x$ (i.e. a self-adjoint element in $\bC$), we have 
		$$
		\overline{V}_{\bC}(t,x) =  \max\Big\{T-t,\inf_{f_{\sharp}Leb[0,1] = \mu}\int_0^1 |x-f(\omega)|d\omega\Big\}=  \max\Big\{T-t,\int_{\bR}|x-y|d\mu(y)\Big\}.
		$$
		We get
		$$
		\partial_x\, \int_{\bR}|x-y|\mu(y) = -\mu\big( (x,\infty)\big) + \mu\big((-\infty,x)\big).
		$$
		Clearly, we have $-\partial_t \overline{V}_{\bC}(t,x)+|\partial_x\overline{V}_{\bC}(t,x)|\leq 1$, but the supersolution property does not hold everywhere so long as $\mu$ is not supported on a point.
		{This particular example shows that, although $\overline{V}$ defined in \eqref{555} is always a \emph{free} viscosity solution, for a \emph{fixed} von Neumann algebra $\cA,$ the function $\overline{V}_\cA$ constructed from $\overline{V}$ via \eqref{defv} may {not} be a viscosity supersolution on $L^2(\cA)_{sa}$ in a usual sense.}

		\subsection{Controlled von Neumann equation}\label{sec:schrodinger}
		
		We consider the dynamics corresponding to the evolution of $d$-quantum system, where the state plays the role of a density matrix (i.e. the one satisfying $\tau(X)=1$ along with $X\geq 0$) and the control plays the role of the Hamiltonian.  This is given by the drift, with the commutator is applied element-wise,
		$$
		b_{\cA}(X,\alpha) = {\rm i}[X,\alpha].
		$$
		Assume that there is neither common noise nor free individual noise, and consider the quadratic cost
		$$
		L_{\cA}(X,\alpha) = \frac{1}{2}\|\alpha\|_{L^2(\cA)}^2
		$$
		along with a general terminal cost $g_{\cA}(X)$.
		
		The drift $b_\cA$ defined above is a tracial vector-field and satisfies $E$-linearity in the control and \eqref{eq:june28.2024.1} for $\bar C=0$, but does not satisfy the uniform continuity or Lipschitz continuity conditions in  Assumption \textbf{A}.  Instead, it would make sense to restrict a state space to $L^\infty(\cA)_{sa}^d$, which is preserved under the dynamics.
		
		A seemingly remarkable property is that the optimal control $(\alpha_t)_t$ turns out be constant along trajectories. The Hamiltonian is given by
		\begin{align*}
			H_{\cA}(X,P) =&\ \sup_{\alpha\in L^2(\cA)_{sa}^d}\big\{\langle {\rm i}[X,\alpha], P\rangle - \frac{1}{2}\|\alpha\|_{L^2(\cA)}^2 \big\}\\
			=&\ \sup_{\alpha\in L^2(\cA)_{sa}^d}\big\{\langle {\rm i}[P, X], \alpha\rangle - \frac{1}{2}\|\alpha\|_{L^2(\cA)}^2 \big\}\\
			=&\ \frac{1}{2} \|[P, X]\|^2_{L^2(\cA)}.
		\end{align*}
		
		
		Thus, the minimization can be done of the terminal cost in the form of a non-commutative Hopf-Lax formula:
		\begin{align}\label{eqn:nc_Hopf_Lax}
			\widetilde{V}_{\cA}(t,X) = \inf_{\alpha\in L^2(\cA)^d_{sa}}\Big\{\frac{(T-t)}{2}\|\alpha\|_{L^2(\cA)}^2 + g_{\cA}\big(e^{-{\rm i}\alpha (T-t)}\, X\, e^{{\rm i}\alpha (T-t)}\big)\Big\}.
		\end{align}
		
		The following theorem partly verifies that the optimal control is constant in time if the minimizer exists.
		
		\begin{theorem}
			Suppose that $\cA\in \bW$, $    \bA_{\cA} = L^2(\cA)_{sa}^d$ and $b_{\cA}(X,\alpha) = {\rm i}[X,\alpha]$ with neither common noise nor free individual noise. Let $(\alpha_t)_{t\in [0,T]}\in\mathbb{A}_{\cA}$ be a minimizer of $\widetilde V_\cA$ in \eqref{eqn:l2_value} with the quadratic Lagrangian $
			L_{\cA}(X,\alpha) = \frac{1}{2}\|\alpha\|_{L^2(\cA)}^2,
			$ where $(X_t)_t\in L^\infty([0,T];L^\infty(\cA)^d_{sa})\cap H^1([0,T];L^2(\mathcal{A})^d_{sa})$ is determined by the dynamics (\ref{eqn:common_noise}). Then $(\alpha_t)_{t\in [0,T]}$ is constant in time.
		\end{theorem}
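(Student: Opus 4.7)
My plan is a variational argument that exploits the unitary structure of the dynamics. The componentwise equation $\dot X_t^j = i[X_t^j, \alpha_t^j]$ integrates to $X_t^j = U_t^j X_0^j (U_t^j)^*$ where $\dot U_t^j = i \alpha_t^j U_t^j$ and $U_0^j = I$. The strategy is to perturb $\alpha$ in a way that keeps $X_T$ fixed to first order and extract from optimality that $(U_t^j)^* \alpha_t^j U_t^j$ is independent of $t$.

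For $\gamma \in L^\infty([0,T]; L^\infty(\cA)^d_{sa})$ and $\alpha_t^\epsilon = \alpha_t + \epsilon\gamma_t$, I would write the perturbed unitary as $U_t^{\epsilon,j} = U_t^j V_t^{\epsilon,j}$, which yields $dV_t^{\epsilon,j} = i\epsilon (U_t^j)^* \gamma_t^j U_t^j V_t^{\epsilon,j}\, dt$ with $V_0^{\epsilon,j} = I$. Since the generator is uniformly bounded in operator norm, a first-order expansion gives $V_T^{\epsilon,j} = I + \epsilon W_T^j + O(\epsilon^2)$ in operator norm, with $W_T^j := i\int_0^T (U_s^j)^* \gamma_s^j U_s^j\, ds$. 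Hence $X_T^{\epsilon,j} - X_T^j = \epsilon U_T^j [W_T^j, X_0^j] (U_T^j)^* + O(\epsilon^2)$ in operator norm.

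Next I would restrict to those $\gamma$ for which $[W_T^j, X_0^j] = 0$ for every $j$. For these, $X_T^{\epsilon,j} - X_T^j = O(\epsilon^2)$ in $L^2(\cA)$, so by Lipschitz continuity of $g_\cA$ (Assumption \textbf{A}), $g_\cA(X_T^\epsilon) - g_\cA(X_T) = O(\epsilon^2)$. The running cost satisfies $\int_0^T \tfrac{1}{2}\|\alpha_t^\epsilon\|^2\, dt - \int_0^T \tfrac{1}{2}\|\alpha_t\|^2\, dt = \epsilon \int_0^T \langle \alpha_t, \gamma_t\rangle_{L^2(\cA)}\, dt + O(\epsilon^2)$, so optimality forces $\int_0^T \langle \alpha_t, \gamma_t\rangle_{L^2(\cA)}\, dt = 0$ for every such admissible $\gamma$. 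Substituting $\tilde\alpha_t^j := (U_t^j)^* \alpha_t^j U_t^j$ and $\tilde\gamma_t^j := (U_t^j)^* \gamma_t^j U_t^j$ transforms this into $\int_0^T \sum_j \langle \tilde\alpha_t^j, \tilde\gamma_t^j\rangle\, dt = 0$ whenever each $\int_0^T \tilde\gamma_t^j\, dt$ commutes with $X_0^j$. Zero-mean $\tilde\gamma^j$ satisfy this vacuously, so $\tilde\alpha_t^j$ lies in the $L^2$-orthogonal complement of all zero-mean $L^\infty$ functions of $t$; by density this forces $\tilde\alpha_t^j = C^j$ for some $t$-independent $C^j \in L^2(\cA)_{sa}$.

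Finally, the identity $\alpha_t^j = U_t^j C^j (U_t^j)^*$ (a.e.\ $t$) together with $\dot U_t^j = i\alpha_t^j U_t^j$ gives $\dot U_t^j = i U_t^j C^j$, whose unique solution is $U_t^j = e^{iC^j t}$; substituting back yields $\alpha_t^j = e^{iC^j t} C^j e^{-iC^j t} = C^j$, proving $\alpha_t$ is constant. The main obstacle I anticipate is the rigorous justification of the $O(\epsilon^2)$ remainder for $X_T^\epsilon - X_T$ in $L^2(\cA)$; this is precisely where the hypothesis $X \in L^\infty([0,T];L^\infty(\cA)^d_{sa})$ is essential, as it bounds $X_0$ in operator norm and lets us propagate second-order estimates through the unitary conjugation while the Lipschitz bound on $g_\cA$ converts them into an error on the cost.
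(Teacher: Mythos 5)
Your proof is essentially correct, but it takes a genuinely different route from the paper's. Both arguments are first-order variational computations, but the perturbation class differs in a way that matters. The paper fixes a curve $\xi \in C^1([0,T];L^\infty(\cA)^d_{sa})$ with $\xi_0 = \xi_T = 0$ and conjugates the optimal trajectory exactly: $\widehat{X}_{t,s} = e^{-{\rm i}s\xi_t} X_t\, e^{{\rm i}s\xi_t}$. Because $\xi$ vanishes at both endpoints, $\widehat{X}_{T,s} = X_T$ for \emph{all} $s$, so the terminal cost $g_\cA(\widehat X_{T,s})$ never enters the variation at all; the first variation of the running cost alone gives $\int_0^T \langle \xi_t', \alpha_t\rangle\,dt = 0$, from which constancy of $\alpha_t$ follows directly. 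You instead perturb the control, $\alpha_t \mapsto \alpha_t + \epsilon\gamma_t$, and keep the endpoint fixed only to first order, so you must control the $O(\epsilon^2)$ error in the terminal cost via Lipschitz continuity of $g_\cA$. Note that the theorem statement (and Section \ref{sec:schrodinger}) allows a ``general terminal cost'' and explicitly flags that this example falls \emph{outside} Assumption \textbf{A}, so your invocation of Lipschitz $g_\cA$ is an extra hypothesis the paper does not need; this is exactly the regularity issue the paper's exact conjugation was designed to sidestep. Your computation of the first-order endpoint change (via $V_T^{\epsilon,j} = I + \epsilon W_T^j + O(\epsilon^2)$ and $[W_T^j,X_0^j]=0$) is sound, and your final substitution $\alpha_t^j = U_t^j C^j (U_t^j)^*$ leading to $U_t^j = e^{{\rm i}C^j t}$ and $\alpha_t^j = C^j$ is correct and worth stating, since the paper's proof infers constancy of $\alpha$ more abruptly from the orthogonality relation. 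In short: your route is correct and perhaps more standard (control perturbation plus first-order endpoint constraint), but it requires more regularity; the paper's exact conjugation is cleaner and works under weaker hypotheses.
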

		\begin{proof}
			We consider variations corresponding to a curve $\xi\in C^1([0,T];L^\infty(\mathcal{A})^d_{sa})$ with $\xi_0=\xi_T=0$.
			We will show that for any $s\in \mathbb{R}$, setting
			$$
			\beta_{t,s} := \int_0^1 e^{-{\rm i}\, r\, s\, \xi_t}\, \xi_t'\, e^{{\rm i} \,r\, s\, \xi_t}dr,\qquad t \in [0,T],
			$$
			the trajectory 
			$$
			t\mapsto  \widehat{X}_{t,s} := e^{-{\rm i}\, s\, \xi_t}X_t\, e^{{\rm i}\, s\, \xi_t}.
			$$
			is a solution to
			\begin{align}\label{eqn:variation}
				\frac{d}{dt}\widehat{X}_{t,s}  = -{\rm i}\, [s\, \beta_{t,s}+e^{-{\rm i}\, s\, \xi_t}\alpha_t\, e^{{\rm i}\, s\, \xi_t}, \widehat{X}_{t,s}].
			\end{align}
			In other words, we have variations of the control (parametrized by $s\in \mathbb{R}$) of the form
			$$
			t\mapsto \widehat{\alpha}_{t,s} := s\, \beta_{t,s}+e^{-{\rm i}\, s\, \xi_t}\alpha_t\, e^{{\rm i}\, s\, \xi_t}
			$$
			that result in solutions with $\widehat{X}_{0,s}=X_0$ and $\widehat{X}_{T,s} = X_T$ for any $s\in \mathbb{R}$.
			
			To verify that $(\widehat{X}_{t,s})_t$ is a solution to \eqref{eqn:variation}, first we use Duhamel's formula to compute 
			\begin{align*}
				\frac{d}{dt} e^{-{\rm i}\, s\, \xi_t} =&\ \int_0^1 e^{-{\rm i} \, r\, s\, \xi_t}(-{\rm i}\, s\, \xi_t')e^{-{\rm i} (1-r)\, s\, \xi_t}dr\\
				=&\ -{\rm i}\, s\, \beta_{t,s}\, e^{{-\rm i}\, s\, \xi_t}
			\end{align*}
			and
			\begin{align*}
				\frac{d}{dt} e^{{\rm i}\, s\, \xi_t} =&\ \int_0^1 e^{{\rm i} r\, s\, \xi_t}({\rm i}\, s\, \xi_t')e^{{\rm i} (1-r)\, s\, \xi_t}dr\\
				=&\ \int_0^1 e^{{\rm i}\, (1-u)\, s\, \xi_t}({\rm i}\, s\, \xi_t')e^{{\rm i} \, u\, s\, \xi_t}du\\
				=&\ {\rm i}\, s\, e^{{\rm i}\, s\, \xi_t} \beta_{t,s}.
			\end{align*}
			We then have that
			\begin{align*}
				\frac{d}{dt} \widehat{X}_{t,s} = -{\rm i}\, s\, \beta_{t,s}\, e^{{-\rm i}\, s\, \xi_t} X_t\, e^{{\rm i}\, s\, \xi_t} + e^{{-\rm i}\, s\, \xi_t} (-{\rm i}[\alpha_t,X_t])\, e^{{\rm i}\, s\, \xi_t} + {\rm i}\, s\,  e^{{-\rm i}\, s\, \xi_t} X_t\, e^{{\rm i}\, s\, \xi_t}\, \beta_{t,s},
			\end{align*}
			which agrees with (\ref{eqn:variation}) seeing as
			$$
			e^{{-\rm i}\, s\, \xi_t} (-{\rm i}[\alpha_t,X_t])\, e^{{\rm i}\, s\, \xi_t} = -{\rm i}[e^{{-\rm i}\, s\, \xi_t} \alpha_t\, e^{{\rm i}\, s\, \xi_t},e^{{-\rm i}\, s\, \xi_t} X_t\, e^{{\rm i}\, s\, \xi_t}].
			$$

			Now the variation of the cost at $s=0$ is given by
			\begin{align*}
				&\ \frac{d}{ds}\Big|_{s=0}\int_0^T \frac{1}{2}\|s\, \beta_{t,s} + e^{-{\rm i}\, s\, \xi_t}\alpha_t\, e^{-{\rm i}\, s\, \xi_t}\|_{L^2(\cA)}^2dt\\
				=&\ \frac{d}{ds}\Big|_{s=0}\int_0^T\Big( \frac{s^2}{2}\|\beta_{t,s}\|_{L^2(\cA)}^2 + s\, \langle \beta_{t,s},   e^{-{\rm i}\, s\, \xi_t}\alpha_t\, e^{-{\rm i}\, s\, \xi_t}\rangle_{L^2(\cA)} + \frac{1}{2}\|\alpha_t\|_{L^2(\cA)}^2\Big)dt\\
				=&\ \int_0^T\langle \xi_t',\alpha_t\rangle_{L^2(\cA)} dt.
			\end{align*}
			By the optimality of $(\alpha_t)_{t\in [0,T]}$, the above quantity is zero for all $\xi\in C^1([0,T];L^2(\cA)^d_{sa})$ with $\xi_0=\xi_T=0$, which implies that $(\alpha_t)_{t\in [0,T]}$ is constant in time.
		\end{proof}
		
		\appendix
		
		\section{Differential equations} \label{subsec: diff eq appendix}
		
		\subsection{Differentiation of vector-valued functions}
		Throughout this section, we interchangeably use the notations $\norm{}_2$ and $\norm{}_{L^2(\cA)}$ for a tracial von Neumann algebra $\cA$.
		\begin{lemma}[Facts about vector-valued $W^{1,2}$ space]
			Let $\cA$ be a tracial von Neumann algebra.  For any function $A: [0,T] \to L^2(\cA)$, define
			\[
			m(A) := \sup_{0 = t_0 < t_1 < \dots < t_n = T} \sum_{j=1}^n \norm{A_{t_j} - A_{t_{j-1}}}_2^2.
			\]
			If $\alpha: [0,T] \to L^2(\cA)$ is Bochner measurable with $\int_0^T \norm{\alpha_t}_2^2\,dt < \infty$, then $A_t := \int_0^t \alpha_s\,ds$ is well-defined and continuous (in $t$) which satisfies $m(A) = \int_0^T \norm{\alpha_t}_2^2\,dt$.  Conversely, if $t\mapsto A_t$ is continuous with $m(A) < \infty$ and $A_0 = 0$, then there exist a measurable $\alpha: [0,T] \to L^2(\cA)$   such that $A_t = \int_0^t\alpha_s\,ds$ for $t\in [0,T]$ and $\int_0^T \norm{\alpha_t}_2^2\,dt = m(A)$.
		\end{lemma}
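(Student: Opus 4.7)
The lemma has two directions: the forward direction (from $\alpha$ to $A$) is largely a matter of Bochner theory plus Jensen's inequality, while the converse (from $A$ to $\alpha$) is the main work, which I would handle by weak compactness in the Hilbert space $L^2([0,T]; L^2(\cA))$. In what follows I read the defining sum as $\sum_j \|A_{t_j} - A_{t_{j-1}}\|_2^2 / (t_j - t_{j-1})$, which is the only interpretation consistent with the conclusion.

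For the forward direction, Bochner measurability of $\alpha$ combined with $\int_0^T \|\alpha_t\|_2\,dt \le \sqrt{T}\bigl(\int_0^T \|\alpha_t\|_2^2\,dt\bigr)^{1/2} < \infty$ (Cauchy--Schwarz) gives that $A_t := \int_0^t \alpha_s\,ds$ is well-defined as a Bochner integral, and continuity of $t \mapsto A_t$ in $L^2(\cA)$ is a standard property of indefinite Bochner integrals. The bound $m(A) \le \int_0^T \|\alpha_s\|_2^2\,ds$ follows partitionwise by Jensen's inequality:
\[
\frac{\|A_{t_j} - A_{t_{j-1}}\|_2^2}{t_j - t_{j-1}} = \frac{1}{t_j - t_{j-1}} \Bigl\| \int_{t_{j-1}}^{t_j} \alpha_s\,ds \Bigr\|_2^2 \le \int_{t_{j-1}}^{t_j} \|\alpha_s\|_2^2\,ds.
\]
The matching lower bound $m(A) \ge \int_0^T \|\alpha_s\|_2^2\,ds$ is obtained by evaluating $m(A)$ on a uniform mesh-$T/N$ partition: each piece contributes $(T/N)\|\bar\alpha_N^{\,j}\|_2^2$ where $\bar\alpha_N^{\,j} := (N/T)\int_{(j-1)T/N}^{jT/N} \alpha_s\,ds$, and the associated step function $\bar\alpha_N$ converges to $\alpha$ in $L^2([0,T]; L^2(\cA))$ by Bochner-valued Lebesgue differentiation, so $\sum_j (T/N)\|\bar\alpha_N^{\,j}\|_2^2 \to \int_0^T \|\alpha_s\|_2^2\,ds$.

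For the converse, given continuous $A$ with $A_0 = 0$ and $m(A) < \infty$, I construct $\alpha$ as a weak limit of difference quotients. For $\epsilon > 0$ with $T/\epsilon \in \bN$, define the step function $\bar\alpha_\epsilon: [0,T] \to L^2(\cA)$ by $\bar\alpha_\epsilon(t) := \epsilon^{-1}(A_{j\epsilon} - A_{(j-1)\epsilon})$ for $t \in [(j-1)\epsilon, j\epsilon)$. Applying the definition of $m(A)$ to the uniform $\epsilon$-partition yields the uniform bound
\[
\int_0^T \|\bar\alpha_\epsilon(t)\|_2^2\,dt = \sum_{j=1}^{T/\epsilon} \frac{\|A_{j\epsilon} - A_{(j-1)\epsilon}\|_2^2}{\epsilon} \le m(A).
\]
Since $L^2([0,T]; L^2(\cA))$ is a Hilbert space, some subsequence $\bar\alpha_{\epsilon_k} \rightharpoonup \alpha$ weakly. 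A telescoping computation shows $\int_0^t \bar\alpha_\epsilon(s)\,ds$ equals $A_t$ at grid points and differs from $A_t$ elsewhere by at most $2\omega_A(\epsilon)$, where $\omega_A$ is the modulus of continuity of $A$ on $[0,T]$; hence $\int_0^t \bar\alpha_\epsilon(s)\,ds \to A_t$ in $L^2(\cA)$ uniformly in $t$. Testing the weak convergence against $\mathbf{1}_{[0,t]} \otimes y$ for arbitrary $y \in L^2(\cA)$ and using continuity of the linear functional $\beta \mapsto \int_0^T \mathbf{1}_{[0,t]}(s) \langle \beta_s, y\rangle\,ds$ identifies $A_t = \int_0^t \alpha_s\,ds$. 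The norm identity $\int_0^T \|\alpha_s\|_2^2\,ds = m(A)$ then combines weak lower semicontinuity ($\le m(A)$) with the forward direction applied to $\alpha$ ($\ge m(A)$).

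The principal obstacle is arranging that the weak limit correctly yields $A$ pointwise in $t$: one must verify that the integral-in-$t$ evaluation commutes with the weak limit and that the resulting equality holds for every $t$, not just for a dense subset. This is precisely where continuity of $A$ on the compact interval $[0,T]$ (giving uniform control via $\omega_A$) and the uniform $L^2$-bound on $\bar\alpha_\epsilon$ are both essential. Measurability of $\alpha$ is automatic from its construction as a weak limit in a space of equivalence classes of measurable functions.
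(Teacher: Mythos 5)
Your proof is correct, and you rightly identify the typographical slip in the lemma: the sum defining $m(A)$ should carry the weight $\tfrac{1}{t_j-t_{j-1}}$, as the paper's own proof makes clear. For the forward direction your argument is essentially the paper's: the upper bound $m(A)\le\int_0^T\norm{\alpha_t}_2^2\,dt$ via Cauchy--Schwarz (you phrase it as Jensen, which is the same inequality), and a density-of-step-functions argument for the lower bound, with the minor difference that you use the canonical averaging step functions $\bar\alpha_N$ and Lebesgue differentiation rather than an arbitrary $\epsilon$-approximating step function plus the triangle inequality for the seminorm $m(\cdot)^{1/2}$.

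For the converse, however, you take a genuinely different route. The paper constructs $\alpha$ by duality: it defines a linear functional $\ell$ on the dense subspace spanned by $\mathbf{1}_{(a,b]}\otimes f$ via $\ell(\mathbf{1}_{(a,b]}\otimes f)=\ip{A_b-A_a,f}_2$, proves $\norm{\ell}\le m(A)^{1/2}$ by Cauchy--Schwarz, and invokes Riesz representation in $L^2([0,T];L^2(\cA))$ to produce $\alpha$ directly, then verifies $A_t=\int_0^t\alpha_s\,ds$ by pairing against test vectors. You instead build $\alpha$ as a weak subsequential limit of the difference quotients $\bar\alpha_\epsilon$, using the uniform bound $\norm{\bar\alpha_\epsilon}_{L^2}^2\le m(A)$ coming straight from the $m(A)$-partition bound, and then check the pointwise identity by testing weak convergence against $\mathbf{1}_{[0,t]}\otimes y$ and using the uniform (in $t$) convergence of $\int_0^t\bar\alpha_\epsilon$ to $A_t$ via the modulus of continuity. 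Both arguments are sound. The paper's is slightly slicker in the Hilbert setting since Riesz representation produces $\alpha$ in one step; yours is more constructive and more robust in spirit (it is the standard compactness argument that would work for a general reflexive Banach target or for $p\ne 2$). Note also that the paper's converse proof only establishes $\norm{\alpha}_{L^2}^2\le m(A)$ and leaves the reverse inequality implicit; you explicitly close the loop by invoking the forward direction on the constructed $\alpha$, which is cleaner.
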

		
		\begin{proof}
			First, let $\alpha: [0,T] \to L^2(\cA)$ be given with $\int_0^T \norm{\alpha_t}_2^2\,dt < \infty$.  Let $0 \leq a < b \leq T$ and $f \in L^2(\cA)$.  Then
			\begin{align*}
				\int_a^b |\ip{\alpha_t,f}_2|\,dt \le  \int_a^b \norm{\alpha_t}_2 \norm{f}_2\,dt & \le  \left( \int_a^b \norm{\alpha_t}_2^2\,dt \right)^{1/2} \left( \int_a^b \norm{f}_2^2\,dt \right)^{1/2} \\
				&= (b - a)^{1/2} \left( \int_a^b \norm{\alpha_t}_2^2\,dt \right)^{1/2} \norm{f}_2.
			\end{align*}
			Thus, $f \mapsto \int_a^b \ip{\alpha_t,f}_2\,dt$ defines a bounded linear functional on $L^2(\cA)$ and hence there is a unique vector, which we denote by $\int_a^b \alpha_t\,dt$, such that
			\[
			\ip*{\int_a^b \alpha_t\,dt, f}_2 = \int_a^b \ip{\alpha_t,f}\,dt \quad \text{ for any } f \in L^2(\cA).
			\]
			Furthermore,
			\[
			\norm*{\int_a^b \alpha_t\,dt}_2^2 = \sup_{\norm{f}_2 \leq 1} \left|\int_a^b \ip{\alpha_t,f}_2\,dt \right|^2 \leq (b - a) \int_a^b \norm{\alpha_t}_2^2\,dt.
			\]
			Let $A_t := \int_0^t \alpha_s\,ds$.  Fix a partition $0 = t_0 < t_1 < \dots < t_n = T$, and note that
			\[
			\sum_{j=1}^n \frac{1}{t_j-t_{j-1}} \norm{A_{t_j} - A_{t_{j-1}}}_2^2 \leq \sum_{j=1}^n \frac{1}{t_j-t_{j-1}} \norm*{\int_{t_{j-1}}^{t_j} \alpha_t\,dt}_2^2 \leq \sum_{j=1}^n \int_{t_{j-1}}^{t_j} \norm{\alpha_t}_2^2\,dt.
			\]
			and so $m(A) \leq \int_0^T \norm{\alpha_t}_2^2\,dt$.  To show the opposite inequality, since $\alpha_t$ is in the Bochner $L^2$ space, it can be approximated by linear combinations of indicator functions times fixed vectors, and we can also arrange approximations by indicator functions of intervals.  Hence, given $\epsilon > 0$, there exists some partition $0 = t_0 < t_1 < \dots < t_n = T$ and  vectors $f_1$, \dots, $f_n \in L^2(\cA)$ such that
			\[
			\left( \int_0^T \norm{\alpha_t - \beta_t}_2^2\,dt \right)^{1/2} < \epsilon, \text{ where } \beta_t := \sum_{j=1}^n \mathbf{1}_{(t_{j-1},t_j]}(t) f_j.
			\]
			Let $B_t := \int_0^t \beta_s\,ds$. Note that
			\[
			m(B) \geq \sum_{j=1}^n \frac{1}{t_j-t_{j-1}} \norm*{\int_{t_{j-1}}^{t_j} \norm{\beta_t}\,dt}_2^2 = \sum_{j=1}^n (t_j-t_{j-1}) \norm{f_j}^2 = \int_0^T \norm{\beta_t}_2^2\,dt.
			\]
			It is not hard to check that $m(A)^{1/2}$ is a seminorm (allowing the value $+\infty$) on functions $[0,T] \to L^2(\cA)$.  Hence,
			\begin{align*}
				m(A)^{1/2} \geq m(B)^{1/2} - m(A-B)^{1/2} &\geq \norm{\beta}_{L^2([0,T],L^2(\cA))} - \norm{\alpha - \beta}_{L^2([0,T],L^2(\cA))} \\
				&\geq \norm{\alpha}_{L^2([0,T],L^2(\cA))} - 2 \norm{\alpha - \beta}_{L^2([0,T],L^2(\cA))} \\
				&\geq \norm{\alpha}_{L^2([0,T],L^2(\cA))} - 2 \epsilon.
			\end{align*}
			Since $\epsilon>0$ was arbitrary, we have $m(A) = \int_0^T \norm{\alpha_t}_2^2\,dt$ as desired.

			~

			Next, suppose we are given $A$ with $m(A) < \infty$.  We will construct $\alpha$ again using the self-duality of Hilbert spaces.  Let $H_0 \subseteq L^2([0,T],L^2(\cA))$ (Bochner $L^2$ space) be the linear span of functions of the form $\mathbf{1}_{(a,b]}(t) f$ for $f \in L^2(\cA)$.  We define a linear map $\ell: H_0 \to \bC$ by
			\[
			\ell: \sum_{j=1}^n \mathbf{1}_{(s_j,t_j]} f_j \mapsto \sum_{j=1}^n \ip{A_{t_j} - A_{s_j},f_j}_2;
			\]
			It is straightforward to check that this is well-defined, i.e.\ independent of the decomposition of the given function on $H_0$.  To check that the map  $\ell$ is bounded, we consider a given function $f$ in $H_0$; let $\{t_0,\dots,t_n\}$ be a partition that contains all the endpoints of intervals in the given decomposition of $f$, and express $f$ as $\sum_{j=1}^n \mathbf{1}_{(t_{j-1},t_j]} f_j$.  Then
			\begin{align*}
				|\ell(f)| &\leq \sum_{j=1}^n |\ip{A_{t_j} - A_{t_{j-1}},f_j}_2| \\ &\leq \sum_{j=1}^n \frac{1}{(t_j-t_{j-1})^{1/2}} \norm{A_{t_j} - A_{t_{j-1}}}_2 (t_j - t_{j-1})^{1/2} \norm{f_j}_2 \\
				&\leq \left( \sum_{j=1}^n \frac{1}{t_j - t_{j-1}} \norm{A_{t_j} - A_{t_{j-1}}}_2^2 \right)^{1/2} \left( \sum_{j=1}^n (t_j - t_{j-1}) \norm{f_j}_2^2 \right)^{1/2} \\
				&\leq m(A)^{1/2} \norm{f}_{L^2([0,T],L^2(\cA))}.
			\end{align*}
			Since $\ell$ is bounded, it extends from $H_0$ to all of $L^2([0,T],L^2(\cA))$, and there exists $\alpha \in L^2([0,T],L^2(\cA))$ with $\ell(f) = \ip{f,\alpha}_{L^2([0,T],L^2(\cA))}$, and $\norm{\alpha}_{L^2([0,T],L^2(\cA)} \leq m(A)^{1/2}$.  Then to check that $A_t = \int_0^t \alpha_s\,ds$, note that for $f \in L^2(\cA)$, we have
			\[
			\ip*{\int_0^t \alpha_s\,ds,f}_{L^2(\cA)} = \ip{\alpha, \mathbf{1}_{[0,t]} f}_{L^2([0,T],L^2(\cA))} = \ell(f) = \ip{A_t - A_0, f}_{L^2(\cA)}.
			\]
		\end{proof}
		
		\begin{lemma}
			The following statements hold.

			1. Let $\alpha: [0,T] \to L^2(\cA)$ be Bochner measurable  and define $A_t := \int_0^t \alpha_s\,ds$ for $t\in [0,T]$.  Then $t \mapsto \norm{\alpha_t}_{L^\infty(\cA)}$ is measurable and
			\[
			\norm{\alpha}_{L^\infty([0,T],L^\infty(\cA))} = \sup_{0 \leq s < t \leq T} \frac{\norm{A_t - A_s}_{L^\infty(\cA)}}{t-s}.
			\]
			(Here both sides may be infinite.)  
			
			2. For any $A: [0,T] \to L^2(\cA)$,
			\[
			m(A)^{1/2} \leq T^{1/2} \sup_{0 \leq s < t \leq T} \frac{\norm{A_t - A_s}_{L^\infty(\cA)}}{t-s},
			\]
			so in particular if $A$ is Lipschitz in $\norm{\cdot}_{L^\infty(\cA)}$, then there exists a compatible $\alpha: [0,T] \to L^2(\cA)$  (i.e. $A_t := \int_0^t \alpha_s\,ds$).
		\end{lemma}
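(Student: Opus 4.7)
\emph{Part 1, measurability.} First I would check that the map $X \mapsto \|X\|_{L^\infty(\cA)}$ from $L^2(\cA)$ to $[0,\infty]$ is lower semicontinuous. The key point is that for fixed $r > 0$, if $X_n \to X$ in $L^2$ with $\|X_n\|_\infty \le r$, then for every $Y \in L^\infty(\cA)$ one has $X_n Y \to XY$ in $L^2$ (since $\|(X_n - X)Y\|_2 \le \|X_n - X\|_2 \|Y\|_\infty$), so $\|XY\|_2 \le r\|Y\|_2$, and taking a supremum over $Y$ in the unit ball of $L^\infty$ (which is dense in the unit ball of $L^2$ in $\|\cdot\|_2$) yields $\|X\|_\infty \le r$. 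Composing the Borel function $X \mapsto \|X\|_\infty$ with the Bochner-measurable $t \mapsto \alpha_t$ gives measurability of $t \mapsto \|\alpha_t\|_\infty$.

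\emph{Part 1, the inequality $\sup_{s<t} \|A_t - A_s\|_\infty/(t-s) \le \|\alpha\|_{L^\infty([0,T],L^\infty(\cA))}$.} For any $Y \in L^2(\cA)$,
\[
\tau\bigl( (A_t - A_s) Y \bigr) = \ip{A_t - A_s, Y^*}_{L^2(\cA)} = \int_s^t \tau(\alpha_r Y)\, dr,
\]
and $|\tau(\alpha_r Y)| \le \|\alpha_r\|_\infty \|Y\|_1$. Since $L^2(\cA)$ is dense in $L^1(\cA)$ in the $\|\cdot\|_1$ norm (finite trace), the estimate $|\tau((A_t - A_s)Y)| \le (\int_s^t \|\alpha_r\|_\infty\,dr) \|Y\|_1$ extends to all $Y \in L^1(\cA)$, giving $\|A_t - A_s\|_\infty \le (t-s)\|\alpha\|_{L^\infty([0,T],L^\infty(\cA))}$.

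\emph{Part 1, the reverse inequality.} Let $L := \sup_{s<t}\|A_t - A_s\|_\infty/(t-s)$, assumed finite. Pick a countable subset $\{Y_k\}$ of $L^2(\cA)$ which is dense in $L^1(\cA)$ (using separability of $L^2(\cA)$ and the continuous inclusion $L^2 \hookrightarrow L^1$). For each $k$, the scalar function $r \mapsto \tau(\alpha_r Y_k) = \ip{\alpha_r, Y_k^*}_2$ lies in $L^1([0,T])$, so by Lebesgue differentiation there is a full-measure set $E_k \subseteq [0,T]$ on which $\frac1h\int_{t_0}^{t_0+h}\tau(\alpha_r Y_k)\,dr \to \tau(\alpha_{t_0} Y_k)$. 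On $\bigcap_k E_k$ the bound $|\tau(\alpha_{t_0} Y_k)| \le L\|Y_k\|_1$ passes to all $Y \in L^1(\cA)$ by density, whence $\|\alpha_{t_0}\|_\infty \le L$ for a.e.\ $t_0$. The main obstacle here is arranging the dense subset of $L^1$ and confirming $L^1$-density; this is where I would spend the most care.

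\emph{Part 2.} Using $\|X\|_2 \le \|X\|_\infty$ (valid because $\|X\|_2^2 = \tau(X^*X) \le \tau(\mathbf{1})\|X^*X\|_\infty = \|X\|_\infty^2$), for any partition $0 = t_0 < \cdots < t_n = T$ and $L := \sup_{s<t}\|A_t - A_s\|_\infty/(t-s)$,
\[
\sum_{j=1}^n \frac{\|A_{t_j} - A_{t_{j-1}}\|_2^2}{t_j - t_{j-1}} \le \sum_{j=1}^n \frac{L^2(t_j - t_{j-1})^2}{t_j - t_{j-1}} = L^2 \sum_{j=1}^n (t_j - t_{j-1}) = L^2 T,
\]
so $m(A)^{1/2} \le T^{1/2} L$. (I am interpreting $m(A)$ as the quadratic variation with the $1/(t_j - t_{j-1})$ weight, as the preceding proof in the paper makes clear.) If $A$ is $L^\infty$-Lipschitz, then $L < \infty$ and $A$ is also $L^2$-Lipschitz, hence continuous, with $m(A) < \infty$, so the previous lemma supplies the compatible $\alpha$.
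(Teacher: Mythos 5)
Your proof is correct, and the route differs from the paper's in the choice of duality. For measurability and the reverse inequality you work through the $L^1$--$L^\infty$ pairing, testing $\alpha$ against a countable $\{Y_k\}\subset L^2(\cA)$ dense in $L^1(\cA)$ and invoking $(L^1(\cA))^*=L^\infty(\cA)$ to upgrade the uniform bound $|\tau(\alpha_{t_0}Y_k)|\le L\|Y_k\|_1$ to $\|\alpha_{t_0}\|_{L^\infty(\cA)}\le L$. The paper instead uses the two-sided $L^2$ pairing $\|x\|_{L^\infty(\cA)}=\sup_{a,b\in\cA,\;\|a\|_2,\|b\|_2\le1}|\tau(axb)|$, testing against $\tau(a(A_t-A_s)b)$ for $a,b$ from a countable $L^2$-dense set; this formula exhibits $\|\cdot\|_\infty$ directly as a pointwise supremum of $L^2$-continuous functions, giving lower semicontinuity (hence measurability) and the reverse inequality in one stroke, without introducing $L^1(\cA)$ or its dual, and without the sequential-closedness argument you run for the sublevel sets. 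Both approaches hinge on the same key idea, namely Lebesgue differentiation applied to countably many testing directions. One point to tighten in yours: you take $L^2(\cA)$ to be separable when choosing $\{Y_k\}$, but the lemma is stated for an arbitrary tracial $\cA$; the paper handles this by using Bochner measurability to force the essential range of $\alpha$ into $L^2(\cA_0)$ for a separable subalgebra $\cA_0$, and then observing that $\|\cdot\|_{L^\infty(\cA)}$ restricted to $\cA_0$ agrees with $\|\cdot\|_{L^\infty(\cA_0)}$. Your Part 2 computation is exactly right; the paper's proof in fact addresses only Part 1 and leaves Part 2 implicit, so you are supplying a detail the paper omits.
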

		
		\begin{proof}
			Note that
			\[
			\norm{x}_{L^\infty(\cA)} = \sup_{a, b \in \cA, \norm{a}_2, \norm{b}_2 \leq 1} |\tau_{\cA}(axb)|.
			\]
			This shows that $x \mapsto \norm{x}_{L^\infty(\cA)}$ is a lower semi-continuous function on $L^2(\cA)$, both in the norm topology and the weak-$*$ topology.  In particular, it is a Borel-measureable function, and hence $t \mapsto \norm{\alpha_t}_{L^\infty(\cA)}$ is measureable.
			
			Now note that for $s < t$, we have
			\[
			\norm{A_t - A_s}_{L^\infty(\cA)} = \norm*{\int_s^t \alpha_u\,du }_{L^\infty(\cA)} \leq \int_s^t \norm{\alpha_u}_{L^\infty(\cA)}\,du \leq (t - s) \norm{\alpha}_{L^\infty([0,T],L^\infty(\cA))}.
			\]
			For the opposite direction, first note that the image of $\alpha$ is contained in $L^2(\cA_0)$ for some $L^2$-separable von Neumann subalgebra of $\cA$.  Furthermore, choose some countable set $C \subseteq \cA_0$ that is dense in the unit ball of $L^2(\cA)$, so that
			\[
			\norm{\alpha_t}_{L^\infty(\cA)} = \sup_{a, b \in C} |\tau_{\cA}(a \alpha_t b)|
			\]
			Note that for $a, b \in C$,
			\[
			\int_s^t \tau_{\cA}(a \alpha_u b)\,du = \int_s^t \ip{\alpha_u, (ba)^*}\,du = \tau_{\cA}(a(A_t - A_s)b).
			\]
			Write $L$ for the Lipschitz norm of $t \mapsto A_t$ with respect to $\norm{\cdot}_{L^\infty(\cA)}$.  By the Lebesgue differentiation theorem, we have for a.e. $t$ that
			\[
			|\tau_{\cA}(a \alpha_t b)| = \lim_{\epsilon \to 0} |\tau_{\cA}(a(A_{t+\epsilon} - A_t) b)| \leq L \norm{a}_{L^2(\cA)} \norm{b}_{L^2(\cA)} \leq L.
			\]
			Since $C$ is countable, this holds for all $a, b \in C$ simultaneously, for a.e.\ $t \in [0,T]$.  Hence, the $L^\infty[0,T]$-norm of $\norm{\alpha_t}_{L^\infty(\cA)}$ is bounded by $L$.
		\end{proof}
		
		\subsection{Stochastic integrals and differential equations} \label{apx:SDE_theory}
		
		First, we should define what the stochastic integral means.  For our purposes, it should be sufficient to assume \textbf{A}, and we really don't have to worry about any free stochastic integrals.

		To complete the proof of Theorem \ref{thm:well_posedness}, we should show the existence of an adapted solution using a Picard iteration scheme.
		A simple version of the proof uses the weighted norm on $C([t_0,t_1];L^2(\Omega,\mathcal{F},\mathbb{P};L^2(\cA)^d_{sa}))$ given by, for some constant $\gamma>0$ chosen later,
		$$
		\|X\|_{C_\gamma}^2 := \sup_{t\in [t_0,t_1]} e^{-2\gamma (t-t_0)} \bE [\|X_t\|^2_{L^2(\cA)}].
		$$

		For $X\in C([t_0,t_1];L^2(\Omega,\mathcal{F},\mathbb{P};L^2(\cA)^d_{sa}))$,  define $\Phi(X) = (\Phi_t(X))_{t \in [t_0,t_1]}$ as
		$$
		\Phi_t(X) := \int_{t_0}^tb_{\cA}(X_s,\alpha_s)ds + \beta_C\, \mathbbm{1}_{\cA}  W^0_t + \beta_F\, S_t.
		$$
		Then for $X,Y\in C([t_0,t_1];L^2(\Omega,\mathcal{F},\mathbb{P};L^2(\cA)^d_{sa}))$,
		$$
		\Phi_t(X)-\Phi_t(Y) = \int_{t_0}^t\big(b_{\cA}(X_s,\alpha_s) - b_{\cA}(Y_s,\alpha_s)\big)ds.
		$$
		From this we obtain
		\begin{align*}
			&\ \frac{d}{dt}e^{-2\gamma (t-t_0)}\mathbb{E}\big[\|\Phi_t(X)-\Phi_t(Y)\|_{L^2(\cA)}^2\big]\\
			=&\ e^{-2\gamma(t-t_0)}\mathbb{E}\Big[2\langle b_{\cA}(X_t,\alpha_t) - b_{\cA}(Y_t,\alpha_t), \Phi_t(X)-\Phi_t(Y)\rangle_{L^2(\cA)} -2\gamma \|\Phi_t(X)-\Phi_t(Y)\|^2_{L^2(\cA)}\Big]  \\
			\leq&\ e^{-2\gamma (t-t_0)}\, \mathbb{E}\Big[\frac{\bar{C}^2}{\gamma}\, \|X_{t}-Y_{t}\|_{L^2(\cA)}^2 -\gamma\,  \|\Phi_t(X)-\Phi_t(Y)\|_{L^2(\cA)}^2\Big].
		\end{align*}
		We end up with 
		\begin{align*}
			\|\Phi(X)-\Phi(Y)\|_{C_\gamma}^2 \leq&\ \frac{\bar{C}^2}{\gamma^2} \|X-Y\|_{C_\gamma}^2,
		\end{align*}
		making $\Phi$ a contraction map w.r.t. $C_\gamma$-norm when $\gamma>\bar{C}$. At each time $t$, the fixed-point iteration convergences in $L^2(\cA_t)$, making the solution freely adapted.
		
		We may also directly calculate that for almost every $t$,
		\begin{align*}
			\frac{d}{dt}\mathbb{E}\big[\|X_t - x_0\|_{L^2(\cA)}^2\big] =&\ \lim_{h\rightarrow^+0}h^{-1}\mathbb{E}\big[\|X_{t+h} - x_0\|_{L^2(\cA)}^2 - \|X_{t} - x_0\|_{L^2(\cA)}^2\big]\\
			=&\ \lim_{h\rightarrow^+0}h^{-1}\mathbb{E}\big[\|X_{t+h} - X_t\|_{L^2(\cA)}^2 + 2\langle X_{t+h} - X_t, X_t-x_0\rangle_{L^2(\cA)}\big] \\
			=&\ \lim_{h\rightarrow^+0}\mathbb{E}\big[ h^{-1}\beta_C^2\, d\, (W^0_{t+h}-W^0_t)^2 + \beta_F^2 \|S_{t+h} - S_t\|^2_{L^2(\cA)}  \\ &\ \quad  \quad \quad +  2\langle h^{-1}\, \int_t^{t+h}b_{\cA}(X_s,\alpha_s)ds, X_t-x_0\rangle_{L^2(\cA)}\big]\\
			=&\ d\, (\beta_C^2 + \beta_F^2) + \mathbb{E}\big[\langle b_{\cA}(X_t,\alpha_t), X_t-x_0\rangle_{L^2(\cA)}\big]. 
		\end{align*}
		
		\section{Computation of Free Laplacian} \label{apx:free_laplacian}
		
		Here we describe the free Laplacian for special test functions (such as polynomials) in terms of Voiculescu's non-commutative derivatives.  These computations are well-established in free probability.  See e.g. \cite[\S 3]{driver2013large}, \cite[\S 14.1]{jekel2020evolution}, \cite[\S 4.3]{jekel2022tracial}.
		
		We recall that $\operatorname{NCP}_d$ denotes the $*$-algebra of non-commutative polynomials in $d$ self-adjoint variables.  Because Voiculescu's free difference quotient maps $\operatorname{NCP}_d$ into the tensor product space $\operatorname{NCP}_d \otimes \operatorname{NCP}_d$, we recall the following definitions.
		
		\subsection{Tensor products}
		
		For any $*$-algebras $A$ and $B$, the algebraic tensor product $A \otimes B$ is also a $*$-algebra with the multiplication and $*$-operation satisfying
		$$
		(X\otimes Y)(W\otimes Z) = (XW) \otimes (ZY)
		$$
		and
		$$
		(X\otimes Y)^* = X^*\otimes Y^*.
		$$
		Moreover, in the case of $A \otimes A$, for $X, Y, Z \in A$, we write
		\[
		(X \otimes Y) \# Z = XZY,
		\]
		and extend this function of $X \otimes Y$ linearly to the tensor product $A \otimes A$.
		
		Given a tracial von Neumann algebra $\mathcal{A} = (A,\tau)$, the algebraic tensor product $A \otimes A$ can be equipped with the trace $\tau \otimes \tau$ and then completed to a tracial von Neumann algebra.  The inner product is then given by
		\[
		\ip{X \otimes Y, Z \otimes W} = \tau(X^*Z) \tau(Y^*W).
		\]
		The von Neumann algebraic tensor product with completion is also denoted by, for example, $\cA \otimes \cA$.
		
		\subsection{The free difference quotient}
		
		Voiculescu's $j$th \emph{free difference quotient} is the map $\partial_{x_j}: \NCPd \to \NCPd \otimes \NCPd$ given, for a monomial, by
		$$
		\partial_{x_j} x_{i_1}x_{i_2}\ldots x_{i_m} := \sum_{i_k=j} (x_{i_1}\ldots x_{i_{k-1}})\otimes (x_{i_{k+1}} \ldots x_{i_m}).
		$$
		In the cases where $i_k=j$ for $k=1$, we take the left part of the tensor to be the identity, and when $k=m$, we take the right part to be the identity.  This operation is extended linearly to $\NCPd$.  Note that if $\cA$ is a tracial $\mathrm{W}^*$-algebra and $x \in \cA_{\operatorname{sa}}^d$, then $\partial_{x_j} p$ can be evaluated at $x$ to produce an element in $\cA {\otimes} \cA$.  
		
		As motivation for this definition, we recall the following fact; see e.g.\ \cite[Lemma 14.1.3]{jekel2020evolution}, \cite[Lemma 3.17]{jekel2022tracial}.
		
		\begin{lemma} \label{lem: NCP differentiability}
			Let $p \in \NCPd$.  Fix a tracial von Neumann algebra $\cA = (A,\tau)$.  Then $p$ defines a map $A_{\operatorname{sa}}^d \to A$ which is Fr{\'e}chet differentiable and satisfies
			\[
			\frac{d}{dt} \biggr|_{t=0} p(x + ty) = \sum_{j=1}^d \partial_{x_j} p(x) \# y_j.
			\]
		\end{lemma}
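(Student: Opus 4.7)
The plan is to reduce by linearity to the case of monomials, verify the formula by direct application of the Leibniz rule, and then deduce Fréchet differentiability from a pointwise Taylor expansion.

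First, since both sides of the asserted identity are linear in $p$, and Fréchet differentiability is preserved under finite linear combinations (with the derivative depending linearly on $p$), it suffices to treat the case $p = x_{i_1} x_{i_2} \cdots x_{i_m}$ for some $m \geq 0$ and indices $i_1, \ldots, i_m \in \{1, \ldots, d\}$. The cases $m = 0$ (constant polynomial) and $m = 1$ are trivial, so take $m \geq 2$.

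Next, I would expand
\[
p(x + ty) = (x_{i_1} + t y_{i_1})(x_{i_2} + t y_{i_2}) \cdots (x_{i_m} + t y_{i_m})
\]
as a polynomial in $t$ whose coefficients lie in $A$. The coefficient of $t^0$ is $p(x)$, the coefficient of $t^1$ is
\[
\sum_{k=1}^m x_{i_1} \cdots x_{i_{k-1}}\, y_{i_k}\, x_{i_{k+1}} \cdots x_{i_m},
\]
and the remaining terms are divisible by $t^2$. Regrouping the linear-in-$t$ coefficient by the value of $i_k = j$, this becomes
\[
\sum_{j=1}^d \sum_{\substack{1 \leq k \leq m \\ i_k = j}} x_{i_1} \cdots x_{i_{k-1}}\, y_j\, x_{i_{k+1}} \cdots x_{i_m} = \sum_{j=1}^d \partial_{x_j} p(x) \# y_j,
\]
which matches the definition of the free difference quotient applied to the monomial $p$ and the $\#$ operation. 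Differentiating in $t$ at $t=0$ then gives the stated identity.

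For Fréchet differentiability (with $y$ replacing $ty$), note that the higher-order part $R(x,y) := p(x+y) - p(x) - \sum_j \partial_{x_j}p(x) \# y_j$ is a sum of monomials in $x$ and $y$ in which each summand contains at least two factors drawn from $\{y_1, \ldots, y_d\}$ and at most $m-2$ factors from $\{x_1, \ldots, x_d\}$. Submultiplicativity of $\|\cdot\|_A$ (i.e., the operator norm on the von Neumann algebra) therefore yields the estimate
\[
\|R(x,y)\|_A \leq C_m \bigl(1 + \|x\|_A\bigr)^{m-2} \|y\|_A^{\,2}
\]
for a combinatorial constant $C_m$ depending only on $m$. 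This bound is $o(\|y\|_A)$ as $\|y\|_A \to 0$ with $x$ fixed, which is exactly the Fréchet differentiability condition. The candidate derivative $y \mapsto \sum_j \partial_{x_j} p(x) \# y_j$ is plainly linear and bounded in $y$, so it is the Fréchet derivative.

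No step is really an obstacle here: the whole argument is just a careful application of the Leibniz rule for non-commutative multiplication, combined with a crude polynomial remainder estimate via submultiplicativity of the operator norm. The only thing to pay attention to is the bookkeeping that identifies the Leibniz expansion with the $\#$-action of $\partial_{x_j}p$, which is essentially the definition.
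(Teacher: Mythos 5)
The paper itself offers no proof of this lemma, only a citation to \cite[Lemma 14.1.3]{jekel2020evolution} and \cite[Lemma 3.17]{jekel2022tracial}, so there is nothing internal to compare against. Your argument is correct and is the standard one: reduce to monomials by linearity, read off the linear coefficient of the Leibniz expansion of $p(x+ty)$, match it term by term with $\sum_j \partial_{x_j}p(x)\# y_j$ via the definitions of $\partial_{x_j}$ and $\#$, and bound the remainder using submultiplicativity of the operator norm since every surviving term carries at least two $y$-factors. One small point worth flagging explicitly: the estimate $\|R(x,y)\|_A \le C_m(1+\|x\|_A)^{m-2}\|y\|_A^2$ as literally stated uses $\|y\|_A^j \le \|y\|_A^2$ for $j\ge 2$, which holds only when $\|y\|_A \le 1$; this is harmless since Fréchet differentiability is a local statement, but you should say so rather than leave it implicit.
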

		
		The non free difference quotionts also satisfy a chain rule; recall the multiplication in the tensor product space defined earlier. 
		
		\begin{lemma}[Chain rule]
			Let $f \in \NCPd$ and let $g = (g_1,\dots,g_d) \in \operatorname{NCP}_{d'}^d$.  Then for $i\in \{1,\ldots, d'\}$,
			$$
			\partial_{x_i}(f\circ g)(x) = \sum_{j=1}^d \partial_{x_j} f(g(x)) \partial_{x_i} g_j(x).
			$$
		\end{lemma}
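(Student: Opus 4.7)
The plan is to induct on the degree of $f$, after reducing by linearity (of both $\partial_{x_i}$ and the tensor-product multiplication in each variable) to the case where $f$ is a monomial. The essential tool is the Leibniz rule for the free difference quotient, which in the paper's twisted tensor-product multiplication takes the form
\[
\partial_{x_i}(uv) \;=\; (1 \otimes v)\,\partial_{x_i}(u) \;+\; (u \otimes 1)\,\partial_{x_i}(v).
\]
I would verify this directly from the monomial definition: each occurrence of $x_i$ in the concatenated word $uv$ lies either in $u$, contributing a tensor $a_l \otimes b_l v$ which is exactly $(1 \otimes v)(a_l \otimes b_l)$ under the twisted multiplication, or in $v$, contributing $u c_l \otimes d_l = (u \otimes 1)(c_l \otimes d_l)$.

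The base cases are immediate: if $\deg f = 0$ both sides vanish, and if $f = y_j$ then $f\circ g = g_j$ and $\partial_{y_{j'}}f = \delta_{j'j}(1\otimes 1)$, so the right-hand side collapses to $\partial_{x_i} g_j$, matching the left. For the inductive step, write $f = y_j \cdot p$ with $\deg p < \deg f$. Applying Leibniz to the left-hand side gives
\[
\partial_{x_i}(f\circ g) \;=\; \bigl(1 \otimes (p\circ g)\bigr)\,\partial_{x_i}(g_j) \;+\; (g_j \otimes 1)\,\partial_{x_i}(p\circ g),
\]
and the inductive hypothesis converts $\partial_{x_i}(p\circ g)$ into $\sum_k \partial_{y_k}p(g)\,\partial_{x_i}g_k$. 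On the right-hand side, applying Leibniz to $\partial_{y_k}(y_j p)$ and then evaluating at $g(x)$ (which is a homomorphism on the twisted tensor algebra, since substitution commutes with word concatenation in the free algebra) gives $\partial_{y_k}f(g) = \delta_{kj}(1 \otimes p(g)) + (g_j \otimes 1)\,\partial_{y_k}p(g)$. Multiplying by $\partial_{x_i}g_k$ and summing over $k$ reproduces exactly the two families of terms obtained from the left-hand side, using associativity of the twisted multiplication to rebracket.

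The main obstacle is purely bookkeeping: the twisted multiplication $(X\otimes Y)(W\otimes Z) = XW \otimes ZY$ (dictated by the requirement that $\#$ be an algebra action on $A$) makes the Leibniz identity look slightly asymmetric compared with the standard tensor-product convention, so at each step one must carefully track composition order and the position of the neutral factor. No analytic input is needed — the chain rule is a purely algebraic identity about the free algebra $\operatorname{NCP}_d$, verified monomial by monomial.
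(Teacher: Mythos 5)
The paper states this lemma without proof (it is a standard fact in free probability), so there is no in-paper argument to compare against; what matters is whether your proof is correct, and it is.

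Your reduction to monomials, the Leibniz identity $\partial_{x_i}(uv) = (1 \otimes v)\,\partial_{x_i}(u) + (u \otimes 1)\,\partial_{x_i}(v)$ in the twisted tensor-product structure $(X\otimes Y)(W\otimes Z) = XW \otimes ZY$, the observation that substitution $A \mapsto A(g(x))$ is a homomorphism for this twisted multiplication (since it acts factor-wise and is multiplicative in each factor separately, compatibly with the flip), and the induction on the degree of $f$ all check out. One small point worth making explicit, which you glide over: in the base case $f = y_j$ you use $(1\otimes 1)\cdot \partial_{x_i}g_j = \partial_{x_i}g_j$, and in the inductive step you implicitly use $(1 \otimes p(g))(1\otimes 1) = 1\otimes p(g)$ when collapsing $\partial_{y_k}(y_j) = \delta_{kj}(1\otimes 1)$ into the Leibniz formula; both are immediate from the twisted-multiplication definition, but since the whole argument hinges on tracking that convention carefully, a one-line verification that $1\otimes 1$ is the unit would tighten the exposition. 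With that, the proof is complete and correct.
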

		
		\begin{remark}  
			In the case of a single variable, the non-commutative derivative $\partial_x$ relates closely to difference quotients. Indeed, $\partial_x$ maps $\bC[x]$ into $\bC[x] \otimes \bC[x]$, and there is a canonical algebra isomorphism $\Phi: \bC[x] \otimes \bC[x] \to \bC[x,y]$ such that $\Phi(x \otimes 1) = x$ and $\Phi(1 \otimes x) = y$.  Note
			\[
			\partial_x[x^k] = x^{k-1} \otimes 1 + x^{k-2} \otimes x + \dots + 1 \otimes x^{k-1},
			\]
			and thus
			\[
			\Phi[\partial_x[x^k]] = x^{k-1} + x^{k-2} y + \dots + y^{k-1} = \frac{x^k - y^k}{x - y} \in \bC[x,y].
			\]
			Hence, by linearity, for all $p \in \bC[x]$,
			\[
			\Phi[\partial_x p(x)] = \frac{p(x) - p(y)}{x - y} \in \bC[x,y].
			\]
			In a classical probabilistic setting, the space $\mathbb{C}[x,y]$ will often be equipped with a product measure representing identical copies of independent random variables. Suppose that $\tau$ is the trace of von Neumann algebra and $X\in L^2(\cA)_{sa}$ has spectral measure $\mu\in \mathcal{P}_2(\mathbb{R})$, i.e., for $p\in \mathbb{C}[x]$,
			$$
			\tau\big(p(X)\big) = \int_{\mathbb{R}}p(x)\mu(dx).
			$$
			This identification extends to the tensor product using the isomorphism $\Phi$. For $A\in C[x]\otimes C[x]$ we have
			$$
			(\tau\otimes \tau) \big(A(X)\big) = \int_{\mathbb{R}}\int_{\mathbb{R}} \Phi[A](x,y)\mu(dx)\mu(dy).
			$$
		\end{remark}
		
		An example that has driven a lot of research in free probability is the case of resolvents (see \cite{voiculescu2000coalgebra}).  Although resolvents are not polynomials, they can be expressed through power series, and so the definition of the non-commutative can be extended to them without much difficulty.  
		
		For resolvents, we may compute, for example,
		$$
		(1-\lambda\, x)^{-1} - (1-\lambda\, y)^{-1} = \lambda (1-\lambda\, x)^{-1}(x-y)(1-\lambda\, y)^{-1}
		$$
		and it follows that
		$$
		\partial_x (1-\lambda\, x)^{-1} = \lambda\, (1-\lambda\, x)^{-1}\otimes (1-\lambda\, x)^{-1}.
		$$
		
		For another example we make use of later, consider $\psi(x) = {\rm arctan}(x)$. We have for a single-variable that the derivative can be computed using the standard derivative of {\rm arctan},
		\begin{align*}
			\frac{d}{dt}{ \psi}(t\, X) =&\ X\, (1 + t^2 X^2)^{-1}\\
			=&\ \frac{1}{2}X\big[(1 + {\rm i}\, tX)^{-1} + (1 - {\rm i}\, tX)^{-1}\big]\\
			=&\ \frac{{\rm i}}{2t}\big[(1 + {\rm i}\, tX)^{-1} - (1 - {\rm i}\, tX)^{-1}\big].
		\end{align*}
		We find that we can write the arctan as an integral of resolvents
		$$
		{\rm arctan}(X) = \int_0^1 \frac{{\rm i}}{2t}\big[(1 + {\rm i}\, tX)^{-1} - (1 - {\rm i}\, tX)^{-1}\big] dt.
		$$
		The non-commutative derivative is then given by
		$$
		\partial_x \arctan(X) = \int_0^1 \frac{1}{2}\big[(1 + {\rm i}\, tX)^{-1}\otimes (1 + {\rm i}\, tX)^{-1} + (1 - {\rm i}\, tX)^{-1}\otimes(1 - {\rm i}\, tX)^{-1}\big] dt.
		$$
		
			

		\subsection{The cyclic derivative}
		The \emph{cyclic derivative} $\mathcal{D}_{x_j}^\circ: \NCPd \to \NCPd$ is defined for non-commutative monomials by
		$$
		\mathcal{D}_{x_j}^\circ x_{i_1}x_{i_2}\ldots x_{i_m} := \sum_{i_k=j} \big(x_{i_{k+1}} \ldots x_{i_m} x_{i_1}\ldots x_{i_{k-1}} \big), 
		$$
		and extended linearly to non-commutative polynomials.  Equivalently, $\mathcal{D}_{x_j}^\circ p$ is obtained from $\partial_{x_j} p$ by applying the map $X \otimes Y \mapsto YX$.  From this we see that $\tau(y \mathcal{D}_{x_j}^\circ p(x)) = \tau(\partial_{x_j} p(x) \# y)$ whenever $x$ is a self-adjoint tuple in a tracial von Neumann algebra and $y$ is an element of it.
		
		We remark that if $p$ is self-adjoint, then $\partial_{x_j} p$ is also self-adjoint.  We denote by $\mathcal{D}^\circ p$ the row vector with entries $\mathcal{D}_{x_j}^\circ p$.
		
		The cyclic derivative arises naturally from computing the gradient of the trace of a non-commutative polynomial as follows.
		
		\begin{lemma} \label{lem: cyclic gradient}
			Let $p \in \NCPd$.  Fix a tracial von Neumann algebra $\cA = (A,\tau)$.  Then $\tau \circ p$ defines a map $A_{\operatorname{sa}}^d \to \bC$ which is Fr{\'e}chet differentiable and satisfies
			\[
			\nabla[\tau(p(x))] = \mathcal{D}^\circ p(x).
			\]
		\end{lemma}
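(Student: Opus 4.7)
The plan is to chain together Lemma \ref{lem: NCP differentiability} with the cyclicity identity that the paper highlights immediately before stating this lemma, namely $\tau(y\,\mathcal{D}_{x_j}^\circ p(x)) = \tau(\partial_{x_j} p(x) \# y)$ for $y \in A$. Since $\tau: A \to \bC$ is a bounded linear functional, the composition $\tau \circ p: A_{\sa}^d \to \bC$ inherits Fr{\'e}chet differentiability from $p$, and the chain rule combined with Lemma \ref{lem: NCP differentiability} immediately yields
\[
D(\tau \circ p)(x)[y] \;=\; \tau\!\left( \sum_{j=1}^d \partial_{x_j} p(x) \# y_j \right) \;=\; \sum_{j=1}^d \tau\bigl( \partial_{x_j} p(x) \# y_j \bigr)
\]
for every $y = (y_1,\dots,y_d) \in A_{\sa}^d$.

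Next I would verify the tracial identity $\tau((a \otimes b)\# y) = \tau(a y b) = \tau(y b a)$, which is immediate from the trace property and extends by bilinearity to all of $A \otimes A$. Writing $\partial_{x_j} p(x) = \sum_k a_k^{(j)} \otimes b_k^{(j)}$ (a finite sum for any polynomial $p$), this gives
\[
\tau\bigl(\partial_{x_j} p(x) \# y_j\bigr) \;=\; \sum_k \tau\bigl(y_j \, b_k^{(j)} a_k^{(j)}\bigr) \;=\; \tau\bigl(y_j \, \mathcal{D}_{x_j}^\circ p(x)\bigr),
\]
where the last equality uses the definition of $\mathcal{D}_{x_j}^\circ p$ as the image of $\partial_{x_j} p$ under the multiplication-flip map $X \otimes Y \mapsto YX$. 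Summing over $j$ and recognising the right-hand side as the $L^2$-pairing of $y$ (which is self-adjoint) with $\mathcal{D}^\circ p(x)$, one concludes $\nabla[\tau(p(x))] = \mathcal{D}^\circ p(x)$ via Riesz duality on $L^2(\cA)_{\sa}^d$.

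There is no substantive obstacle here: the entire argument reduces to one application of Lemma \ref{lem: NCP differentiability}, the trace identity, and the very definition of the cyclic derivative. The only mild bookkeeping concerns the real-linear versus complex-linear structure (the domain $A_{\sa}^d$ is a real Hilbert space while $\tau \circ p$ is complex-valued), but this is harmless since we test the derivative only against self-adjoint directions $y$, for which $\tau(y_j \, \mathcal{D}_{x_j}^\circ p(x))$ is precisely the $L^2(\cA)$ inner product that defines the gradient.
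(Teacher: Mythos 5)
Your proof is correct and follows essentially the same route as the paper: invoke Lemma \ref{lem: NCP differentiability} for Fr\'echet differentiability, pass the derivative through the linear functional $\tau$, apply the trace identity $\tau(\partial_{x_j} p(x)\# y_j)=\tau(y_j\,\mathcal{D}_{x_j}^\circ p(x))$, and recognise the result as the $L^2$-pairing defining the gradient. The extra care you take in spelling out the tensor/trace identity and the real-versus-complex bookkeeping is sound but not substantively different from the paper's argument.
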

		
		\begin{proof}
			Differentiability follows immediately from Lemma \ref{lem: NCP differentiability} since $\tau$ is linear.  Moreover, for $y \in A_{\operatorname{sa}}^d$, we have
			\[
			\ip{y, \nabla p(x)}_{L^2(\cA)} = 
			\frac{d}{dt} \biggr|_{t=0} \tau [p(x + ty)] = \sum_{j=1}^d \tau(\partial_{x_j} p(x) \# y_j) = \sum_{j=1}^d \tau(y_j \mathcal{D}_{x_j}^\circ p(x)) = \ip{y, \mathcal{D}^\circ p(x)}_{L^2(\cA)}.
			\]
			Since $y$ is arbitrary,  
			{{$\nabla[\tau(p(x))]= \mathcal{D}^\circ p(x).$}}
		\end{proof}

		\subsection{Cylindrical test functions} \label{subsec: cylindrical}
		
		A useful class of test functions where one can compute the free Laplacian are cylindrical functions of the form
		$$
		U_{\cA}(X) = g\Big( \tau\big((\phi_1\circ\psi) (X)\big),\ldots,  \tau\big((\phi_m\circ \psi)(X)\big)\Big)
		$$
		for $g\in C^{2,1}(\bR^m;\bR)$, self-adjoint $\phi_1,\ldots,\phi_m\in {\rm NCP}_d$, and $\psi(x)={\rm arctan}(x)$ applied component-wise. Clearly, $(U_{\cA})_{\cA\in \bW}$ are tracial $W^*$-functions.  In fact, they fall into the category of non-commutative smooth functions studied in \cite{jekel2022tracial}, as one can see from \S 3.4 and \S 4.2 of that paper.  The free Laplacian computed here for cylindrical functions will thus be a special case of the free Laplacian in \cite[\S 4.3]{jekel2022tracial}, but here we want to give a shorter and more self-contained development for this concrete family.
		
		We use the notation $g_o$ to denote the partial derivative with respect to the $o$th component of $g$ and $g_{oq}$ to denote the second partial derivative with respect to the $o$ and $q$th components. We abbreviate by
		$$
		g_o = g_o\Big( \tau\big((\phi_1\circ\psi) (X)\big),\ldots,  \tau\big((\phi_m\circ \psi)(X)\big)\Big)
		$$
		and
		$$
		g_{oq} = g_{oq}\Big( \tau\big((\phi_1\circ\psi) (X)\big),\ldots,  \tau\big((\phi_m\circ \psi)(X)\big)\Big).
		$$ 
		Then by the chain rule we have
		$$
		\partial_{x_j}(\phi_m\circ \psi)(X) = (\partial_{x_j}\phi_m\circ \psi)(X)\, \partial_{x}\psi(X_j).
		$$
		Similarly, we may define the cyclic derivative $\mathcal{D}_{x_j}^\circ (\phi_m\circ \psi)(X)$.
		
		By Lemma \ref{lem: cyclic gradient} and the chain rule, we have that, for $j\in \{1,\ldots, d\}$,
		\begin{align*}
			(\nabla U_{\cA})^j(X) = \sum_{o=1}^m g_o\, \mathcal{D}_{x_j}^\circ  (\phi_o\circ \psi)(X).
		\end{align*}
		
		We define the non-commutative derivative, for $i,j\in \{1,\ldots, d\}$, naturally to be
		$$
		\partial_{x_i} (\nabla U_{\cA})^j(X) = \sum_{o=1}^m g_o\, \partial_{x_i}\mathcal{D}_{x_j}^o (\phi_o\circ\psi)(X)  \in L^2(\cA\otimes \cA,\tau\otimes \tau).
		$$
		
		\begin{proposition}\label{prop:cylindrical_test_functions}
			For  a cylindrical function $U$ as given above, we have $U\in \mathcal{X}_\Sigma$ and
			\begin{align*}
				\Theta_{\cA} U(X) = \sum_{i=1}^{d}(\tau\otimes \tau)\Big( \partial_{x_i} (\nabla U_{\cA})^i(X)\Big).
			\end{align*}
		\end{proposition}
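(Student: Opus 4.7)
The plan is to verify first that $(U_{\cA})_{\cA\in\bW}\in\mathcal{X}_\Sigma$ and then to compute the Hessian of $U_{\cB}$ evaluated at the directions $(S^l\mathbf{e}_{\cA}^l, S^l\mathbf{e}_{\cA}^l)$ in closed form, exploiting the algebraic freeness of $S^l$ from $\iota(X)$. The consistency condition making $U$ a tracial $\mathrm{W}^*$-function is immediate from the definition since each $\tau\big((\phi_o\circ\psi)(X)\big)$ depends only on the non-commutative law of $X$. For the regularity properties (b)--(d), I would use that $\psi(x)=\arctan(x)$ is bounded and smooth, so $\psi(X)\in L^\infty(\cA)^d_{sa}$ with $\|\psi(X)\|_\infty\le\pi/2$ for every $X\in L^2(\cA)_{sa}^d$. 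Consequently, each $(\phi_o\circ\psi)(X)$ is a bounded operator, and the first and second Fr\'echet derivatives can be expressed via the chain rule combining the resolvent expansion for $\arctan$ (as noted in the previous subsection) with the standard non-commutative calculus for polynomials from Lemmas \ref{lem: NCP differentiability} and \ref{lem: cyclic gradient}. Because $g\in C^{2,1}$ and the polynomial/arctan building blocks are smooth in the operator norm, the $C^{1,1}$ and second-derivative Lipschitz bounds \eqref{hess} follow.

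For the core computation, I would compute $\frac{d^2}{dt^2}\big|_{t=0}U_{\cB}(Y+tH)$ where $Y=\iota X$ and $H=S^l\mathbf{e}_{\cA}^l$, then specialize. By Lemma \ref{lem: cyclic gradient}, $\frac{d}{dt}\tau((\phi_o\circ\psi)(Y+tH))=\sum_j\ip{H^j,\mathcal{D}_{x_j}^\circ(\phi_o\circ\psi)(Y+tH)}_{L^2(\cB)}$, and differentiating once more splits the Hessian into two pieces:
\begin{align*}
\operatorname{Hess}U_{\cB}(Y)[H,H] =&\ \sum_{o,q}g_{oq}\Big(\sum_j\ip{H^j,\mathcal{D}_{x_j}^\circ(\phi_o\circ\psi)(Y)}\Big)\Big(\sum_i\ip{H^i,\mathcal{D}_{x_i}^\circ(\phi_q\circ\psi)(Y)}\Big)\\
&\ +\sum_o g_o\sum_{i,j}\tau\Big(H^j\big(\partial_{x_i}\mathcal{D}_{x_j}^\circ(\phi_o\circ\psi)(Y)\#H^i\big)\Big).
\end{align*}
For $H=S^l\mathbf{e}_{\cA}^l$, only the $l$-th component is nonzero, so the first sum reduces to terms containing $\tau_{\cB}(S^l\cdot Z)$ with $Z\in\mathrm{W}^*(Y)$, which vanish by free independence of $S^l$ from $Y$ together with $\tau(S^l)=0$. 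The second sum collapses to $i=j=l$, and writing $\partial_{x_l}\mathcal{D}_{x_l}^\circ(\phi_o\circ\psi)(Y)=\sum_\alpha A_\alpha\otimes B_\alpha$ with $A_\alpha,B_\alpha\in\mathrm{W}^*(Y)$, the key identity $\tau_{\cB}(S^l A_\alpha S^l B_\alpha)=\tau(A_\alpha)\tau(B_\alpha)$ (standard for variance-one semicirculars free from $\{A_\alpha,B_\alpha\}$, via the non-crossing pair-partition formula) gives
\[
\operatorname{Hess}U_{\cB}(Y)[S^l\mathbf{e}_{\cA}^l,S^l\mathbf{e}_{\cA}^l]=\sum_o g_o\,(\tau\otimes\tau)\big(\partial_{x_l}\mathcal{D}_{x_l}^\circ(\phi_o\circ\psi)(X)\big).
\]
Summing over $l$ and recognizing $\partial_{x_l}(\nabla U_{\cA})^l(X)=\sum_o g_o\,\partial_{x_l}\mathcal{D}_{x_l}^\circ(\phi_o\circ\psi)(X)$ (per the definition just above the proposition) yields the stated formula.

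The main obstacle I expect is the careful justification of the differentiation formulas for the composition $\phi_o\circ\psi$. Since $\psi=\arctan$ is not a polynomial, the identities for $\partial_{x_j}$ and $\mathcal{D}_{x_j}^\circ$ derived in the appendix for $\mathrm{NCP}_d$ need to be extended via the resolvent representation of $\arctan$ and the chain rule; one must check that the resulting expressions genuinely lie in $L^2(\cA\otimes\cA)$ and that differentiation commutes with the integral over the resolvent parameter. Once this is in place, the verification that $U\in\mathcal{X}_\Sigma$ (conditions (b)--(d)) and the Hessian identification both reduce to routine chain-rule bookkeeping combined with the freeness identities $\tau(S^l Z)=0$ and $\tau(S^l A S^l B)=\tau(A)\tau(B)$.
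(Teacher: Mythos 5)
Your proposal follows essentially the same route as the paper's proof: you decompose $\operatorname{Hess}U_{\cB}$ into the piece weighted by $g_{oq}$ (products of cyclic gradients) and the piece weighted by $g_o$ (the genuine non-commutative Hessian $\partial_{x_l}\mathcal{D}_{x_l}^\circ$), kill the first piece using $\tau(S^l Z)=0$ for $Z$ free from $S^l$, and evaluate the second piece via the free Wick identity $\tau(S^l A S^l B)=\tau(A)\tau(B)$ — which is exactly what the paper does. The one place where the paper does real work that you have deferred (and acknowledged deferring) is the verification of condition (d) of $\mathcal{X}_\Sigma$: the paper proves the Lipschitz estimate on $\operatorname{Hess}U_{\cA}$ by expanding $\psi=\arctan$ as an integral of resolvents $(1\pm\mathrm{i}tX)^{-1}$, telescoping the difference $\operatorname{Hess}U_\cA(X)[A,B]-\operatorname{Hess}U_\cA(Y)[A,B]$ into finitely many products of bounded factors, and bounding each via $\|\arctan\|_\infty\le\pi/2$ and the trace's H\"older inequality $|\tau(M_1 A M_2 B M_3)|\le\text{const}\cdot\|A\|_{L^2}\|B\|_{L^\infty}$; without some such explicit argument your "follows from smoothness" is not a complete proof of \eqref{hess}, since the $L^\infty$ control on one argument of the Hessian (rather than symmetric $L^2$--$L^2$ control) is what makes the estimate compatible with unbounded $X\in L^2(\cA)_{sa}^d$, and this is precisely what the $\arctan$ truncation is there to deliver.
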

		
		\begin{proof}
			There are two tensors arising in the calculation of the Hessian of $U_\cA$ that act in different ways. The first is the non-commutative derivative, $
			\partial_{x_i} (\nabla U_{\cA})^j(X)$ defined above. We also collect terms involving the second derivatives of $g$ by
			$$
			(\nabla^2 U_{\cA})^{ij}(X) = \sum_{o=1}^m\sum_{q=1}^m g_{oq}\, \mathcal{D}_{x_i}^o (\phi_o\circ\psi)(X)\otimes \mathcal{D}^o_{x_j} (\phi_q\circ\psi)(X).
			$$
			The Hessian of $U$ can be expressed as
			\begin{align}\label{eqn:cylindrical_hessian}
				&\ {\rm Hess}\, U_{\cA}(X)\big[A,B\big]\nonumber\\
				=&\ \sum_{i=1}^d\sum_{j=1}^d\Big( \langle(\nabla^2 U_\cA)^{ij}(X), A^i\otimes B^j\rangle_{L^2(\cA\otimes \cA)} + \langle \partial_{x_i} (\nabla U_\cA)^j(X)\# A^i,B^j\rangle_{L^2(\cA)}\Big).
			\end{align}
			We check condition (d) of the definition $\mathcal{X}_\Sigma$ as the other conditions are simpler.  We fix $\cA \in \bW$, $X,Y,A\in L^2(\cA)^d_{sa}$ and $B\in L^\infty(\cA)_{sa}^d$. Furthermore, we consider the case that $U_\cA(X) = \tau\big((\phi\circ \psi)(x)\big)$ for a monomial $\phi(x) = \prod_{k=1}^mx_{i_k} \in \text{NCP}_d$, which showcases the key calculation. We have
			\begin{align*}
				&\ \langle(\nabla U_{\cA})^j(X),A\rangle_{L^2(\cA)}\\
				=&\ \frac{1}{2}\int_0^1\sum_{i_k=j}\sum_{\rho=\pm1}\tau\big(\psi(X_{i_{1}})\ldots \psi(X_{i_{k-1}})(1+\rho\, {\rm i}\, t\, X_j)^{-1} A_j(1+\rho\, {\rm i}\, t\, X_j)^{-1}\, \psi(X_{i_{k+1}})\ldots \psi(X_{i_m})\big)dt.
			\end{align*}
			We have when computing the Hessian finitely many terms that include either, for $j,j'\in \{1,\ldots,d\}$ and $\rho,\rho'\in \{\pm 1\}$ and $i_k=j$ and $i_{k'}=1$,
			\begin{align*}
				&\ \tau\big(\psi(X_{i_{1}})\ldots \psi(X_{i_{k-1}})(1+\rho\, {\rm i}\, t\, X_j)^{-1} A_j(1+\rho\, {\rm i}\, t\, X_j)^{-1}\, \psi(X_{i_{k+1}})\\
				&\ \ldots \psi(x_{i_{k'-1}}) (1+\rho'\, {\rm i}\, t\, X_{j'})^{-1} B_{j'}(1+\rho'\, {\rm i}\, t\, X_{j'})^{-1}\, \psi(X_{i_{k'+1}})\ldots  \psi(X_{i_m})\big),
			\end{align*}
			or, when $j=j'$ and $k=k'$,
			\begin{align*}
				&\ -\rho\, {\rm i}\, t\, \tau\big(\psi(X_{i_{1}})\ldots \psi(X_{i_{k-1}})(1+\rho\, {\rm i}\, t\, X_j)^{-1} A_j(1+\rho\, {\rm i}\, t\, X_j)^{-1} B_j(1+\rho\, {\rm i}\, t\, X_j)^{-1}\, \psi(X_{i_{k+1}})\ldots  \psi(X_{i_m})\big).
			\end{align*}
			When we consider the terms appearing in $|{\rm Hess}\, U_\cA(X)\big[A,B\big] - {\rm Hess}\, U_\cA(Y)\big[A,B\big]|$ we may telescope and rearrange the terms into either the form, where $M_1,M_2,M_3$ have bounded $L^\infty$ norm (consider $\|\arctan(X_j)\|_\infty \leq \frac{\pi}{2})$,
			$$
			\tau\big([\psi(X_i)- \psi(Y_i)]M_1\, A_j\, M_2 B_{j'}\, M_3\big) \leq \big(\frac{\pi}{2}\big)^{m-2}\|X_i-Y_i\|_{L^2(\cA)} \|A_j\|_{L^2(\cA)}\, \|B_j'\|_{L^\infty(\cA)}
			$$
			or
			$$
			\tau\big([(1+\rho\, {\rm i}\, t\, X_i)^{-1}-(1+\rho\, {\rm i}\, t\, Y_i)^{-1}]\, A_j\, M_2 B_{j'}\, M_3\big) \leq \big(\frac{\pi}{2}\big)^{m-1}\|X_i-Y_i\|_{L^2(\cA)} \|A_j\|_{L^2(\cA)}\, \|B_j'\|_{L^\infty(\cA)}.
			$$
			It follows that condition (d) in the definition of $\mathcal{X}_{\Sigma}$ is satisfied with the constant $K$ depending only on the degree of the monomial because of the number of terms that arise in this calculation. For more general cylindrical functions the result holds with $K$ also depending on Lipschitz estimates for $g$.

			We now compute the free Laplacian for a general cylindrical function. Assume that  $\iota:\cA=(A,\tau)\rightarrow \cB=(B,\rho)$ is a  tracial $W^*$-embedding and $\cB$ contains a semi-circular   $S = (S^1,\cdots,S^d)\in B$  freely independent of $\iota(A)$. For the terms with the second-order derivatives of $g$, we see that, for example,  for $l=1,\cdots,d,$
			\begin{multline*}
				\langle(\nabla^2 U_{\cB})^{ll}(X), S^l\otimes S^l\rangle_{L^2(\cB\otimes \cB)}\\
				= \sum_{o=1}^m\sum_{q=1}^m g_{oq}\, \langle \mathcal{D}_{x_l}^o (\phi_o\circ\psi)(X), S^l\rangle_{L^2(\cB)}\, \langle \mathcal{D}^o_{x_l} (\phi_q\circ\psi)(X), S^l\rangle_{L^2(\cB)} = 0,
			\end{multline*}
			because $S^l$ is freely independent of $ \mathcal{D}^o_{x_l} (\phi_o\circ\psi)(X)$ and $S^l$ has a zero trace.

			For the term with the non-commutative second derivative, for each $i$ and $X$, we can decompose the non-commutative derivative into finitely many simple tensors (see the calculations above) like
			\begin{align*}
				\partial_{x_i} (\nabla U_{\cA})^i(X) = \sum_{j=1}^M a^j\otimes b^j.
			\end{align*}
			We then have that, using the free Wick law (which follows here directly from free independence) to evaluate,
			\begin{align*}
				{\rm Hess}\, U_{\cB}(X)[\mathbf{e}^iS, \mathbf{e}^iS]
				=&\ \big\langle \partial_{x_i} (\nabla U_\cB)^i(X)\# S^i, S^i\big\rangle_{L^2(\cB)}\\
				=&\ \sum_{j=1}^M\langle a^j\, S^i\, b^j, S^i\rangle_{L^2(\cB)}\\
				=&\ \sum_{j=1}^M\rho( S^i\, a^j\, S^i, b^j)\\
				=&\ \sum_{j=1}^M\tau(a^j)\tau(b^j)\\
				=&\ \sum_{j=1}^M(\tau \otimes \tau)(a^j\otimes b^j)\\
				=&\ (\tau\otimes \tau)\big(\partial_{x_i} (\nabla U_{\cA})^i(X)\big).
			\end{align*}
		\end{proof}

		
		
		
		We can also consider the special case of $d=1$ and relate it to the Wasserstein derivative of $U(\mu)$ where $\mu$ denotes the probability measure on $\mathbb{R}$ and the Wasserstein derivative is denoted by $\partial U(\mu)(x)$.  
		We will only describe this here as a formal computation.  We see that if $\cA$ is a tracial $W^*$-algebra and $X\in L^2(\cA)_{sa}$ with law $\mu$ then, since $\partial U(\mu)$ is a Borel function, the spectral theory of bounded normal operators allows us to define the operator $\partial U(\mu)(X)$. When $U$ is differentiable at $\mu$ and $\cU_\cA$ is differentiable at $X$, i.e., the cylindrical test functions above, one checks that the identity
		$$
		\partial U(\mu)(X) = \nabla U_{\cA}(X)
		$$
		holds (note the resemblance with the  identity in  \cite[Corollary 3.22]{gangbo2019}, which instead involves $\partial U(\mu)\circ X$).
		
		Using the canonical isomorphism $\Phi: \mathbb{C}[x] \otimes \mathbb{C}[x] \to \mathbb{C}[x,y]$ above we have
		$$
		\Phi\big[\partial_{x} \partial U(\mu)(x)\big] =  \frac{\partial U(\mu)(x) - \partial U(\mu)(y)}{x - y}.
		$$
		The free Laplacian can then be expressed using the identification of the trace tensor product with the double integral as
		\begin{align*}
			\Theta_{\cA}U(X) =&\ \int_{\mathbb{R}}\int_{\mathbb{R}} \frac{\partial U(\mu)(x) - \partial U(\mu)(y)}{x-y}\mu(dx)\mu(dy)\\
			=&\ 2\int_{\mathbb{R}}\Big(P.V.\int_{\mathbb{R}} \frac{\mu(dy)}{x-y}\Big) \partial U(\mu)(x)\mu(dx).
		\end{align*}
		The final form involves the Hilbert transform of the density, provided it exists.  For background and similar computations, see \cite[\S 5]{voiculescu1993analogues1}, \cite[Proposition 3.5]{Voiculescu1998analogues5}.

		\section{Amalgamated free products} \label{sec: AFP}
		
		\subsection{Properties of free products of von Neumann algebras}
		
		In this section, we record several useful properties of amalgamated free products and free independence.  The first property is a form of associativity for free independence, which is well-known (see e.g.\ \cite[Exercise 5.3.8]{anderson2010introduction}, \cite[Exercise 5.25]{nica2006lectures}, \cite[Example 5.22]{jekel2020operad}).  The proof is similar to, but much easier, than the proof of Lemma \ref{lem: AFP manipulation} below, so we omit it.
		
		\begin{lemma}[Associativity of free independence] \label{lem: associativity}
			Let $\mathcal{A}$, $\mathcal{B}$, and $\mathcal{C}$ be von Neumann subalgebras of a tracial von Neumann algebra $\mathcal{M}$.  Let $\mathcal{A} \vee \mathcal{B}$ be the von Neumann subalgebra generated by $\mathcal{A}$ and $\mathcal{B}$.  Then the following are equivalent:
			\begin{enumerate}
				\item $\mathcal{A}$, $\mathcal{B}$, and $\mathcal{C}$ are freely independent.
				\item $\mathcal{A}$ and $\mathcal{B}$ are freely independent, and $\mathcal{A} \vee \mathcal{B}$ and $\mathcal{C}$ are freely independent.
			\end{enumerate}
		\end{lemma}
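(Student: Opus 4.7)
\medskip

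The plan is to verify both implications by working directly with the vanishing-moment characterization of free independence applied to alternating products of centered elements. Throughout I will write $\tau$ for the trace on $\mathcal{M}$, and for a subalgebra $\mathcal{N}$ I'll write $\mathring{\mathcal{N}} := \{x \in \mathcal{N} : \tau(x) = 0\}$.

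For the direction (2) $\Rightarrow$ (1), I would take an alternating product $x_1 \cdots x_n$ where $x_k \in \mathring{\mathcal{A}}_{i_k}$ with $\mathcal{A}_1 = \mathcal{A}$, $\mathcal{A}_2 = \mathcal{B}$, $\mathcal{A}_3 = \mathcal{C}$ and $i_k \neq i_{k+1}$, and show $\tau(x_1 \cdots x_n) = 0$. The idea is to group consecutive factors coming from $\mathcal{A}$ and $\mathcal{B}$ into maximal ``$\mathcal{A}\vee\mathcal{B}$-blocks,'' producing a new alternating product between elements of $\mathcal{A} \vee \mathcal{B}$ and $\mathcal{C}$. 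By free independence of $\mathcal{A}\vee\mathcal{B}$ and $\mathcal{C}$, it suffices to check that each such block has trace zero: a block containing only one factor is centered by assumption, and a block that is a genuine alternating product of centered elements from $\mathcal{A}$ and $\mathcal{B}$ has vanishing trace by free independence of $\mathcal{A}$ and $\mathcal{B}$. A small care is needed when the block sits at the boundary and must be subsequently sandwiched by elements of $\mathcal{C}$; the standard trick is to center the block and absorb its scalar part into the neighboring structure, iterating until all blocks are centered.

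For the direction (1) $\Rightarrow$ (2), pairwise free independence of $\mathcal{A},\mathcal{B}$ and of $\mathcal{A}\vee\mathcal{B},\mathcal{C}$ on the level of any two of the three algebras is immediate. The substantive claim is that $\mathcal{A}\vee\mathcal{B}$ is freely independent from $\mathcal{C}$. I would fix an alternating product $y_1 z_1 y_2 z_2 \cdots$ with $y_k \in \mathring{\mathcal{A}\vee\mathcal{B}}$ and $z_k \in \mathring{\mathcal{C}}$ (or starting/ending with $\mathcal{C}$) and show it has trace zero. The key reduction is that, because $\mathcal{A}$ and $\mathcal{B}$ are freely independent, each $y_k \in \mathcal{A}\vee\mathcal{B}$ can be approximated in $\|\cdot\|_{L^2}$ (hence weakly, by boundedness after truncation) by finite linear combinations of $1$ and alternating products of centered elements from $\mathcal{A}$ and $\mathcal{B}$; the centered part of $y_k$ corresponds to those combinations with no scalar term. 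Substituting such an approximation turns $\tau(y_1 z_1 y_2 z_2 \cdots)$ into a sum of traces of alternating products in $\mathring{\mathcal{A}}$, $\mathring{\mathcal{B}}$, $\mathring{\mathcal{C}}$ (after collapsing any $\mathcal{A}$-next-to-$\mathcal{A}$ or $\mathcal{B}$-next-to-$\mathcal{B}$ adjacencies that occur when adjacent $y_k$-blocks meet, using that the junction has a $\mathcal{C}$-factor strictly between them so no such adjacency can occur). By joint free independence of $\mathcal{A},\mathcal{B},\mathcal{C}$, each summand vanishes, and taking the limit concludes.

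The main obstacle is the approximation step in (1) $\Rightarrow$ (2): one must know that the algebraic free product $\mathcal{A}*_{\mathrm{alg}}\mathcal{B}$ (spanned by $1$ together with alternating centered products) is $\|\cdot\|_{L^2}$-dense in $\mathcal{A}\vee\mathcal{B}$, and that this density is compatible with the centering. This is where the normality of $\tau$ and Kaplansky density are invoked: bounded nets in $\mathcal{A}*_{\mathrm{alg}}\mathcal{B}$ can be chosen converging strongly (hence in $L^2$) to any element of $\mathcal{A}\vee\mathcal{B}$, and continuity of $\tau$ on bounded sets lets one pass the limit through the trace of the full product (the remaining $z_k$-factors being bounded). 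Once this density and $L^2$-continuity are established, the combinatorial bookkeeping is essentially the same as in the grouping argument of the first direction.
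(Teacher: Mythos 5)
The paper omits the proof of this lemma (referring the reader to the argument style of Lemma \ref{lem: AFP manipulation}, which uses an $L^2$-bimodule decomposition and orthogonality of the amalgamated free product), so there is no in-text proof to compare against directly. Your proof uses the classical combinatorial approach — verify the vanishing-moment condition on alternating centered words, plus Kaplansky density to handle general elements of $\mathcal{A}\vee\mathcal{B}$ — and it is essentially correct.

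A few remarks. In the direction $(2)\Rightarrow(1)$, the ``small care\ldots center the block and absorb its scalar part'' step is unnecessary: once you group the original alternating word into maximal $\mathcal{A}$-$\mathcal{B}$ blocks separated by $\mathcal{C}$-factors, \emph{every} block is already centered — a singleton block is centered by hypothesis, and a block of length $\geq 2$ is an alternating word of centered elements from $\mathcal{A}$ and $\mathcal{B}$, hence has trace zero by free independence of $\mathcal{A}$ and $\mathcal{B}$. No iteration or absorption of scalar parts is required; you can immediately apply free independence of $\mathcal{A}\vee\mathcal{B}$ and $\mathcal{C}$ to the regrouped word. In the direction $(1)\Rightarrow(2)$, the Kaplansky-density step is the right ingredient and is handled correctly: you approximate each $y_k$ by bounded nets in $\mathcal{A}*_{\mathrm{alg}}\mathcal{B}$ with the scalar component removed (which preserves $\tau$-centering, since alternating words already have zero trace by free independence of $\mathcal{A},\mathcal{B}$), and joint strong continuity of multiplication on bounded sets together with normality of $\tau$ passes the vanishing through the limit. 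Your parenthetical observation that no $\mathcal{A}$-next-to-$\mathcal{A}$ or $\mathcal{B}$-next-to-$\mathcal{B}$ collapses can occur because a $\mathcal{C}$-factor separates adjacent blocks is exactly the point that makes the expansion land directly in the joint-freeness vanishing condition. Compared to the Hilbert-module orthogonality argument the paper gestures at, your approach is more elementary but requires the explicit Kaplansky/approximation step; the module approach avoids that by working with $L^2$-dense $*$-subalgebras from the start. Both are standard and valid.
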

		
		The next property relates free independence with restriction to subalgebras.
		
		\begin{lemma}[Free independence passes to subalgebras] \label{lem: free product embedding} ~ Let $J$ be any index set.
			\begin{enumerate}
				\item Suppose that $\cA$ is a non-commutative probability space containing $\cC$.  Let $\cC \subseteq \cB_j \subseteq \cA_j \subseteq \cA$.  If the $\cA_j$'s are freely independent over $\cB$, then the $\cB_j$'s are freely independent over $\cC$.
				\item Similarly, suppose non-commutative probability spaces $\cC \subseteq \cB_j \subseteq \cA_j$ are given.  Then there is a unique tracial $\mathrm{W}^*$-embedding $*_{\cC}  (\cB_j)_{j\in J} \to *_{\cC} (\cA_j)_{j\in J}$ that restricts to the given inclusions $\cB_j \to \cA_j$ for each $j$.
			\end{enumerate}
		\end{lemma}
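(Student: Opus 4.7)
The plan for part (1) is to observe that the vanishing-moment definition of free independence with amalgamation passes automatically to subalgebras. Assuming (as context suggests) that the hypothesis is that the $\cA_j$'s are freely independent over $\cC$, I would take elements $b_1, \ldots, b_n$ with $b_k \in \cB_{j(k)}$, $j(k) \neq j(k+1)$, and $E_\cC(b_k) = 0$, and note that these elements witness the free independence condition for the $\cA_j$'s over $\cC$ as well, since $\cB_{j(k)} \subseteq \cA_{j(k)}$. A small point to verify is that the trace-preserving conditional expectation $E_\cC$ is unambiguous: conditional expectations onto $\cC$ computed inside $\cB_{j(k)}$, $\cA_{j(k)}$, or $\cA$ all agree on $\cB_{j(k)}$ by uniqueness of the trace-preserving projection. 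Then $E_\cC(b_1 \cdots b_n) = 0$ follows immediately from free independence of the $\cA_j$'s over $\cC$.

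For part (2), the strategy is to apply part (1) inside the free product $\median_\cC (\cA_j)_{j\in J}$ and then invoke Corollary \ref{cor: independence and embeddings}. Denote by $\iota_j^{\cA} : \cA_j \to \median_\cC (\cA_j)_{j\in J}$ the canonical inclusions. Composing with the given inclusions $\cB_j \hookrightarrow \cA_j$ yields tracial $\mathrm{W}^*$-embeddings $\cB_j \to \median_\cC (\cA_j)_{j\in J}$ that all agree on $\cC$. By construction, $(\iota_j^{\cA}(\cA_j))_{j \in J}$ are freely independent over $\cC$ in the free product, so part (1), applied to the intermediate algebras $\cC \subseteq \iota_j^{\cA}(\cB_j) \subseteq \iota_j^{\cA}(\cA_j)$, shows that $(\iota_j^{\cA}(\cB_j))_{j \in J}$ are freely independent over $\cC$ as well. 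Corollary \ref{cor: independence and embeddings} then gives a canonical isomorphism
\[
\Phi : \median_\cC (\cB_j)_{j\in J} \;\longrightarrow\; \bigvee_{j \in J} \iota_j^{\cA}(\cB_j) \;\subseteq\; \median_\cC (\cA_j)_{j\in J},
\]
making the natural diagram commute; composing $\Phi$ with the inclusion of the join into the ambient free product produces the desired embedding.

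For uniqueness, I would observe that any two tracial $\mathrm{W}^*$-embeddings $\median_\cC (\cB_j)_{j\in J} \to \median_\cC (\cA_j)_{j\in J}$ restricting to the same inclusion on each $\cB_j$ must agree on the $*$-subalgebra generated by the images of the $\cB_j$'s; since such embeddings are automatically normal (being trace-preserving), they extend uniquely to the weak-$*$ closure, which is all of $\median_\cC (\cB_j)_{j\in J}$. This is essentially the uniqueness clause of Lemma \ref{lem: uniqueness of free product} applied to the family of maps $\cB_j \to \median_\cC(\cA_j)_{j\in J}$.

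I do not anticipate a serious obstacle: both parts reduce to the universal property of amalgamated free products together with the observation that free independence is defined by vanishing of alternating centered moments, a condition manifestly preserved by passage to subalgebras. The main item requiring care is simply ensuring that conditional expectations and traces behave consistently across the various nested algebras, which is standard for tracial $\mathrm{W}^*$-algebras with faithful normal traces.
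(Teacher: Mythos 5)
Your proof is correct and follows essentially the same route as the paper: part (1) by directly observing that the vanishing alternating-centered-moment condition for the $\cB_j$'s is a special case of that for the $\cA_j$'s, and part (2) by applying part (1) to the canonical images in $\median_\cC(\cA_j)_{j\in J}$ and invoking Corollary~\ref{cor: independence and embeddings}. You also correctly flag the typo (``over $\cB$'' should read ``over $\cC$'') and usefully make explicit that $E_\cC$ is unambiguous and that uniqueness follows from the images of the $\cB_j$'s generating the free product, which the paper leaves implicit.
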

		
		\begin{proof}
			(1) For the $\cB_j$'s to be freely independent over $\cC$ means that for all positive integers $n$ and $j:\{1,\ldots,n\}\rightarrow J$ such that $j(k)\not= j(k+1)$  for $k=1,\cdots,n-1,$
			$$
			E_{\cC}\bigg( \prod_{k=1}^{n}\big(b_k-E_{\cC}(b_k)\big)\bigg)=0, \qquad \hbox{for all }(b_1,\ldots,b_{n}) \in {\cB}_{j(1)}\times\ldots \times {\cB}_{j(n)}.
			$$
			Since ${\cB}_{j(1)}\times\ldots \times {\cB}_{j(n)} \subseteq {\cA}_{j(1)} \times \ldots \times {\cA}_{j(n)}$, the free independence of the $\cA_j$'s over $\cC$ implies free independence of the $\cB_j$'s over $\cC$.
			
			(2) This follows from claim (1), since we can apply Corollary \ref{cor: independence and embeddings} to the subalgebra of $*_{\cC} \cA_j$ generated by the images of $\cB_j$'s.
		\end{proof}
		
		Next, we consider the behavior of certain conditional expectations in free products.  The following fact is well-known in the study of von Neumann algebras.
		
		\begin{lemma} \label{lem: free product commuting square}
			Consider an amalgamated free product $\cA \median_{\cC} \cB$ of tracial von Neumann algebras, and let $\cC \subseteq \cA_0 \subseteq \cA$.  Let $\iota: \cA \to \cA \median_{\cC} \cB$ be the canonical inclusion.  Using the previous lemma, view $\cA_0 \median_{\cC} \cB$ as a subset of $\cA \median_{\cC} \cB$.  Then for $x \in \cA$, we have
			\[
			E_{\cA_0 \median_{\cC} \cB} \circ \iota(x) = \iota \circ E_{\cA_0}(x).
			\]
		\end{lemma}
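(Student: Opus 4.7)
The plan is to characterize $E_{\cA_0 \median_{\cC} \cB}(\iota(x))$ by its defining property: it is the unique element $z \in \cA_0 \median_{\cC} \cB$ such that $\tau(y \cdot \iota(x)) = \tau(y \cdot z)$ for all $y \in \cA_0 \median_{\cC} \cB$. Setting $x' := x - E_{\cA_0}(x) \in \cA$, it therefore suffices to show that
\[
\tau\big( y \cdot \iota(x') \big) = 0 \quad \text{for all } y \in \cA_0 \median_{\cC} \cB,
\]
since $\iota(E_{\cA_0}(x)) \in \iota(\cA_0) \subseteq \cA_0 \median_{\cC} \cB$. The key property of $x'$ is that $E_{\cA_0}(x') = 0$, which in particular forces $E_{\cC}(x') = E_{\cC}(E_{\cA_0}(x')) = 0$, so $x'$ is a centered element of $\cA$ relative to $\cC$.

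Next, I would argue by density and normality of the trace that it is enough to verify the vanishing identity when $y$ ranges over a total family in $\cA_0 \median_{\cC} \cB$. The standard ``alternating'' description of amalgamated free products says that $\cA_0 \median_{\cC} \cB$ is the strong closure of $\cC$ plus the linear span of products $y_1 y_2 \cdots y_n$ where $y_i$ alternates between lying in $\cA_0 \ominus \cC := \ker(E_{\cC}|_{\cA_0})$ and $\cB \ominus \cC := \ker(E_{\cC}|_{\cB})$. The case $y \in \cC$ is immediate, since $\tau(c \cdot \iota(x')) = \tau(c \, x') = \tau(E_{\cC}(cx')) = \tau(c E_{\cC}(x')) = 0$.

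For the alternating case, I would split into two subcases according to whether $y_1 \in \cB \ominus \cC$ or $y_1 \in \cA_0 \ominus \cC$. In the first subcase, the product $\iota(x') \cdot y_1 y_2 \cdots y_n$, viewed inside $\cA \median_{\cC} \cB$, is already an alternating product whose factors lie in $\cA \ominus \cC$ and $\cB \ominus \cC$ (since $x' \in \cA \ominus \cC$). Free independence of $\cA$ and $\cB$ over $\cC$ then yields $E_{\cC}$ of this product equals $0$, so its trace is zero. In the second subcase, I would merge $x'$ into $y_1$: since $y_1 \in \cA_0$ and conditional expectations are bimodule maps,
\[
E_{\cA_0}(x' y_1) = E_{\cA_0}(x') y_1 = 0,
\]
so $z_1 := x' y_1 \in \cA \ominus \cC$; using cyclicity of the trace to rewrite $\tau(y_1 \cdots y_n \cdot x') = \tau(z_1 y_2 \cdots y_n)$ reduces this subcase to an alternating product starting with an element of $\cA \ominus \cC$ followed by $y_2 \in \cB \ominus \cC$, and free independence over $\cC$ again forces the trace to vanish.

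The main subtlety I expect is the bookkeeping in the second subcase, in particular making sure that absorbing $x'$ into the leftmost factor still produces an alternating word whose factors are all centered over $\cC$; the bimodule property of $E_{\cA_0}$ together with the chain $\cC \subseteq \cA_0$ is exactly what makes this work. Once this vanishing is established on the alternating generating family and on $\cC$, extension to all $y \in \cA_0 \median_{\cC} \cB$ follows from linearity, normality of $\tau$, and Kaplansky density, completing the identification of $\iota(E_{\cA_0}(x))$ with $E_{\cA_0 \median_{\cC} \cB}(\iota(x))$.
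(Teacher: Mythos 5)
Your proposal is correct and follows essentially the same route as the paper's proof: reduce to showing $x' := x - E_{\cA_0}(x)$ is orthogonal to (equivalently, has vanishing trace pairing with) the $L^2$-dense span of alternating words in $\cA_0 \median_{\cC} \cB$, and when the factor adjacent to $x'$ comes from $\cA_0$, absorb it via the $\cA_0$-bimodule property of $E_{\cA_0}$ together with $E_{\cC}\circ E_{\cA_0} = E_{\cC}$ to see the merged factor is still centered over $\cC$, then invoke freeness over $\cC$. The only cosmetic difference is that you organize the argument by cases on the position of $y_1$ and invoke cyclicity of the trace where the paper instead absorbs $x'$ on the left and notes the remaining cases are ``similar.''
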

		
		\begin{proof}
			We regard $\cA$ as a subset of $\cA \median_{\cC} \cB$ in this proof, and hence suppress $\iota$ in the notation.  Let $x' = x - E_{\cA_0}(x)$.  It suffices to show that $x'$ is orthogonal to $\cA_0 \median_{\cC} \cB$ inside $\cA \median_{\cC} \cB$.  We recall (see \cite[p. 684-685]{popa1993markov}) that an $L^2$-dense subset of $\cA_0 \median_{\cC} \cB$ is spanned by alternating products of elements $y_i$ from $\cA$ and $z_i$ from $\cB$ respectively that are orthogonal to $\cC$; for instance, an alternating product that starts with an element from $\cA$ and ends with an element from $\cB$ would be $y_1 z_1 \dots y_k z_k$.  Hence, it suffices to show that $x'$ is orthogonal to these elements.  Note
			\[
			x'(y_1 z_1 \dots y_k z_k) = (x'y_1)z_1 \dots y_k z_k
			\]
			is another alternating product of elements from $\cA$ and $\cB$ that are orthogonal to $\cC$; indeed,
			\[
			E_{\cC}[x'y_1] = E_{\cC} \circ E_{\cA_0}[x'y_1] = E_{\cC}[ E_{\cA_0}[x'] y_1] = 0,
			\]
			and by assumption $y_j \in \cA_0 \subseteq \cA$ and $z_j \in \cB$.  Therefore, by free independence of $\cA$ and $\cB$ over $\cC$, we have $E_{\cC}[(x'y_1)z_1 \dots y_k z_k] = 0$ as desired.  The other cases where the alternating string starts wtih an element of $\cB$ or ends with an element of $\cA$ are similar.
		\end{proof}
		
		The last and most substantial result is a free product manipulation for which several cases and related facts exist in the literature \cite[Proposition 4.1]{houdayer2007freeproducts}.  We remark that the free independence manipulation of \cite[Lemma 2.6]{shlyakhtenko2000entropy} which changes the base subalgebra is somewhat related but does not imply the statement here.
		
		\begin{lemma} \label{lem: AFP manipulation}
			Let $(\cA_j)_{j \in J}$ be a family of tracial $\mathrm{W}^*$-algebra containing a common subalgebra $\cB$, let $\cD \subseteq \cB$, and let $\cC$ be another tracial $\mathrm{W}^*$-algebra containing $\cD$.  For each $i \in J$, let
			\[
			\varphi_i: \cB \median_{\cD} \cC \to \cA_i \median_{\cD} \cC
			\]
			be the embedding given by Lemma \ref{lem: free product embedding} applied to the inclusion $\cB \to \cA_i$ and the identity $\cC \to \cC$.  Let $\median_{\cB \median_{\cD} \cC} \left(\cA_j \median_{\cD} \cC \right)_{j \in J}$ be the amalgamated free product over $\cB \median_{\cD} \cC$, where $\cB \median_{\cD} \cC$ is viewed as a subalgebra of $\cA_i \median_{\cD} \cC$ via $\varphi_i$.  Then we have a unique tracial $\mathrm{W}^*$-isomorphism
			\[
			\median_{\cB \median_{\cD} \cC} \left(\cA_j \median_{\cD} \cC \right)_{j \in J} \xrightarrow[\cong]{\Phi} \left( \median_{\cB} (\cA_j)_{j \in J} \right) \median_{\cD} \cC
			\]
			which behaves in the natural way on each copy of $\cA_j$ and each copy of $\cC$.  More precisely, there is a unique isomorphism such that the following diagrams commute for each $i \in J$:
			\[
			\begin{tikzcd}
				& \cA_i \arrow{dl}{\rho_i} \arrow{dr}{\lambda_i} & \\
				\cA_i \median_{\cD} \cC \arrow{d}{\tilde{\lambda}_i} & & \median_{\cB} (\cA_j)_{j \in J} \arrow{d}{\rho} \\
				\median_{\cB \median_{\cD} \cC} \left(\cA_j \median_{\cD} \cC \right)_{j \in J} \arrow{rr}{\Phi} & & \left( \median_{\cB} (\cA_j)_{j \in J} \right) \median_{\cD} \cC,
			\end{tikzcd}
			\]
			where $\rho_i$, $\lambda_i$, $\tilde{\lambda}_i$, and $\rho$ are the canonical inclusions associated to the free product constructions, and similarly,
			\[
			\begin{tikzcd}
				\cA_i \median_{\cD} \cC \arrow{d}{\tilde{\lambda}_i} & \cC \arrow{l}{\sigma_i} \arrow{d}{\sigma} \\
				\median_{\cB \median_{\cD} \cC} \left(\cA_j \median_{\cD} \cC \right)_{j \in J} \arrow{r}{\Phi} & \left( \median_{\cB} (\cA_j)_{j \in J} \right) \median_{\cD} \cC,
			\end{tikzcd}
			\]
			where $\sigma_i$ and $\sigma$ are the canonical inclusions of $\cC$ associated to the free product constructions.
		\end{lemma}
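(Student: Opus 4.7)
The plan is to exhibit the right-hand side $M := (\median_\cB (\cA_j)_{j \in J}) \median_\cD \cC$ as an amalgamated free product of the family $(\cA_j \median_\cD \cC)_{j \in J}$ over $\cB \median_\cD \cC$, then invoke the uniqueness part of Lemma \ref{lem: uniqueness of free product} to obtain the canonical isomorphism $\Phi$ and the commutativity of both diagrams in one stroke.

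First I would construct, for each $i \in J$, a tracial $\mathrm{W}^*$-embedding $\mu_i: \cA_i \median_\cD \cC \to M$. Inside $M$, both $\rho(\lambda_i(\cA_i))$ and $\sigma(\cC)$ contain $\cD$ (identified via $\rho|_\cD = \sigma|_\cD$ inherited from the outer amalgamation), and by definition of $M$ the larger subalgebras $\rho(\median_\cB (\cA_j)_{j \in J})$ and $\sigma(\cC)$ are free over $\cD$. Lemma \ref{lem: free product embedding}(1), applied to the intermediate subalgebra $\rho(\lambda_i(\cA_i))$, yields that $\rho(\lambda_i(\cA_i))$ and $\sigma(\cC)$ are free over $\cD$ in $M$, and Corollary \ref{cor: independence and embeddings} then produces $\mu_i$ with $\mu_i \circ \rho_i = \rho \circ \lambda_i$ and $\mu_i \circ \sigma_i = \sigma$. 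Next I would check the base compatibility: for $b \in \cB$ we have $\mu_i \circ \varphi_i(b) = \rho(\lambda_i(b)) = \rho(b)$, independent of $i$ because $\lambda_i|_\cB$ is the amalgamation inclusion, and for $c \in \cC$ we have $\mu_i \circ \varphi_i(c) = \sigma(c)$; hence $\mu_i|_{\cB \median_\cD \cC}$ equals a single embedding $\varphi$ into $M$, independent of $i$. Generation $\bigvee_{i \in J} \mu_i(\cA_i \median_\cD \cC) = M$ is immediate since the joint image contains both $\rho(\median_\cB \cA_j)$ and $\sigma(\cC)$.

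The heart of the argument, and the step I expect to be the main obstacle, is showing that $(\mu_i(\cA_i \median_\cD \cC))_{i \in J}$ is freely independent in $M$ with amalgamation over $\varphi(\cB \median_\cD \cC)$. I would verify the moment condition $E_{\cB \median_\cD \cC}\bigl(\mu_{i(1)}(y_1)\cdots\mu_{i(n)}(y_n)\bigr) = 0$ for $i(k) \neq i(k+1)$ and $E_{\cB \median_\cD \cC}(y_k) = 0$, by passing to a dense spanning set. The orthogonal complement of $L^2(\cB \median_\cD \cC)$ inside $L^2(\cA_i \median_\cD \cC)$ is spanned by alternating words in $\cA_i$ and $\cC$ which have every $\cC$-factor centered on $\cD$ and at least one $\cA_i$-factor lying in $\cA_i \ominus \cB$ (the kernel of $E_\cB$), the remaining $\cA_i$-factors allowed to stay in $\cB$; this is a direct consequence of the tensor-product decomposition of $L^2(\cA_i \median_\cD \cC)$ relative to $L^2(\cD)$, refined by the splitting $L^2(\cA_i)^\circ = L^2(\cB)^\circ \oplus (L^2(\cA_i) \ominus L^2(\cB))$. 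Substituting such alternating expansions for each $y_k$ and concatenating, the resulting word sits inside $M$ and reads as an alternating word between $\rho(\median_\cB \cA_j)$ and $\sigma(\cC)$: the $\sigma(\cC)$-factors are centered on $\cD$ by construction, and between consecutive $\sigma(\cC)$-factors the $\rho$-block is a nontrivial $\cB$-centered alternating product in the $\cA_j$'s (using $i(k) \neq i(k+1)$ together with the $\cA_{i(k)} \ominus \cB$ factor inside each $y_k$ to prevent collapses of the alternation across the $w_k$/$w_{k+1}$ boundary), hence lies in $\rho(\median_\cB \cA_j) \ominus \cB$ by free independence of the $\cA_j$'s over $\cB$. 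The defining freeness of $\rho(\median_\cB \cA_j)$ and $\sigma(\cC)$ over $\cD$ in $M$ then forces the conditional expectation onto $\cD$, hence onto $\varphi(\cB \median_\cD \cC)$, to vanish.

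With the three properties (common base embedding, amalgamated free independence, generation) in hand, Lemma \ref{lem: uniqueness of free product} yields a unique tracial $\mathrm{W}^*$-isomorphism $\Phi: \median_{\cB \median_\cD \cC}(\cA_j \median_\cD \cC)_{j \in J} \to M$ satisfying $\Phi \circ \tilde\lambda_i = \mu_i$ for every $i \in J$. Unpacking $\Phi \circ \tilde\lambda_i \circ \rho_i = \rho \circ \lambda_i$ and $\Phi \circ \tilde\lambda_i \circ \sigma_i = \sigma$ recovers exactly the commutativity of the two diagrams in the statement, completing the proof.
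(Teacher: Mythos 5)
Your overall strategy agrees with the paper's: construct embeddings $\mu_i: \cA_i \median_\cD \cC \to M := (\median_\cB(\cA_j)_{j})\median_\cD \cC$, verify that the images generate $M$ and share a common copy of $\cB\median_\cD\cC$, then establish amalgamated free independence and invoke the uniqueness of the amalgamated free product (Lemma~\ref{lem: uniqueness of free product}). The construction of $\mu_i$ and the base compatibility check are fine. The gap is in the central freeness verification.

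You claim that after substituting alternating expansions for each $y_k$ and concatenating, ``between consecutive $\sigma(\cC)$-factors the $\rho$-block is a nontrivial $\cB$-centered alternating product in the $\cA_j$'s,'' hence lies in $\rho(\median_\cB \cA_j)\ominus\cB$. This is false. Once you refine the splitting of each $\cD$-centered $\cA_i$-factor as $L^2(\cA_i)^\circ = L^2(\cB)^\circ \oplus (L^2(\cA_i)\ominus L^2(\cB))$, a single $\rho$-block sitting between two consecutive $\cC^\circ$-factors in the interior of one $y_k$ can perfectly well land entirely in $\cB^\circ$. Such a block satisfies $E_\cD = 0$ but not $E_\cB = 0$; it is not in $\median_\cB\cA_j \ominus \cB$. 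The hypothesis ``at least one $\cA_{i(k)}\ominus\cB$ factor somewhere in $y_k$'' provides no control over what sits in any \emph{particular} $\rho$-block, and the $i(k)\neq i(k+1)$ condition only constrains the two boundary $\cA$-factors, not interior ones. Consequently your final step --- deducing orthogonality to $\cB\median_\cD\cC$ from orthogonality to $\cD$ --- does not follow, since $E_\cD = 0$ certainly does not imply $E_{\cB\median_\cD\cC} = 0$ (any nonzero $b\in\cB^\circ$ is a counterexample).

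The actual obstruction, which the paper's proof confronts head-on, is the boundary collapse: when the juncture $\cC$-factor $y_{k,n_k}$ lies in $\cD$ and the two adjacent $\cA$-factors $x_{k,n_k}, x_{k+1,1}$ both lie in $\cB^\circ$, their merged product $x_{k,n_k}\,y_{k,n_k}\,x_{k+1,1}$ lands in $\cB$, and its $\cD$-component shortens the word in a way that can destroy the alternation structure. The paper resolves this by a normalization (absorbing $\cC$-factors so each $X_i$ starts with an $\cA$-factor, and splitting each juncture $\cC$-factor into its $\cD$- and $\cC^\circ$-parts), followed by a five-way case analysis on the type of each juncture, handling the problematic Cases 1--3 by induction on the total word length, and only in the remaining Cases 4--5 reaching the clean conclusion that at least one (not all) $H_\ell$ lands in $\cA^\bullet$. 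The correct target statement is ``at least one $\rho$-block is $\cB$-centered,'' not ``all are,'' and proving even that weaker statement requires the inductive reduction you have skipped. I would suggest reworking this step along those lines.
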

		
		\begin{proof}
			Let $\lambda_i: \cA_i \to \median_{j \in J} \cA_j$ be the embedding given by the free product construction.  By Lemma \ref{lem: free product embedding} (2), $\lambda_i$ and the identity map on $\cB$ induce an embedding
			\[
			\pi_i: \cA_i \median_{\cD} \cC \to (\median_{\cB} (\cA_j)_{j \in J}) \median_{\cD} \cC.
			\]
			Our goal is to show that the images $\pi_j(\cA_j \median_{\cD} \cC)$ are freely independent with amalgamation over $\cB \median_{\cD} \cC$ in $(\median_{\cB} (\cA_j)_{j \in J}) \median_{\cD} \cC$, and that they generate it.  Indeed, if we can show this, then the uniqueness of the free product with amalgamation over $\cB \median_{\cD} \cC$ up to canonical isomorphism (Lemma \ref{lem: uniqueness of free product}) will imply the existence and uniqueness of the isomorphism $\Phi$ with the desired properties.  Moreover, the fact that $(\median_{\cB} (\cA_j)_{j \in J}) \median_{\cD} \cC$ is generated by the images of $\pi_i$ is immediate because it is generated by the images of $\cA_j$ and $\cC$.  Thus, the main challenge is showing free independence over $\cB \median_{\cD} \cC$.
			
			In the entire rest of the argument, we work in $(\median_{\cB} (\cA_j)_{j \in J}) \median_{\cD} \cC$ as the ambient space and view $\cA_i \median_{\cD} \cC$ as a subset of it via the map $\pi_i$, and we therefore suppress $\pi_i$ in our notation.  To show free independence of $\cA_j \median_{\cD} \cC$ over $\cB \median_{\cD} \cC$, it suffices to prove free independence over $\cB \median_{\cD} \cC$ for weak-operator dense $*$-subalgebras of $\cA_j * \cC$; this is well known and follows from a straightforward limiting argument as in \cite[Proposition 2.5.7]{voiculescu1985symmetries}.  Therefore, given $j: \{1,\dots,N\} \to J$ and $X_i$ in some dense subalgebra of $\cA_{j(i)} *_{\cD} \cC$ with $E_{\cB *_{\cD} \cC}[X_i] = 0$, we seek to show that $E_{\cB *_{\cD} \cC}[X_1 \dots X_N] = 0$.  We rely on the decomposition of elements in the free product $\cA_j *_{\cD} \cC$ as linear combinations of strings in $\cA_j$ and $\cC$.  Consider the dense subalgebra of $\cA_j *_{\cD} \cC$ generated by $\cA_j$ and $\cC$; these elements can be expressed as linear combinations of products of elements from $x_i$ from $\cA_j$ and $y_i$ from $\cC$ respectively that are orthogonal to $\cD$ (for instance, an alternating product that starts with an element from $\cA_j$ and ends with an element from $\cC$ would be $x_1 y_1 \dots x_k y_k$). Thus our dense subalgebra can be expressed as
			\[
			\cC \oplus \bigoplus_{n=0}^\infty \cC \cdot (\cA_j^\circ \cdot \cC^\circ)^n \cdot\cA_j^\circ \cdot \cC,
			\]
			where $\cA_j^\circ = \{a \in \cA_j: E_{\cD}[a] = 0\}$ and $\cC^\circ = \{c \in \cC: E_{\cD}[c] = 0\}$.  These summands are, in fact, orthogonal to each other \cite[p. 684-685]{popa1993markov}.  Moreover, on each one, the inner product is given by
			\[
			\tau((y_0 x_1 y_1 \dots x_k y_k)^* y_0' x_1' y_1' \dots x_k' y_k') = \tau\left( y_k^* E_{\cD}[\dots x_1^*E_{\cD}[y_0^*y_0]x_1' \dots] y_k'\right).
			\]
			In other words, we have a bimodule decomposition,
			\[
			\cC \oplus \bigoplus_{n=0}^\infty \cC \otimes_{\cD} (\cA_j^\circ \otimes_{\cD} \cC^\circ)^{\otimes_{\cD}} \otimes_{\cD} \cA_j^\circ \otimes_{\cD} \cC.
			\]
			This follows for instance from the way that the amalgamated free product is constructed (see \cite[\S 4.7]{brown2008c*algebras} and use for the $\mathcal{H}_i$'s the $\cA_j$-$\cD$ correspondence generated by $\cA_j$ and $\cC$-$\cD$-correspondence generated by $\cC$).  In summary, we have a dense subalgebra
			\[
			\cA_j *_{\cD} \cC \supseteq \cC \oplus \bigoplus_{n=0}^\infty \cC \otimes_{\cD} (\cA_j^\circ \otimes_{\cD} \cC^\circ)^{\otimes_{\cD} n} \otimes_{\cD} \cA_j^\circ \otimes_{\cD} \cC.
			\]
			representing that each element of $\cA_j *_{\cD} \cC$ is expressed as a sum of products $c_1 a_0 c_2 \dots a_{n-1} c_n$ where $a_j \in \cA_j$ with $E_{\cD}[a_j] = 0$ and $c_j \in \cC$ with $E_{\cD}[c_j] = 0$ except for the first and last $c_j$'s, and the products for different lengths of strings are orthogonal to each other.  Now we further decompose
			\begin{align*}
				\cA_j^\circ &= \cB^\circ \oplus \cA_j^\bullet, \qquad \text{ where } \\
				\cB^\circ &= \{b \in \cB: E_{\cD}[b] = 0\}, \\ \cA_j^\bullet &= \{a \in \cA_j: E_{\cB}[a] = 0\}.
			\end{align*}
			Hence, we obtain a dense subalgebra
			\[
			\cA_j *_{\cD} \cC) \supseteq \cC \oplus \bigoplus_{n=0}^\infty \bigoplus_{H_\ell \in \{\cB^\circ, \cA_j^{\bullet} \}} \cC \otimes_{\cD} H_1 \otimes_{\cD} \cC^\circ \otimes_{\cD} \dots \otimes_{\cD} H_n \otimes_{\cD} \cC,
			\]
			where the summands are orthogonal.  When we restrict to the terms where all the $H_j$'s are $\cB^\circ$, that produces a dense subalegbra of $\cB *_{\cD} \cC$.  Thus, since $X_i$ is orthogonal to $\cB *_{\cD} \cC$, we see that $X_i$ can be expressed as a sum of terms in $\cC \otimes_{\cD} H_1 \otimes_{\cD} \cC^\circ \otimes_{\cD} \dots \otimes_{\cD} H_n \otimes_{\cD} \cC$ where at least one of the $H_\ell$'s is $\cA_j^{\bullet}$.  By taking linear combinations, we can assume without loss of generality that $X_i$ is an element of one of the spaces $L^2(\cC) \otimes_{\cD} H_1 \otimes_{\cD} L^2(\cC)^\circ \otimes_{\cD} \dots \otimes_{\cD} H_n \otimes_{\cD} L^2(\cC)$, so that
			\[
			X_i = y_{i,0} x_{i,1} y_{i,1} \dots x_{i,k_i} y_{i,n_i},
			\]
			where $y_{i,\ell} \in L^2(\cC)$ for all $\ell$, $y_{i,\ell} \in L^2(\cC)^\circ$ for $0 < \ell < n_i$, and $x_{i,\ell} \in L^2(\cB)^\circ$ or $x_{i,\ell} \in L^2(\cA_{j(i)})^{\bullet}$.
			
			Now we proceed to show that $X_1 \dots X_n$ is orthogonal to $\cB *_{\cD} \cC$ using induction on the total length $n_1 + \dots + n_i$.  Since the orthogonal complement of $\cB *_{\cD} \cC$ is a bimodule over $\cC$, we can factor out the first term $y_{1,0}$ and thus assume without loss of generality that $y_{1,0} = 1$.  Similarly, for $i > 1$, the term $y_{i,0}$ can be combined with $y_{i-1,n_{i-1}}$, so without loss of generality $y_{i,0} = 1$.  Next, each $y_{i,n_i}$ can be expressed as $E_{\cD}[y_{i,n_i}] + (y_{i,n_i} - E_{\cD}[y_{i,n_i}])$ and hence by linearity we can assume without loss of generality that each $y_{i,n_i}$ is either in $\cD$ or in $L^2(\cC)^{\circ}$.
			
			We want to apply the free product structure of $\cA *_{\cD} \cC$ where $\cA = *_{\cB} \cA_j$.  Note as above that there is a dense subalgebra
			\begin{align*}
				L^2(\cA *_{\cD} \cC) &\subseteq \cC \oplus \bigoplus_{n=0}^\infty \cC \otimes_{\cD} (\cA^\circ \otimes_{\cD} \cC^\circ)^{\otimes_{\cD} n} \otimes_{\cD} \cA^\circ \otimes_{\cD} \cC \\
				&\cong \cC \oplus \bigoplus_{n=0}^\infty \bigoplus_{H_\ell \in \{\cB^\circ, \cA^{\bullet} \}} \cC \otimes_{\cD} H_1 \otimes_{\cD} \cC^\circ \otimes_{\cD} \dots \otimes_{\cD} H_n \otimes_{\cD} \cC.
			\end{align*}
			Meanwhile,
			\[
			X_1 \dots X_n = (x_{1,1} y_{1,1} \dots x_{1,n_1} y_{i,n_i}) \dots (x_{N,1} y_{N,1} \dots x_{N,n_N} y_{N,n_N})
			\]
			It is important to keep track of what happens when we multiply $x_{i,n_i}$, $y_{i,n_i}$, and $x_{i+1,1}$ since $y_{i,n_i}$ is not necessarily in $\cC^{\circ}$.  We distinguish for each $i < N$ several cases:
			\begin{itemize}
				\item \textbf{Case 1:} $y_{i,n_i} \in \cD$, and $x_{i,n_i}$ and $x_{i+1,1}$ are in $\cB^\circ$.
				\item \textbf{Case 2:} $y_{i,n_i} \in \cD$, and $x_{i,n_i} \in \cA_{j(i)}^\bullet$ and $x_{i+1,1} \in \cB^\circ$.
				\item \textbf{Case 3:} $y_{i,n_i} \in \cD$, and $x_{i,n_i} \in \cB^\circ$ and $x_{i+1,1} \in \cA_{j(i+1)}^\bullet$.
				\item \textbf{Case 4:} $y_{i,n_i} \in \cD$, and $x_{i,n_i} \in \cA_{j(i)}^{\bullet}$ and $x_{i+1,1} \in \cA_{j(i+1)}^{\bullet}$.
				\item \textbf{Case 5:} $y_{i,n_i} \in \cC^\circ$.
			\end{itemize}
			We will first show that if there is any term in Cases 1-3, we can reduce the claim to a shorter string and apply the induction hypothesis, and moreover that if all the terms fall under Cases 4 and 5, we obtain the desired orthogonality from the free product structure of $\cA *_{\cD} \cC$.
			
			\textbf{Case 1:} Note that $x_{i,n_i} y_{i,n_i} x_{i+1,1} \in \cB$ and thus can be written as $d + b$ where $d \in \cD$ and $b \in \cB^\circ$.  Thus,
			\begin{align*}
				X_i X_{i+1} &= [x_{i,1} y_{i,1} \dots x_{i,n_i-1} (y_{i,n_i-1} d y_{i+1,1})] [x_{i+1,2} \dots x_{i+1,n_{i+1}} y_{i,n_{i+1}}] \\ & \quad + [x_{i,1} y_{i,1} \dots x_{i,n_i-1} y_{i,n_i-1}] [b y_{i+1,1} \dots x_{i+1,n_{i+1}} y_{i,n_{i+1}}],
			\end{align*}
			and hence $X_1 \dots X_n$ is a linear combination of two terms of the same form but with a lower total degree.  Hence, by induction hypothesis, it is orthogonal to $\cB *_{\cD} \cC$ as desired.
			
			\textbf{Case 2:}  Since $\cA_{j(i)}^{\bullet}$ is a bimodule over $\cB$, we have that $x_{i,n_i} y_{i,n_i} x_{i+1,1} \in L^2(\cA_{j(i)})^{\bullet}$.  Hence,
			\[
			X_i X_{i+1} = [x_{i,1} y_{i,1} \dots x_{i,n_i-1} y_{i-1,n_{i-1}} (x_{i,n_i} y_{i,n_i} x_{i+1,1}) y_{i+1,1}] [x_{i+1,2} y_{i+1,2} \dots x_{i+1,n_{i+1}} y_{i,n_{i+1}}],
			\]
			and so $X_1 \dots X_n$ has been expressed as a term of the same form but with a lower degree, and we may apply the induction hypothesis.
			
			\textbf{Case 3:} The argument is symmetrical to Case 2.
			
			At this point, we can assume that every index $i$ is in Case 4 or Case 5.  Besides, since we want to show that the element is orthogonal to $\cB *_{\cD} \cC$, we can factor out any terms at the front that come from $\cB *_{\cD} \cC$, and thus assume that $x_{1,1} \in \cA_{i(1)}^{\bullet}$.  We claim that $X_1 \dots X_N$ can be expressed as
			\begin{multline*}
				a_{1,1} d_{1,1} \dots a_{1,k_1-1} d_{1,k_1-1} a_{1,k_1}  b_{1,1} c_{1,1} \dots b_{1,\ell_1} c_{1,\ell_1} \\
				\dots \\
				a_{K,1} d_{K,1} \dots a_{K,k_K} d_{1,k_K} c_{1,K} b_{1,K} \dots c_{1,\ell_K} b_{1,\ell_K} c_{1,\ell_K+1},
			\end{multline*}
			where
			\begin{itemize}
				\item each $a_{i,1} d_{i,1} \dots a_{i,k_i} d_{i,k_i}$ is an alternating product with $a_{i,j}$ in one of the $\cA_t^{\bullet}$'s  and $d_{i,j} \in \cD$,
				\item each $c_{i,j} \in \cC^{\circ}$,
				\item each $b_{i,j} \in \cB^{\circ}$.
			\end{itemize}
			This factorization arises as follows.  The $d_{i,j}$ terms come from the $y_{i,n_i}$ terms that are in $L^2(\cD)$ under Case 4. The $c_{i,j}$ terms come from all the rest of the $y_{i,j}$ terms, i.e.\ those which are in $\cC^\circ$, and so in particular the junctures between $X_i$ and $X_{i+1}$ in Case 5 will produce $c_{i,j}$ terms.  Thus, each $a_{i,1} d_{i,1} \dots a_{i,k_i} d_{i,k_i}$ corresponds to consecutive occurrences of Case 5; here intermediate terms $d_{i,j} a_{i,j} d_{i,j+1}$ only occur when $a_{i,j}$ comes from some $X_{i'}$ of the form $x_{i',1} y_{i',1}$.  Now we notice that $a_{1,1} d_{1,1} \dots a_{1,k_1-1} d_{1,k_1-1} a_{1,k_1} \in \cA^{\bullet}$ since the $\cA_j$'s are freely independent over $\cB$.  Therefore, $X_1 \dots X_N$ is an element in
			\[
			\cC \otimes_{\cD} H_1 \otimes_{\cD} \cC^\circ \otimes_{\cD} \dots \otimes_{\cD} H_n \otimes_{\cD} \cC,
			\]
			where $H_\ell \in \{\cA^{\bullet}, \cB^{\circ}\}$, and there is at least one occurrence of $\cA^{\bullet}$.  Therefore, it follows from the direct sum decomposition that $X_1 \dots X_N$ is orthogonal to $\cB *_{\cD} \cC$ as desired.
		\end{proof}
		
		\subsection{Amalgamation of filtrations and free Brownian motions} \label{subsec: amalgamation of filtrations}
		
		\begin{lemma}[{Lemma \ref{lem: amalgamation of Brownian motions}}] \label{lem: amalgamation of Brownian motions appendix}
			Let $0 \leq t_0 \leq t_1 \leq T$ and $K$ be any index set.  Let $\cA$ be a tracial von Neumann algebra equipped with a filtration $(\cA_t)_{t \in [t_0,t_1]}$ and a compatible $d$-variable free Brownian motion $(S_t^0)_{t \in [t_0,t_1]}$. 
			Let $(\cB^k)_{k \in K}$ be a family of tracial von Neumann algebras and let $\iota_k: \cA \to \cB^k$ be a tracial $\mathrm{W}^*$-embedding.  Suppose each $\cB^{k}$ has a filtration $(\cB_t^k)_{t \in [t_1,T]}$ and a compatible $d$-variable free Brownian motion $(S_t^k)_{t \in [t_1,T]}$.  Assume that $\iota_k(\cA_{t_1}) \subseteq \cB_{t_1}^k$.
			
			Then there exists an algebra $\cB$, a tracial $\mathrm{W}^*$-embeddings $\iota: \cA \to \cB$ and $\widetilde{\iota}_k: \cB^k \to \cB$, a filtration $(\cB_t)_{t \in [t_0,T]}$, and a compatible  $d$-variable free Brownian motion $(S_t)_{t \in [t_0,T]}$ such that the following hold:
			\begin{enumerate}
				\item We have $\widetilde{\iota}_k \circ \iota_k|_{\cA_{t_0}} = \iota|_{\cA_{t_0}}$ for each $k \in K$.
				\item We have $\iota(S_t^0) = S_t$ for $t \in [t_0,t_1]$.
				\item We have $\widetilde{\iota}_k(S_t^k) = S_t - S_{t_1}$ for $t \in [t_1,T]$.
				\item We have $\iota(\cA_t) \subseteq \cB_t$ for $t \in [t_0,t_1]$.
				\item We have $\widetilde{\iota}_k(\cB_t^k) \subseteq \cB_t$ for $t \in [t_1,T]$.
			\end{enumerate}
			Furthermore, for any given $k_0 \in K$, the embedding $\iota$ can be taken to be $\iota = \widetilde{\iota}_{k_0} \circ \iota_{k_0}$.
		\end{lemma}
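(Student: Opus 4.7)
The plan is to build $\cB$ as an amalgamated free product that simultaneously (i) glues the $\cB^k$'s along the shared initial-time algebra $\cA_{t_1}$ and (ii) identifies the tails $(S^k_u)_{u\in[t_1,T]}$ of the various Brownian motions into a single free Brownian motion on $[t_1,T]$. Let $\mathcal{W}$ be an abstract tracial $\mathrm{W}^*$-algebra generated by a fresh $d$-variable free Brownian motion $(T_u)_{u\in[t_1,T]}$, and set $\mathcal{Q} := \cA_{t_1} \median \mathcal{W}$. Inside each $\cB^k$, the hypothesis $\iota_k(\cA_{t_1})\subseteq \cB^k_{t_1}$ together with compatibility of $(S^k_u)_u$ with $(\cB^k_t)_t$ gives that $\iota_k(\cA_{t_1})$ and $W^*((S^k_u)_{u\in[t_1,T]})$ are freely independent, and by Corollary \ref{cor: independence and embeddings} the subalgebra $\mathcal{P}^k := \iota_k(\cA_{t_1}) \vee W^*((S^k_u)_u)$ of $\cB^k$ is canonically isomorphic to $\mathcal{Q}$ via $\cA_{t_1}\mapsto\iota_k(\cA_{t_1})$ and $T_u\mapsto S^k_u$. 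Likewise $\mathcal{Q}$ sits inside $\cA^+:=\cA\median\mathcal{W}$ via Lemma \ref{lem: free product embedding}(2). Take $\cB$ to be the amalgamated free product of $\cA^+$ and $(\cB^k)_{k\in K}$ over $\mathcal{Q}$ using these identifications, with $\iota:\cA\hookrightarrow\cA^+\hookrightarrow\cB$ and $\widetilde{\iota}_k:\cB^k\hookrightarrow\cB$ the canonical inclusions.

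Define the filtration by $\cB_t:=\iota(\cA_t)$ for $t\in[t_0,t_1]$ and $\cB_t:=\bigvee_k \widetilde{\iota}_k(\cB^k_t)$ for $t\in[t_1,T]$, and set $S_t:=\iota(S^0_t)$ on $[t_0,t_1]$ and $S_t:=\iota(S^0_{t_1}) + (\text{image of }T_t\text{ in }\cB)$ on $[t_1,T]$. The amalgamation automatically identifies $\iota|_{\cA_{t_1}}$ with $\widetilde{\iota}_k\circ\iota_k|_{\cA_{t_1}}$ and identifies the image of $T_t$ with $\widetilde{\iota}_k(S^k_t)$ for every $k$, so conditions (1)--(5) hold by construction. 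The distributional half of the free-BM property (each $S_t - S_s$ semicircular of variance $t-s$ with freely independent components) is inherited from $\cA$ or $\mathcal{W}$ when $s,t$ lie in the same subinterval; for $s\leq t_1\leq t$ it follows because the two summands of $S_t - S_s$ live in the freely independent pieces $\iota(\cA)$ and $\text{image}(\mathcal{W})$ of $\cA^+$, so $S_t - S_s$ is a free convolution of two semicirculars of variances $t_1-s$ and $t-t_1$.

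The main task is verifying compatibility of $(S_t)_t$ with $(\cB_t)_t$. The cases $s\leq t\leq t_1$ (direct inheritance) and $s\leq t_1\leq t$ (write $S_t - S_s = (S_t - S_{t_1}) + \iota(S^0_{t_1}-S^0_s)$; the first summand is free from $\iota(\cA)$ by the free-product structure of $\cA^+$, the second is free from $\iota(\cA_s)$ inside $\iota(\cA)$, so by associativity of free independence (Lemma \ref{lem: associativity}) their sum is free from $\iota(\cA_s)=\cB_s$) are routine. The hard case is $t_1\leq s\leq t$: here $S_t - S_s$ is the image of $T_t - T_s \in \mathcal{W}_{[s,T]}:=W^*(T_u - T_s : u\in[s,T])\subseteq\mathcal{Q}$, and one must show this image is free from $\cB_s=\bigvee_k\widetilde{\iota}_k(\cB^k_s)$. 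Set $\mathcal{W}_{[t_1,s]}:=W^*(T_u:u\in[t_1,s])$ and $\mathcal{Q}_s:=\cA_{t_1}\median\mathcal{W}_{[t_1,s]}$, so $\mathcal{W}=\mathcal{W}_{[t_1,s]}\median\mathcal{W}_{[s,T]}$ and $\mathcal{Q}=\mathcal{Q}_s\median\mathcal{W}_{[s,T]}$. In each $\cB^k$ compatibility of $(S^k_u)_u$ with $(\cB^k_t)_t$ gives $\cB^k_s$ and the image of $\mathcal{W}_{[s,T]}$ freely independent, so their join is $\cB^k_s\median\mathcal{W}_{[s,T]}$ and contains $\mathcal{Q}$. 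These subalgebras are free over $\mathcal{Q}$ in $\cB$ by Lemma \ref{lem: free product embedding}(1), so Lemma \ref{lem: AFP manipulation} applied with $\cA_k=\cB^k_s$, $\cB=\mathcal{Q}_s$, $\cD=\bC$, $\cC=\mathcal{W}_{[s,T]}$ yields
\[
\cB_s \vee \text{image}(\mathcal{W}_{[s,T]}) \;\cong\; \Bigl(\median_{\mathcal{Q}_s,\, k\in K}\cB^k_s\Bigr)\median \mathcal{W}_{[s,T]},
\]
a plain free product in which $\mathcal{W}_{[s,T]}$ is free from $\median_{\mathcal{Q}_s}\cB^k_s\cong\cB_s$, giving the desired freeness.

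For the final assertion that $\iota$ may be taken equal to $\widetilde{\iota}_{k_0}\circ\iota_{k_0}$, I would simply drop $\cA^+$ from the amalgamation and take instead $\cB:=\median_{\mathcal{P}^{k_0},\,k\in K}\cB^k$ (amalgamating over $\mathcal{P}^{k_0}\subseteq\cB^{k_0}$, with each $\cB^k$ for $k\ne k_0$ embedding $\mathcal{P}^{k_0}$ via the canonical isomorphism $\mathcal{P}^{k_0}\cong\mathcal{P}^k$) and set $\iota:=\widetilde{\iota}_{k_0}\circ\iota_{k_0}$. All verifications go through with $\cB^{k_0}$ replacing the role of $\cA^+$; in particular the inclusion $\iota_{k_0}(\cA_s)\subseteq\iota_{k_0}(\cA_{t_1})\subseteq\cB^{k_0}_{t_1}$ for $s\leq t_1$ ensures the associativity argument in the $s\leq t_1\leq t$ case still applies.
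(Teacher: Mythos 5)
Your proof is correct and follows essentially the same route as the paper's: construct $\mathcal{Q}=\cA_{t_1}\median\mathcal{W}$ (the paper's $\widetilde{\cA}$) from $\cA_{t_1}$ and a fresh free Brownian motion, embed it into each $\cB^k$ via compatibility, amalgamate over $\mathcal{Q}$, and verify compatibility of the glued Brownian motion with the glued filtration using associativity (for $s\le t_1$) and Lemma \ref{lem: AFP manipulation} (for $s\ge t_1$). The only difference is that your main construction also includes $\cA^+=\cA\median\mathcal{W}$ as an extra factor in the amalgamation before dropping it for the final assertion, whereas the paper simply takes $\iota=\widetilde{\iota}_{k_0}\circ\iota_{k_0}$ from the outset (your ``final assertion'' variant), which is sufficient and slightly more economical.
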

		
		\begin{proof}
			We want to construct $\cB$ as an amalgamated free product of the $\cB^k$'s over the subalgebra generated by $\cA_{t_1}$ and a free Brownian motion.
			
			First, let $\mathcal{Z}$ be the von Neumann algebra generated by a free Brownian motion $(Z_t)_{t \in [t_1,T]}$ with initial condition $Z_{t_1} = 0$.  The existence of such an algebra follows from Voiculescu's free Gaussian construction.  It is also well known in free probability that $(\mathcal{Z},(Z_t)_{t\in[t_1,T]})$ is unique up to isomorphism, meaning that if $(\widetilde{\mathcal{Z}},(\widetilde{Z}_t)_{t \in [t_1,T]})$ is another tracial von Neumann algebra generated by a free Brownian motion, then there is a unique trace-preserving $*$-isomorphism $\phi: \mathcal{Z} \to \widetilde{\mathcal{Z}}$ such that $\phi(Z_t) = \widetilde{Z}_t$ for $t \in [t_1,T]$.  We explain the proof briefly here for completeness.  Consider non-commutative polynomials in variables indexed by $t \in [t_1,T]$ and $j = 1$, \dots, $d$; this is a well-defined $*$-algebra where each polynomial only depends on finitely many variables, even though there are uncountably many variables overall.  We have $\tau(p(Z_t: t \in [t_1,T])) = \tau(p(\widetilde{Z}_t: t \in [t_0,T]))$ for every $p$ since the joint moments of the $S_t$'s are uniquely determined by it being a Brownian motion.  It follows that $\norm{p(Z_t: t \in [t_1,T])}_{L^2} = \norm{p(\widetilde{Z}_t: t \in [t_1,T])}_{L^2}$, hence there is an isometric map $L^2(\mathrm{W}^*(Z_t: t \in [t_1,T])) \to L^2(\mathrm{W}^*(\widetilde{Z}_t: t \in [t_1,T]))$.  Since the operator norm can be evaluated by testing the inner products against polynomials, we also obtain $\norm{p(Z_t^k: t \in [t_1,T])} = \norm{p(\widetilde{Z}_t: t \in [t_1,T])}$, and so this restricts to a isomorphism of tracial $\mathrm{W}^*$-algebras $\mathrm{W}^*(Z_t: t \in [t_1,T])) \to \mathrm{W}^*(\widetilde{Z}_t: t \in [t_1,T])$.

			Let $\widetilde{\cA}$ be the free product $\cA_{t_1} * \mathcal{Z}$, and denote the associated embeddings
			\[
			\lambda_1: \mathcal{A}_{t_1} \to \widetilde{\cA}, \qquad \lambda_2: \mathcal{Z} \to \widetilde{\mathcal{A}}.
			\]
			We claim that there is a unique tracial $\mathrm{W}^*$-embedding
			\[
			\varphi_k: \widetilde{\cA} \to \cB^k
			\]
			such that
			\begin{equation} \label{eq: October 4}
				\varphi_k \circ \lambda_1 = \iota_k|_{\cA_{t_1}}, \qquad \varphi_k \circ \lambda_2(Z_t) = S_t^k \text{ for } t \in [t_1,T].
			\end{equation}
			First, by uniqueness of the tracial $\mathrm{W}^*$-algebra generated by a free Brownian motion proved above, we obtain an isomorphism $\mathcal{Z} \to \mathrm{W}^*(S_t^k: t \in [t_1,T])$ that sends $Z_t$ to $S_t^k$.  Then because $\iota_k \circ \lambda_1(\cA_{t_1})$ is contained in $\cB_{t_1}^k$ and the Brownian motion $(S_t^k)_{t \in [t_1,T]}$ is compatible with the filtration $(\cB_t^k)_{t \in [t_1,T]}$, we have that $\iota_k \circ \lambda_1(\cA_{t_1})$ is freely independent of $\mathrm{W}^*(S_t^k: t \in [t_1,T])$.  Since the free product is unique up to canonical isomorphism, we have a unique isomorphism $\varphi_k$ from $\widetilde{\cA} = \mathcal{A}_{t_1} * \mathcal{Z}$ to the subalgebra of $\mathcal{B}^k$ generated by $\iota_k \circ \lambda_2(\cA_{t_1})$ and $\mathrm{W}^*(S_t^k: t \in [t_1,T])$ satisfying the desired conditions \eqref{eq: October 4}.
			
			Now let $\cB$ be the amalgamated free product
			\[
			\cB = \median_{\widetilde{\cA}} (\cB^k)_{k \in K}
			\]
			with respect to the inclusions $\varphi_k: \widetilde{\cA} \to \cB_k$ described above, and let $\widetilde{\iota}_k: \cB^k \to \cB$ be the canonical inclusion of $\cB^k$ into the amalgamated free product, so that $\psi := \widetilde{\iota}_k \circ \varphi_k: \widetilde{\cA} \to \cB$ is independent of $k$.  To obtain the embedding from $\cA$ into $\cB$, fix $k_0 \in K$ arbitrary and let $\iota: \cA \to \cB$ be given by $\iota = \widetilde{\iota}_k \circ \iota_k$.  (This embedding will not be unique since our requirements in the theorem statement only determine $\iota|_{\cA_{t_0}}$.)  The filtration $\cB_t$ is defined as follows:
			\[
			\cB_t = \psi \circ \lambda_1(\cA_t) \text{ for } t \in [t_0,t_1)
			\]
			and
			\[
			\cB_t = \mathrm{W}^*(\widetilde{\iota}_k(\cB_t^k): k \in K) \text{ for } t \in [t_1,T].
			\]
			This is a valid filtration since we assumed that $\iota_k(\cA_{t_1}) \subseteq \cB_{t_1}^k$.  The condition that $\iota(\cA_t) \subseteq \cB_t$ for $t \in [t_0,t_1)$ is immediate since $\iota|_{\cA_{t_1}} = \psi|_{\cA_{t_1}}$.  Similarly, for $t \in [t_1,T]$, we have $\widetilde{\iota}_k(\cB_t^k) \subseteq \cB_t$ by construction.
			
			Define a $d$-variable free Brownian motion $(S_t)_{t \in [t_0,T]}$ as follows:
			\begin{itemize}
				\item Set $S_t = \psi(S_t^0)$ for $t \in [t_0,t_1]$, which is well-defined since $S_t \in \widetilde{\cA}_{t_1}$.
				\item Set $S_t = S_{t_0} + \psi(Z_t)$ for $t \in [t_1,T]$.  Note that $S_t = S_{t_0} + \widetilde{\iota}_k(S_t^k)$ as well since $\psi = \widetilde{\iota}_k \circ \varphi_k$.
			\end{itemize}
			To check that this is actually a free Brownian motion, note that $(S_t: t \in [t_0,t_1])$ and $(S_t - S_{t_1}: t \in [t_1,T])$ are free Brownian motions, and by construction, $(S_t - S_{t_1}: t \in [t_1,T]) = (\psi(Z_t): t \in [t_1,T])$ is freely independent of $\psi(\cA_{t_0})$ which contains $(S_t: t \in [t_0,t_1])$.  From here it is easy to check the free independence of increments in $[t_0,T]$ by splitting into cases based on the intervals $[t_0,t_1]$ and $[t_1,T]$.
			
			Finally, we need to check that the free Brownian motion $(S_t)_{t \in [t_0,T]}$ is compatible with the filtration $(\cB_t)_{t \in [t_0,T]}$.  To see adaptedness, note that by construction, we have
			\[
			S_t = \psi(S_t^0) \in \psi(\cA_t) = \cB_t \text{ for } t \in [t_0,t_1]
			\]
			and
			\[
			S_t = \widetilde{\iota}_k(S_t^k) + \psi(S_{t_1}^0) \in \widetilde{\iota}_k(\cB_t^k) + \psi(\cA_{t_1}) = \widetilde{\iota}_k(\cB_t^k) \text{ for } t \in [t_1,T].
			\]
			Then we have to check that for $t \in [t_0,T]$, we have free independence of $\cB_t$ and $\mathrm{W}^*(S_s - S_t: s \in [t,T])$.
			
			First consider the case where $t \in [t_0,t_1]$.  Then $\cB_t = \psi(\cA_t)$ and $S_s - S_t$ for $s \geq t$ are both in the image of $\psi$.  Specifically, we have $S_s = \psi(\widetilde{S}_s^0)$, where
			\[
			\widetilde{S}_t^0 = \begin{cases} S_t^0, & t \in [t_0,t_1] \\ S_{t_1}^0 + Z_t, & t \in [t_1,T]. \end{cases}
			\]
			Since $\psi$ is trace-preserving, it suffices to show that $\cA_t$ and $\mathrm{W}^*(\widetilde{S}_s^0 - \widetilde{S}_t^0)$ are freely independent in $\widetilde{\cA}$.  For each of notation, for $t_1 \leq a \leq b \leq T$, let
			\[
			\mathcal{C}_{a,b} = \mathrm{W}^*(\widetilde{S}_s^0 - \widetilde{S}_a^0: s \in [a,b]).
			\]
			Since $S_t^0$ is compatible with the filtration on $\cA$, we know that $\cA_t$ and $\mathcal{C}_{t,t_1}$ are freely independent.  Furthermore, by the construction of $\widetilde{\cA}$ as a free product, we see that $\mathcal{C}_{t_1,T}$ is freely independent of $\cA_{t_1}$, and in particular freely independent of the $\mathrm{W}^*$-algebra generated by $\cA_t$ and $\mathcal{C}_{t,t_1}$.  By the associativity property of free products (Lemma \ref{lem: associativity}), it follows that $\cA_t$ and $\mathcal{C}_{t,t_1}$ and $\mathcal{C}_{t_1,T}$ are freely independent.  Then by associativity again, $\cA_t$ is freely independent of the $\mathrm{W}^*$-algebra generated by $\mathcal{C}_{t,t_1}$ and $\mathcal{C}_{t_1,T}$, which is $\mathcal{C}_{t,T}$.  This is the claim we wanted to prove.
			
			Next, consider the case where $t \in [t_1,T]$, which is the more difficult case.  Using the notation above, we need to show that $\psi(\mathcal{C}_{t,T})$ is freely independent of $\mathcal{B}_t$.  Let $\widetilde{\iota}_k(\cB_t^k) \vee \psi(\mathcal{C}_{t,T})$ be the von Neumann subalgebra of $\cB$ generated by $\widetilde{\iota}_k(\cB_t^k)$ and $\psi(\mathcal{C}_{t,T})$, which is canonically isomorphic to the free product $\widetilde{\iota}_k(\cB_t^k) * \psi(\mathcal{C}_{t,T})$.  By Lemma \ref{lem: free product embedding}, since
			\[
			\psi \circ \lambda_1(\widetilde{\cA}) \subseteq \widetilde{\iota}_k(\cB_t^k) \vee \psi(\mathcal{C}_{t,T}) \subseteq \widetilde{\iota}_k(\cB_k),
			\]
			we see that the algebras $(\widetilde{\iota}_k(\cB_t^k) \vee \psi(\mathcal{C}_{t,T}))_{k \in K}$ are freely independent with amalgamation over $\psi \circ \lambda_1(\widetilde{\cA})$.   Hence, by Corollary \ref{cor: independence and embeddings},
			\[
			\cB_t \vee \psi(\mathcal{C}_{t,T}) = \bigvee_{k \in K} (\widetilde{\iota}_k(\cB_{t,k}) \vee \psi(\mathcal{C}_{t,T}))
			\]
			is canonically isomorphic to the free product with amalgamation over $\psi \circ \lambda_1(\widetilde{\cA})$
			\[
			\median_{\widetilde{\cA}} (\cB_{t,k} \vee \varphi_k(\mathcal{C}_{t,T}))_{k \in K}.
			\]
			Furthermore, by the choice of filtration on $\cB_k$, we have that $\cB_t^k$ is freely independent of $\mathrm{W}^*(S_s^k - S_t^k: s \in [t,T]) = \varphi_k(\mathcal{C}_{t,T})$.  Hence, we obtain an isomorphism
			\[
			\sigma: \cB \supseteq \cB_t \vee \psi(\mathcal{C}_{t,T}) \to \median_{\widetilde{\cA}} (\cB_{t,k} * \mathcal{C}_{t,T})_{k \in K}
			\]
			such that, writing $\sigma_k$ for the canonical embedding of $\cB_{t,k} * \mathcal{C}_{t,T}$ into the free product, we have
			\[
			\sigma \circ \widetilde{\iota}_k|_{\cB_t^k} = \sigma_k|_{\cB_t^k}
			\]
			and
			\[
			\sigma \circ \psi|_{\mathcal{C}_{t,T}} = \sigma_k \circ \varphi_k|_{\mathcal{C}_{t,T}}.
			\]
			Finally, recall by construction that
			\[
			\widetilde{\cA} = \cA_{t_1} * \mathcal{C}_{t_1,T} \cong \mathcal{A}_{t_1} * \mathcal{C}_{t_1,t} * \mathcal{C}_{t,T} = \mathcal{D} * \mathcal{C}_{t,T},
			\]
			where we set $\mathcal{D} = \mathcal{A}_{t_1} * \mathcal{C}_{t_1,t}$.  Hence, we have
			\[
			\cB_t \vee \psi(\mathcal{C}_{t,T}) \cong \median_{\mathcal{D} * \mathcal{C}_{t,T}} (\cB_t^k * \mathcal{C}_{t,T})_{k \in K},
			\]
			where the isomorphism respects the inclusions of various subalgebras as described above.  We claim that there is a unique isomorphism
			\[
			\Phi: \median_{\mathcal{D} * \mathcal{C}_{t,T}} (\cB_t^k * \mathcal{C}_{t,T})_{k \in K} \to \left( \median_{\mathcal{D}} (\cB_t^k)_{k \in K} \right) \median \mathcal{C}_{t,T},
			\]
			which respects the inclusions of various algebras in the way one would expect, namely, the following diagrams commute for each $i \in K$:
			\[
			\begin{tikzcd}
				& \cB_t^i \arrow{dl}{\rho_i} \arrow{dr}{\lambda_i'} & \\
				\cB_t^i \median \cC_{t,T} \arrow{d}{\tilde{\lambda}_i'} & & \median_{\cD} (\cB_t^k)_{k \in K} \arrow{d}{\rho} \\
				\median_{\cD \median \cC_{t,T}} \left(\cB_t^k \median \cC_{t,T} \right)_{k \in K} \arrow{rr}{\Phi} & & \left( \median_{\cD} (\cB_t^k)_{k \in K} \right) \median \cC_{t,T},
			\end{tikzcd}
			\]
			and
			\[
			\begin{tikzcd}
				\cB_t^i \median \cC_{t,T} \arrow{d}{\tilde{\lambda}_i'} & \cC_{t,T} \arrow{l}{\sigma_i'} \arrow{d}{\sigma'} \\
				\median_{\cD \median \cC_{t,T}} \left(\cB_t^k \median \cC_{t,T} \right)_{j \in J} \arrow{r}{\Phi} & \left( \median_{\cD} (\cB_t^k)_{k \in K} \right) \median \cC_{t,T},
			\end{tikzcd}
			\]
			where the maps $\rho_i$, $\lambda_i'$, $\tilde{\lambda}_i'$, $\rho$, $\sigma_i'$, $\sigma'$ in these diagrams are the canonical inclusions associated to the respective free product constructions (see Lemma \ref{lem: uniqueness of free product}).  We give a proof of this claim in Lemma \ref{lem: AFP manipulation} in the appendix.  Overall, we have the following chain of isomorphisms:
			\[
			\cB \supseteq \cB_t \vee \psi(\mathcal{C}_{t,T}) \xrightarrow{\sigma} \median_{\tilde{\cA}} (\cB_{t,k} * \mathcal{C}_{t,T})_{k \in K} \to \median_{\mathcal{D} * \mathcal{C}_{t,T}} (\cB_t^k * \mathcal{C}_{t,T})_{k \in K} \xrightarrow{\Phi} \left( \median_{\mathcal{D}} (\cB_t^k)_{k \in K} \right) \median \mathcal{C}_{t,T},
			\]
			which all respect the inclusions of $\cB_t^k$ and $\mathcal{C}_{t,T}$ in the way we expect.  In the right-most expression $\left( \median_{\mathcal{D}} (\cB_t^k)_{k \in K} \right) * \mathcal{C}_{t,T}$, the subalgebra generated by the images of $(\cB_t^k)_{k \in K}$ is freely independent of $\mathcal{C}_{t,T}$; hence, mapping these subalgebras back into $\cB$ by the chain of isomorphisms, we see that $\bigvee_{k \in K} \widetilde{\iota}_k(\cB_{t,k})$ is freely independent of $\psi(\mathcal{C}_{t,T})$ in $\cB$, which is what we wanted to prove.
		\end{proof}
		
		We continue with the proof of Lemma \ref{lem: arranging initial filtration} which we restate here.
		
		\begin{lemma}[{Lemma \ref{lem: arranging initial filtration}}] \label{lem: arranging initial filtration appendix}
			Suppose that Assumption \textbf{A} holds.  Let $\cA$ be a tracial $\mathrm{W}^*$-algebra, let $x_0 \in L^2(\cA)_{\sa}^d$, and let $t_0 \in [0,T]$.  Then for every $\epsilon > 0$, there exists a tracial $\mathrm{W}^*$-algebra $\cB$, a tracial $\mathrm{W}^*$-embedding $\iota: \cA \to \cB$, and a control policy $\alpha \in \mathbb{A}_{\mathcal{B},\iota x_0}^{t_0,T}$ associated with a filtration $(\cB_t)_{t \in [t_0,T]}$ and compatible $d$-variable free Brownian motion $(S_t)_{t \in [t_0,T]}$, such that
			\begin{enumerate}
				\item $\iota(\cA) \subseteq \cB_{t_0}$
				\item $\mathbb{E}\left[ \int_{t_0}^T L_{\mathcal{B}}(X_t[\widetilde \alpha],\alpha_t)\,dt + g_{\mathcal{B}}(X_T[\alpha]) \right] \leq 
				\overline{V}_{\mathcal{A}}(t_0,x_0) + \epsilon$.
			\end{enumerate}
		\end{lemma}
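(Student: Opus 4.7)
The plan is to realize the near-optimal policy for $\overline{V}_\cA(t_0, x_0)$ inside a larger algebra $\cB$ where the initial filtration is artificially enlarged to contain all of $\iota(\cA)$, while preserving compatibility of the free Brownian motion. I would begin by invoking the definition of $\overline{V}_\cA$: for the given $\epsilon > 0$, choose a tracial $\mathrm{W}^*$-algebra $\cB'$, an embedding $\iota': \cA \to \cB'$, and an admissible policy $\widetilde{\alpha}' = ((\alpha'_t)_t, (\cB'_t)_t, (S'_t)_t)$ whose expected cost lies within $\epsilon$ of $\overline{V}_\cA(t_0, x_0)$. The only deficiency is that $\cB'_{t_0}$ need only contain $\iota'(x_0)$, not all of $\iota'(\cA)$.

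Setting $\cP := \mathrm{W}^*(\iota'(x_0)) \subseteq \cB'_{t_0}$, I would form the amalgamated free product $\cB := \cA \median_\cP \cB'$, identifying $x_0 \in \cA$ with $\iota'(x_0) \in \cB'$. Let $\iota: \cA \to \cB$ and $\widetilde{\iota}: \cB' \to \cB$ be the canonical embeddings, so that $\iota|_\cP = \widetilde{\iota} \circ \iota'|_\cP$. I would then define $\cB_t := \iota(\cA) \vee \widetilde{\iota}(\cB'_t)$ for $t \in [t_0,T]$ and the new Brownian motion $S_t := \widetilde{\iota}(S'_t)$. By construction, $\iota(\cA) \subseteq \cB_{t_0}$ and $S_t \in \widetilde{\iota}(\cB'_t) \subseteq \cB_t$, so the initial value and adaptedness conditions, as well as the semicircular distribution and independence-of-increments properties of a free Brownian motion, transfer from $\cB'$ through the trace-preserving embedding $\widetilde{\iota}$.

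The main obstacle — the condition that $S_t - S_s = \widetilde{\iota}(S'_t - S'_s)$ is freely independent of $\cB_s$ in $\cB$ — is where the amalgamated free product machinery from Appendix \ref{sec: AFP} enters. Setting $\mathcal{F} := \mathrm{W}^*(S'_r - S'_s : r \in [s, T])$, compatibility in $\cB'$ gives that $\mathcal{F}$ is freely independent of $\cB'_s$ (and hence of $\cP \subseteq \cB'_s$) over $\mathbb{C}$, so $\widetilde{\iota}(\cB'_s \vee \mathcal{F}) \cong \widetilde{\iota}(\cB'_s) \median \widetilde{\iota}(\mathcal{F})$ by Corollary \ref{cor: independence and embeddings}. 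Combined with the amalgamated freeness of $\iota(\cA)$ and $\widetilde{\iota}(\cB')$ over $\cP$, a commutation of amalgamations (a mild variant of Lemma \ref{lem: AFP manipulation}, or equivalently an argument paralleling the last part of the proof of Lemma \ref{lem: amalgamation of Brownian motions appendix}) should yield
\[
\iota(\cA) \vee \widetilde{\iota}(\cB'_s \vee \mathcal{F}) \;\cong\; \bigl(\iota(\cA) \median_\cP \widetilde{\iota}(\cB'_s)\bigr) \median \widetilde{\iota}(\mathcal{F}),
\]
which is precisely the assertion that $\widetilde{\iota}(\mathcal{F})$ is freely independent of $\iota(\cA) \vee \widetilde{\iota}(\cB'_s) = \cB_s$.

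Once compatibility is established, I would set $\alpha_t := \widetilde{\iota}(\alpha'_t)$ and verify that $\widetilde{\alpha} := ((\alpha_t)_t,(\cB_t)_t,(S_t)_t) \in \mathbb{A}_{\cB, \iota(x_0)}^{t_0, T}$: progressive measurability transfers through $\widetilde{\iota}$ since $\widetilde{\iota}(\cB'_t) \subseteq \cB_t$, and Assumption \textbf{A}(c) ensures $\widetilde{\iota}(\bA_{\cB'}) \subseteq \bA_\cB$. By Remark \ref{rem:commute}, the associated trajectory is $X_t[\widetilde{\alpha}] = \widetilde{\iota}(X_t[\widetilde{\alpha}'])$, and since $L$ and $g$ are tracial $\mathrm{W}^*$-functions, the expected cost equals that of $\widetilde{\alpha}'$, hence is within $\epsilon$ of $\overline{V}_\cA(t_0, x_0)$. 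The technical heart of the argument is the commutation-of-amalgamations identity above, which is what prevents a direct one-line invocation of Lemma \ref{lem: amalgamation of Brownian motions appendix} — the latter applied with $t_0 = t_1$ and $\cA_{t_0} = \mathrm{W}^*(x_0)$ would only deliver $\iota(\mathrm{W}^*(x_0)) \subseteq \cB_{t_0}$, rather than the stronger $\iota(\cA) \subseteq \cB_{t_0}$ required here.
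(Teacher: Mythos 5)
Your construction $\cB := \cA \median_{\cP} \cB'$ with $\cP = \mathrm{W}^*(\iota'(x_0))$ and $\cB_t := \iota(\cA) \vee \widetilde\iota(\cB'_t)$ is a legitimate alternative route, and the bookkeeping of $\alpha_t$, $X_t[\widetilde\alpha]$, and the costs is handled correctly. The gap is the ``commutation of amalgamations'' step, which is genuinely not a consequence of the appendix lemmas as you suggest. What you need is a base-change isomorphism $\cA \median_{\cP} (\cB'_s \median \cF) \cong (\cA \median_{\cP} \cB'_s) \median \cF$ with $\cP \subseteq \cB'_s$, which amalgamates one factor over $\cP$ while the other is joined over $\bC$. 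Lemma~\ref{lem: AFP manipulation} has a different shape: it pulls a common ``sideways'' factor $\cC$ out of \emph{all} the algebras $\cA_j \median_{\cD} \cC$ being joined over $\cB \median_{\cD} \cC$, but it keeps the base $\cD$ fixed on both sides; it never changes the amalgamating subalgebra from $\cP \median \cF$ to $\cP$. Lemma~\ref{lem: free product embedding} only shrinks the factors while \emph{preserving} the base. Indeed, the paper itself flags exactly this distinction just before Lemma~\ref{lem: AFP manipulation}, noting that Shlyakhtenko's change-of-base manipulation ``is somewhat related but does not imply the statement here.'' Your identity is very plausibly true, but establishing it would require an $L^2$-string-decomposition argument of roughly the same length and difficulty as the proof of Lemma~\ref{lem: AFP manipulation}; asserting it as a ``mild variant'' leaves a real hole.

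The paper sidesteps this entirely. Instead of trying to enforce $\iota(\cA) \subseteq \cB_{t_0}$ by joining $\iota(\cA)$ onto an existing filtration (which is what forces the base change), it introduces an auxiliary algebra $\cB^1 := \cA \median \cC$, where $\cC$ is a fresh free-Brownian-motion algebra, with filtration $\cB^1_t := \iota_1(\cA) \vee \psi(\cC_{t_0,t})$. This filtration has $\cB^1_{t_0} = \iota_1(\cA)$ by construction, and compatibility of $\psi(Z_t)$ with it follows from ordinary associativity (Lemma~\ref{lem: associativity}), with no amalgamated base change. Then Lemma~\ref{lem: amalgamation of Brownian motions} is applied with $K = \{0,1\}$, $t_0 = t_1$, and amalgamating subalgebra $\cA_{t_0} = \mathrm{W}^*(x_0)$; its conclusion~(5) gives $\iota(\cA) = \widetilde\iota_1(\cB^1_{t_0}) \subseteq \cB_{t_0}$ for free. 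So the paper pays for the scaffolding algebra $\cB^1$ in order to route everything through the already-proved amalgamation lemma. If you want your shorter construction, you would have to state and prove the base-change isomorphism as its own lemma.
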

		
		\begin{proof}
			First, by definition of $\overline{V}_{\mathcal{A}}(t_0,x_0)$, there exists some tracial $\mathrm{W}^*$-embedding $\iota_0: \cA \rightarrow  \mathcal{B}^0$ and some control policy $\widetilde \alpha^0$ in $\mathbb{A}_{\mathcal{B}^0,{\iota_0}(x_0)}^{t_0,T}$ such that
			\[
			\mathbb{E}\left[ \int_{t_0}^T L_{\mathcal{B}^0}(X_t[\widetilde \alpha^0],\alpha_t^0)\,dt + g_{\mathcal{B}^0}(X_T[\alpha^0]) \right] \leq 
			\overline{V}_{\mathcal{A}}(t_0,x_0) + \epsilon.
			\]
			Let $(\mathcal B_t^0)_{t \in [t_0,T]}$ be the corresponding filtration and $(S_t^0)_{t \in [t_0,T]}$ the corresponding  free Brownian motion.
			
			Next, we define a tracial von Neumann algebra $\mathcal{B}^1$ as follows.  By Voiculescu's free Gaussian functor construction, there exists a tracial von Neumann algebra $\mathcal{C}$ generated by a $d$-variable free Brownian motion $(Z_t)_{t \in [t_0,T]}$ with initial condition $Z_{t_0} = 0$.  For $t_0 \leq a \leq b \leq T$, let $\mathcal{C}_{a,b}$ be the von Neumann subalgebra of $\mathcal{C}$ generated by $(Z_s - Z_a)_{s \in [a,b]}$.  Let $\mathcal{B}^1$ be the free product $\mathcal{A} * \mathcal{C}$.  Let
			\[
			\iota_1: \mathcal{A} \to \mathcal{B}^1, \qquad \psi: \mathcal{C} \to \mathcal{B}^1
			\]
			be the canonical inclusions from the free product construction.  Define $S_t^1 = \psi(Z_t)$ for $t \in [t_0,T]$.  Define the filtration
			\[
			\mathcal{B}_t^1 = \iota_1(\mathcal{A}) \vee \psi(\mathcal{C}_{t_0,t}),\qquad t\in [t_0,T].
			\]
			We claim that $(S_t^1)_{t \in [t_0,T]}$ is compatible with the filtration $(\mathcal{B}_t^1)_{t \in [t_0,T]}$.  To prove this, first note that it is adapted because $S_t^1 = \psi(Z_t) \in \psi(\mathcal{C}_{t_0,t}) \subseteq \mathcal{B}_t^1$.  Next, we show that for each $t \in [t_0,T]$, the algebra $\mathrm{W}^*(S_s^1 - S_t^1: s \in [t,T])$ is freely independent of $\mathcal{B}_t^1$.  Note that $\mathrm{W}^*(S_s^1 - S_t^1: s \in [t,T]) = \psi(\mathcal{C}_{t,T})$.  Recall that $\mathcal{C}_{t_0,t}$ and $\mathcal{C}_{t,T}$ are freely independent.  Hence, $\psi(\mathcal{C}_{t_0,t})$ and $\psi(\mathcal{C}_{t,T})$ are freely independent of each other, and $\iota_1(\mathcal{A})$ is freely independent of $\psi(\mathcal{C}_{t_0,t}) \vee \psi(\mathcal{C}_{t,T})$.  Therefore, by associativity of free products (Lemma \ref{lem: associativity}), $\iota_1(\mathcal{A})$ and $\psi(\mathcal{C}_{t_0,t})$ and $\psi(\mathcal{C}_{t,T})$ are freely independent.  Using associativity again, it follows that $\psi(\mathcal{C}_{t,T})$ is freely independent of $\iota_1(\mathcal{A}) \vee \psi(\mathcal{C}_{t_0,t}) = \mathcal{B}_t^1$, which is what we wanted to prove.
			
			Now we apply Lemma \ref{lem: amalgamation of Brownian motions} using the index set $K = \{0,1\}$ and using the algebras $\mathcal{B}^0$ and $\mathcal{B}^1$ with the associated filtrations and Brownian motions above.  Set $t_1 = t_0$, and for the filtration in $\cA$ on the degenerate time interval $[t_0,t_0]$, which reduces to just a single subalgebra, use $\cA_{t_0} = \mathrm{W}^*(x_0)$.  Then $\iota_0(\cA_{t_0}) \subseteq \cB_{t_0}$ because the definition of admissible control policies requires that $x_0 \in L^2(\cB_{t_0})_{\sa}^d$.  Also, $\iota_1(\cA_{t_0}) \subseteq \iota_1(\cA) = \cB_{t_0}^1$ by construction.  Let $\cB$, $\widetilde \iota_0$, $\widetilde \iota_1$, $(\cB_t)_{t \in [t_0,T]}$, and $(S_t)_{t \in [t_0,T]}$ be as in the previous lemma.  By that lemma, we may take the embedding $\iota: \cA \to \cB$ to be given by $\iota = \widetilde \iota_1 \circ \iota_1$.  Thus, by (5) of the previous lemma,
			\[
			\iota(\cA) = \widetilde \iota_1 \circ \iota_1(\cA) = \widetilde \iota_1(\cB_{t_0}^1) \subseteq \cB_{t_0},
			\]
			which verifies the first condition we wanted to prove.  Let $\alpha := \widetilde \iota_0 \circ \alpha^0$.  By construction,
			\[
			\alpha_t = \widetilde \iota_0(\alpha_t^0) \in \widetilde \iota_0(\cB_t^0) \subseteq \cB_t,
			\]
			so that $((\alpha_t)_{t \in [t_0,T]}, (\cB_t)_{t \in [t_0,T]},(S_t)_{t \in [t_0,T]})$ is an admissible control policy in $\mathcal{B}$. Note that $X_t[\alpha] = \widetilde \iota_0(X_t[\alpha^0])$.  Since $L$ and $g$ are tracial $\mathrm{W}^*$-functions, we have
			\begin{align*}
				\mathbb{E}\left[ \int_{t_0}^T L_{\mathcal{B}}(X_t[\alpha],\alpha_t)\,dt + g_{\mathcal{B}}(X_T[\alpha]) \right] &= \mathbb{E}\left[ \int_{t_0}^T L_{\mathcal{B}^0}(X_t[\alpha^0],\alpha_t^0)\,dt + g_{\mathcal{B}^0}(X_T[\alpha^0]) \right] \\
				&\leq V_{\cA}(t_0,x_0) + \epsilon,
			\end{align*}
			which verifies the second condition that we wanted to prove.
		\end{proof}

		\bibliography{FreeViscBibJan21}
		\bibliographystyle{plain}

	\end{document}